\theoremstyle{theorem}
\newtheorem{theorem}{Theorem}[section]
\newtheorem{lemma}{Lemma}[section]
\newtheorem{corollary}{Corollary}[section]
\newtheorem{proposition}{Proposition}[section]
\theoremstyle{remark}
\newtheorem{remark}{Remark}[section]
\theoremstyle{example}
\theoremstyle{definition}
\newtheorem{definition}{Definition}[section]
\numberwithin{equation}{section}
\newcommand{\beq}{\begin{eqnarray}}
\newcommand{\eeq}{\end{eqnarray}}
\def\d{\mathfrak d}
\def\g{\mathfrak g}
\def\r{\mathfrak r}
\def\T{\mathfrak{T}}
\def\End{\mathfrak{End}}
\def\bbZ{\mathbb{Z}}
\def\bbC{\mathbb{C}}
\newcommand{\cC}{{\cal C}}
\newcommand{\cD}{{\cal D}}
\newcommand{\cT}{{\cal T}}
\newcommand{\cG}{{\cal G}}
\newcommand{\cH}{{\cal H}}
\newcommand{\cK}{{\cal K}}
\newcommand{\cN}{{\cal N}}
\newcommand{\cR}{{\cal R}}
\newcommand{\id}{\operatorname{id}}
\begin{document}
\title{Hopf 2-Algebras and Braided Monoidal 2-Categories}
% \title{Categorified Hopf Theory I: Higher-Bialgebras and Braided Monoidal 2-Categories}
% \title{Categorified Hopf Theory I: Higher-Bialgebras and the Categorified Yang-Baxter Equations}

\author[1]{{ \sf Hank Chen}}\thanks{hank.chen@uwaterloo.ca}
% \affiliation{Department of Applied Mathematics, University of Waterloo, Waterloo, Ontario, Canada}

% \author[1]{\sffamily Florian Girelli\thanks{florian.girelli@uwaterloo.ca}}
% \author[1]{\sffamily Panagiotis Tsimiklis\thanks{ptsimiklis@uwaterloo.ca}}

\author[1]{{\sf Florian Girelli}}\thanks{fgirelli@uwaterloo.ca}

\affil[1]{\small Department of Applied Mathematics, University of Waterloo, 200 University Avenue West, Waterloo, Ontario, Canada, N2L 3G1}

% \affiliation{Department of Applied Mathematics, University of Waterloo, Waterloo, Ontario, Canada}

%\date{today}

\maketitle

\begin{abstract}
    Following the theory of principal $\infty$-bundles of Niklaus-Schreiber-Steveson, we develop a homotopy categorification of Hopf algebras, which model quantum groups. We study their higher-representation theory in the setting of $\mathsf{2Vect}^{hBC}$, which is a homotopy refinement of the notion of 2-vector spaces due to Baez-Crans that allows for higher coherence data. We construct in particular the \textit{2-quantum double} as a homotopy double crossed product, and prove its duality and factorization properties. We also define and characterize "{2-$R$-matrices}", which can be seen as an extension of the usual notion of $R$-matrix in an ordinary Hopf algebra. We found that the 2-Yang-Baxter equations describe the braiding of extended defects in 4d, distinct from but not unlike the the Zamolodchikov tetrahedron equations. The main results we prove in this paper is that the 2-representation 2-category of a weak 2-bialgebra is braided monoidal if it is equipped with a universal 2-$R$-matrix, and that our homotopy quantization admits the theory of Lie 2-bialgebras as a semiclassical limit.
\end{abstract}

\bigskip

\noindent {\scriptsize AMS subject classification: 17B37, 81R50, 18N25.\\ Keywords: Quantum groups, representation theory, categorification, quantum field theory}

\tableofcontents

\section{Introduction}
It is well-known that the algebra of excitations in 3D  BF theory   (equivalent to a Chern-Simons theory \cite{Witten:1988hc, Osei:2013xra,chen:2022})  for a given gauge group  is described by its Drinfel'd double quantum group \cite{Delcamp:2016yix,Dupuis:2020ndx,Osei:2013xra}, which forms a quasitriangular Hopf algebra \cite{Majid:1990bt, Majid:1996kd}. The tensor category of its representations has equipped very important monoidal structures that capture the essential topological properties of the underlying topological field theory. In particular, the universal $R$-matrix equipped on a Hopf algebra $H$ defines the braiding map $b=\text{flip}\circ R$ on its representation category $\operatorname{Rep}(H)$. 
 
 The fact that the modules of such quantum group Hopf algebras are {braided} means that $\operatorname{Rep}(H)$ furnishes a representation of the Artin braid group \cite{Majid:1990bt}, which allows quantum invariants of knots and tangles to be developed from the Hopf algebra machinery. This was first pointed out by Witten \cite{WITTEN1990285} in which the Jones polynomial knot invariants were recovered from modules of the quantum $U_q(\mathfrak{su}(2))$ symmetry of $SU(2)$-Chern-Simons theory. Indeed, the deep connection between Hopf algebras and 3d geometry/topology is well-known \cite{Hopf:1941,Borel:1953}. 

 The well-known 2d toric code \cite{Kitaev1997} is another fruitful example of such construction. It  has been shown to be described by a 3d $\bbZ_2$-gauge BF theory with the underlying Drinfel'd double $D(\bbZ_2)$ symmetry. Moreover, the theory of non-degenerate fusion categories, modelling 3-dimensional gapped topological phases and their boundary excitations, are now very well-understood \cite{KitaevKong_2012,Wen:2010gda,Kong:2014qka}.

The characterization of quasitriangular Hopf algebras/quantum groups, as well as its $R$-matrix,  plays also a central role in the theory of quantum and classical integrable systems \cite{Tomei:2013,Adler:1978,book-integrable,Meusburger:2021cxe}. In particular for 1-dimensional spin chains such as the Toda lattice \cite{Adler:1978} or the XXX/XXZ/XYZ family of Ising spins \cite{Zhang:1991}, obtaining the quantum $R$-matrix is equivalent to solving the entire model, through the method known as \textit{quantum inverse scattering}. 

\medskip

One might wonder how these different examples extend to a dimension up. For example, one might wonder what is the right tool to characterize the algebra of excitations in a \textit{4d}  BF theory, or any higher-dimensional topological phases in general \cite{Bullivant:2016clk,Wen:2019,Kong:2020}, such as the 4d toric code \cite{Hamma:2004ud,Kong:2020wmn,Else:2017yqj}. In a similar way, given the well-studied integrable systems are typically in  1+1 dimension, one could wonder what would be the relevant structure for an integrable system in 2+1 dimension. 
 
According to the dimensional (or categorical) ladder proposal \cite{Crane:1994ty, Baez:1995xq, Mackaay:ek}, one way to obtain the relevant structure is through \textit{categorification}. In fact, several constructions already point  to the efficiency of using  2-categories (arising from for instance higher-representation theory \cite{neuchl1997representation,Baez:2012,Douglas:2018,Delcamp:2023kew}) to describe a 4-dimensional gapped topological phases, such as the 4d toric code \cite{Hamma:2004ud,Kong:2020wmn,Else:2017yqj}. 

The goal of this paper is to describe a categorification of the  rich and fruitful theory of quantum groups, starting from the theory of higher-dimensional $L_\infty$-algebras of \cite{Baez:2003fs,Baez:2004in,Nikolaus_2014,costello_gwilliam_2016}. Together with the quantization step, which we understand as taking a Lie algebra to a quantum Hopf algebras (cf. Drinfel'd-Jimbo quantization \cite{Majid:1996kd,Semenov1992,Drinfeld:1986in} and Kontsevich $\star$-quantization \cite{Kontsevich:1997vb}), we arrive at the following diagram,
\begin{equation}
        \begin{tikzcd}
            \text{Lie (or Poisson) algebra} \arrow[rr,"\text{categorify}"] \arrow[d,"\text{quantize}"] & & \text{(Poisson) } L_\infty\text{-algebras} \arrow[d,"\text{quantize}",dashed]\\
            \text{(Hopf) algebras} \arrow[rr,"\text{categorify}"] & & \text{(Hopf) } A_\infty\text{-algebras}
        \end{tikzcd}.\label{dimladder}
\end{equation}
As such, focusing on 4-dimensions, it follows that a {\bf categorical quantum group} living on the bottom-right corner of \eqref{dimladder} can be modelled as a "2-bialgebra" $\cG$ in the $A_\infty$ context. We shall construct such a structure starting from the strict/associative 2-algebras of \cite{Wagemann+2021}, then consider a weakening of the associativity up to homotopy expressed by a \textit{Hochschild 3-cocycle} $\cT$ on $\cG$. Moreover, we will substantiate the diagram \eqref{dimladder} in Section \ref{claslim}, where we prove that we recover the known notions of Lie 2-bialgebras \cite{Bai_2013,Chen:2012gz,Chen:2013} by taking an appropriate "classical limit". 

A structure of particular interest that we will also construct is the {\it 2-quantum double}, which is a "categorical Drinfel'd double" generalizing the works of Majid \cite{Majid:1990bt,Majid:1994nw,Majid:1996kd}. We will prove some crucial structural theorems about this 2-quantum double construction, such as quasitriangularity and self-duality. We explicitly demonstrate that these proofs run parallel to classical results for ordinary quantum doubles. 

\medskip

Next, we consider higher-representations of the weak 2-bialgebra $\cG$ by examining its action on 2-vector spaces. We emphasize that the 2-representation theory based on the Baez-Crans notion of 2-vector spaces \cite{Angulo:2018} $\mathsf{2Vect}^{BC}$, which is equivalently a category internal to $\mathsf{Vect}$ \cite{Baez:2003fs}, is insufficient for our purposes \cite{heredia2016representations2groupsbaezcrans2vector}. Instead, we shall base our "weak 2-representation theory" on a modified version of $\mathsf{2Vect}^{BC}$, which we denote simply by $\mathsf{2Vect}^{hBC}$. More precisely, if we think of $V\in\mathsf{2Vect}^{BC}$ as a $\mathsf{Vect}$-algebra in the bicategory $\mathsf{Cat}$ of (small) categories, then $V\in\mathsf{2Vect}^{hBC}$ can be thought of as as \textit{psuedo}-$\mathsf{Vect}$-algebra \cite{Fiore2004PseudoLB}.

The 1- and 2-morphisms in this 2-category $\mathsf{2Vect}^{hBC}$ are modelled respectively by cochain maps and cochain homotopies equipped with homotopy witnesses for associativity. In particular, the endormophism 1-categories of objects in $\mathsf{2Vect}^{hBC}$ by definition have the structure of a 2-term $A_\infty$-algebra. We shall show how the theory of $A_\infty$-algebras \cite{Stasheff:1963} provides the necessary commutativity of all diagrams in this 2-category $\mathsf{2Vect}^{hBC}$.

We denote the resulting 2-category of weak 2-representations by $\operatorname{2Rep}^\cT(\cG)$. Weakening the 2-representations is sufficient, and in fact \textit{necessary} \cite{heredia2016representations2groupsbaezcrans2vector}, in order for it to carry higher-homotopical data. These play very important roles in the following main result of this paper.
\begin{theorem}\label{mainthm}
The 2-representation 2-category $\operatorname{2Rep}^\cT(\cG)$ of a weak 2-bialgebra $(\cG,\cT,\Delta)$ is braided monoidal ({\'a} la Gurski \cite{GURSKI20114225}), with trivial left-/right-unitors, if $\cG$ is equipped with a universal 2-$R$-matrix $\cR$.
\end{theorem}
\noindent We will, in particular, universally characterize the quantum 2-$R$-matrices on $\cG$ which is responsible for the braiding, and provide  the categorified \textbf{2-Yang-Baxter equations} it satisfies. The theorem is then proven by translating the 2-algebraic properties to structures of the 2-representations, and explicitly checking all coherence diagrams \cite{GURSKI20114225,KongTianZhou:2020}. We will further prove in the appendix that the weak 2-representation theory we develop here do indeed host the necessary homotopy data as that studied in the literature \cite{Delcamp:2021szr,Delcamp:2023kew,Douglas:2018,Baez:2012}.  

\medskip

There is then naturally a forgetful functor $\operatorname{2Rep}(\cG)\rightarrow \mathsf{2Vect}^{hBC}$ into the 2-category of {\it weak} 2-vector spaces discussed briefly above, and {\it not} the Baez-Crans 2-category $\mathsf{2Vect}^{BC}$. However, $\mathsf{2Vect}^{hBC}$ does \textit{not} coincide with 2-vector spaces of the Karpanov-Voevodskey type $\mathsf{2Vect}^{KV}$ \cite{Kapranov:1994} --- this issue is currently under investigation by one of the authors. 

% \medskip

% We note here that our construction is very close in flavour to the notion of a \textbf{Hopf category} as developed in the literature \cite{BAEZ1996196,Pfeiffer2007,batista2016hopf}. In the strict case in particular, we show in the appendix that we recover the so-called "$\text{cat}^1$-Hopf algebra" of \cite{Wagemann+2021}, which is intimately related to trialgebras \cite{Pfeiffer2007}. We recognize that the relation between the representation theory of Hopf categories and braided monoidal 2-categories, which forms the main content of {\bf Theorem \ref{mainthm}}, has also been explored previously in \cite{neuchl1997representation}. However, here we are able to give an explicit 2-R-matrix responsible for the natural braiding on the 2-representations. 

% \medskip

\subsubsection*{Outline}

Section \ref{strict2bialgebra} constructs a {\it strict} 2-bialgebra (where associativity is retained) following the definition of a strict 2-algebra/algebra crossed-module in \cite{Wagemann+2021}. In Section \ref{strict2bialg}, we begin by introducing a {\it graded} coproduct $\Delta$ on $\cG$, and define the appropriate notion for two 2-bialgebras to be {\it dually paired}. The 2-bialgebra axiom plays a central role in this notion of duality, just as in the case of the usual 1-bialgebra. We also provide a classic example of the function 2-bialgebra on a 2-group, and verify explicitly the 2-bialgebra axioms. 

\smallskip

Section \ref{str2qd}  delves into the construction of the 2-quantum double $D(\cG)$ associated to a pair of mutually dual 2-bialgebras $\cG,\cG^*$. We prove key structural theorems about $D(\cG)$, such as its factorizability and self-duality. Furthermore, motivated by Majid's construction of the generalized double \cite{Majid:1994nw}, based on the existence of a quantum  $R$-matrix, we introduce the notion of 2-$R$-matrix $\cR$ and derive  explicitly the "2-Yang-Baxter equations" that $\cR$ satisfies.  We demonstrate in particular how $D(\cG)$ is in fact naturally equipped with a universal 2-$R$-matrix.   

\smallskip

In Section \ref{weak2bialgebras}, we weaken our 2-bialgebra construction by introducing a {\it Hochschild 3-cocycle} that witnesses associativity. We note that we obtain precisely a 2-bialgebra in the $A_\infty$ context, which fits into the diagram \eqref{dimladder} as desired. We prove that there is once again a well-defined notion of duality in this case by extending the 2-bialgebra axioms to take the weakened associativity/coassociativity into account. 

\smallskip

In Section \ref{weakskeletalqd}, we generalize the 2-quantum double construction, as well as their structural theorems, to the weak case. We specialize to the skeletal case, where the structural $t$-map is trivial, in order to leverage the construction given in the strict case.

\smallskip

In Section \ref{2representations}, we discuss directly the notion of weak 2-representations $\operatorname{2Rep}(\cG)$ living in $\mathsf{2Vect}^{hBC}$, based on a modified notion of Baez-Crans 2-vector spaces. We discuss the different key aspects of the 2-category $\operatorname{2Rep}^\cT(\cG)$, namely the monoidal structure and the braiding. We will in particular prove the naturality of these struvtures form the underlying properties of the 2-bialgebra.

\smallskip

Finally, in Section \ref{braiding2rep}, we prove  the main theorem  by explicitly checking all the relevant coherence diagrams of a braided monoidal 2-category. We will moreover emphasize how the fusion associators/pentagonators and braiding hexagonators arise from the homotopy data attached to a weak quasitriangular 2-bialgebra $(\cG,\cT,\cR)$, as well as its weak 2-representation theory $\operatorname{2Rep}^\cT(\cG)$.

\medskip

We collect some supplementary but also important results that we have obtained in the Appendix. In Section \ref{2-hopf}, we provide an appropriate definition of an antipode in the strict case, and examine briefly its properties. 

In Section \ref{claslim}, we prove that under an appropriate "classical limit", the quantum 2-bialgebra and the universal 2-$R$-matrix that we have defined in fact recovers the Lie 2-bialgebra and the classical 2-$r$-matrix as studied in \cite{Bai_2013,Chen:2012gz,Chen:2013}.  

\smallskip

And finally, in Section \ref{weak2repthy}, we will demonstrate that the theory of weak 2-representations, as developed in this paper, has the same homotopy theory as the 2-representation theory of skeletal 2-groups as studied in the literature \cite{Douglas:2018,Baez:2012,Delcamp:2021szr}. The latter is typically understood to be a {\it symmetric} monoidal 2-category, but our theory allows for non-trivial braiding data for the 2-representations. 

\subsubsection*{Acknowledgements}
The authors would like to thank Matthew Yu, Theo Johnson-Freyd, David Green, Liang Kong, and Matt Hogencamp for valuable discussions and suggestions throughout the completion of this work.

\section{Strict 2-bialgebras}
\label{strict2bialgebra}
Quantum groups are Hopf algebras, hence we expect to define quantum 2-groups as "Hopf 2-algebras". Different  notions of  2-Hopf algebra have already been previously proposed in \cite{Majid:2012gy,Wagemann+2021,Pfeiffer2007,grosse2001suggestion}, but we shall work primarily in the context of \textit{Baez-Crans 2-vector spaces} \cite{Baez:2003fs}. 

A \textbf{Baez-Crans 2-vector space} $V\in \mathsf{2Vect}^{BC}$ is a category internal to the category $\mathsf{Vect}$ of $\bbC$-vector spaces. The following characterization result was obtained in \cite{Baez:2003fs}. 
\begin{proposition}\label{2ch}
    There is an equivalence $\mathsf{2Vect}^{BC} \simeq\mathsf{2Ch}(\mathsf{Vect})$ with the 2-category of 2-term $\bbC$-vector space chain complexes.
\end{proposition}
\noindent One side of the equivalence, given a $V=V_1\overset{s}{\underset{t}{\rightrightarrows}}V_0\in \mathsf{2Vect}^{BC}$, is
\begin{equation*}
    V_{-1} = \operatorname{ker}s,\qquad V_0=V_0,
\end{equation*}
whence the map $t:V_{-1}\rightarrow V_0$ defined by $$\mu_1(f) = x'-x,\qquad f:x\rightarrow x' \in V$$ determines a 2-term chain complex.

\subsection{Associative 2-algebras}
We begin with the following definition, then build up to the definition of an associative 2-algebra in \cite{Wagemann+2021}.
\definition\label{bimodule}
Let $\cG_0,\cG_{-1}$ denote a pair of associative algebras. We say that $\cG_{-1}$ is a \textbf{$\cG_0$-bimodule} if we have a  left and a right action\footnote{We will often omit  the subscript when there is no ambiguity.} $\cdot_l,\cdot_r$ of $\cG_0$ on $\cG_{-1}$ which commute. 
\begin{eqnarray}
(x'x)\cdot y = x'\cdot (x\cdot y), \qquad (x\cdot y)\cdot x' = x\cdot(y\cdot x'), \quad y\cdot (xx')= (y \cdot x)\cdot x'
     \label{actioncond}
\end{eqnarray}
for all $y\in \cG_{-1}$ and $x',x\in\cG_0$. 
\enddefinition
Equivalently we can demand that the following diagrams are commutative. We note $\mu_{i}$ the multiplication in $\cG_{i}$, $i=-1,0$. 

The associativity of the multiplication  is encoded in the usual diagram. 
\begin{equation}\label{diag:associativity}
\begin{tikzcd}
&  \cG_{i}\otimes \cG_{i}\otimes \cG_{i}  
\arrow[dl, "\id\otimes \mu_{i}"]  \arrow[dr, "\mu_{i}\otimes \id"] \\
\cG_i\otimes \cG_{i} \arrow[dr, "\mu_{i}"] & &\cG_{i}\otimes \cG_{i} \arrow[dl, "\mu_{i}"]\\
&
\cG_{i}
\end{tikzcd}.
\end{equation}
The bimodularity conditions \eqref{actioncond} read as commutative diagrams,
\begin{eqnarray}\label{diag:action0}
\begin{tikzcd}
&  \cG_{0}\otimes \cG_{0}\otimes \cG_{-1}  
\arrow[dl, "\mu_{0}\otimes \id"]  \arrow[dr,  "\id \otimes \cdot_l"] \\
\cG_{0}\otimes \cG_{-1} \arrow[dr, "\cdot_l"] & &\cG_{0}\otimes \cG_{-1} \arrow[dl, "\cdot_l"]\\
&
\cG_{-1} &
\end{tikzcd}\nonumber\\
\begin{tikzcd}
&  \cG_{-1}\otimes \cG_{0}\otimes \cG_{0}  
\arrow[dl, "\id \otimes \mu_{0}"]  \arrow[dr,  " \cdot_r\otimes\id"] \\
\cG_{-1}\otimes \cG_{0} \arrow[dr, "\cdot_r"] & &\cG_{-1}\otimes \cG_{0} \arrow[dl, "\cdot_r"]\\
&
\cG_{-1}
\end{tikzcd}
\end{eqnarray}
and the bimodule condition, the middle one of \eqref{actioncond} is 
\begin{eqnarray}\label{diag:bimo}
\begin{tikzcd}
&  \cG_{0}\otimes \cG_{-1}\otimes \cG_{0}  
\arrow[dl, "\cdot_l\otimes \id"]  \arrow[dr,  "\id \otimes \cdot_r"] \\
\cG_{-1}\otimes \cG_{0} \arrow[dr, "\cdot_r"] & &\cG_{0}\otimes \cG_{-1} \arrow[dl, "\cdot_l"]\\
&
\cG_{-1}
\end{tikzcd}
\end{eqnarray}

If we introduce a homomorphism $t$ between $\cG_{-1}$ and $\cG_{0}$, subject to some conditions, then    $\cG_{-1}$ and $\cG_0$  can be used to define a crossed module of algebras.

\definition 
An \textbf{associative 2-algebra} $\cG$ is an algebra object in $\mathsf{2Vect}^{BC}$.
\enddefinition
Through the characterization \textbf{Proposition \ref{2ch}}, we equivalently have the following.
\begin{definition}\label{assoc2alg}
An {\bf associative 2-algebra} is a crossed-module of (finite dimensional) associative algebras. More precisely, $\cG$ is the data of a pair of associative algebras $\cG_0, \,\cG_{-1}$ and an algebra homomorphism $t:\cG_{-1}\rightarrow\cG_0$ satisfying the following:
\begin{enumerate}
    \item $\cG_{-1}$ is a $\cG_0$-bimodule,
    \item $t$ is \textbf{two-sided $\cG_0$-equivariant},
    \begin{equation}
        t(x\cdot y) = x t(y),\qquad t(y\cdot x)=t(y)x \label{algpeif1}
    \end{equation}
    for all $y\in\cG_{-1},x\in\cG_0$, and
    \item the \textbf{Peiffer identity} is satisfied,
    \begin{equation}
        t(y)\cdot y' = yy' = y\cdot t(y'),\label{algpeif2}
    \end{equation}
    where $y,y'\in\cG_{-1}$.
\end{enumerate}
We call the latter two the {\it Peiffer conditions}. We denote an associative 2-algebra simply by $\cG$, or by $(\cG,\cdot)$ to emphasize the bimodule structure. Let $k$ denote the ground ring of the 2-vector space underlying $\cG$. We call $\cG$ {\bf unital} if there exists a unit map $\eta=(\eta_{-1},\eta_0):k \rightarrow \cG$ such that
\begin{equation}
  \eta_{-1} y = y \eta_{-1} = y,  \quad \eta_0  x = x \eta_0 = x, \label{def:unity}
\end{equation}
for all $y\in\cG_{-1},x\in\cG_0$. Moreover, $t$ should respect the units, ie. $t (\eta_{-1}) = \eta_0$. 
\end{definition}
Note that one may consider $\cG_{-1}$ first as a vector space and
    {\it define} its product with the Peiffer identity. This notion is how one may show the bijective correspondence between Lie algebra crossed-modules and 2-term $L_\infty$-algebras \cite{Bai_2013,Chen:2012gz}. However, in the skeletal case, since the Peiffer identity is empty, which forces the product on $\cG_{-1}$ to be trivial. 

\begin{remark}\label{bimodulecondition}
   If $t\neq 0$ were non-trivial then the Peiffer conditions, together with bimodularity, imply that
    \begin{equation}
        x\cdot(yy') = (x\cdot y)y',\qquad y(x\cdot y') = (y\cdot x)y',\qquad (yy')\cdot x=y(y'\cdot x) \nonumber
    \end{equation}
    for each $x\in \cG_0,y,y'\in\cG_{-1}$. This puts strong constraints on the algebra action $\cdot$, which is not necessarily imposed in the skeletal $t=0$ case. 
\end{remark}

\smallskip

Equivalently, we can encode the different conditions defining the 2-algebra in terms of commutative diagrams. The equivariance reads 
\begin{eqnarray}\label{diag:equivariance}
\begin{tikzcd}
&  \cG_0\otimes \cG_{-1}  
\arrow[dl, "\cdot_l"]  \arrow[dr, "\id \otimes t"] \\
 \cG_{-1} \arrow[dr, "t"] & &\cG_{0}\otimes \cG_{0} \arrow[dl, "\mu_{0}"]\\
&
\cG_{0}
\end{tikzcd}, \quad
\begin{tikzcd}
&  \cG_{-1}\otimes \cG_{0}  
\arrow[dl, "\cdot_r"]  \arrow[dr,  "t\otimes \id"] \\
\cG_{-1}\arrow[dr, "t"] & &\cG_{0}\otimes \cG_{0} \arrow[dl, "\mu_{0}"]\\
&
\cG_{0}
\end{tikzcd}
\end{eqnarray}
and the Peiffer identity is 
\begin{eqnarray}\label{diag:Pid}
\begin{tikzcd}
&  \cG_{-1}\otimes \cG_{-1}  
\arrow[dl, "t\otimes \id"]  \arrow[dd, "\mu_{-1}"] \arrow[dr, "\id\otimes t"] \\
\cG_0\otimes \cG_{-1} \arrow[dr, "\cdot_l"] & &\cG_{-1}\otimes \cG_{0} \arrow[dl, "\cdot_r"]\\
&
\cG_{-1}
\end{tikzcd}.
\end{eqnarray}
Finally the unit map is encoded in the following commutative diagrams.
% \begin{center}
\begin{eqnarray}
\centering
&
\begin{tikzcd}
&  \cG_{i}\otimes \cG_{i}  
 \arrow[d, "\mu_{i}"] \\
k\otimes \cG_i\arrow[ur, "\eta_i\otimes \id"]   \arrow[r, "\cong"'] &  \cG_i   & \arrow[l, "\cong"]  \arrow[ul, "\id\otimes \eta_i"'] \cG_{i}\otimes k
\end{tikzcd} &  \label{diag:unit}\\
\begin{tikzcd}
&  \cG_{0}\otimes \cG_{-1}  \arrow[d, "\cdot_r"]
  \\
k\otimes \cG_{-1}  \arrow[ur, "\eta_0\otimes \id"]   \arrow[r, "\cong"] &  \cG_{-1}  
\end{tikzcd}  &
\begin{tikzcd}
&   \cG_{-1} \otimes \cG_{0}  \arrow[d, "\cdot_l"]
  \\
 \cG_{-1}\otimes k   \arrow[ur, "\id \otimes \eta_0"]   \arrow[r, "\cong"] &  \cG_{-1}  
\end{tikzcd}
&
\begin{tikzcd}
&   \cG_{-1}  \arrow[d, "t"]
  \\
  k \arrow[ur, "\eta_{-1}"] \arrow[r, "\eta_0"]    &  \cG_{0}  
\end{tikzcd}\nonumber
\end{eqnarray}

\subsubsection{Classification of 2-groups and associative 2-algebras}
Recall a 2-group is a connected 2-groupoid $[G_{-1}],G_0,\text{pt}]$ \cite{Baez:2012,Baez:2008hz}, or equivalently its loop 1-groupoid $G_{-1}\rtimes G_0 \rightrightarrows G_0$ \cite{Douglas:2018}. These are equivalent to the following crossed-module description \cite{Chen:2012gz,Baez:2008hz}.
\begin{definition}
The crossed-module model of a {\bf 2-group} $G$ is a group homomorphism $t:G_{-1}\rightarrow G_0$ together with an action $\rhd$ of $G_0$ on $G_{-1}$ such that the following conditions
\begin{equation}
    t(x\rhd y) = xt(y)x^{-1},\qquad (ty)\rhd y'=yy'y^{-1}\label{pfeif2}
\end{equation}
are satisfied for each $x\in G_0$ and $y,y'\in G_{-1}$. The first and second conditions are known respectively as the {\bf equivariance} and the {\bf Peiffer identity}.
\end{definition}
\noindent %We treat a 2-group as a group crossed complex. 
A {\bf 2-group homomorphism}, is a graded map $\phi=(\phi_{-1},\phi_0):G\rightarrow G'$ such that
\begin{enumerate}
    \item $\phi_0:G_0\rightarrow G_0'$ and $\phi_{-1}:G_{-1}\rightarrow G_{-1}'$ are group homomorphisms,
    \item $\phi_{-1}(x\rhd y) = (\phi_0x)\rhd'(\phi_{-1}y)$ for each $x\in G_0,y\in G_{-1}$, and
    \item $\phi_0t = t'\phi_{-1}$.
\end{enumerate}
We say that two (strict) 2-groups $G,G'$ are elementary equivalent, or quasi-isomorphic, if there exists an invertible 2-group homomorphism between them. The fundamental classification result \cite{Zhu:2019,Kapustin:2013uxa} is that
\begin{theorem}
{\bf (Gerstenhaber, attr. Mac-Lane).} 2-groups are classified up to quasi-isomorphism by a degree-3 {\bf group cohomology class} $\tau\in H^3(N,V)$, where  $N=\operatorname{coker}t,V=\operatorname{ker}t$.
\end{theorem}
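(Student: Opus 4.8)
The plan is to present the crossed module $t:G_{-1}\to G_0$ as a \emph{crossed extension} of $N$ by $V$ and to read off $\tau$ as the obstruction to strictifying it. First I would isolate the two invariants. Applying the Peiffer identity \eqref{pfeif2} to $y\in\operatorname{ker}t$ gives $t(y)=e$, hence $y'=yy'y^{-1}$ for all $y'$, so $V=\operatorname{ker}t$ is central — in particular abelian — in $G_{-1}$; the equivariance identity in \eqref{pfeif2} shows $\operatorname{im}t$ is normal in $G_0$, so $N=\operatorname{coker}t$ is a genuine group. The action $\rhd$ restricts to $V$, and since $t(z)\rhd y=zyz^{-1}=y$ for $y\in V$, $z\in G_{-1}$ (Peiffer plus centrality), $\operatorname{im}t$ acts trivially on $V$; thus $\rhd$ descends to an $N$-module structure on $V$, relative to which $H^3(N,V)$ is formed.

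Next I would extract the cocycle from the four-term exact sequence $1\to V\to G_{-1}\xrightarrow{t}G_0\xrightarrow{\pi}N\to 1$. Fix a normalized set-theoretic section $s:N\to G_0$ of $\pi$; then $g(m,n)=s(m)s(n)s(mn)^{-1}$ lies in $\operatorname{im}t$, so I may choose normalized lifts $\lambda(m,n)\in G_{-1}$ with $t\lambda(m,n)=g(m,n)$. Associativity of $s(m)s(n)s(p)$ in $G_0$ forces $g(m,n)\,g(mn,p)=\big(s(m)\,g(n,p)\,s(m)^{-1}\big)\,g(m,np)$ inside $\operatorname{im}t$. Using $t$-equivariance to convert conjugation by $s(m)$ into the action $s(m)\rhd(-)$, the two lifts of the two sides of this identity differ by
\[
\tau(m,n,p)=\lambda(m,n)\,\lambda(mn,p)\,\big((s(m)\rhd\lambda(n,p))\,\lambda(m,np)\big)^{-1},
\]
which maps to $e$ under $t$ and hence lies in $V$; centrality of $V$ makes it insensitive to bracketing. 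A fourfold comparison on $s(m)s(n)s(p)s(q)$ then yields $\delta\tau=0$, so $\tau\in Z^3(N,V)$.

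Then I would show $[\tau]$ is canonical and set up the bijection. Replacing the lift $\lambda$ by $\lambda\cdot v$ with $v\in C^2(N,V)$, or moving the section $s$ within its coset, changes $\tau$ only by a coboundary $\delta v$, so $[\tau]\in H^3(N,V)$ is independent of all choices and is manifestly preserved by any invertible 2-group homomorphism. For injectivity I would take two crossed modules inducing the same data $(N,V,[\tau])$, align their sections and lifts, and assemble the graded maps $\phi_0,\phi_{-1}$ of a quasi-isomorphism, checking compatibility with $t$, $\rhd$ and the products. For surjectivity I would realize a given normalized $\tau\in Z^3(N,V)$ by an explicit crossed extension — building $G_0$ from a presentation of $N$ and twisting the product on a $G_{-1}$ assembled from $V$ by $\tau$ — and verify the equivariance and Peiffer axioms, which hold exactly because $\delta\tau=0$.

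The cocycle identity and the coboundary bookkeeping are routine but lengthy; the genuinely delicate step is surjectivity, where one must produce an \emph{honest} crossed module (not merely a weak/coherent 2-group) for an arbitrary class and check every axiom. Conceptually the cleanest route is homotopical: the nerve of the one-object 2-groupoid determined by $G$ is a connected homotopy $2$-type with $\pi_1=N$, $\pi_2=V$, and $\tau$ is precisely its first Postnikov $k$-invariant, so this is the crossed-module form of the Eilenberg--Mac Lane classification of $2$-types. I would use that picture as the conceptual backbone while carrying out the algebraic construction above to keep the argument self-contained.
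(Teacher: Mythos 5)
The paper itself gives no proof of this statement: it is quoted as a classical result (Gerstenhaber/Mac\,Lane, via \cite{Zhu:2019,Kapustin:2013uxa}), so there is no in-paper argument to compare against. Your proposal is the standard crossed-extension proof, and most of it is sound: the Peiffer identity does force $V=\operatorname{ker}t$ to be central in $G_{-1}$, equivariance makes $\operatorname{im}t$ normal so that $N$ is a group acting on $V$, and your extraction of $\tau$ from a set-theoretic section $s$ and lifts $\lambda$, together with the coboundary bookkeeping under changes of $s$ and $\lambda$, is exactly how the class is defined; your centrality remark correctly disposes of the ordering ambiguity (if $t(a)=t(b)$ then $ab^{-1}=b^{-1}a\in V$). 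Surjectivity via a free presentation $F\twoheadrightarrow N$ also works for the reason you hint at: a free group has cohomological dimension one, so the inflation of $\tau$ to $F$ is a coboundary $\delta\beta$, and $\beta$ is precisely the data needed to assemble a crossed module $1\to V\to G_{-1}\to F\to N\to 1$ and to verify equivariance and Peiffer from $\delta(\inf\tau)=0$.

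The one genuine gap is injectivity as you state it. Two crossed modules with the same $(N,V,[\tau])$ need in general admit \emph{no} quasi-isomorphism between them in either direction, so "aligning their sections and lifts and assembling $\phi_0,\phi_{-1}$" cannot work: there is typically no group homomorphism $G_0\to G_0'$ compatible with the two extensions at all. The equivalence in this classification is the relation \emph{generated} by quasi-isomorphisms, i.e.\ zig-zags $G\leftarrow K\rightarrow G'$, and the standard repair is to build the comparison object $K$ over a free presentation $F\twoheadrightarrow N$: lift $F\to G_0$ and $F\to G_0'$ using freeness, pull back the degree-$(-1)$ parts, and check that $K$ maps quasi-isomorphically onto both. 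Relatedly, be aware that the paper's literal definition of quasi-isomorphism ("an invertible 2-group homomorphism") is strict isomorphism, under which the theorem would be false --- e.g.\ $1\to 1$ and $\mathbb{Z}\xrightarrow{\operatorname{id}}\mathbb{Z}$ have the same invariants $(1,1,0)$ but are not isomorphic; your argument is correct for the intended zig-zag-generated notion of elementary equivalence, provided you make that zig-zag step explicit.
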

\noindent $\tau$ is also called the {\bf Postnikov class} in the literature \cite{Kapustin:2013uxa,Ang2018,chen:2022}. Note $V=\operatorname{ker}t$ must be Abelian due to the Peiffer identities. The tuple $(N,V,\tau)$ is known as {\it Ho{\'a}ng data} \cite{Ang2018,Nguyen2014CROSSEDMA}, which was proven by Ho{\'a}ng to classify "$\operatorname{Gr}$-categories". 
 
\smallskip

Similar to the case of 2-groups \cite{Baez:2003fs,Wagemann+2021}, a 2-algebra homomorphism $f=(f_{-1},f_0):\cG\rightarrow \cG'$ is a graded pair of algebra homomorphisms that respect the underlying bimodule structure; namely,
\begin{enumerate}
    \item $f_0:\cG_0\rightarrow \cG_0'$ and $f_{_1}:\cG_{-1}\rightarrow \cG_{-1}'$ are algebra homomorphisms,
    \item $f_{-1}(x\cdot y) = (f_0x)\cdot'(f_{-1}y)$ and $f_{-1}(y\cdot x) = (f_{-1}y)\cdot' (f_0x)$ for each $x\in\cG_0,y\in\cG_{-1}$, and
    \item $f_0t=t'f_{-1}$.
\end{enumerate}
We say that two 2-algebras are elementary equivalent, or quasi-isomorphic, if there exists an invertible 2-algebra homomorphism between them.
\begin{theorem}\label{2algclass}
{\bf (Gerstenhaber, attr. Wagemann \cite{Wagemann+2021}).} Associative 2-algebras are classified up to quasi-isomorphism by a degree-3 {\bf Hochschild cohomology} class $\cT\in HH^3(\cN,V)$, where $\cN = \operatorname{coker}t$ and $V=\operatorname{ker}t$. 
\end{theorem}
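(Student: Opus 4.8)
The plan is to mirror the classical Eilenberg--Mac Lane classification of crossed modules (and its Baues--Minian associative-algebra analogue), replacing group cohomology by Hochschild cohomology. The observation driving everything is that an associative $2$-algebra $t\colon\cG_{-1}\to\cG_0$ is exactly the data of a length-two \emph{crossed extension}, i.e.\ a $4$-term exact sequence
\begin{equation}
0\longrightarrow V\longrightarrow \cG_{-1}\xrightarrow{\ t\ }\cG_0\xrightarrow{\ \pi\ }\cN\longrightarrow 0,\nonumber
\end{equation}
so the first step is to extract this sequence together with its coefficient data. The equivariance \eqref{algpeif1} shows $\operatorname{im}t$ is a two-sided ideal of $\cG_0$, so $\cN=\operatorname{coker}t$ is an associative algebra and $\pi$ an algebra map, while $V=\ker t$ is a sub-bimodule of $\cG_{-1}$. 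The Peiffer identity \eqref{algpeif2} then does the essential work: setting $t(y)=0$ gives $yy'=0=y'y$ for all $y\in V,\ y'\in\cG_{-1}$, so $V$ is square-zero and $\operatorname{im}t$ acts trivially on $V$ (since $t(z)\cdot y=zy=0$); hence the $\cG_0$-bimodule structure \eqref{diag:action0}--\eqref{diag:bimo} descends to an $\cN$-bimodule structure on $V$. This produces the invariants $(\cN,V)$, which are manifestly carried to isomorphic data by any invertible $2$-algebra homomorphism, so they are genuine quasi-isomorphism invariants ($\cN=H_0$, $V=H_{-1}$ of the $2$-term complex).

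Next I would build the cohomology class. Choose $k$-linear sections $s\colon\cN\to\cG_0$ of $\pi$ and $\sigma\colon\operatorname{im}t\to\cG_{-1}$ of $t$. Since $\pi\bigl(s(a)s(b)-s(ab)\bigr)=0$, the element $\omega(a,b)=\sigma\bigl(s(a)s(b)-s(ab)\bigr)$ is a well-defined $\cG_{-1}$-valued $2$-cochain, and I set
\begin{equation}
\cT(a,b,c)=s(a)\cdot\omega(b,c)-\omega(ab,c)+\omega(a,bc)-\omega(a,b)\cdot s(c).\nonumber
\end{equation}
Applying $t$ and using two-sided equivariance \eqref{algpeif1} with associativity in $\cG_0$, all terms cancel, so $\cT(a,b,c)\in\ker t=V$; thus $\cT$ is a normalized Hochschild $3$-cochain $\cN^{\otimes 3}\to V$. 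The crux is to verify $\delta\cT=0$, i.e.\ that $(\delta\cT)(a,b,c,d)=a\cdot\cT(b,c,d)-\cT(ab,c,d)+\cT(a,bc,d)-\cT(a,b,cd)+\cT(a,b,c)\cdot d$ vanishes: after substituting the definition of $\omega$, the cancellation is forced by the \emph{associativity of $\mu_{-1}$} on $\cG_{-1}$ together with the bimodule axioms. This bookkeeping --- translating the single on-the-nose associativity constraint into the degree-$4$ Hochschild identity --- is where essentially all the work concentrates and is the main obstacle.

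It then remains to show that $\cG\mapsto[\cT]\in HH^3(\cN,V)$ descends to a bijection on quasi-isomorphism classes. For well-definedness I would check that replacing the lift $\sigma$ changes $\omega$ by a $V$-valued $2$-cochain $\beta$, shifting $\cT$ by the coboundary $\delta\beta$, with a change of $s$ absorbed similarly; and that an invertible $2$-algebra homomorphism fixing $(\cN,V)$ transports one choice of sections to another, hence preserves $[\cT]$. Injectivity is the converse statement: given equal invariants and cohomologous cocycles, a primitive of the coboundary assembles a section-compatible isomorphism built degree by degree. For surjectivity I would realize an arbitrary normalized cocycle by the standard crossed-extension construction --- taking $\cG_0$ a linear/free lift of $\cN$, $\cG_{-1}=(\text{relations})\oplus V$ with $\mu_{-1}$ deformed by $\cT$, and $t$ the inclusion of relations --- and verify directly that the axioms of Definition \ref{assoc2alg} hold \emph{iff} $\delta\cT=0$. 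This identifies the quasi-isomorphism classes with $\operatorname{Ext}^3_{\cN^e}(\cN,V)\cong HH^3(\cN,V)$, exactly as the crossed (length-two) extensions in the group case of the Mac Lane classification account for the degree shift from $2$ to $3$, completing the proof.
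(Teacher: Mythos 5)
The first thing to note is that the paper contains no proof of this statement: Theorem \ref{2algclass} is quoted as an imported classification result, attributed to Gerstenhaber, with the reader referred to \cite{Wagemann+2021} both for the notion of Hochschild cohomology and for the proof. So there is no internal argument to compare yours against; what you have written is a reconstruction of the crossed-extension proof from the cited literature. Much of your outline is sound: equivariance \eqref{algpeif1} does make $\operatorname{im}t$ a two-sided ideal, so $\cN=\operatorname{coker}t$ is an algebra; the Peiffer identity \eqref{algpeif2} does make $V=\ker t$ square-zero with $\operatorname{im}t$ acting trivially, so $V$ is an $\cN$-bimodule; and your cochain $\cT(a,b,c)=s(a)\cdot\omega(b,c)-\omega(ab,c)+\omega(a,bc)-\omega(a,b)\cdot s(c)$ does land in $V$, by associativity of $\cG_0$, equivariance, and $t\circ\sigma=\operatorname{id}$.

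There are, however, concrete gaps in exactly the steps you single out as the crux. (i) The vanishing of $\delta\cT$ is \emph{not} forced by associativity of $\mu_{-1}$ together with the bimodule axioms. After the telescoping cancellations and the bimodule identities \eqref{actioncond}, the surviving term is
\begin{equation}
t(\omega(a,b))\cdot\omega(c,d)\;-\;\omega(a,b)\cdot t(\omega(c,d)),\nonumber
\end{equation}
and this vanishes only because the Peiffer identity \eqref{algpeif2} identifies \emph{both} summands with the product $\omega(a,b)\,\omega(c,d)$ in $\cG_{-1}$; associativity of $\mu_{-1}$ never enters, since no triple product in $\cG_{-1}$ ever appears in the computation. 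The needed axiom is available, so this is repairable, but the justification you give would leave you stuck at the displayed term. (ii) Your surjectivity construction, "$\cG_{-1}=(\text{relations})\oplus V$ with $\mu_{-1}$ deformed by $\cT$", cannot be carried out as stated in the strict setting: the Peiffer identity forces $yy'=t(y)\cdot y'$, so $\mu_{-1}$ is completely rigid once $t$ and the bimodule action are fixed, and there is nothing to deform. The standard realization instead pulls $\cT$ back along a free presentation $F\twoheadrightarrow\cN$, where it becomes exact (tensor algebras have Hochschild cohomological dimension $\leq 1$), and uses a trivializing $2$-cochain to twist the $F$-bimodule structure on $\ker(F\rightarrow\cN)\oplus V$; only then do the axioms of Definition \ref{assoc2alg} hold. (iii) Your injectivity step claims that cohomologous cocycles yield an \emph{isomorphism}; this is false. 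For example, $0\rightarrow\cN$ and $\ker(F\rightarrow\cN)\xrightarrow{\iota}F$ have identical invariants $(\cN,V=0)$ and trivial classes but different dimensions, hence are not isomorphic; they are only connected by a quasi-isomorphism $(f_{-1},f_0)=(0,p)$ inducing isomorphisms on $\ker$ and $\operatorname{coker}$. The classification holds up to the equivalence relation \emph{generated} by such maps (zig-zags), a point admittedly blurred by the paper's own definition of elementary equivalence, but essential for the theorem to be true at all.
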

\noindent See \cite{Wagemann+2021} for a definition of Hochschild cohomology of an algebra. The Peiffer identity implies that $V\subset Z(\cG_{-1})$ is in the {\it nucleus} of $\cG_{-1}$; it is in fact a square-free ideal \cite{Wagemann+2021}. Note the nucleus is \textit{not} the same as the centre, which have commutative (but non-trivial) multiplication.

\subsubsection{Example: group 2-algebras from 2-groups}\label{sec:2gpalgb}
Let $k$ denote a field of characteristic zero. One example of 2-algebras comes from using a 2-group $G$. One way to construct a 2-algebra from $G$ is to take the group algebra functor $kG$ \cite{Pfeiffer2007}, and extend the $t$-map linearly such that $t: kG_{-1}\rightarrow kG_0$ is an algebra map. To form an associative 2-algebra, we need a $kG_0$-bimodule structure on $kG_{-1}$, which can be induced from the group action
\begin{equation}
    x\cdot y = x\rhd y,\qquad y\cdot x = x^{-1}\rhd y,\nonumber
\end{equation}
where $x\in  G_0,y\in G_{-1}$. 

However, the subtlety here is that $G_0$ acts by group automorphism, not algebra automorphism,
\begin{equation}
    x\cdot (yy') = (x\cdot y)(x\cdot y') \neq (x\cdot y)y',\label{grpactiontrouble} 
\end{equation}
which contradicts the condition listed in {\it Remark \ref{bimodulecondition}} if $t\neq 0$ or the group action $\rhd\neq 0$ were non-trivial. 

We are going to present three alternative ways to circumvent this issue, which will rely on different properties of the  2-group $G$. 
We will therefore obtain three different resulting 2-algebras $kG$ associated to $G$ (depending on the properties of $G$).   

\medskip

\paragraph{\textit{a) Wagemann's quotienting construction.} } The first  way to  guarantee the functoriality of the map $G\mapsto kG$ follows \cite{Wagemann+2021}. Essentially, this amounts to quotienting out certain terms such that we can recover the algebra automorphism. 

We begin with the group algebra $k(G_{-1}\times G_0)$ equipped with the {\bf balanced} algebra structure
\begin{equation}
    (y,x)\cdot (y',x') = (yy'+y\cdot x' + x\cdot y',xx'),\qquad x,x'\in kG_0,~y,y'\in kG_{-1},\nonumber
\end{equation}
and let $\pi_0$ denote the projection onto $kG_0$. Embed $kG_{-1}\hookrightarrow k(G_{-1}\rtimes G_0)$ by $y\mapsto (y,0)$ as a subalgebra, and define the ideal
\begin{equation}
    X = kG_{-1}\cdot D + D\cdot kG_{-1},\qquad \textrm{with }\, D \subset \{(y,x)\in k(G_{-1}\rtimes G_0)\mid t(y)=-x\}
    \nonumber
\end{equation}
where $D$ is the maximal subalgebra within the kernel subspace.

Explicitly, elements in the subspace $D\subset k(G_{-1}\rtimes kG_0)$ satisfy $t(y)=-x$, and hence consist of pairs $(y,x) = (y,-t(y))$ parameterized by $y\in kG_{-1}$. The ideal $X$ is then a direct sum of elements of the forms
\begin{gather}
    (y',0)\cdot(y,x) = (y'y-t(y^{-1})\rhd y',0) = (y'y-y^{-1}y'y,0),\nonumber\\
    (y,x)\cdot (y',0) = (yy'-t(y)\rhd y',0) = (yy'-yy'y^{-1},0)\nonumber
\end{gather}
using the 2-group properties, where $y'\in kG_{-1},x\in kG_0$ are arbitrary. 

We now form the algebra quotient $k(G_{-1}\rtimes G_0)/X$, and denote by $\tilde\pi_0:k(G_{-1}\rtimes G_0)/X\rightarrow kG_0$ the induced projection map. Explicitly, this quotient consist of pairs $(y,y')\in k(G_{-1})\otimes k(G_{-1})$ such that
\begin{equation}
    (y'-y^{-1}y')y = 0,\qquad y(y' - y'y^{-1}) =0,\qquad y,y'\in kG_{-1}.\label{cat1algcondition}
\end{equation}
Put $\tilde t = t|_{\widetilde{kG_{-1}}}$ as the restriction of the $t$-map onto the kernel $\widetilde{kG_{-1}}\equiv \operatorname{ker}\tilde{\pi}_0$ --- in other words, $\widetilde{kG_{-1}}$ consist of the $G_{-1}$-invariant elements of $kG_{-1}$ under both left- and right-multiplication. The induced $t$-map then induces the following 2-vector space 
\begin{equation}
    kG := \widetilde{kG_{-1}} \xrightarrow{\tilde t} kG_0\label{2grpalg}
\end{equation}
which has an associative 2-algebra structure. For a proof that this construction indeed yields an associative 2-algebra, see {Theorem 3.8.3} of \cite{Wagemann+2021}.

\medskip 

\paragraph{\textit{b) 2-group with adjoint action.}}
The condition  \eqref{cat1algcondition} is a very stringent requirement, especially in the $t=\id$ case. As an alternative, we can follow a different route when the 2-group action is given by the adjoint action. In this case, to  guarantee the functoriality of the map $G\mapsto kG$, one needs to   find bilinear maps $\cdot:kG_{-1}\rtimes kG_0\oplus kG_0\otimes kG_{-1}\rightarrow kG_{-1}$ satisfying
\begin{equation}
    x\rhd y = x\cdot y\cdot x^{-1},\qquad x\in G_0,~y\in G_{-1}.\label{2grpalg2}
\end{equation}
Provided $\cdot$ extends linearly to left- and right-regular algebra representations, one may use it directly to define the $kG_0$-bimodule structure of $kG_{-1}$. The equivariance condition and Peiffer identity 
\begin{gather}
    x t(y\cdot x^{-1}) = t(x\cdot y)x^{-1} = t(x\cdot y\cdot x^{-1})= t(x\rhd y) = xt(y)x^{-1},\nonumber\\
    yy'y^{-1} = t(y)\rhd y' = t(y)\cdot y'\cdot t(y^{-1}).\nonumber
\end{gather}
are directly verified. This construction is closely related to the invertibles functor $\cG\mapsto G=\cG^\times$, which is left-adjoint to the group algebra functor \cite{Wagemann+2021}.

\medskip 

If $G=(G_{-1}\xrightarrow{t=\id}G_0)$ is a \textit{trivial} 2-group, the $t$-map is the identity and there is no group kernel nor cokernel. Hence it is in the trivial class under elementary equivalence \cite{Wag:2006,Baez:2004in} of 2-groups --- this is why such 2-groups are called trivial. By definition, the group action $\rhd$ must be the conjugation action. Hence the second approach to associate a 2-algebra to $G$ seems well suited to this case.

The identity $t$-map extends $\operatorname{id}:kG_{-1}\rightarrow kG_0$ directly to the group algebras, whence we obtain an associative 2-algebra $kG$. Following  \eqref{2grpalg2}, the $kG_0$-bimodule structure of $kG_{-1}$ is given by group multiplication, and {\it not} conjugation. Similarly to the 2-group, there is no algebra kernel nor cokernel, whence $kG$ is in the trivial class under elementary equivalence of 2-algebras \cite{Wagemann+2021}. As such, we shall also call such 2-algebras $A\xrightarrow{\operatorname{id}}A$ trivial. 

This construction defines a functor $G_0\mapsto G$ assigning the group $G_0$ (resp. the algebra $A_0$) to the associated trivial 2-group $G$ (resp. the trivial associative 2-algebra $A$), which commutes with the group algebra functor \cite{Wagemann+2021}. We will show in Section \ref{2groupbialg} that  this functor, which defines an embedding of the category of associative algebras into that of associative 2-algebras, can be extended to the bialgebra/2-bialgebra context.

\medskip

% \end{tcolorbox}

\paragraph{\textit{c) The skeletal case.}}
The skeletal case is peculiar enough, that it deserves its own consideration. Recall for a \textbf{skeletal} 2-group,  the $t$-map $t=1:G_{-1}\rightarrow G_0$ is the constant map onto to identity $1\in G_0$. By the (2-group) Peiffer identity, $G_{-1}$ must be Abelian, and the group kernel $\operatorname{ker}t = G_{-1}$ is the entire group $G_{-1}$. If we take the group algebras and simply extend $t$ linearly, we obtain the augmentation $t:kG_{-1}\rightarrow k\cdot 1=k$, whose algebra kernel $\operatorname{ker}t = I$ is the augmentation ideal, which is in general distinct from the group algebra $kG_{-1}$. 

This case is in drastic contrast with the trivial $t=\id$ case. The Peiffer identity dictates that $kG_{-1}$ is {commutative}, but \textit{not} nuclear, and hence $kG$ would be non-skeletal. In other words, merely taking the group algebras of the graded components of a 2-group does not preserve skeletality, and hence does not preserve its elementary equivalence class. Since the linearized $t$-map is non-trivial $t\neq 0$, there are then two ways to see $kG$ as a 2-algebra:
\begin{enumerate}
    \item By following the quotient prescription  \eqref{2grpalg}, we must kill the entire degree-(-1) structure, yielding $\tilde t: 0\rightarrow kG_0$. Indeed, the only totally invariant subgroups of an Abelian group under group multiplication by itself is trivial, and $D \cong k(\operatorname{ker}t) = kG_{-1}$ is the entire group algebra. This is the {\it only} group 2-algebra one can construct associated to a skeletal 2-group $G$ following the prescription by Wagemann \cite{Wagemann+2021}.
    \item Alternatively, we may seek to solve  \eqref{2grpalg2} with the given group action $\rhd$. However, solutions do not exist unless $x\rhd\in\operatorname{Aut}(G_{-1})$ is \textit{inner} for all $x\in G_0$, which is certainly not always the case. This gives us a broader class of skeletal 2-groups with which we may form the 2-group algebra by linearly extending $t$.
\end{enumerate}

Yet another alternative construction (which we emphasize is specialized to the skeletal case) is to linearly extend the group action $\rhd$ but {\it not} the $t$-map. Instead, we take the skeletal 2-algebra $t = 0: kG_{-1}\rightarrow kG_0$, whose trivial $t$-map allows us to circumvent the issues posed by  \eqref{grpactiontrouble}. This construction $G\mapsto kG$ not only holds for any skeletal 2-group $G$, but for certain $G_{-1},G_0$ it also preserves the classification! This is due to a deep result in \cite{Siegel:1999}, which computes an explicit isomorphism between the Hochshild cohomology and group cohomology in certain cases. We will adopt this approach in an accompanying paper \cite{Chen2z:2023}, which applies the general framework of Hopf 2-algebras that we shall develop here to construct excitations in the 4d Kitaev model.

\medskip

With either  \eqref{2grpalg},   \eqref{2grpalg2}, or the skeletal case proposal understood, we shall neglect the tilde on the $t$-map and simply denote $kG= kG_{-1}\xrightarrow{t}kG_0$ in the following. Given a 2-group $G$, while we can construct in effect different 2-algebras $kG$ with the above two proposals, at the end of the day, we will obtain an associative 2-algebra, which will be the starting point of the 2-bialgebra definition.

\subsection{Associative 2-bialgebras}\label{strict2bialg}

\subsubsection{Definition}

\paragraph{Coassociative 2-coalgebra.}
We seek a dual notion of an associative 2-algebra {\bf Definition \ref{assoc2alg}}. The idea will be to reverse the arrows in the diagrams and swap the degree. Indeed, our duality structure will typically swap degrees. This is a consequence of how "dualization" is defined in homological algebra \cite{Bai_2013,chen:2022,Chen:2012gz,Chen:2013}.

\smallskip

Let us consider a pair of vector spaces, $\cG_{0}, \cG_{-1}$ with the map $t:\cG_{-1}\rightarrow \cG_0$.
In direct analogy with the %2-algebra 
2-cocycle $\delta=\delta_{-1}+\delta_0$ that were introduced to define a classical Lie 2-bialgebra \cite{Bai_2013,chen:2022}, we introduce the {\it coproduct} maps
\begin{equation}
    \Delta_{-1}:\cG_{-1}\rightarrow \cG_{-1}\otimes \cG_{-1},\qquad \Delta_0:\cG_0\rightarrow (\cG_{-1}\otimes \cG_0)\oplus(\cG_0\otimes\cG_{-1}). \label{grpdcoprod} 
\end{equation}
Note that $\Delta_0$ comes in two graded components  $\Delta_0= \Delta_0^l + \Delta_0^r$ with 
\begin{equation}
    \Delta_0^l:\cG_0\rightarrow\cG_{-1}\otimes \cG_0,\qquad \Delta_0^r:\cG_0\rightarrow\cG_0\otimes\cG_{-1}.\nonumber  
\end{equation}
In the following, we shall use extensively the conventional Sweedler notation 
\begin{equation}
    \Delta(y,x) \equiv \Delta_{-1}(y) + \Delta_0(x) = y_{(1)}\otimes y_{(2)} + (x_{(1)}^l\otimes x_{(2)}^l+x_{(1)}^r\otimes x_{(2)}^r)\label{sweed}
\end{equation}
where $x^l_{(2)},x^r_{(1)}\in \cG_{0}$ and $y_{(1)}, y_{(2)}, x_{(1)}^l,x^r_{(2)}\in \cG_{-1}$. 

\smallskip

Now let
\begin{equation}
    \Delta_0':\cG_0\rightarrow \cG_0\otimes \cG_0  \nonumber
\end{equation}
denote a coproduct in degree-0, such that $\Delta_{-1},\Delta_0'$ are subject to the following {\bf coassociativity} conditions
\begin{eqnarray}
    (\id \otimes \Delta_{-1})\circ \Delta_{-1} = (\Delta_{-1}\otimes \id)\circ\Delta_{-1},&\qquad& (\id \otimes \Delta_0')\circ \Delta_0'= (\Delta_0'\otimes \id)\circ\Delta_0',\label{cohgrpd00}
\end{eqnarray}
which can be obtained by reversing the arrows in \eqref{diag:associativity} for the products $\mu_{-1}$ and $\mu_{0}$. Hence  $(\cG_{-1},\Delta_{-1})$ and $(\cG_0,\Delta_0')$ are {\bf coassociative coalgebras} if  \eqref{cohgrpd00} is satisfied \cite{Majid:1996kd}. In the following, we shall use the Sweedler notation
\begin{equation}
    \Delta_0'(x) = \bar x_{(1)}\otimes \bar x_{(2)}\in\cG_0\otimes \cG_0.\label{deg0sweed}
\end{equation}

\begin{definition}
Let $(\cG_{-1},\Delta_{-1})$ and $(\cG_0,\Delta_0')$ denote a pair of coassociative coalgebras with the coactions $\Delta_0^l$ and $\Delta_0^r$. We say that $\cG_0$ forms a {\bf $\cG_{-1}$-cobimodule} if the following {\bf cobimodularity conditions}
\begin{eqnarray}
    (\Delta_{-1}\otimes \id)\circ \Delta_0^l &=& (\id \otimes\Delta_0^l)\circ\Delta_0^l,\nonumber\\
    (\id \otimes\Delta_{-1})\circ\Delta_0^r&=&(\Delta_0^r\otimes \id)\circ\Delta_0^r,\nonumber\\
    (\id\otimes \Delta_0^r)\circ\Delta_0^l &=& (\Delta_0^l\otimes\id)\circ\Delta_0^r\label{cohgrpd10}
\end{eqnarray}
are satisfied. In terms of commutative diagrams, we have
\begin{eqnarray}\nonumber
\begin{tikzcd}
&  \cG_{0}  
\arrow[dl, "\Delta_0^l"]  \arrow[dr, "\Delta_0^l"] \\
 \cG_{-1} \otimes \cG_0 \arrow[dr, "\id\otimes \Delta_0^l"] & &\cG_{-1} \otimes \cG_{0} \arrow[dl, "\Delta_{-1}\otimes \id"]\\
&\cG_{-1}\otimes \cG_{-1}\otimes \cG_{0} 
\end{tikzcd}, \nonumber\\
\begin{tikzcd}
&  \cG_{0}  
\arrow[dl, "\Delta_0^r"]  \arrow[dr, "\Delta_{0}^r"] \\
 \cG_{0} \otimes \cG_{-1} \arrow[dr, "\Delta_0^r\otimes\id"] & &\cG_{0} \otimes \cG_{-1} \arrow[dl, "\id\otimes \Delta_{-1}"]\\
&\cG_{0}\otimes \cG_{-1}\otimes \cG_{-1} 
\end{tikzcd}\nonumber \\
\begin{tikzcd}
&  \cG_{0}  
\arrow[dl, "\Delta_0^r"]  \arrow[dr, "\Delta^l_{0}"] \\
 \cG_{0} \otimes \cG_{-1} \arrow[dr, "\Delta_0^l\otimes\id"'] & &\cG_{-1} \otimes \cG_{0} \arrow[dl, "\id\otimes\Delta_0^r "]\\
&\cG_{-1}\otimes \cG_0\otimes \cG_{-1} 
\end{tikzcd}
\end{eqnarray}
\end{definition}

We emphasize again that, upon dualizing the commutative diagrams \eqref{diag:associativity}-\eqref{diag:unit}, we must also swap the grading: the action $\cdot_l: \cG_{0}\otimes \cG_{-1}\rightarrow\cG_{-1}$ is dualized to the coaction component $\Delta_0^l:\cG_{0} \rightarrow \cG_{-1}\otimes \cG_{0}$ of the coproduct.

\begin{definition}\label{coassoc2coalg}
A {\bf coassociative 2-coalgebra} $(\cG,\Delta)$ is a coalgebra homomorphism $t:\cG_{-1}\rightarrow\cG_0$ such that
\begin{enumerate}
    \item $\cG_0$ is a $\cG_{-1}$-cobimodule,
    \item $t$ is \textbf{coequivariant} 
    %and satisfy the {\bf coPeiffer identity},
    \begin{equation}
        D_t^+\circ\Delta_{-1}= \Delta_0\circ t, \label{cohgrpd+} 
    \end{equation}
    where we have introduced a convenient tensor notation for the induced $t$-map
    \begin{equation}
        D_t^\pm := t\otimes 1 \pm 1\otimes t \nonumber
    \end{equation}
in terms of the graded sum.  This condition is encoded by the following commutative diagram.
\begin{eqnarray}\label{diag:coequiv}
\begin{tikzcd}
&  \cG_{-1}  
\arrow[dl, "t"]  \arrow[dr, "\Delta_{-1}"] \\
 \cG_0 \arrow[dr, "\Delta_0"] & &\cG_{-1}\otimes \cG_{-1} \arrow[dl, "D_t^+"]\\
&
\cG_{0}\otimes \cG_{-1}\oplus \cG_{-1}\otimes \cG_0
\end{tikzcd}
\end{eqnarray}   
    \item the {\bf coPeiffer identity}
    \begin{equation}
        (t\otimes\id)\circ\Delta_0^l = \Delta_0' = (\id\otimes t)\circ\Delta_0^r, \label{cohgrpd-} 
    \end{equation}
     which in particular means that we must necessarily have
    \begin{equation}
        D_t^-\Delta_0 =(t\otimes\id)\circ\Delta_0^l - (\id\otimes t)\circ\Delta_0^r  =0.\nonumber
    \end{equation}    
It is encoded in the following commutative diagram    
    \begin{eqnarray}\label{diag:coPid1}
\begin{tikzcd}
&  \cG_{0}\otimes \cG_{0}  
  \\
\cG_0\otimes \cG_{-1}\arrow[ur, "\id\otimes t"]   & &\cG_{-1}\otimes \cG_{0} \arrow[ul, "t\otimes \id"]\\
&
\cG_{0}\arrow[ul, "\Delta_0^r"] \arrow[uu,"\Delta_0'"]  \arrow[ur, "\Delta_0^l"]
\end{tikzcd}.
\end{eqnarray}

\end{enumerate}
We call $(\cG,\Delta)$ {\bf counital} if there is a {counit} map $\epsilon=(\epsilon_{-1},\epsilon_0):\cG\rightarrow k $ such that 
\begin{eqnarray}
    \operatorname{id}=(\operatorname{id}\otimes \epsilon_{-1})\circ\Delta_{-1},&\quad& \operatorname{id} = (\epsilon_{-1}\otimes \operatorname{id})\circ\Delta_{-1},\nonumber\\
    \operatorname{id}=(\epsilon_{-1}\otimes \id)\circ\Delta_0^l, &\quad& \id =(\id\otimes \epsilon_{-1})\circ\Delta_0^r.\label{counit}
\end{eqnarray}
Moreover, $\epsilon$ should respect the $t$-map such that $\epsilon_{-1} = \epsilon_0\circ t $. 
\end{definition}

The counit conditions can be seen as reversing the arrows of the diagrams \eqref{diag:unit} (and also a {swap of the grading} since we are dualizing).
\begin{eqnarray}
\centering
& 
\begin{tikzcd}
&  \cG_{-1}\otimes \cG_{-1}  \arrow[dl, "\epsilon_{-1}\otimes \id"'] \arrow[dr, "\id\otimes \epsilon_{-1}"]
 \\
k\otimes \cG_{-1}   \arrow[r, "\cong"] &  \cG_{-1}  \arrow[u, "\Delta_{-1}"']  & \arrow[l, "\cong"]   \cG_{-1}\otimes k
\end{tikzcd} & \label{diag:counit}\\
\begin{tikzcd}
&  \cG_{0}\otimes \cG_{-1}  \arrow[dl, "\id\otimes\epsilon_{-1}"']
  \\
\cG_0 \otimes k    \arrow[r, "\cong"] &  \cG_0 \arrow[u, "\Delta_0^r"'] 
\end{tikzcd} & 
\begin{tikzcd}
&   \cG_{-1} \otimes \cG_{0}  \arrow[dl, "\epsilon_{-1}\otimes\id"']  
  \\
 k\otimes \cG_0     \arrow[r, "\cong"] &  \cG_0  \arrow[u, "\Delta_0^l"'] 
\end{tikzcd} 
 &
\begin{tikzcd}
&   \cG_{-1}  \arrow[d, "t"]\arrow[dl, "\epsilon_{-1}"'] 
  \\
  k    &  \arrow[l, "\epsilon_0"]  \cG_{0}  
\end{tikzcd}\nonumber
\end{eqnarray}

Note again that%$ that in both {\bf Definition \ref{assoc2alg}} and 
 in {\bf Definition \ref{coassoc2coalg}}, the coequivariance and coPeiffer identity are treated as constraints between two coalgebras and the coalgebra homomorphism $t$ between them. With these constraints, we can deduce 
\begin{equation}
    \operatorname{id}=(\epsilon_0\otimes \id)\circ\Delta_0' = (\id\otimes\epsilon_0)\circ\Delta_0'\label{deg0counit}
\end{equation}
from \eqref{deg0sweed} and \eqref{counit}. In the skeletal $t=0$ case, the coproducts $\Delta_{-1},\Delta_0,\Delta_0'$ and the counits $\epsilon_{-1},\epsilon_0$ are independent, and this condition is separate from \eqref{counit}.

\begin{remark}\label{comp-const}
Similar to the 2-algebra case, if $t\neq0$ were not trivial, then we could have the following conditions
 \begin{eqnarray}
     (\id\otimes \Delta_0')\circ \Delta_0^l = (\Delta_0^l\otimes \id)\circ \Delta_0',\nonumber\\
     (\Delta_0'\otimes\id)\circ \Delta_0^r= (\id\otimes \Delta_0^r)\circ \Delta_0',\nonumber\\
     (\id\otimes\Delta_0^l)\circ \Delta_0' = (\Delta_0^r\otimes \id)\circ \Delta_0'\label{cobimodu}
 \end{eqnarray}
between the coproducts $\Delta_0$ and $\Delta_0'$. By making use of the Sweedler notation  \eqref{sweed}, \eqref{deg0sweed}, the coequivariance and the coPeiffer identities translate to
\begin{equation}\label{condcop}
\begin{cases}ty_{(1)} = (ty)_{(1)}^r \\ ty_{(2)} = (ty)_{(2)}^l\end{cases},\qquad \begin{cases} \bar x_{(1)} = t x_{(1)}^l = x_{(1)}^r \\ \bar x_{(2)} = x_{(2)}^l = tx_{(2)}^r\end{cases}.
\end{equation}
When combined, they give $\widebar{ty}_{(1)} = ty_{(1)}, \widebar{ty}_{(2)} = ty_{(2)}$ which will become important later. In the skeletal case, the constraints involving $t$ drop.
\end{remark}

\paragraph{2-bialgebras.}
Using the Sweedler notations  \eqref{sweed}, \eqref{deg0sweed}, we state the condition that the coproduct map $\Delta$ given in  \eqref{grpdcoprod} preserves the algebra/bimodule structure:
\begin{eqnarray}
    \Delta_{-1}(x\cdot y)=  \bar x_{(1)}\cdot y_{(1)}\otimes  \bar x_{(2)}\cdot y_{(2)},&\qquad& \Delta_{-1}(y\cdot x)= y_{(1)}\cdot \bar x_{(1)}\otimes y_{(2)}\cdot \bar x_{(2)},\nonumber\\
    \Delta_0^l(xx')=x_{(1)}^lx_{(1)}'^l\otimes x_{(2)}^lx_{(2)}'^l,&\qquad& \Delta_0^r(xx') = x_{(1)}^rx_{(1)}'^r\otimes x_{(2)}^rx_{(2)}'^r.\label{2algcoprod}
\end{eqnarray}
We call these conditions the {\bf 2-bialgebra axioms}. 

\smallskip

The bialgebra axioms in each degree,
\begin{equation}
    \Delta_{-1}(yy') = y_{(1)}y_{(1)}'\otimes y_{(2)}y_{(2)}',\qquad \Delta_0'(xx') = \bar x_{(1)}\bar x_{(1)}'\otimes \bar x_{(2)}\bar x_{(2)}', \nonumber
\end{equation}
follow directly from  \eqref{2algcoprod} and the coequivariance and coPeiffer identities  \eqref{cohgrpd+}, \eqref{cohgrpd-}; see {\it Remark \ref{comp-const}}.

\begin{definition}\label{assoc2bialg}
The tuple $(\cG,\cdot,\Delta)$ is an {\bf associative 2-bialgebra} iff $(\cG,\cdot)$ is an associative 2-algebra and $(\cG,\Delta)$ is a coassociative 2-coalgebra such that the two structures are mutually compatible; namely $\Delta$ satisfies \eqref{2algcoprod}.

We call $(\cG,\cdot,\eta,\Delta,\epsilon)$ {\bf unital} if $(\cG,\cdot,\eta)$ and $(\cG,\Delta,\epsilon)$ are respectively unital and counital.
\end{definition}

The  2-bialgebra axioms  are equivalently described in terms of the following commutative diagrams, where we use the swap $\sigma: \cG_0\otimes \cG_{-1}\rightarrow \cG_{-1}\otimes \cG_0$ and $\sigma': \cG_{-1}\otimes \cG_{0}\rightarrow \cG_{0}\otimes \cG_{-1}$, 
\begin{eqnarray}
% \centering
\hspace*{-1cm}
\begin{tikzcd}
&  \cG_{0}\otimes \cG_{-1}  \arrow[dl, "\cdot_l"'] \arrow[dr, "\Delta_0'\otimes \Delta_{-1}"]
 \\
\cG_{-1}   \arrow[ddr, "\Delta_{-1}"] &  & \cG_0\otimes \cG_0\otimes \cG_{-1}\otimes \cG_{-1}  \arrow[d, "\id \otimes \sigma\otimes \id"]  \\
&& \cG_0\otimes \cG_{-1} \otimes \cG_0\otimes \cG_{-1} \arrow[dl, "\cdot_l\otimes \cdot_l"]\\
&\cG_{-1}\otimes \cG_{-1}
\end{tikzcd} \quad
\begin{tikzcd}
&  \cG_{-1} \otimes \cG_0 \arrow[dl, "\cdot_r"'] \arrow[dr, "\Delta_{-1}\otimes \Delta_{0}'"]
 \\
\cG_{-1}   \arrow[ddr, "\Delta_{-1}"] &  & \cG_{-1}\otimes \cG_{-1} \otimes \cG_0\otimes \cG_0   \arrow[d, "\id \otimes \sigma'\otimes \id"]  \\
&& \cG_{-1}\otimes \cG_{0} \otimes \cG_{-1}\otimes \cG_{0} \arrow[dl, "\cdot_r\otimes \cdot_r"]\\
&\cG_{-1}\otimes \cG_{-1}
\end{tikzcd}
\nonumber\\
\hspace*{-1cm}
\begin{tikzcd}
&  \cG_{0}\otimes \cG_{0}  \arrow[dl, "\mu_0"'] \arrow[dr, "\Delta_0^l\otimes \Delta_{0}^l"]
 \\
\cG_{0}   \arrow[ddr, "\Delta_0^l"] &  & \cG_{-1}\otimes \cG_0\otimes \cG_{-1}\otimes \cG_0  \arrow[d, "\id \otimes \sigma\otimes \id"']  \\
&& \cG_{-1}\otimes \cG_{-1} \otimes \cG_0\otimes \cG_0 \arrow[dl, "\mu_{-1}\otimes \mu_0"]\\
&\cG_{-1}\otimes \cG_0
\end{tikzcd}\quad
\begin{tikzcd}
&  \cG_{0}\otimes \cG_{0}  \arrow[dl, "\mu_0"'] \arrow[dr, "\Delta_0^r\otimes \Delta_{0}^r"]
 \\
\cG_{0}   \arrow[ddr, "\Delta_0^r"] &  & \cG_{0}\otimes \cG_{-1}\otimes \cG_0\otimes \cG_{-1}  \arrow[d, "\id \otimes \sigma'\otimes \id"]  \\
&&  \cG_0\otimes \cG_0 \otimes \cG_{-1}\otimes \cG_{-1}  \arrow[dl, "\mu_0\otimes \mu_{-1}"]\\
&\cG_{0}\otimes \cG_{-1}
\end{tikzcd}\nonumber
% \label{diag:compatibility-cop-prod} 
\end{eqnarray}

\subsubsection{Example: function 2-bialgebras on 2-groups}\label{2groupbialg}
Let $k$ denote a field of characteristic zero, and let $G$ denote a (finite) 2-group. 

\paragraph{Function 2-coalgebra on a (finite) 2-group.}
Consider the 2-group 2-algebra $kG$ constructed in section \ref{sec:2gpalgb}. We denote its linear dual by $k[G] = \operatorname{Hom}_k(kG,k)$, which consist of $k$-linear functions on $kG$. This space inherits the {\it dual} grading of the 2-group 2-algebra $kG$, in the sense that \textit{$k[G_{-1}]$ is in degree-0 while $k[G_0]$ is in degree-(-1)}. Each element $F  \in k[G]$, then admits a decomposition in terms of this grading, $F = \xi \oplus \zeta$, with $\xi\in k[G_0]$, $\zeta\in k[G_{-1}]$, for which the $t$-map is given by the pullback, 
\begin{equation}
    k[G] = k[G_0]\xrightarrow{t^*} k[G_{-1}],\qquad (t^*\xi)(y) = \xi(ty),\nonumber
\end{equation}
where $y\in kG_{-1}$. Note this $t$-map is the one on $kG$ defined in  \eqref{2grpalg}, which may differ from that of the underlying 2-group $G$.

\smallskip

The reversal of degree for the function 2-coalgebra allows us to define a grading-odd duality pairing given by the function evaluation
\begin{equation}
    \langle (y,x),(\xi,\zeta)\rangle = \operatorname{ev}((y,x)\otimes (\xi,\zeta)) = \xi(x) + \zeta(y),\nonumber
\end{equation}
with respect to which the coproduct $\Delta$ on $k[G]$ defined in  \eqref{function2alg} is dual to the 2-algebra structure on $kG$. Conversely, if there is a well-defined coassociative coproduct $\Delta$ on $kG$ (ie. satisfying  \eqref{cohgrpd+}-\eqref{cohgrpd-}), then the evaluation pairing dualizes it to a well-defined associative 2-algebra structure on $k[G]$. This is a guiding principle with which we shall construct the 2-quantum double in the following sections.

\smallskip

The natural coproduct $\Delta^*= \Delta_{-1}^*+ \Delta_0^*$ on $k[G]$, is induced by the 2-algebra structure on $kG$,
\begin{gather}
    \Delta_{-1}^*(\xi) =  \xi_{(1)} \otimes \xi_{(2)} \iff \xi(xx') =\xi_{(1)}(x)\xi_{(2)}(x'),\nonumber\\
    (\Delta_0^*)^l(\zeta) = \zeta^l_{(1)}\otimes \zeta^l_{(2)} \iff \zeta(x\cdot y) = \zeta^l_{(1)}(x)\zeta_{(2)}^l(y),\nonumber\\
    (\Delta_0^*)^r\zeta = \zeta^r_{(1)}\otimes\zeta^r_{(2)} \iff \zeta(y\cdot x) = \zeta^r_{(1)}(y)\zeta_{(2)}^r(x),\label{function2alg}
\end{gather}
where $x,x'\in kG_0$ and $y\in kG_{-1}$. The  superscript $^*$ encodes the fact we are considering the dual of $kG$ as we will emphasize soon. 
The induced coproduct $\Delta^*_0{}'$ in degree-0 is induced from the product $\mu_{-1}$,
\begin{equation}
    \Delta^*_0{}'(\zeta) = \bar\zeta_{(1)}\otimes\bar\zeta_{(2)}\iff \zeta(yy') = \bar\zeta_{(1)}(y)\bar\zeta_{(2)}(y'),\nonumber
\end{equation}
and is also related to the actions, thanks to the Peiffer identity, so that $\Delta^*_0{}' = \frac{1}{2}D_{t^*}^+\Delta_0$. In a similar way, all the 2-co-algebra axioms are satisfied as we are dualizing all the properties of the 2-algebra $kG$, hence we are reversing the arrows.

For example, the co-bimodularity is obtained from the bimodularity in $kG$. 
\begin{gather}
    (\Delta^*_0)^l(t^*\xi)(x,y) = \xi(t(x\cdot y)) = \xi(xt(y)) =  \xi_{(1)}(x) \xi_{(2)}(t(y)) = ((1\otimes t^*)\Delta_{-1}^*(\xi))(x,y),\nonumber\\
    (\Delta^*_0)^r(t^*\xi)(y,x) = \xi(t(y\cdot x))= \xi(t(y)x) = \xi_{(1)}(ty) \xi_{(2)}(x) = ((t^*\otimes 1)\Delta_{-1}^*(\xi))(y,x),\nonumber
\end{gather}
The co-Peiffer identity is obtained from  the Peiffer identity
\begin{equation}
    \zeta(yy') = \zeta(ty\cdot y') = \zeta(y\cdot ty')\implies (t^*\otimes 1)(\Delta_0^*)^l(\zeta) = (1\otimes t^*)(\Delta_0^*)^r(\zeta),\nonumber
\end{equation}
and so on and so forth.

\smallskip

Moreover, there is also  a counit $\epsilon:k[G]\rightarrow k$ given by evaluation at the identity, $\epsilon_{-1}(\xi) = \xi(1),\epsilon_0(\zeta) = \zeta(1)$. Since the $t$-map respects the identity, we have $\epsilon_{-1}(t^*\xi) = t^*\xi(1) = \xi(1) = \epsilon_0(\xi)$, as desired for a counit.

\medskip

In order to determine the 2-algebra structure of $k[G]$, we can use the coalgebra structure of $kG$. For this we are going to consider the specific case $t=\id$ as otherwise we cannot make the example as explicit as one would wish, due to the constraints \eqref{condcop}.

\paragraph{The trivial 2-algebra.} Recall a {\it trivial} 2-group $G$ has $G_{-1}=G_0$ and the $t$-map is the identity $t=\operatorname{id}$. This the simplest case one can consider, as its corresponding 2-group 2-algebra $kG$ is obtained by just extending every structure $k$-linearly. We shall now consider endowing a coproduct on $kG$ from that of $kG_0$.

Let $\Delta$ denote a coproduct on $kG$.  \eqref{cohgrpd+}, \eqref{cohgrpd-} dictates that all components $\Delta_{-1},\Delta_0,\Delta_0'$ of the coproduct are identical, and hence $(kG_0,\Delta_0')$ is a coalgebra iff $(kG,\Delta)$ is also one. Indeed, if $\Delta_0'(x) = x_{(1)}\otimes x_{(2)}$ then we must have
\begin{equation}
    \Delta_{-1}(y) = y_{(1)}\otimes y_{(2)},\qquad \Delta_0(x) = y_{(1)}\otimes x_{(2)} + x_{(1)}\otimes y_{(2)},\nonumber
\end{equation}
where $y_{(1)},y_{(2)}$ are identical to $x_{(1)},x_{(2)}\in kG_0$ as elements of the group algebra $kG_0$, but with degree-(-1); in other words, we have $t(y_{(1)}) = x_{(1)}$ under $t=\operatorname{id}$. Explicitly, we can choose the group like coproduct 
\begin{equation}
  \Delta_0'(x) = x\otimes x\quad   \Delta_{-1}(y) = y\otimes y,\quad \Delta_0(x) = y\otimes x + x\otimes y, \quad \textrm{with } ty=y=x\nonumber
\end{equation}

We recover then a 2-algebra structure on the graded function space $k[G]$, for which
\begin{gather}
    \zeta\zeta'(y) = \zeta(y_{(1)})\zeta'(y_{(1)})=\zeta(y)\zeta'(y),\qquad \xi\xi'(x) = \xi(x_{(1)})\xi'(x_{(2)})=\xi(x)\xi'(x),\nonumber\\
    (\zeta\cdot \xi)(x) = \zeta(y_{(1)})\xi(x_{(2)})=\zeta(y)\xi(x),\qquad (\xi\cdot \zeta)(x) = \xi(x_{(1)})\zeta(y_{(2)})=\xi(x)\zeta(y).\nonumber
\end{gather}
We have recovered the pointwise products on $k[G_0]$ and  $k[G_{-1}]$,  the bimodule structure in terms of these pointwise product (since $ty=y=x$).  
Moreover, one can check that we have the 2-bialgebra axioms  \eqref{2algcoprod},
\begin{eqnarray}
    (\zeta\zeta')(x\cdot y) &=& (\zeta\zeta')_{(1)}^l(x)(\zeta\zeta')_{(2)}^l(y) =  \zeta_{(1)}^l(x_{(1)})\zeta'^l_{(1)}(x_{(2)})\zeta_{(2)}^l(y_{(1)})\zeta'^l_{(2)}(y_{(2)})\nonumber\\
    &=&  (\zeta^l_{(1)}\zeta'^l_{(1)})(x)(\zeta^l_{(2)}\zeta'^l_{(2)})(y),\nonumber\\
    (\zeta\zeta')(y\cdot x) &=& (\zeta\zeta')_{(1)}^r(y)(\zeta\zeta')_{(2)}^r(x) =  \zeta_{(1)}^r(y_{(1)})\zeta'^r_{(1)}(y_{(2)})\zeta_{(2)}^r(x_{(1)})\zeta'^r_{(2)}(x_{(2)})\nonumber\\
    &=&  (\zeta^r_{(1)}\zeta'^r_{(1)})(x)(\zeta^r_{(2)}\zeta'^r_{(2)})(y) \nonumber\\
    (\zeta\cdot \xi)(xx')&=& (\zeta\cdot \xi)_{(1)}(x)(\zeta\cdot \xi)_{(2)}(x') = \zeta_{(1)}(y_{(1)})\xi_{(1)}(x_{(2)})\zeta_{(2)}(x'_{(1)}) \xi_{(2)}(x'_{(2)}) \nonumber\\
    &=& (\zeta_{(1)}\xi_{(1)})(x)(\zeta_{(2)}\xi_{(2)})(x');\nonumber
\end{eqnarray}
note $\bar \zeta = \zeta$ as the $t$-map is the identity. Therefore $(k[G],\Delta^*)$ and $(kG,\Delta)$ are (unital) associative 2-bialgebras that are mutually dual in a certain sense, with the former being commutative and the latter being cocommutative, when using the group-like coproducts, just as in the 1-group case \cite{Majid:1994nw}. We shall rigorously establish this general 2-bialgebra duality in {\bf Proposition \ref{2bialg}}.

\begin{remark}
In general, given an ordinary 1-bialgebra $G$, we can form its associated trivial 2-bialgebra $\cG = G\xrightarrow{\operatorname{id}}G$ in analogy with the above. Moreover, it is clear that any bialgebra homomorphism $f:G\rightarrow G'$ extends to a 2-bialgebra homomorphism $f\oplus f: \cG\rightarrow \cG'$, whose graded components are just copies of $f$. We therefore have an embedding
\begin{equation}
    \text{Bialg}_\text{ass}\hookrightarrow \text{2Bialg}_\text{ass}\label{2algembed}
\end{equation}
of the category of associative bialgebras into the category of associative 2-bialgebras. This is a "quantum" version of an analogous result for Lie 2-bialgebras stated in {\it Remark 3.11} of \cite{Bai:2007}.
\end{remark}

\section{Strict 2-quantum doubles and the 2-R-matrix}\label{str2qd}
In this section, we construct our main example of a strict 2-bialgebra given by the strict 2-quantum doubles which can be seen a categorification of the standard quantum double \cite{Majid:1996kd}, and the quantization of a classical 2-double \cite{Chen:2012gz, chen:2022} of Lie 2-algebras. 

The goal for studying (2-)quantum doubles is that, for the ordinary 1-bialgebra $H$, the {\it skew-pairing} involved in the construction of the quantum double $D(H,H)$ of Majid \cite{Majid:1994nw} provides a characterization of R-matrices on $H$. Moreover, this construction reveals that any R-matrix on $H$ can be derived in this way from $D(H,H)$. We wish to directly categorify Majid's construction, and derive a universal characterization of \textit{2-R-matrices} from our construction of a 2-quantum double.

\medskip

Our strategy will be as follows. Firstly, we consider a pair of dual associative 2-bialgebras. They are dual in the sense that the coalgebra sector is given by the algebra sector of its dual counterpart. We then define a notion of a canonical \textit{coadjoint action} of a 2-bialgebra on its dual. By requesting that the mutually-dual 2-bialgebras act on each other by such coadjoint actions, we are then able to form the 2-quantum double as a 2-bialgebra. We will then also prove a key factorization theorem for 2-quantum doubles.

\subsection{Matched pair of 2-(bi)algebras}

\paragraph{Dually paired 2-bialgebras}
Let $(\cG,\cdot,\Delta)$ denote a (finite dimensional) 2-bialgebra, and let $\cG^*$ denote its linear dual, defined with respect to the following duality evaluation/pairing map\footnote{We shall drop the subscripts on the pairing forms $\langle\cdot,\cdot\rangle$ when no confusion arises.}
\begin{equation}
    \langle (g,f),(y,x)\rangle = \langle f,y\rangle_{-1} + \langle g,x\rangle_0\label{pairing}
\end{equation}
for each $x\in \cG_0,y\in \cG_{-1},f\in \cG_{-1}^*,g\in \cG_0^*$. Note that the grading is flipped by dualizing the $t$-map: $\langle t^*\cdot,\cdot\rangle = \langle \cdot,t\cdot\rangle$, whence $t^*:\cG_0^*\rightarrow \cG_{-1}^*$ and $\cG^*$ is skeletal whenever $\cG$ is. In the following, we shall denote this pairing also by an evaluation $\operatorname{ev}$. 

So far, $\cG^*$ merely forms a 2-vector space. By leveraging the duality  \eqref{pairing}, we can induce algebraic structures on $\cG^*$ according to the {\it co}algebraic structures  \eqref{grpdcoprod}, \eqref{deg0sweed} on $\cG$ as follows:
\begin{eqnarray}
    \langle f\otimes f',\Delta_{-1}(y)\rangle= \langle ff',y\rangle,&\qquad&\langle g\otimes g', \Delta_0'(x)\rangle =\langle gg',x\rangle, \nonumber\\
    \langle f\otimes g,\Delta_0^l(x)\rangle = \langle f\cdot^* g,x\rangle,&\qquad& \langle g\otimes f,\Delta_0^r(x)\rangle = \langle g\cdot^* f,x\rangle,\nonumber\\
\langle \Delta^*_{0}{}' f,y \otimes y' \rangle= \langle f,yy'\rangle,&\qquad&
\langle \Delta^*_{-1}g, x\otimes x'\rangle =\langle g,xx'\rangle, \nonumber\\
    \langle \Delta^*_0{}^l f,x\otimes y \rangle = \langle f,x\cdot_l y \rangle,
    &\qquad& \langle \Delta^*_0{}^r f ,y\otimes x \rangle = \langle f,y\cdot_r x\rangle.\nonumber
\end{eqnarray}
The conditions  \eqref{cohgrpd+}, \eqref{cohgrpd-}, \eqref{cohgrpd00}, \eqref{cohgrpd10},  then ensure that $(\cG^*,\cdot^*)$ forms an associative 2-algebra. More is true, in fact, which we now prove in the following.

\begin{proposition}\label{2bialg}
Let $\cG,\cG^*$ be dually paired as in  \eqref{pairing}, then $(\cG,\cdot,\Delta)$ is an (unital) associative 2-bialgebra iff $(\cG^*,\cdot^*,\Delta^*)$ is an (unital) associative 2-bialgebra.
\end{proposition}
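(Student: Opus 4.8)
The plan is to exploit the symmetry of the duality pairing \eqref{pairing} to show that the entire setup is self-dual, so that proving one implication suffices and the converse follows by formally interchanging the roles of $\cG$ and $\cG^*$. First I would note that the pairing is nondegenerate (since $\cG$ is finite-dimensional) and grading-reversing, so that $(\cG^*)^* \cong \cG$ canonically. Under this identification the product $\cdot^*$ on $\cG^*$ is \emph{defined} to be dual to the coproduct $\Delta$ on $\cG$, and conversely the coproduct $\Delta^*$ on $\cG^*$ is dual to the product $\cdot$ on $\cG$; this is exactly the content of the defining relations listed just before the statement. Hence the assertion ``$\cG$ is a 2-bialgebra $\iff$ $\cG^*$ is a 2-bialgebra'' is a single biconditional that is manifestly symmetric under $(\cG,\cdot,\Delta)\leftrightarrow(\cG^*,\Delta^*,\cdot^*)$, and it is enough to prove the forward direction.

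For the forward direction I would proceed by translating each defining axiom of a 2-bialgebra on $\cG$ into the corresponding dual axiom on $\cG^*$, working axiom-by-axiom through \textbf{Definition \ref{assoc2bialg}}. The coassociativity/cobimodularity and coequivariance/coPeiffer conditions \eqref{cohgrpd00}--\eqref{cohgrpd-} on $\cG$ dualize — by reversing arrows and swapping the grading, exactly as emphasized after \textbf{Definition \ref{coassoc2coalg}} — into the associativity \eqref{diag:associativity}, bimodularity \eqref{actioncond}, equivariance \eqref{algpeif1}, and Peiffer \eqref{algpeif2} conditions that make $(\cG^*,\cdot^*)$ an associative 2-algebra; this last step is already asserted in the paragraph preceding the proposition, so I may invoke it. Dually, the associativity and bimodularity of $(\cG,\cdot)$ dualize into the coassociativity and cobimodularity making $(\cG^*,\Delta^*)$ a coassociative 2-coalgebra. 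The crucial remaining piece is the compatibility: I must show the 2-bialgebra axioms \eqref{2algcoprod} for $\cG$ are equivalent, under the pairing, to the 2-bialgebra axioms for $\cG^*$. The mechanism is that the four diagrams in \eqref{diag:compatibility-cop-prod} each express a compatibility between a product and a coproduct on $\cG$; pairing such a diagram against $\cG^*\otimes\cG^*$ and using that $\cdot^*$ pairs with $\Delta$ and $\Delta^*$ pairs with $\cdot$ turns it into precisely the compatibility diagram for $\cG^*$ with the roles of product and coproduct exchanged. Concretely, the identity $\langle f\otimes f',\Delta_{-1}(yy')\rangle = \langle ff', yy'\rangle$ can be expanded in two ways — via the bialgebra axiom on $\Delta_{-1}$ and via the product $\cdot^*$ — and matching them yields the compatibility of $\cdot^*$ with $\Delta^*$.

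The main obstacle I expect is \emph{bookkeeping of the grading and the left/right splittings} rather than any conceptual difficulty. Because the pairing flips degree, the component $\Delta_0^l:\cG_0\to\cG_{-1}\otimes\cG_0$ dualizes to a product component landing in a specific slot, and one must track carefully which of $\cdot_l^*,\cdot_r^*$ each coproduct component induces, and that the swap maps $\sigma,\sigma'$ in \eqref{diag:compatibility-cop-prod} are transported correctly. A second subtlety is the compatibility of $\Delta_0'$ with $\Delta_0$ recorded in \textbf{Remark \ref{comp-const}}: the relations \eqref{condcop} must be respected on the dual side, and in the non-skeletal $t\neq0$ case one should verify that $\Delta_0^{*\prime}$ (the induced degree-0 coproduct on $\cG^*$, dual to $\mu_{-1}$) is consistent with $\Delta^*_{0}$ via the dualized coPeiffer identity $\Delta^{*\prime}_0 = \tfrac12 D_{t^*}^+\Delta^*_0$ already noted in the function-2-bialgebra example. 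I would handle these by fixing Sweedler conventions at the outset and checking the four compatibility diagrams one at a time, treating the unital/counital clause last: there the unit $\eta$ dualizes to the counit $\epsilon^*$ and vice versa via $\langle \eta(1),-\rangle = \epsilon^*(-)$, so the unital biconditional follows from the same self-duality argument.
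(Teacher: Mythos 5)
Your proposal is correct and follows essentially the same route as the paper's proof: dualize the (co)algebra axioms through the pairing \eqref{pairing}, isolate the 2-bialgebra compatibility \eqref{2algcoprod} as the only non-trivial step and verify it by expanding pairings in two ways with Sweedler notation, and treat units/counits as dual to each other (the paper's ``converse direction is identical'' is precisely your symmetry reduction). The only minor quibble is that your sample identity $\langle f\otimes f',\Delta_{-1}(yy')\rangle=\langle ff',yy'\rangle$ is one of the derived degree-wise bialgebra axioms rather than one of the four mixed axioms in \eqref{2algcoprod}, but your stated plan of checking the four diagrams \eqref{diag:compatibility-cop-prod} one at a time is exactly what the paper's computation does.
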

\begin{proof}
This is a straightforward computation using the pairing  \eqref{pairing}. In particular, the equivariance and Peiffer identity of $t^*$, as well as the fact that $\cG_0^*$ forms a $\cG_{-1}^*$-bimodule, follow directly from dualizing  \eqref{cohgrpd+}, \eqref{cohgrpd-}, \eqref{cohgrpd00}, \eqref{cohgrpd10}.

What is non-trivial is  \eqref{2algcoprod}. Define $\Delta^*_0$ by dualizing the bimodule structure $\cdot$ of $\cG$, then we have
\begin{eqnarray}
    \langle (\Delta^*_0)^l(ff'),x\otimes y\rangle = \langle f\otimes f',\Delta_{-1}(x\cdot y)\rangle,&\quad&  \langle (\Delta^*_0)^r(ff'),y\otimes x\rangle = \langle f\otimes f',\Delta_{-1}(y\cdot x)\rangle,\nonumber\\
    \langle (\Delta_{-1}^*)(f\cdot^*g),x\otimes x'\rangle = \langle f\otimes g,\Delta_0^l(xx')\rangle,&\quad&\langle (\Delta_{-1}^*)(g\cdot^*f),x\otimes x'\rangle = \langle f\otimes g,\Delta_0^r(xx')\rangle.\nonumber
\end{eqnarray}
We now compute using analogues of  \eqref{2algcoprod} for $\Delta^*$, that
\begin{eqnarray}
    \langle f_{(1)}^lf_{(1)}'^l \otimes f_{(2)}^lf_{(2)}'^l,x\otimes y\rangle &=& \langle (f_{(1)}^l\otimes f_{(1)}'^l) \otimes (f_{(2)}^l\otimes f_{(2)}'^l), (\bar x_{(1)}\otimes \bar x_{(2)}) \otimes (y_{(1)}\otimes y_{(2)})\rangle \nonumber\\
    &=&\langle(\Delta_0^*)^l(f)\otimes (\Delta_0^*)^l(f'),(\bar x_{(1)}\otimes y_{(1)})\otimes (\bar x_{(2)}\otimes y_{(2)})\rangle \nonumber\\
    &=& \langle f\otimes f',(\bar x_{(1)}\cdot y_{(1)})\otimes (\bar x_{(2)}\cdot y_{(2)})\rangle,\nonumber\\
    \langle f_{(1)}^rf_{(1)}'^r \otimes f_{(2)}^rf_{(2)}'^r,y\otimes x\rangle &=& \langle f\otimes f', (y_{(1)}\cdot \bar x_{(1)})\otimes (y_{(2)}\cdot \bar x_{(2)})\rangle,\nonumber
\end{eqnarray}
and similarly
\begin{eqnarray}
   \langle \bar f_{(1)}\cdot^* g_{(1)}\otimes \bar f_{(2)}\cdot g_{(2)}, x\otimes x'\rangle &=&  \langle (\bar f_{(1)} \otimes g_{(1)}) \otimes (\bar f_{(2)}\otimes g_{(2)}),(x_{(1)}^l \otimes x_{(2)}^l)\otimes (x_{(1)}'^l \otimes x_{(2)}'^l)\rangle \nonumber\\
   &=&\langle \Delta_0'^*(f)\otimes \Delta_{-1}^*(g),(x_{(1)}^l\otimes x_{(1)}'^l)\otimes (x_{(2)}^l\otimes x_{(2)}'^l)\rangle,\nonumber\\
   &=&\langle f \otimes g, x_{(1)}^lx_{(1)}'^l \otimes x_{(2)}^lx_{(2)}'^l\rangle ,\nonumber\\
   \langle g_{(1)}\cdot^*\bar f_{(1)}\otimes g_{(2)}\cdot^*\bar f_{(2)}, x\otimes x'\rangle &=& \langle g \otimes f, x_{(1)}^rx_{(1)}'^r \otimes x_{(2)}^rx_{(2)}'^r\rangle,\nonumber 
\end{eqnarray}
hence $\Delta$ also satisfies  \eqref{2algcoprod}. This proves that $(\cG^*,\cdot^*,\Delta^*)$ is an associative 2-bialgebra iff $(\cG,\cdot,\Delta)$ also is. 

Now consider the units and counits. Given
\begin{eqnarray}
    \langle g,\eta x\rangle = \langle (\eta^*\otimes\operatorname{id})\circ\Delta_{-1}^*(g),x\rangle,&\qquad&\langle g,x\eta\rangle = \langle (\operatorname{id}\otimes\eta^*)\circ\Delta_{-1}^*(g),x\rangle,\nonumber\\
    \langle f,\eta\cdot y\rangle = \langle (\eta^*\otimes\operatorname{id})\circ(\Delta_0^*)^l(f),y\rangle,&\qquad& \langle f,y\cdot \eta\rangle = \langle (\operatorname{id}\otimes \eta^*)\circ(\Delta_0^*)^r(f),y\rangle,\nonumber
\end{eqnarray}
we see that $\eta$ is a unit for $(\cG,\cdot)$ (ie. these quantities all vanish) iff $\eta^*$ is a counit for $(\cG^*,\Delta^*)$. Similarly, $\epsilon$ is a counit for $(\cG,\Delta)$ iff $\epsilon^*$ is a unit for $(\cG^*,\cdot^*)$. 
\end{proof}

\paragraph{Coadjoint action.}

\begin{definition} %\cite{Bai_2013,chen:2022}
 The canonical \textbf{coadjoint action} of $\cG$ on $\cG^*$ is specified in terms of three components, $\bar\rhd=((\rhd_0,\rhd_{-1}),\Upsilon)$ given by 
\begin{eqnarray}
    \rhd_0:\cG_0\rightarrow \operatorname{End}\cG_0^*,&\qquad& \langle g,xx'\rangle = -\langle x\rhd_0 g,x'\rangle,\nonumber\\
    \rhd_{-1}:\cG_0\rightarrow \operatorname{End}\cG_{-1}^*,&\qquad& \langle f,x\cdot y\rangle = -\langle x\rhd_{-1} f,y\rangle,\nonumber\\
    \Upsilon:\cG_{-1}\rightarrow \operatorname{Hom}(\cG_{-1}^*,\cG_0^*),&\qquad& \langle f,y\cdot x\rangle = -\langle \Upsilon_yf, x\rangle.\label{coadj0}
\end{eqnarray}
\end{definition}
\noindent As we will see when discussing 2-representations in Section \ref{2representations}, the coadjoint action can also be interpreted as a weak 2-representation.

Analogously, we have the coadjoint back-action $\bar\lhd=((\lhd_0,\lhd_{-1}),\tilde\Upsilon)$ of $\cG^*$ on $\cG$, which we write from the right\footnote{This means that we have, for instance, $\langle g\cdot^* f,x\rangle = -\langle g,x\lhd_{-1}f\rangle$ and $\langle f\cdot^* g,x\rangle = -\langle f,x\tilde\Upsilon_{g}\rangle$.}. The "bar" notation is used to distinguish $\bar\rhd$ from the group action $\rhd$ in the case where $\cG=kG$ is defined through a 2-group $G$.

\paragraph{Matched pair.}
We now allow a given pair $(\cG, \cG^*)$ of strict 2-bialgebras to act upon each other by coadjoint actions $\bar\rhd$ and $\bar\lhd$. 
In analogy with \cite{Majid:1996kd}, we impose the following monstrous set of twelve compatibility conditions
\begin{eqnarray}
    x\rhd_{-1}(ff') &=& (tx^l_{(1)} \rhd_0 f_{(1)}^l) \cdot^*((x^l_{(2)}\lhd_{-1} f_{(2)}^l) \rhd_{-1}f')+(x^r_{(1)} \rhd_0 f_{(1)}^l)\cdot^*(\Upsilon_{x^r_{(2)}\lhd_0f_{(2)}^l} f')\nonumber\\
    &\qquad&+~ (\Upsilon_{x^l_{(1)}}f_{(1)}^r) \cdot^*(\Upsilon_{x^l_{(2)}\tilde\Upsilon_{f_{(2)}^r}}f') + (x_{(1)}^r\rhd_{-1}f_{(1)}^r)\cdot^*(\Upsilon_{x_{(2)}^r\lhd_0 (t^*f_{(2)}^r)}f'),\nonumber\\
    \Upsilon_y(ff') &=& ((ty_{(1)})\rhd_0 f^l_{(1)})\cdot^*( \Upsilon_{y_{(2)}\lhd_0 f^l_{(2)}} f')+ (\Upsilon_{y_{(1)}}f^r_{(1)})\cdot^*(\Upsilon_{y_{(2)}\lhd_0 (t^*f^r_{(2)})}f'),\nonumber\\
    x\rhd_0(f\cdot^* g) &=& (tx_{(1)}^l\rhd_0 f_{(1)}^l) \cdot^*((x^l_{(2)}\lhd_{-1} f^l_{(2)})\rhd_0 g)+(x_{(1)}^r\rhd_0f_{(1)}^l)\cdot^*(t(x_{(2)}^r\lhd_0 f_{(2)}^l)\rhd_0g)\nonumber\\
    &\qquad& +~ (\Upsilon_{x_{(1)}^l}f_{(1)}^r)\cdot^*(t(x_{(2)}^l\tilde\Upsilon_{f_{(2)}^r})\rhd_0g)+(x_{(1)}^r\rhd_{-1}f_{(1)}^r)\cdot^*(t(x_{(2)}^r\lhd_0(t^*f_{(2)}^r)) \rhd_0 g) ,\nonumber\\
    ty\rhd_0 (f\cdot^* g) &=& (ty_{(1)}\rhd_0f_{(1)}^l) \cdot^* (t(y_{(2)}\lhd_0f_{(2)}^l)\rhd_0 g) + (\Upsilon_{y_{(1)}}f_{(1)}^r)\cdot^*(t(y_{(2)}\lhd_0(t^*f_{(2)}^r))\rhd_0 g),\nonumber\\
    x\rhd_0(g\cdot^* f) &=& (tx_{(1)}^l\rhd_0 g_{(1)}) \cdot^*(\Upsilon_{x^l_{(2)}\tilde\Upsilon_{g_{(2)}}} f)+(x_{(1)}^r\rhd_0g_{(1)})\cdot^*(\Upsilon_{x_{(2)}^r\lhd_0(t^*g_{(2)})}f)\nonumber\\
    ty\rhd_0 (g\cdot^* f) &=& (ty_{(1)}\rhd_0g_{(1)}) \cdot^* (\Upsilon_{y_{(2)}\lhd_0(t^*g_{(2)})}f),\nonumber\\
    (xx') \lhd_{-1} f &=& (x \tilde\Upsilon_{tx'^l_{(1)} \rhd_0 f^l_{(1)}})\cdot(x'^l_{(2)}\lhd_{-1} f^l_{(2)}) + (x\tilde\Upsilon_{x'^r_{(1)}\rhd_0 f^l_{(1)}})\cdot (x'^r_{(2)} \lhd_0f^l_{(2)}) \nonumber\\
    &\qquad& +~ (x \tilde\Upsilon_{\Upsilon_{x'^l_{(1)}}f^r_{(1)}})\cdot (x'^l_{(2)} \tilde\Upsilon_{f^r_{(2)}}) + (x \lhd_{-1}(x'^r_{(1)}\rhd_{-1} f^r_{(1)}))\cdot(x'^r_{(2)} \lhd_0 (t^*f^r_{(2)})),\nonumber\\
    (xx')\tilde\Upsilon_g&=& (x \tilde\Upsilon_{tx'^l_{(1)} \rhd_0 g_{(1)}}) \cdot (x'^l_{(2)}\tilde\Upsilon_{g_{(2)}})+ (x\tilde\Upsilon_{x'^r_{(1)} \rhd_0g_{(1)}})\cdot(x'^r_{(2)} \lhd_0(t^*g_{(2)})),\nonumber\\
    (y\cdot x)\lhd_0 f &=& (y \lhd_0 t^*(tx_{(1)}^l\rhd_0 f_{(1)}^l))\cdot(x^l_{(2)} \lhd_{-1}f^l_{(2)}) + (y \lhd_0t^*(x_{(1)}^r\rhd_0 f_{(1)}^l))\cdot (x_{(2)}^r \lhd_0f_{(2)}^l) \nonumber\\
    &\qquad&+~(y \lhd_0t^*(\Upsilon_{x_{(1)}^l}f_{(1)}^r))\cdot(x_{(2)}^l \tilde\Upsilon_{f_{(2)}^r}) + (y \lhd_0(x_{(1)}^r \rhd_{-1}f_{(1)}^r))\cdot(x_{(2)}^r \lhd_0 (t^*f_{(2)}^r)),\nonumber\\
    (y\cdot x) \lhd_0 t^*g &=& (y \lhd_0t^*(tx_{(1)}^l \rhd_0g_{(1)}))\cdot(x_{(2)}^l\tilde\Upsilon_{g_{(2)}}) + (y\lhd_0 t^*(x_{(1)}^r \rhd_0g_{(1)}))\cdot(x_{(2)}^r \lhd_0t*^g_{(2)}),\nonumber\\
    (x\cdot y)\lhd_0 f &=& (x\tilde\Upsilon_{ty_{(1)}\rhd_0 f^l_{(1)}}) \cdot (y_{(2)} \lhd_0 f^l_{(2)}) + (x \tilde\Upsilon_{\Upsilon_{y_{(1)}}f_{(1)}^r})\cdot (y_{(2)} \lhd_0(t^*f_{(2)}^r)),\nonumber\\
    (x\cdot y) \lhd_0 t^*g &=& (x \tilde\Upsilon_{ty_{(1)}\rhd_0 g_{(1)}}) \cdot(y_{(2)}\lhd_0 t^*g_{(2)}),\nonumber
\end{eqnarray}
where we have made use of the Sweedler notation  \eqref{sweed}. 

\medskip 

We define a shorthand notation where $z=(y,x)\in\cG,h=(g,f)\in\cG^*$, such that the following
\begin{eqnarray}
    z\bar\rhd (h\cdot^* h') &=& (z_{(1)}\bar\rhd h_{(1)})\cdot^* ((z_{(2)}\bar\lhd h_{(2)}) \bar\rhd h') ,\label{comprel1}\\
    (z\cdot z')\bar\lhd h &=& (z \bar\lhd (z'_{(1)}\bar\rhd h_{(1)}))\cdot (z'_{(2)}\bar\lhd h_{(2)})\label{comprel2}
\end{eqnarray}
encode respectively the first six and last six of the above conditions. We also have the cross relations
\begin{equation}
        z_{(1)}\bar\lhd h_{(1)}\otimes z_{(2)}\bar\rhd h_{(2)} = z_{(2)}\bar\lhd h_{(2)}\otimes z_{(1)}\bar\rhd h_{(1)},\label{crossrel}
\end{equation}
as well as the unity axioms against the unit $\eta$ and counit $\epsilon$,
\begin{equation}
    z\bar\rhd \eta = \epsilon(z),\qquad \eta\bar\lhd h = \epsilon(h).\label{grpdunit}
\end{equation}

\begin{definition}\label{def:matchedpair}
We call a tuple $(\cG,\cG^*)$ of (finite dimensional) 2-bialgebras satisfying  \eqref{comprel1}-\eqref{grpdunit} a {\bf matched pair}.    
\end{definition}

\begin{remark}\label{skelqd}
Note that in the skeletal case $t,t^*=0$, the crossed relations  \eqref{comprel1}, \eqref{comprel2} reduce to just two non-trivial equations. These are given by
\begin{eqnarray}
    x\rhd_{-1}(ff') &=& (x^r_{(1)} \rhd_0 f_{(1)}^l)\cdot^*(\Upsilon_{x^r_{(2)}\lhd_0f_{(2)}^l} f')+ (\Upsilon_{x^l_{(1)}}f_{(1)}^r) \cdot^*(\Upsilon_{x^l_{(2)}\tilde\Upsilon_{f_{(2)}^r}}f')\nonumber\\
    &\equiv & (x_{(1)}\bar\rhd f_{(1)})\cdot^* ((x_{(2)}\bar\lhd f_{(2)})\bar\rhd f'),\nonumber\\
    (xx') \lhd_{-1} f &=&  (x\tilde\Upsilon_{x'^r_{(1)}\rhd_0 f^l_{(1)}})\cdot (x'^r_{(2)} \lhd_0f^l_{(2)}) + (x \tilde\Upsilon_{\Upsilon_{x'^l_{(1)}}f^r_{(1)}})\cdot (x'^l_{(2)} \tilde\Upsilon_{f^r_{(2)}})\nonumber\\
    &\equiv& (x\bar\lhd (x_{(1)}'\bar\rhd f_{(1)}))\cdot (x_{(2)}\bar\lhd f_{(2)}),\label{skelcomprel} 
\end{eqnarray}
where we have used a convenient notation for brevity. One may notice that these are precisely the usual crossed relations for a quantum double group (cf. \cite{Majid:1996kd}) of a semidirect product 2-bialgebra $\cG_{-1}\rtimes \cG_0$, where $\cG_{-1}$ is nuclear.
\end{remark}

\subsection{Construction of the strict 2-quantum double}\label{str2qdconstruct}
Inspired by the above, we now begin our construction of the general 2-quantum double given a matched pair $(\cG,\cG^*)$. We shall explicitly construct its 2-bialgebra structure such that its self-duality is manifest.

\paragraph{2-algebra structure.}
We consider $D(\cG)$ defined in terms of the  graded components given by
\begin{equation}
    D(\cG)_0\cong \cG_0\otimes \cG_{-1}^*\ni (x,f),\qquad D(\cG)_{-1} \cong \cG_{-1}\otimes \cG_0^*\ni (y,g),\nonumber
\end{equation}
for which we have a "right-moving" semidirect product $\overrightarrow{\times} = (\cdot,\bar\rhd)$, giving rise to $D(\cG)_{-1}\overrightarrow{\rtimes} D(\cG)_0$. Similarly, we also have a "left-moving" semidirect product $\overleftarrow{\times} = (\cdot^*,\bar\lhd)$ giving rise to $D(\cG)_{-1}\overleftarrow{\rtimes}D(\cG)_0$.  The combined $t$-map $T=t\otimes t^*$ is equivariant with respect to these semidirect products
\begin{equation}
    t^*(x\rhd_0 g) = x\rhd_{-1} t^*g,\qquad t(y\lhd_0f) = (ty)\lhd_{-1} f,\label{ddequivar}
\end{equation}
since the coadjoint action is a 2-representation,
while the definition of the adjoint $t^*$ implies
\begin{eqnarray}
    (ty)\rhd_0 g = \Upsilon_y(t^*g),&\qquad& y\lhd_0(t^*g) =  (ty)\tilde\Upsilon_g,\nonumber\\
    (ty)\rhd_{-1}f = t^*(\Upsilon_yf),&\qquad& x\lhd_{-1}(t^*g) = t(x\tilde\Upsilon_g).\label{genpeif}
\end{eqnarray}
These are in fact generalizations of the Peiffer identity.
\begin{proposition}\label{gen2alg}
If $\bar\rhd,\bar\lhd$ are given by the coadjoint representations (see  \eqref{coadj}), then  \eqref{genpeif} reproduces the Peiffer identity.
\end{proposition}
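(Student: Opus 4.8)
The plan is to verify the four identities in \eqref{genpeif} one at a time, each by pairing both sides against an arbitrary test element and then invoking non-degeneracy of the duality pairing \eqref{pairing}. The key mechanism is that every component of the coadjoint actions $\bar\rhd=((\rhd_0,\rhd_{-1}),\Upsilon)$ and $\bar\lhd=((\lhd_0,\lhd_{-1}),\tilde\Upsilon)$ is \emph{defined} by transposing a product or a bimodule action across $\langle\cdot,\cdot\rangle$. Hence unfolding the definitions \eqref{coadj0} moves all structure onto the $\cG$- or $\cG^*$-side of the pairing, where it collapses to one of the Peiffer conditions of Definition \ref{assoc2alg}. Throughout I would repeatedly use the flipped-grading identity $\langle t^*\phi,\cdot\rangle=\langle\phi,t\,\cdot\rangle$ to trade $t$ for $t^*$.

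For the two $\bar\rhd$-relations I would test against $\cG$. For the first, $(ty)\rhd_0 g=\Upsilon_y(t^*g)$, pairing with $x'\in\cG_0$ gives $\langle(ty)\rhd_0 g,x'\rangle=-\langle g,(ty)x'\rangle$ on the left and $\langle\Upsilon_y(t^*g),x'\rangle=-\langle g,t(y\cdot x')\rangle$ on the right, so the asserted equality is exactly the equivariance $t(y\cdot x')=(ty)x'$ of \eqref{algpeif1}. For the third, $(ty)\rhd_{-1}f=t^*(\Upsilon_y f)$, pairing with $y'\in\cG_{-1}$ reduces the two sides to $-\langle f,(ty)\cdot y'\rangle$ and $-\langle f,y\cdot(ty')\rangle$, so it is precisely the Peiffer identity $(ty)\cdot y'=yy'=y\cdot(ty')$ of \eqref{algpeif2}.

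The remaining two $\bar\lhd$-relations I would handle symmetrically, now testing against $\cG^*$ and using that $\cG^*$ is itself an associative 2-bialgebra (Proposition \ref{2bialg}), hence obeys the same Peiffer conditions with $t$ replaced by $t^*$. For $y\lhd_0(t^*g)=(ty)\tilde\Upsilon_g$, pairing with $f\in\cG_{-1}^*$ and transposing $\lhd_0$ (defined dually to $\rhd_0$) together with $\tilde\Upsilon$ turns the statement into $f\,(t^*g)=t^*(f\cdot^* g)$, which is the $\cG^*$-equivariance. For $x\lhd_{-1}(t^*g)=t(x\tilde\Upsilon_g)$, pairing with $g'\in\cG_0^*$ turns it into $g'\cdot^*(t^*g)=(t^*g')\cdot^* g$, the Peiffer identity in $\cG^*$. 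In each of the four cases, non-degeneracy of \eqref{pairing} upgrades the equality of evaluations to the desired equality of elements.

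The main obstacle I anticipate is purely bookkeeping: keeping straight which graded piece of $\cG$ or $\cG^*$ each term inhabits, since dualization flips the grading and sends $t:\cG_{-1}\to\cG_0$ to $t^*:\cG_0^*\to\cG_{-1}^*$, and since the induced degree-$(-1)$ product on $\cG_0^*$ used in the last two relations is the one forced by coassociativity and the coPeiffer identity rather than an independent datum. Once the grading is tracked correctly, each line is a single transposition step, and the content of the proposition is simply that the two Peiffer conditions (equivariance \eqref{algpeif1} and the Peiffer identity \eqref{algpeif2}) of $\cG$ and of $\cG^*$ are exactly what \eqref{genpeif} encodes through the coadjoint actions.
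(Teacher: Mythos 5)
Your unwinding of the four relations through the pairing is correct as far as it goes, and your observation that the relation $(ty)\rhd_0 g=\Upsilon_y(t^*g)$ paired against an arbitrary $x'\in\cG_0$ is equivalent to the equivariance $t(y\cdot x')=(ty)x'$ is fine (indeed slightly more general than the special case the paper uses). The genuine gap is in your treatment of $(ty)\rhd_{-1}f=t^*(\Upsilon_yf)$: pairing with $y'$ yields only the \emph{two}-term equality $(ty)\cdot y'=y\cdot(ty')$, yet you declare this to be ``precisely the Peiffer identity $(ty)\cdot y'=yy'=y\cdot(ty')$''. The middle term $yy'$ --- the product on $\cG_{-1}$, which is the entire content of the Peiffer identity --- never appears anywhere in your computation, and the two statements are not equivalent: when $t=0$, for instance, your outer equality reads $0=0$ and is vacuous, while the Peiffer identity forces $yy'=0$. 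The same defect occurs in your $\cG^*$-side claim that $g'\cdot^*(t^*g)=(t^*g')\cdot^* g$ ``is the Peiffer identity in $\cG^*$'': the product of two degree-$(-1)$ elements of $\cG^*$ is missing there as well.

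The missing step --- and it is exactly the step the paper's proof is built around --- is to bring in the degree-$(-1)$ product via the fact that $t$ is an algebra homomorphism. Pair the first relation $(ty)\rhd_0 g=\Upsilon_y(t^*g)$ against the special element $x'=ty'$ (equivalently, specialize the equivariance statement you already derived): the left side gives $-\langle g,(ty)(ty')\rangle=-\langle g,t(yy')\rangle$, using $(ty)(ty')=t(yy')$, while the right side gives $-\langle g,t(y\cdot ty')\rangle$. Non-degeneracy then identifies $yy'=y\cdot ty'$ (the paper silently passes over the injectivity of $t$ needed at this point, but that is its own gloss, not something you introduced), and combining this with your outer equality $ty\cdot y'=y\cdot ty'$ produces the full three-term identity $yy'=y\cdot t(y')=t(y)\cdot y'$. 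The analogous use of $t^*$ being an algebra map is required on the $\cG^*$ side. As written, your argument never invokes multiplicativity of $t$ or $t^*$, so it proves a strictly weaker statement than the proposition: equivariance plus a balanced compatibility of the left and right actions, but not the Peiffer identity itself.
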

\begin{proof}
This is a direct computation. By the equality in the second row of  \eqref{genpeif}, we have
\begin{equation}
    \langle f, y\cdot ty'\rangle=-\langle t^*\Upsilon_yf,y'\rangle=-\langle (ty)\rhd_{-1}f,y'\rangle = \langle f,ty\cdot y'\rangle,\nonumber
\end{equation}
giving $ty\cdot y' = y\cdot ty'$. Now by the fact that $t$ is an algebra homomorphism, we have
\begin{eqnarray}
    \langle (ty)\rhd_0 g,ty'\rangle &=& -\langle g,(ty)(ty')\rangle = -\langle g,t(yy')\rangle,\nonumber\\
    \langle \Upsilon_y(t^*g),ty'\rangle &=& -\langle t^*g,y\cdot ty'\rangle=-\langle g,t(y\cdot ty')\rangle,\nonumber
\end{eqnarray}
for which the first row of  \eqref{genpeif} states $yy' = y\cdot ty'$. Altogether yields
\begin{equation}
    yy' = y\cdot t(y') = t(y)\cdot y'\nonumber
\end{equation}
for any $y,y'\in \cG_{-1}$, which is precisely the Peiffer identity on $\cG$. Similarly, if $\bar\lhd$ is the coadjoint representation then  \eqref{genpeif} reproduces the Peiffer identity on $\cG^*$. 
\end{proof}
\noindent In other words, the Peiffer identity in $D(\cG)$ is {\it by definition} given as in  \eqref{genpeif}. The multiplication between the sectors $\cG_{-1},\cG^*_{-1}$ is given by $yg=\Upsilon_y(t^*g)$ and $gy=(ty)\tilde\Upsilon_g$.

Now that we have defined the product of the graded components and the $t$-map associated to $D(\cG)$, we can identify the bimodule structure.

\medskip

We combine the right-moving $\overrightarrow{\times}=(\cdot,\bar\rhd)$ and left-moving  $\overleftarrow{\times}=(\cdot^*,\bar\lhd)$ multiplications on $D(\cG)$ to form $\hat\cdot = \overrightarrow{\times}+ \overleftarrow{\times}$,
\begin{equation}
    (z,h)\hat\cdot (z',h') = (z\cdot z' + z\bar\lhd h' + z'\bar\lhd h, h\cdot h' + z\bar\rhd h'+z'\bar\rhd h), \quad z,z'\in\cG, \, h,h'\in\cG^*.\label{2ddalgstructure}
\end{equation}
Since $\hat\cdot$ is a combination of the internal 2-algebra structures of $\cG,\cG^* $ and the 2-representations $\bar\rhd,\bar\lhd$, we have respectively the Peiffer conditions and associativity for $\cG,\cG^*$, as well as the 2-representation properties  \eqref{ddequivar}, \eqref{genpeif} and the matched pair conditions  \eqref{comprel1}, \eqref{comprel2}, \eqref{grpdunit}. These imply that the map $\hat\cdot$ \begin{itemize}
    \item[(i)] is associative, \item[(ii)] makes $D(\cG)_{-1}$ into a $D(\cG)_0$-bimodule, \item[(iii)] satisfies the Peiffer conditions under $T=t\otimes t^*$. 
\end{itemize}
Hence $(D(\cG),~\hat\cdot~)$ is a 2-algebra.

\paragraph{2-coalgebra structure.} We intend now to construct the coproduct $\Delta_D:D(\cG)\rightarrow D(\cG)^{2\otimes}$. We have to build the components 
\begin{eqnarray}    
{\Delta_D}_{-1}&:& D(\cG)_{-1}\rightarrow D(\cG)_{-1}\otimes D(\cG)_{-1} = (\cG_{-1}\otimes \cG^*_{0})\otimes (\cG_{-1}\otimes \cG^*_{0}) \nonumber \\
{\Delta_D}_{0}&:& D(\cG)_0\rightarrow (D(\cG)_{-1}\otimes D(\cG)_{0}) \oplus (D(\cG)_{0}\otimes D(\cG)_{-1})
\nonumber
\end{eqnarray}
We can directly infer some of the components ${\Delta_D}_{-1}$  from the coproducts $\Delta_{-1},\Delta^*_{-1}$ of $\cG,\cG^*$. Explicitly, it is defined as
\begin{eqnarray}
{\Delta_D}_{-1}^d&=&\Delta_{-1}\otimes \Delta^*_{-1}. 
\nonumber    %
 \end{eqnarray}   
 This coproduct  by construction encodes  the separate coproducts $\Delta=\Delta_D|_\cG,\Delta^*=\Delta_D|_{\cG^*}$ by restriction and it is consistent with the products of each 2-algebras. These components are diagonal in a sense and we need to introduce some off diagonal contributions, 
 \begin{eqnarray}
 \xi_{-1}:\cG_{-1}\rightarrow \cG^*_{0}\otimes \cG_{-1}%\oplus (\cG_{-1}\otimes \cG_{0}^*)
 %\nonumber\\
 ,\quad \zeta_{-1}: \cG_{0}^*\rightarrow %(\cG^*_{0}\otimes \cG_{-1})\oplus 
 \cG_{-1}\otimes \cG_{0}^*,\nonumber
 \end{eqnarray}
 such that 
 \begin{equation}
   \Delta_D{}_{-1}=(\Delta_D)_{-1}^d +\xi_{-1} \otimes \zeta_{-1}.   \label{copD1}
 \end{equation} 
  $\xi_{-1}$ and $\zeta_{-1}$   can be interpreted as coactions and  are defined as dualized components of the coadjoint actions. Taking as usual $ (x,f)\in D(\cG)_0\cong \cG_0\otimes \cG_{-1}^*$ and $(y,g)\in D(\cG)_{-1}\cong \cG_{-1}\otimes \cG_{0}^*$ we have 
\begin{gather}
    \langle \xi_{-1}(y),x\otimes f\rangle := \langle y, x\rhd_{-1}f\rangle,\qquad
    \langle \zeta_{-1}(g),f\otimes x\rangle := \langle g,x\lhd_{-1}f\rangle.
    \label{2coact1}
\end{gather}
These coactions are 2-algebra maps by  \eqref{crossrel}, and hence $\Delta_D{}_{-1}$ satisfies  \eqref{2algcoprod} on $D(\cG)$.

\smallskip

In a similar way, ${\Delta_D}_{0}$ is also made of several components. We use the components $\Delta_0:\cG_0\rightarrow(\cG_{0}\otimes \cG_{-1})\oplus (\cG_{-1}\otimes \cG_{0})$ and $\Delta^*_0:\cG^*_{-1}\rightarrow(\cG^*_{0}\otimes \cG^*_{-1})\oplus (\cG_{-1}^*\otimes \cG_{0}^*) $ of $\cG$ and $\cG^*$ to define the "diagonal" contribution,
\begin{eqnarray}
     (\Delta_D^l)^d_0 &:=& \Delta^l_0 \otimes \Delta^{*l}_0 , \qquad (\Delta_D^r)_0:= \Delta^r_0 \otimes \Delta^{*r}_0. \nonumber%\\
\end{eqnarray}
Once again, by restriction, one recovers the   separate coproducts  $\Delta^{r,l}_0$ and $\Delta^{*r,l}_0$ on respectively $\cG$ and $\cG^*$. 

We also have to recover the mixed terms. 
\begin{eqnarray}
    \xi_0^l: \cG_0 \rightarrow \cG^*_{0}\otimes \cG_0, \quad \xi_0^r: \cG_0 \rightarrow\cG_{-1}^*\otimes \cG_{-1} \nonumber\\
     \zeta_0^l: \cG^*_{-1}\rightarrow\cG_{-1}\otimes \cG^*_{-1}, \quad \zeta_0^r:\cG_{0} \rightarrow \cG_{0}\otimes \cG^*_{0}\nonumber,
\end{eqnarray}
such that 
\begin{equation}
  (\Delta_D^{r,l})_0 := (\Delta_D^l)^d_0 + \xi^{r,l}_{0}\otimes \zeta^{r,l}_{0}.  \label{copD2}
\end{equation}

These mixed terms  are again obtained  by dualizing the components of the coadjoint actions
\begin{gather}
    \langle \xi_0^l(x),x'\otimes g\rangle := \langle x,x'\rhd_0g\rangle,\qquad \langle \xi_0^r(x),y\otimes f\rangle := \langle x,\Upsilon_yf\rangle,\nonumber\\
    \langle \zeta_0^l(f),f'\otimes y\rangle := \langle f, y\lhd_{-1}f'\rangle,\qquad \langle \zeta_0^r(f),g\otimes x\rangle := \langle f, x\tilde\Upsilon_g\rangle.\label{2coact}
\end{gather}
Once again, these coactions are 2-algebra maps by  \eqref{crossrel}, and hence $\Delta_D$ satisfies  \eqref{2algcoprod} on $D(\cG)$. 

\smallskip 

We now need to show that it also satisfies  \eqref{cohgrpd+}, \eqref{cohgrpd-}. We do this by leveraging the self-duality $D(\cG)\cong D(\cG)^*$ under the natural non-degenerate self-pairing via  \eqref{pairing} (cf. \cite{Bai_2013}),
\begin{equation}
    \langle (z,h),(z',h')\rangle = \langle f,y'\rangle + \langle g,x'\rangle + \langle f',y\rangle+ \langle g',x\rangle.\label{selfpair}
\end{equation}
By {\bf Proposition \ref{2bialg}}, $(D(\cG),\cdot)$ is an associative 2-algebra iff $(D(\cG)^*\cong D(\cG),\Delta_D)$ is a coassociative 2-coalgebra, which implies  \eqref{cohgrpd+}-\eqref{cohgrpd-} for $\Delta_D$.

\begin{definition}\label{2dd}
%\begin{enumerate}
%    \item 
    We call  the 2-bialgebra $$\cG \bar{\bowtie}\cG^*:=D(\cG) =  (D(\cG)_{-1}\xrightarrow{T} D(\cG)_0, \hat\cdot,\Delta_D)$$ built out of the the matched pair of strict 2-bialgebras  $(\cG,\cG^*)$  with the product, coproduct, and counit given respectively in  \eqref{2ddalgstructure}, \eqref{copD1} and \eqref{copD2}, \eqref{2coact},   the {\bf strict 2-quantum double} of $\cG$. 

\end{definition}

\subsection{Factorizability of 2-bialgebras}
Conversely, we can determine when a strict 2-bialgebra is actually a strict 2-quantum double, which is given by a {\it factorizability/splitting} condition. In fact, we prove that any 2-bialgebra that factorizes appropriately into 2-bialgebras will automatically determine a 2-quantum double.
\begin{theorem}\label{2qd}

Suppose a (unital) 2-bialgebra $(\cK=\cK_{-1}\xrightarrow{T}\cK_0,\hat\cdot)$ factorizes into two (unital) sub-2-bialgebras $\cG,\cH$, meaning that there is a span of inclusions,
\begin{equation}
    \cG\xhookrightarrow{\iota}\cK\xhookleftarrow{\jmath}\cH, \label{span}
\end{equation} 
such that $\hat\cdot\circ (\iota\otimes\jmath)$ is an isomorphism of 2-vector spaces and such that  the 2-sub-bialgebras $\cG,\cH$ are dually paired, with their $t$-maps satisfying $\langle t_\cG\cdot,\cdot\rangle = \langle \cdot,t_\cH\cdot\rangle$. Then $(\cG,\cH)$ is a matched pair and $\cK\cong \cG \bar{\bowtie} \cH$.
%\end{enumerate}
\end{theorem}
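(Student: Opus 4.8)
The plan is to categorify Majid's factorization theorem by adapting the classical \emph{straightening} argument to the graded setting. Writing $m = \hat\cdot\circ(\iota\otimes\jmath)\colon \cG\otimes\cH\to\cK$, the hypothesis says that $m$ is an isomorphism of 2-vector spaces, so every element of $\cK$ is a unique combination of products $\iota(z)\hat\cdot\jmath(h)$ with $z\in\cG$, $h\in\cH$. The first step is to straighten the \emph{opposite-order} product: since $m$ is invertible, $\jmath(h)\hat\cdot\iota(z)$ can be rewritten uniquely in the standard order, which I encode in Sweedler notation as
\[
    \jmath(h)\hat\cdot\iota(z) = \iota\big(z_{(1)}\bar\lhd h_{(1)}\big)\hat\cdot \jmath\big(z_{(2)}\bar\rhd h_{(2)}\big).
\]
This single formula simultaneously defines the cross-maps $\bar\rhd\colon\cG\otimes\cH\to\cH$ and $\bar\lhd\colon\cG\otimes\cH\to\cG$; decomposing it into graded pieces, and contracting against the counit $\epsilon$ of the complementary factor to isolate each action, produces exactly the triples $((\rhd_0,\rhd_{-1}),\Upsilon)$ and $((\lhd_0,\lhd_{-1}),\tilde\Upsilon)$.

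Next I would identify these straightening actions with the canonical coadjoint actions of \eqref{coadj0}. Because $\iota,\jmath$ are inclusions of sub-2-bialgebras, the coproduct of $\cK$ restricts to those of $\cG,\cH$, and the duality pairing together with the $t$-compatibility $\langle t_\cG\cdot,\cdot\rangle=\langle\cdot,t_\cH\cdot\rangle$ means that pairing the straightening relation against a third factor dualizes the product of $\cK$ into precisely the coproduct components appearing in \eqref{coadj0}. The grading flip carried by $t^*$ is exactly what makes the degrees match, and this is also the stage at which the generalized Peiffer identities \eqref{genpeif} --- relating $\Upsilon,\tilde\Upsilon$ to $\rhd_0,\lhd_0$ through $t$ --- fall out of the compatibility of the two $t$-maps.

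The matched-pair conditions then follow from associativity and the 2-bialgebra axiom in $\cK$. Straightening the triple product $\jmath(h)\hat\cdot\jmath(h')\hat\cdot\iota(z)$ in two ways --- first multiplying $h\hat\cdot h'$ and then straightening, versus straightening $h'$ past $z$ first --- yields the module-type compatibility \eqref{comprel1}, and the mirror computation on $\jmath(h)\hat\cdot\iota(z)\hat\cdot\iota(z')$ yields \eqref{comprel2}. Applying the fact that $\Delta_\cK$ is an algebra homomorphism (the 2-bialgebra axiom \eqref{2algcoprod}) to the straightening formula produces the cross relation \eqref{crossrel}, while testing against the unit $\eta$ and counit $\epsilon$ gives the unit axioms \eqref{grpdunit}. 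Expanding each of these six relations into its degree-$0$ and degree-$(-1)$ components, using the splitting $\Delta_0=\Delta_0^l+\Delta_0^r$, reproduces the full list of twelve conditions, so $(\cG,\cH)$ is a matched pair in the sense of Definition \ref{def:matchedpair}.

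Finally, $\cG\bar{\bowtie}\cH$ is by Definition \ref{2dd} assembled from exactly this matched-pair data, with product \eqref{2ddalgstructure} and coproduct \eqref{copD1}--\eqref{copD2}. By construction the product \eqref{2ddalgstructure} is just the straightening formula, so $m$ intertwines $\hat\cdot_{\cG\bar{\bowtie}\cH}$ with $\hat\cdot_\cK$; and the coproduct $\Delta_D$ is the dualization of this product under the self-pairing \eqref{selfpair}, so by Proposition \ref{2bialg} $m$ also intertwines the coproducts. Since $m$ is already an isomorphism of 2-vector spaces, it is an isomorphism of 2-bialgebras, giving $\cK\cong\cG\bar{\bowtie}\cH$. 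I expect the main obstacle to be the graded bookkeeping in the third step: keeping track of the four coproduct components and the $t$-insertions so that all twelve matched-pair conditions emerge with the correct placement of $\Upsilon,\tilde\Upsilon$ and $t^*$ --- this is precisely where the strict-case relations of Remark \ref{comp-const} and the generalized Peiffer identities \eqref{genpeif} must be invoked with care.
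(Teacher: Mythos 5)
Your proposal is correct in outline and follows essentially the same route as the paper's proof: both categorify Majid's factorization argument by using the invertibility of $\hat\cdot\circ(\iota\otimes\jmath)$ to produce a straightening map (the paper's braided transposition $\Psi$ of \eqref{braid}; yours is its opposite-order counterpart), extracting $\bar\rhd,\bar\lhd$ by counit contraction, deriving the module compatibilities \eqref{comprel1}, \eqref{comprel2} from associativity of $\hat\cdot$, and deriving \eqref{crossrel}, \eqref{grpdunit} and the coproduct intertwining from the 2-bialgebra axiom \eqref{2algcoprod}.

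Two caveats on rigor, both of which the paper's ordering is designed to avoid. First, your opening formula cannot serve as a \emph{definition} of $\bar\rhd$ and $\bar\lhd$: a priori the straightening of $\jmath(h)\hat\cdot\iota(z)$ is merely some element of the image of $\hat\cdot\circ(\iota\otimes\jmath)$, and the claim that it factors through the coproduct legs in the matched-pair Sweedler pattern is a statement to be proved, not assumed. The paper repairs exactly this: $\Psi$ is introduced as an abstract linear map via \eqref{braid}, the actions are defined as its counit contractions, $\Psi$ is then shown to be a 2-coalgebra map using \eqref{2algcoprod}, and only at that point is the Sweedler form of $\Psi$ recovered by applying counits in the appropriate slots to \eqref{2repbruh'}. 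Second, your identification of the extracted actions with the canonical coadjoint actions \eqref{coadj0} is an over-claim: the hypotheses only require the dual pairing of $\cG,\cH$ to exist and be $t$-compatible, and nothing forces that pairing to dualize the cross multiplication of $\cK$ into the coproduct components of the factors. The paper neither proves nor needs this identification --- the conclusion $\cK\cong\cG\bar{\bowtie}\cH$ is the bicrossed product built from whatever matched-pair actions the factorization produces --- and the generalized Peiffer identities \eqref{genpeif} are obtained there from the Peiffer identity of $\cK$ itself (via the lifts $\hat\Upsilon$ along $t_\cH$, as in {\bf Proposition \ref{gen2alg}}), not from duality.
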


\begin{proof}
Let $\cK=\cK_{-1}\xrightarrow{T}\cK_0$ be a 2-bialgebra factorizing into two 2-subbialgebras $\cG,\cH$, with typical elements $w\in \cK_0$ and $e\in \cK_{-1}$. Its 2-algebra structure $\hat\cdot$ contains a multiplication $ww'$ in $\cK_0$ and a $\cK_0$-bimodule structure $w\hat\cdot e,e\hat\cdot w$ on $\cK_{-1}$, which are both associative. Since  \eqref{span} is a span of 2-vector spaces, we have
\begin{equation}
    T\circ (\iota_{-1}\otimes \jmath_{-1}) = (\iota_0\circ t_\cG)\otimes (\jmath_0\circ t_\cH) = (\iota_0\otimes \jmath_0)\circ (t_\cG\otimes t_\cH), \nonumber
\end{equation}
where $t_G,t_H$ are the $t$-maps in $\cG,\cH$ respectively, and $\iota_{-1},\iota_0$ are the graded components of the inclusion $\iota$; similarly for $\jmath$.

We now separate the bimodule structure $\hat\cdot$ into components according to the span  \eqref{span},
\begin{equation}
    \hat\rhd\equiv \hat\cdot|_{\operatorname{im}(\iota_0\otimes \jmath_{-1})},\qquad \hat\Upsilon\equiv \hat\cdot|_{\operatorname{im}(\iota_{-1}\otimes \jmath_0)},\nonumber 
\end{equation}
then for $e=\iota_{-1}(y),e'=\jmath_{-1}(g)$ where $y\in G_{-1},g\in H_{-1}$ we have
\begin{equation}
    (T\iota_{-1}(y))\hat\rhd \jmath_{-1}(g) = \iota_0(t_\cG y)\hat\rhd \jmath_{-1}(g).\nonumber
\end{equation}
By the Peiffer identity in $\cK$, this should read as a left-multiplication of $y$ on $g$. We lift this action along $t_\cH$ to create a map $\hat\Upsilon_y:\cH_0\rightarrow \cH_{-1}$, for which $\hat\Upsilon_y(t_\cH g)$ denotes the left-multiplicaion of $y$ by $g$. Similarly we have the lift $\hat\Upsilon_g:\cG_0\rightarrow \cG_{-1}$ of the right-multiplication of $g$ on $y$.

Provided we identify $\hat\Upsilon_{y\otimes g} = \Upsilon_y\otimes \tilde\Upsilon_g$, the Peiffer conditions in $\cK$ are then equivalent to the 2-representation properties  \eqref{ddequivar}, \eqref{genpeif}. In particular, the multiplication $y\cdot g=\Upsilon_y(t^*g) = (ty)\tilde\Upsilon_g$ is given by the generalized Peiffer identity as shown in {\bf Proposition \ref{gen2alg}}.

\smallskip

Now we prove that  \eqref{span} is in fact a span of 2-algebras. Due to the linear isomorphsm $\hat\cdot\circ(\iota\otimes\jmath)$, there exists a tuple of well-defined linear maps $\Psi = (\Psi_0,\Psi_{-1};\bar\Psi): \cG\otimes \cH\rightarrow \cH\otimes \cG$, called the {\bf braided transposition}, such that
\begin{eqnarray}
    \iota_0(x)\cdot\jmath_0(f) = \cdot \circ (\jmath_0\otimes\iota_0)\circ \Psi_0(x\otimes f),\nonumber\\
    \iota_0(x)\hat\rhd \jmath_{-1}(g) = \hat\Upsilon \circ(\jmath_{-1}\otimes\iota_0)\circ \Psi^r_{-1}(x\otimes g),\nonumber\\
    \iota_{-1}(y)\hat\Upsilon \jmath_0(f)= \hat\lhd \circ(\jmath_0\otimes\iota_{-1}) \circ \Psi^l_{-1}(y\otimes f),\nonumber\\
    \iota_{-1}(y)\cdot \jmath_{-1}(g) = \cdot \circ(\jmath_{-1}\otimes\iota_{-1})\circ\bar\Psi(y\otimes g),\nonumber
\end{eqnarray}
where $\Psi_{-1} = \Psi_{-1}^l+\Psi_{-1}^r$ and $x\in \cG_0,y\in \cG_{-1},f\in \cH_0,g\in \cH_{-1}$. Due to Peiffer conditions on $\cK$, these braiding maps are not independent and must satisfy
\begin{gather}
    (t_\cH\otimes 1)\circ \Psi_{-1}^r=\Psi_0\circ(1\otimes t_\cH),\qquad (1\otimes t_\cG)\circ \Psi_{-1}^l = \Psi_0\circ(t_\cG\otimes 1),\nonumber\\
    \Psi_{-1}^r\circ(t_\cG\otimes 1) = \bar\Psi =\Psi_{-1}^l\circ(1\otimes t_\cH).\nonumber
\end{gather}
By collecting all of the graded components of $\Psi$ in accordance with the shorthand notation $z=(y,x)\in \cG,h=(g,f) \in \cH$, the definition  of $\Psi$ can be concisely written as
\begin{equation}
    \iota(z)~\hat\cdot~\jmath(h) = \hat\cdot\circ (\jmath\otimes\iota) \circ \Psi(z\otimes h),\label{braid}
\end{equation} 
and the relations between its components is summarized as
\begin{equation}
    T'\circ\Psi_{-1} = \Psi_0\circ T,\qquad \bar\Psi = \Psi_{-1}\circ T,\label{braidcomprel}
\end{equation}
where $T' = t_\cH\otimes t_\cG$ is the $t$-map of the 2-bialgebra $\cK'\cong\cH\otimes\cG$ with $\cG,\cH$ swapped in the span  \eqref{span}.  \eqref{braidcomprel} then implies in particular that $\Psi:\cK\rightarrow \cK'$ is a 2-vector space homomorphism. 

\medskip

We now proceed formally as in the 1-bialgebra case \cite{Majid:1996kd,Majid:1994nw}. The associativity in $\cK$ is
\begin{eqnarray}
    (\iota(z)\hat\cdot\iota(z'))\hat\cdot\jmath(h)&=& \iota(z)\hat\cdot(\iota(z')\hat\cdot\jmath(h)),\nonumber\\
    (\iota(z)\hat\cdot\jmath(h))\hat\cdot \jmath(h')&=& \iota(z)\hat\cdot(\jmath(h)\hat\cdot \jmath(h')),\nonumber
\end{eqnarray}
which yields the {\bf 2-braiding relations}
\begin{eqnarray}
     \Psi \circ(\hat\cdot \otimes \operatorname{id}) &=& (\operatorname{id}\otimes \hat\cdot)\circ \Psi_{12}\circ \Psi_{23},\nonumber\\
     \Psi\circ(\operatorname{id}\otimes \hat\cdot) &=& (\hat\cdot\otimes \operatorname{id})\circ \Psi_{23}\circ \Psi_{12}.\label{2yb1}
\end{eqnarray}
This then allows us to define the actions
\begin{eqnarray}
    \bar\rhd &=& (\operatorname{id}\otimes \epsilon)\circ\Psi:\cG\otimes \cH\rightarrow \cH,\nonumber\\
    \bar\lhd &=& (\epsilon\otimes\operatorname{id})\circ\Psi:\cG\otimes \cH\rightarrow \cG,\nonumber
\end{eqnarray}
where $\epsilon$ denotes the counit map. Applying $\operatorname{id}\otimes \epsilon$ and $\epsilon\otimes\operatorname{id}$ respectively to the first and second equation of  \eqref{2yb1} implies that $\bar\rhd,\bar\lhd$ respect the semidirect product structures $\cG_{-1}\rtimes \cG_0,\cH_{-1}\rtimes \cH_0$, respectively. Together with our above result,  \eqref{span} is in fact a span of 2-algebras.

We now prove that  \eqref{span} is actually a span of 2-bialgebras, which proves the theorem. Applying $\epsilon\otimes\operatorname{id}$ and $\operatorname{id}\otimes \epsilon$ respectively to the first and second  \eqref{2yb1} yields
\begin{equation}
    (z\cdot z')\bar\lhd h = \hat\cdot (z\bar\lhd \Psi(z'\otimes h)),\qquad z\bar\rhd(h\cdot h') = \hat\cdot(\Psi(z\otimes h)\hat\rhd h').\label{2repbruh}
\end{equation}
We now take the coproduct $\Delta_K:\cK\rightarrow \cK^{2\otimes}$ on $\cK$, given in components and Sweedler notation (see  \eqref{grpdcoprod}, \eqref{sweed}) by
\begin{equation}
    (\Delta_{\cK})_{-1}(e) = e_{(1)}\otimes e_{(2)},\qquad (\Delta_{\cK})_0(w) = w_{(1)}^l\otimes w^l_{(2)}+ w_{(1)}^r\otimes w_{(2)}^r;\nonumber
\end{equation}
note $w_{(1)}^l,w_{(2)}^r\in \cK_{-1}$. With the span  \eqref{span}, we can write $w=\iota_0(x)\jmath_0(f),e=\iota_{-1}(y)\jmath_{-1}(g)$ for some appropriate elements $x,f,y,g$ such that
\begin{eqnarray}
     (\Delta_{\cK})_{-1}(y,g) &=& (y_{(1)}\otimes g_{(1)})\otimes (y_{(2)}\otimes g_{(2)}),\nonumber\\
     (\Delta_{\cK}^l)_0(x,f) &=& (x^l_{(1)}\otimes f^l_{(1)})\otimes (x^l_{(2)}\otimes f^l_{(2)}),\nonumber\\
     (\Delta_{\cK}^r)_0(x,f) &=& (x^r_{(1)}\otimes f^r_{(1)})\otimes (x^r_{(2)}\otimes f^r_{(2)}).\nonumber
\end{eqnarray}
This then allows us to define coproducts on $\cG,\cH$ by
\begin{eqnarray}
    (\Delta_{\cG})_{-1}(y) = y_{(1)}\otimes y_{(2)},&\qquad&(\Delta_{\cG})_0(x) = x^l_{(1)}\otimes x^l_{(2)}+x_{(1)}^r\otimes x_{(2)}^r,\nonumber\\
    (\Delta_{\cH})_{-1}(g) = g_{(1)}\otimes g_{(2)},&\qquad&(\Delta_{\cH})_0(f) = f^l_{(1)}\otimes f^l_{(2)}+f^r_{(1)}+f^r_{(2)},\nonumber
\end{eqnarray}
whence $\Delta_{\cK} = \Delta_{\cG\otimes \cH}$, which implies that $\hat\cdot\circ (\iota\otimes\jmath)$ and $\hat\cdot\circ(\jmath\otimes\iota)$ by construction respects the coproducts. 

As such, $\Psi$ is a 2-coalgebra map. In particular, we have
\begin{equation}
    \Delta_{\cK}\circ\Psi = (\Psi\otimes\Psi)\circ\Delta_{\cK'},\qquad (\epsilon\otimes \epsilon)\circ\Psi = \epsilon\otimes\epsilon\label{2repbruh'}
\end{equation}
where $\cK'$ is the 2-bialgebra with $\cG,\cH$ swapped in the span  \eqref{span}. An application of $\epsilon\otimes \operatorname{id}\otimes \epsilon\otimes\operatorname{id}$ and $\operatorname{id}\otimes \epsilon\otimes \operatorname{id}\otimes \epsilon$ to  \eqref{2repbruh'} gives
\begin{equation}
    \Delta_{\cG} \circ \bar\lhd = (\bar\lhd\otimes\bar\lhd) \circ \Delta_{\cK},\qquad \Delta_{\cH} \circ\bar\rhd = (\bar\rhd\otimes\bar\rhd)\circ\Delta_{\cK},\nonumber
\end{equation}
which ensures that $\bar\rhd,\bar\lhd$ are 2-coalgebra maps. 

Now applying $\epsilon\otimes\operatorname{id}\otimes\operatorname{id}\otimes \epsilon$ and $\operatorname{id}\otimes\epsilon\otimes\epsilon\otimes\operatorname{id}$ to  \eqref{2repbruh'} yields
\begin{eqnarray}
     z_{(1)}\bar\lhd h_{(1)} \otimes z_{(2)}\bar\rhd h_{(2)} &=& \tau\circ\Psi(z\otimes h),\nonumber\\
     z_{(1)}\bar\rhd h_{(1)} \otimes z_{(2)}\bar\lhd h_{(2)} &=& \Psi(z\otimes h).\nonumber
\end{eqnarray}
Using the second equation, together with  \eqref{2repbruh}, gives  \eqref{grpdunit} and
\begin{eqnarray}
     z\bar\rhd(h\hat\cdot|_\cH h') &=& \hat\cdot((z_{(1)}\bar\rhd h_{(1)}\otimes z_{(2)}\bar\lhd h_{(2)})\hat\rhd h')= (z_{(1)}\bar\rhd h_{(1)})\hat\cdot|_\cH((z_{(2)}\hat\lhd h_{(2)})\bar\rhd h'),\nonumber\\
     (z\hat\cdot|_\cG z')\bar\lhd h &=& \hat\cdot(z\bar\lhd (z'_{(1)}\bar\rhd h_{(1)}\otimes z'_{(2)}\bar\lhd h_{(2)}))= (z\bar\lhd (z'_{(1)}\bar\rhd h_{(1)}))\hat\cdot|_\cG(z'_{(2)}\hat\lhd h_{(2)}),\nonumber
\end{eqnarray}
which are precisely the matched pair conditions  \eqref{comprel1}, \eqref{comprel2} for $\hat\cdot|_\cG=\cdot,\hat\cdot|_{\cH}=\cdot^*$. On the other hand, using the first equation gives  \eqref{crossrel}. Thus  \eqref{span} is a span of 2-bialgebras and so $K \cong \cG \bar{\bowtie}\cH$.
%\end{enumerate}
\end{proof}

Note that the span  \eqref{span} factorizes the 2-algebra structure on $\cK$ into the right- $\overrightarrow{\times}=(\hat\cdot|_{\cG},\bar\lhd)$ and left-moving $\overleftarrow{\times}=(\hat\cdot|_{\cH}^\text{opp},\bar\rhd^\text{opp})$ 2-algebra structures. In other words, in order to identify $\cK$ with a 2-quantum double, we must have \cite{Majid:1994nw}
\begin{equation}
    \cK \cong \cG\bar\bowtie\cH \cong D(\cG,\cH^\text{opp}),\label{skewdouble}
\end{equation}
where $\cH^\text{opp}$ denotes the opposite 2-algebra; see Appendix \ref{2-hopf}.

This is because, as can be seen in  \eqref{2coact}, the back-action $\bar\lhd$ is written from right to left.

\subsection{Universal characterization of quantum 2-R-matrices}\label{quasitrihopf}
As we have mentioned in the beginning of this section, we wish to leverage the 2-quantum double construction we have given above in order to provide a notion of a quantum R-matrix on a 2-bialgebra $\cG$. More precisely, we shall use the skew-pairing on $\cG$ used in forming the 2-quantum double $D(\cG,\cG) = \cG\bar{\bowtie}\cG^\text{opp}$ in order to provide a definition of the 2-R-matrix on $\cG$ (and \textit{not} just on $D(\cG)$!). We shall show in Section \ref{braided2cat} that our definition of a 2-R-matrix indeed gives rise to a {\it braiding} on the 2-representations of $\cG$.

\paragraph{Review of the 1-bialgebra case.} We first recall the explicit construction of the R-matrix for the ordinary 1-bialgebra $H$. It was noted by Majid (see eg. \cite{Majid:1994nw,Majid:1996kd}) that, in forming the quantum double $D(H,H) =H\bowtie H^\text{opp}$ as a double crossed product, the (non-degenerate) skew-pairing which dualizes $H$ with itself satisfies 
\begin{gather}
    \langle xx',g\rangle_\text{sk} = \langle x\otimes x',\Delta(g)\rangle_\text{sk}=\langle x,g_{(1)}\rangle_\text{sk}\, \langle  x',g_{(2)}\rangle_\text{sk} ,\nonumber\\
    \langle x,gg'\rangle_\text{sk} = \langle \Delta(x),g'\otimes g\rangle_\text{sk} = \langle x_{(1)},g'\rangle_\text{sk} \, \langle x_{(2)},g\rangle_\text{sk},\nonumber
\end{gather}
where $x,x'\in H$ and $g,g'\in H^\text{opp}\cong H$. If we define this skew-pairing as a functional $\langle\cdot,\cdot\rangle_\text{sk} = R^*:H^{2\otimes}\rightarrow k$, then we see that the above conditions translate to the following operational condition
\begin{equation}
    R^*\circ (\mu\otimes \id) = R^*_{13}R^*_{23},\qquad R^*\circ (\id\otimes \mu) = R^*_{13}R^*_{12},\label{dualrmat}
\end{equation}
which is nothing but the defining properties of a \textit{dual} R-matrix.  Indeed, together with the property 
\begin{equation}
 x_{(1)} \, x'_{(1)}\,R^*(x'_{(2)},x_{(2)})= R^*(x'_{(1)},x_{(1)})\, x'_{(2)}\, x_{(2)}, \quad x,x'\in H
\end{equation}
we obtain the (dual) Yang-Baxter equations \cite{Majid:1994nw, Majid:1996kd}.

In summary, we see that the Drinfel'd double $D(H,H) = H\bowtie H^\text{opp}$ is canonically equipped with a quasitriangular $R$-matrix, which we call the \textit{universal $R$-matrix}. This then allows us to characterize the algebraic properties satisfied by $R$-matrices by studying Drinfel'd doubles and its skew self-pairing --- a (dual) quasitriangularity structure $R^*$ on a Hopf algebra $A$ should satisfy the operational condition \eqref{dualrmat} and the Yang-Baxter equations. If such a $R^*$ furthermore defines a self skew-paring $\langle\cdot,\cdot\rangle_\text{sk}$ on $A$, then $A=D(H)$ should itself form a Drinfel'd double. 
% Moreover, this pairing is non-degenerate iff the corresponding R-matrix is invertible. Leveraging this, we will in the following \textit{define} a quasitriangular R-matrix $R$ on a 2-bialgebra $\cG$ from this idea. 

\medskip

\paragraph{(Dual) 2-$R$-matrix.}
We now follow an analogous treatment to characterize dual 2-$R$-matrices of a quasitriangular 2-bialgebra $\cG$. Take the 2-quantum double $D(\cG,\cG)$, whose underlying duality pairing  \eqref{selfpair} is given by a non-degenerate self-duality skew-pairing $\langle\cdot,\cdot\rangle_\text{sk}:\cG\otimes\cG\rightarrow k$. Explicitly, this pairing satisfies
\begin{gather}
    \langle x\cdot_l y,f\rangle_\text{sk} = \langle x\otimes y,\Delta_0^l(f)\rangle_\text{sk},\qquad \langle y\cdot_r x,f\rangle_\text{sk} = \langle y\otimes x,\Delta_0^r(f)\rangle_\text{sk},\nonumber\\
    \langle x,f\cdot_l g\rangle_\text{sk} = \langle \Delta_0^r(x),g\otimes f\rangle_\text{sk},\qquad \langle x,g\cdot f\rangle_\text{sk} = \langle \Delta_0^l(x),f\otimes g\rangle_\text{sk},\label{builtinR}
\end{gather}
and also in addition to the fact that it should respect the $t$-map $T = t\otimes t$ on $D(\cG,\cG)$,
\begin{equation}
    \langle ty,g\rangle_\text{sk} =  \langle y,tg\rangle_\text{sk},\nonumber
\end{equation}
where $x,f,f'\in \cG_0$ and $y,g\in\cG_{-1}$. Writing the skew-pairing in terms of a functional $\cR^*: \cG^{2\otimes}\rightarrow k$ by
\begin{equation}
    \cR^*_l(y,f)=\langle y,f\rangle_\text{sk},\qquad \cR^*_r(x,g) = \langle x,g\rangle_\text{sk} ,\nonumber
\end{equation}
we can rewrite \eqref{builtinR} as
\begin{gather}
    \cR^*_l\circ(\cdot_l \otimes \id) =  (\cR^*_r)_{13}(\cR^*_l)_{23},\qquad \cR^*_l\circ(\cdot_r\otimes \id) = (\cR^*_l)_{13}(\cR^*_r)_{23},\nonumber\\
    \cR^*_r\circ(\id\otimes\cdot_l) = (\cR^*_l)_{13}(\cR^*_r)_{12},\qquad \cR^*_r\circ(\id\otimes \cdot_r) = (\cR^*_r)_{13}(\cR^*_l)_{12},\nonumber
\end{gather}
where $\cdot_l,\cdot_r$ denotes respectively the left and right $\cG_0$-actions on $\cG_{-1}$. We also have the compatibility conditions with the $t$-map:
\begin{equation}
    \cR^*_r\circ (\id\otimes t) = \cR^*_l\circ(t\otimes \id)\in \cG_{-1}^{2\otimes}.\nonumber
\end{equation}
An intrinsic notion of higher-quasitriangularity can then be inferred from these properties.

\medskip

Inspired by the above functional $\cR^*$, we characterize the 2-R-matrix $\cR$ for a {single copy} of $\cG$.
\begin{definition}\label{2Rdef}
A quasitriangular \textbf{2-$R$-matrix} of a 2-bialgebra $(\cG,\cdot,\Delta)$ is an element $\cR\in\cG\otimes\cG$ \textit{of total degree -1}, consisting of the graded components
\begin{equation}
    \cR^l \in \cG_{-1}\otimes \cG_0,\qquad \cR^r\in\cG_0\otimes \cG_{-1},\nonumber
\end{equation}
such that the following identities are satisfied:
\begin{enumerate}
    \item the {\bf compatibility with the coproduct}
    \begin{gather}
    (\Delta_0^l \otimes \id) \cR^r = \cR^l_{13}\cdot_l\cR^r_{23},\qquad (\Delta_0^r\otimes \id) \cR^r = \cR^r_{13}\cdot_r\cR^l_{23},\nonumber\\
    (\id\otimes\Delta_0^l) \cR^l = \cR^l_{13}\cdot_r\cR^r_{12},\qquad (\id\otimes \Delta_0^r) \cR^l = \cR^r_{13}\cdot_l\cR^l_{12}, \label{def2R1}
    \end{gather}
    \item the {\bf coproduct permutation identity}
    \begin{equation}
      \cR^r\Delta_0^r(x)=(\sigma\circ \Delta_0^l(x))\cR^r,\qquad  \cR^l\Delta_0^l(x)=(\sigma\circ\Delta_0^r(x))\cR^l
    \label{def2R2} %\label{2yangbax} 
\end{equation}
for each $x\in\cG_0$, where $\sigma:\cG\otimes\cG\rightarrow \cG\otimes\cG$ is the permutation of tensor factors, and
\item the {\bf equivariance condition}
\begin{equation}
D^-_t\cR=0\iff    (t\,\otimes \id)\cR^l = (\id\otimes\, t)\cR^r\in\cG_0^{2\otimes}.\label{2ybequiv}
\end{equation}
\end{enumerate}
We call the tuple $\cR$ invertible iff $\cR^l,\cR^r$ are both invertible.
\end{definition}
\noindent  For the canonical, universal quasitriangular structure on the double $\cG=D(\cH,\cH)$, see \textit{Remark \ref{quasiR2double}}.

We now derive the categorified notion of the Yang-Baxter equations.
\begin{proposition}\label{2yangbaxter}
     The 2-$R$-matrix of a quasitriangular 2-bialgebra $(\cG,\cdot,\Delta,\cR)$ satisfies the \textbf{2-Yang-Baxter equations}
    \begin{align}
    \cR^r_{23}(\cR^r_{13}\cdot_l\cR_{12}^l) = (\cR^l_{12}\cdot_r\cR^r_{13})\cR^r_{23},\qquad 
    (\cR^l_{23}\cdot_l\cR^r_{13}) \cR^r_{12}= \cR^r_{12} (\cR^r_{13}\cdot_r\cR^l_{23} )  ,    \nonumber\\
      \cR^l_{23}(\cR^l_{13}\cdot_r\cR_{12}^r) = (\cR^r_{12}\cdot_l\cR^l_{13})\cR^l_{23},\qquad 
    (\cR^r_{23}\cdot_r\cR^l_{13}) \cR^l_{12}= \cR^l_{12} (\cR^l_{13}\cdot_l\cR^r_{23} )     .\label{2yangbax2}
\end{align}
\end{proposition}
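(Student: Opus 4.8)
The plan is to categorify verbatim the classical one-line derivation that a quasitriangular bialgebra $(H,R)$ obeys the Yang--Baxter equation. Recall that in the $1$-bialgebra case one writes $R_{12}R_{13}R_{23}=R_{12}(\Delta\otimes\id)R$, uses the permutation axiom $R\Delta(x)=(\sigma\circ\Delta)(x)R$ to pull $R_{12}$ through the coproduct, and then re-reads $(\sigma\circ\Delta\otimes\id)R$ as $R_{23}R_{13}$; crucially this uses only the operator identities $(\Delta\otimes\id)R=R_{13}R_{23}$ and $R\Delta=\Delta^{\mathrm{op}}R$, never the invertibility of $R$. I would reproduce exactly this bracketing of the permutation identity \eqref{def2R2} between the two halves of the compatibility relations \eqref{def2R1}, now keeping track of the grading so that the single classical identity fractures into the four graded equations \eqref{2yangbax2}.

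Concretely, for the first equation of \eqref{2yangbax2} I would start from its left-hand side and use the fourth relation of \eqref{def2R1} to recognize $\cR^r_{13}\cdot_l\cR^l_{12}=(\id\otimes\Delta_0^r)\cR^l$, so that the left-hand side becomes $\cR^r_{23}\cdot\big[(\id\otimes\Delta_0^r)\cR^l\big]$. The outer factor $\cR^r_{23}$ multiplies precisely into legs $2,3$, where $\Delta_0^r$ has been applied to the $\cG_0$-leg of $\cR^l$; there the multiplication is the diagonal pair $(\mu_0,\mu_{-1})$, exactly the product appearing in the permutation identity. Applying \eqref{def2R2} in the form $\cR^r\Delta_0^r(x)=(\sigma\circ\Delta_0^l(x))\cR^r$ on those two legs converts the expression into $\sigma_{23}\big[(\id\otimes\Delta_0^l)\cR^l\big]\,\cR^r_{23}$, and the third relation of \eqref{def2R1} together with the leg-transposition $\sigma_{23}(\cR^l_{13}\cdot_r\cR^r_{12})=\cR^l_{12}\cdot_r\cR^r_{13}$ identifies this with $(\cR^l_{12}\cdot_r\cR^r_{13})\cR^r_{23}$, which is the right-hand side. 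The same three-move mechanism --- (i) collapse an inner product of two components into a single coproduct-applied component via \eqref{def2R1}, (ii) commute the outer component through using \eqref{def2R2}, which trades $\Delta_0^r\leftrightarrow\sigma\circ\Delta_0^l$, and (iii) re-expand through the complementary relation of \eqref{def2R1} after a single $\sigma_{ij}$ --- proves the remaining three equations: the second from the pair $(\Delta_0^l\otimes\id)\cR^r$, $(\Delta_0^r\otimes\id)\cR^r$ with the outer factor multiplying on the left, and the third and fourth as the $l\leftrightarrow r$ mirror images obtained by using \eqref{def2R2} in its second form $\cR^l\Delta_0^l(x)=(\sigma\circ\Delta_0^r(x))\cR^l$.

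The main obstacle is not conceptual but the grading bookkeeping: I must check at each step that every product lands in the correct component ($\mu_0,\mu_{-1},\cdot_l,\cdot_r$ respectively), that the tensor legs are tracked correctly through each $\sigma_{ij}$, and --- most importantly --- that the graded expansion of the single categorified derivation separates cleanly into exactly these four equations, with no leftover cross terms arising from $\Delta_0=\Delta_0^l+\Delta_0^r$. The equivariance condition \eqref{2ybequiv}, $D_t^-\cR=0$, is what guarantees this consistency across degrees, ensuring that the $l$- and $r$-typed products are compatible under $t$ so that the four equations close among themselves; I would invoke it precisely to exclude the mixed terms. Invertibility of $\cR^l,\cR^r$ (i.e.\ quasitriangularity proper) plays no role here, exactly as in the classical case.
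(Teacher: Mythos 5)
Your proof is correct, and it rests on exactly the same two ingredients as the paper's: the compatibility relations \eqref{def2R1} and the permutation identity \eqref{def2R2}, assembled with the same leg-permutation bookkeeping. The only real difference is organizational. The paper evaluates $(\id\otimes\sigma\circ\Delta_0^l)\cR^l$ (and its three companions) in \emph{two} ways, and in the second way it uses \eqref{def2R2} in conjugated form, e.g. $(\id\otimes\sigma\circ\Delta_0^l)\cR^l = \cR^r_{23}\bigl((\id\otimes\Delta_0^r)\cR^l\bigr)(\cR^{r}_{23})^{-1}$, which explicitly invokes the invertibility of $\cR^l,\cR^r$; the four equations \eqref{2yangbax2} then follow by clearing the inverses. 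You instead apply \eqref{def2R2} directly as an intertwining relation, with $x$ specialized to the $\cG_0$-leg of $\cR^l$ (resp.\ $\cR^r$), so no inverses ever appear. This substantiates your closing observation that invertibility plays no role --- a mild but genuine sharpening of the paper's write-up, and it matches the classical situation as you say. One caveat: your appeal to the equivariance condition \eqref{2ybequiv} to ``exclude the mixed terms'' is a non-issue. There are no mixed terms to exclude, since each relation in \eqref{def2R1} involves $\Delta_0^l$ or $\Delta_0^r$ applied to a single component, never the sum $\Delta_0=\Delta_0^l+\Delta_0^r$; correspondingly, the paper's proof never invokes \eqref{2ybequiv} either. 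The grading consistency you worry about is automatic from the typing of the four relations and of the two forms of \eqref{def2R2}, so your three-move mechanism closes on its own.
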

\begin{proof}
    Recall that $\cR$ is quasitriangular iff $\cR^l,\cR^r$ are square and invertible. This pairs $\cG$ with itself and hence $\operatorname{dim}\cG_0 = \operatorname{dim}\cG_{-1}$.
We calculate $(\id\otimes \sigma\circ \Delta_0^l)\cR^l$ and     $(\sigma\circ \Delta_0^l\otimes \id )\cR^r$, as well as  $(\id\otimes \sigma\circ \Delta_0^r)\cR^l$ and     $(\sigma\circ \Delta_0^r\otimes \id )\cR^r$ in two ways.
First using  \eqref{def2R1}, we have
\begin{align}
(\id\otimes \sigma\circ \Delta_0^l)\cR^l &= (\id\otimes \sigma)   \cR^l_{13}\cdot_r\cR^r_{12} = \cR^l_{12}\cdot_r\cR^r_{13}, \nonumber\\
(\sigma\circ \Delta_0^l\otimes \id )\cR^r&= (\sigma\otimes \id ) \cR^l_{13}\cdot_l\cR^r_{23} = \cR^l_{23}\cdot_l\cR^r_{13}, \nonumber\\
(\id\otimes \sigma\circ \Delta_0^r)\cR^l &= (\id\otimes \sigma)\cR^r_{13}\cdot_l\cR^l_{12}= \cR^r_{12}\cdot_l\cR^l_{13},\nonumber\\
(\sigma\circ \Delta_0^r\otimes \id )\cR^r &= (\sigma\otimes \id ) \cR^r_{13}\cdot_r\cR^l_{23}=\cR^r_{23}\cdot_r\cR^l_{13}. \nonumber
\end{align}    
On the other hand from     \eqref{def2R2}, we have that,   
\begin{align}
(\id\otimes \sigma\circ \Delta_0^l)\cR^l &= \cR^r_{23}( (\id\otimes \Delta_0^r )\cR^l)  \cR^{r}{}^{-1}_{23}  = \cR^r_{23}( \cR^r_{13}\cdot_l\cR^l_{12})  \cR^{r}{}^{-1}_{23}  \nonumber\\
(\sigma\circ \Delta_0^l\otimes \id )\cR^r &= \cR^r_{12}( ( \Delta_0^r\otimes \id  )\cR^r)  \cR^{r}{}^{-1}_{12} =
\cR^r_{12} (\cR^r_{13}\cdot_r\cR^l_{23} )  \cR^{r}{}^{-1}_{12} , \nonumber\\
(\id\otimes \sigma\circ \Delta_0^r)\cR^l &=  \cR^l_{23}( (\id\otimes \Delta_0^l )\cR^l)  \cR^{l}{}^{-1}_{23}  = \cR^l_{23}( \cR^l_{13}\cdot_r\cR^r_{12})  \cR^{l}{}^{-1}_{23}\nonumber\\
(\sigma\circ \Delta_0^r\otimes \id )\cR^r &= \cR^l_{12}( ( \Delta_0^l\otimes \id  )\cR^r)  \cR^{l}{}^{-1}_{12} =
\cR^l_{12} (\cR^l_{13}\cdot_l\cR^r_{23} )  \cR^{l}{}^{-1}_{12}\nonumber
\end{align}
Putting each equation with its above counterpart  leads to   \eqref{2yangbax2}.
\end{proof}

To see how \textbf{Definition \ref{2Rdef}} reduces to an ordinary R-matrix at degree-0 $\cG_0$, we direct the reader to the beginning of \S \ref{braiding2rep}, as well as \S \ref{braided2cat} in the weakened case.

\begin{remark}\label{quasiR2double}
    As we have noted in the beginning of this section, the 2-quantum double $\cG = D(\cH)$ by construction has equipped a canonical invertible quasitriangularity structure, which splits into two copies of its (skew) self-pairing form  \eqref{selfpair}. Conversely, if a quasitriangular 2-bialgbera $(\cG,\cR)$ defines a skew self-pairing, then $\cG$ factorizes by \textbf{Theorem \ref{2qd}} and forms a 2-Drinfel'd double. See also the ensuing paragraph for more details.
\end{remark}

\paragraph{The (dual) 2-R-matrix from factorizability.} Due to the factorizability result {\bf Theorem \ref{2qd}}, we could have begun our characterization with a general associative 2-bialgebra $\cK$ which factorizes into two copies of $\cG$, instead of the 2-quantum double $D(\cG,\cG)$. This introduces the braided transposition $\Psi:\cG\otimes\cG \rightarrow \cG\otimes \cG$ given in  \eqref{braid} into the definition of the dual 2-R-matrix:
\begin{equation}
    \cR^*_l = \operatorname{ev}_l\circ\Psi_{-1}^l,\qquad \cR^*_r = \operatorname{ev}_r\circ\Psi_{-1}^r \nonumber
\end{equation}
where $\operatorname{ev} = \operatorname{ev}_{l} + \operatorname{ev}_r$ is precisely the skew-pairing $\langle\cdot,\cdot\rangle_\text{sk}$ that we have introduced previously. 

Dualizing this construction then gives 
\begin{equation}
    \cR^l = \Psi_{-1}^l\circ \operatorname{coev}_l,\qquad \cR^r = \Psi_{-1}^r\circ\operatorname{coev}_r, \label{2rmatreconstruct}
\end{equation}
where $\operatorname{coev}= \operatorname{coev}_l + \operatorname{coev}_r: k\rightarrow \cG\otimes\cG$ is the coevaluation. In other words, we are able to reconstruct the 2-$R$-matrix from the braided transposition $\Psi$ on the 2-quantum double $\cK\cong D(\cG,\cG)$. Indeed,  \eqref{braidcomprel} gives the equivariance  \eqref{2ybequiv}, and the relation  \eqref{2yb1} implies  \eqref{def2R1}.

The odd-degreeness of the self-duality on $\cK\cong D(\cH)$ explains why only $\Psi_{-1}$ appears in the reconstruction of the invertible 2-$R$-matrix in the factoriable case: the degree-0 component $\Psi_0$ dualizes to that in degree-(-2) $\bar\Psi^*$ for the dual $\cK^*\cong \cK$, which has the same $t$-map $T= t\otimes t$. As $\bar \Psi^*$ is determined by $(\Psi^*_{-1})^{l,r}=\Psi_{-1}^{r,l}$ per  \eqref{braidcomprel}, the component $\Psi_0$ is also completely determined by $\Psi_{-1}$.

% Since {\bf Theorem \ref{2qd}} implies that $\cK\cong D(\cG,\cG)$, this particular construction is isomorphic to the one we have given above directly from $D(\cG,\cG)$. The characterization of the 2-$R$-matrix, {\bf Definition \ref{2Rdef}}, thus does not depend on whether we induce $R$ from the skew-pairing on $D(\cG,\cG)$ or the braiding trasposition $\Psi$ on $\cK$.

\section{Weak 2-bialgebras}\label{weak2bialgebras}
We now begin our endeavour to weaken the associativity conditions in the above 2-quantum double construction. The idea of \textit{non-associative} 2-algebra has not been developed nearly as much as their associative counterpart, but we shall take inspiration from their Lie 2-algebra counterparts.

The motivation for this endeavour is twofold:
\begin{enumerate}
    \item Mathematically, it is known \cite{heredia2016representations2groupsbaezcrans2vector} that the 2-representations of associative 2-algebras do not carry non-trivial $k$-invariants --- namely, all coherences such as the associators, unitors, pentagonators etc. are identities.
    \item Physically, it is understood that the $L_\infty$-algebra of semiclassical observables in higher-dimensional field theories \cite{Budzik:2022mpd,costello_gwilliam_2016} will, in general, acquire higher homotopy products upon perturbative quantization.
\end{enumerate}
We will then introduce a \textit{homotopy-refinement} of Baez-Crans 2-vector spaces, forming a 2-category $\mathsf{2Vect}^{hBC}$, by considering \textit{pseudo} $\mathsf{Vect}$-algebras \cite{Fiore2004PseudoLB} in $\mathsf{Cat}$.

\subsection{Definition of weak 2-algebras}

Operationally, through the macrocosm principle \cite{Baez:1995xq}, algebra objects in $\mathsf{2Vect}^{hBC}$ are 2-term \textit{$A_\infty$-algebras} of Stasheff \cite{Stasheff:1963}. We can explicitly describe them by generalizing {\bf Definition \ref{assoc2alg}}.

\begin{definition}\label{weak2alg}
A 2-term $A_\infty$-algebra, or equivaelntly a {\bf weak 2-algebra} $(\cG,\cT)$, is a map $t:\cG_{-1}\rightarrow \cG_0$ between a pair of {\it not necessarily associative} algebras, together with an {invertible homotopy map} $\cT:\cG^{3\otimes}_0\rightarrow \cG_{-1}$ such that we have the conditions  \eqref{algpeif1}, \eqref{algpeif2}, as well as
\begin{enumerate}
    \item the {\bf weak 1-associativity}, 
    \begin{equation}
        (xx')x'' - x(x'x'') = t\cT(x,x',x''),\qquad (yy')y'' - y(y'y'') = \cT(ty,ty',ty'')  \nonumber
    \end{equation}
    and the {\bf weak bimodularity},
    \begin{gather}
        x\cdot (x'\cdot y) - (xx')\cdot y = \cT(x,x',ty)\qquad
        (x\cdot y)\cdot x' - x\cdot (y\cdot x') = \cT(x,ty,x'),\nonumber\\
        (y\cdot x)\cdot x' - y\cdot (xx') = \cT(ty,x,x'),
        \nonumber
    \end{gather}
    for each $x,x',x''\in\cG_0$ and $y,y',y''\in\cG_{-1}$,
    \item the {\bf Hochschild 3-cocycle condition},
    \begin{equation}
        x_1\cdot \cT(x_2,x_3,x_4)+\cT(x_1,x_2,x_3)\cdot x_4 = \cT(x_1x_2,x_3,x_4)- \cT(x_1,x_2x_3,x_4)+\cT(x_1,x_2,x_3x_4)\nonumber
    \end{equation}
    for each $x_1,\dots,x_4\in\cG_0$.
\end{enumerate}
We call $(\cG,\cT)$ a unital weak 2-algebra if we have a unit map $\eta:k\rightarrow \cG$
%$\eta:\cG\rightarrow k$ 
that satisfies the usual conditions \eqref{def:unity}, and such that $\cT$ is {\bf normalized} --- namely it vanishes whenever any of its arguments are $0$ or $\eta_0$.
\end{definition}
\noindent We note here that this structure is precisely the definition of a 2-term homotopy {\bf $A_\infty$-algebra} \cite{Stasheff:1963}, together with the Peiffer identity constraint \eqref{algpeif2}. The correspondence between the $n$-nary product $m_n\in \operatorname{Hom}^{n-2}(\cG^{n\otimes},\cG)$ and the weak 2-algebra structure is given by $$m_1(-)= t(-),\qquad m_2(-,-) = (--,-\cdot-),\qquad m_3(-,-,-)=\cT(-,-,-),$$ with $m_n=0$ trivial for $n\geq 4$. Nevertheless, we shall see that the Peiffer identity on $\cG$ shall play a very important role. 

Similar to {\it Remark \ref{bimodulecondition}}, the Peiffer identity implies the further constraints\begin{gather}
    (x\cdot y)y' - x\cdot(yy') = \cT(x,ty,ty'),\qquad (y\cdot x)y' - y(x\cdot y') = \cT(ty,x,ty'),\nonumber\\ 
    y(y'\cdot x) - (yy')\cdot x = \cT(ty,ty',x) \nonumber
\end{gather}
for $t\neq 0$, where $x\in\cG_0,y,y'\in\cG_{-1}$. 

\subsubsection{Weak 2-algebra homomorphisms}
We define a map between weak 2-algebras $(\cG,\cT)\rightarrow(\cG',\cT')$ as a cochain map $F=(F_1,F_0,F_{-1}):\cG\rightarrow \cG'$:
\begin{equation}
    F_1: \cG_0^{2\otimes}\rightarrow\cG_{-1}',\qquad F_0:\cG_0\rightarrow\cG_0',\qquad F_{-1}:\cG_{-1} \rightarrow\cG_{-1}' ,\nonumber
\end{equation}
such that $t'\circ F_{-1} = F_0\circ t$ and the following conditions are satisfied,
\begin{eqnarray}
    t'F_1(x,x') &=& F_0(xx') - F_0(x)F_0(x'),\nonumber\\
    F_1(x,ty) &=& F_{-1}(x\cdot y) - F_0(x)\cdot'F_{-1}(y),\nonumber\\
    F_1(ty,x) &=& F_{-1}(y\cdot x) - F_{-1}(y)\cdot'F_0(x)\nonumber\\
    \cT'(F_0(x),F_0(x'),F_0(x'')) &=& F_0(x)\cdot'F_1(x',x'')-F_1(xx',x'') \nonumber\\
     &\qquad&+~ F_1(x,x'x'') - F_1(x,x')\cdot'F_0(x'') \nonumber\\
     &\qquad&+~F_{-1}(\cT(x,x',x'')) .\label{weak2hom}
\end{eqnarray}
In other words, $F_1$ contributes as an "obstruction" for the other components $(F_0,F_{-1})$ to define a strict 2-algebra homomorphism, but only up to homotopy in the sense that $F_1$ by definition (see the last equation of  \eqref{weak2hom}) gives an explicit trivialization of the Hochschild cohomology class $[\cT'\circ F_0] - [F_{-1}\circ\cT]=0$.

It can then be deduced that quasi-isomorphism classes of weak 2-algebras --- where $\cG\sim \cG'$ are said to be quasi-isomorphic iff there exists a weakly inertible cochain map  \eqref{weak2hom} between them --- is still labeled by Hochschild cohomology classes $\cT\in HH^3(\cN,V)$, where $\mathcal{N} = \operatorname{coker}t$ and $V=\operatorname{ker}t$. In particular, $(\cG,\cT)$ is always quasi-isomorphic to its skeleton $(\cN\xrightarrow{0}V,[\cT])$, which is in fact associative.

\subsubsection{Example: weak 2-group algebras} 
Our definition of the weak 2-algebra is less natural in the context of groups, as weakening the associativity in a group $G$ reads
\begin{equation}
    (xx')x'' = \tau(x,x',x'')\cdot x(x'x''),\qquad x,x',x''\in G,\nonumber
\end{equation}
which does not reproduce our above notion of a weak 2-algebra when we pass to the group algebra $kG$. There is hence an inherent disconnect between a natural notion of a "weak 2-group" and that of a weak 2-algebra.

\medskip 

Consider a skeletal 2-group $G$ with Ho{\'a}ng data $(G_0,G_{-1},\tau)$ as a categorical group \cite{Cui_2017}. There is a copy of $G_{-1}$ over each object $x\in G_0$ as the space of endomorphisms on $x$. Notice here that $G_0$ is a genuine group with an associative product, but there are distinguished associator isomorphisms valued in $G_{-1}$,
\begin{equation}
    \tau(x,x',x''):(xx')x''\rightarrow x(x'x''),\qquad x,x',x''\in G_0,\nonumber
\end{equation}
that represents the Postnikov class $\tau\in H^3(G_0,G_{-1})$. The 3-cocycle condition for $\tau$ holds due to the pentagon relation. Note the group $G_{-1}$ is Abelian ($G$ being skeletal) and we use the addition for its product.

\medskip 

We wish to take the same point of view with weak 2-algebras. Suppose $\cG:\cG_{-1}\xrightarrow{0}\cG_0$ is a weak skeletal 2-algebra, then $\cG_0$ is in fact associative, and $\cG_{-1}$ is an associative $\cG_0$-bimodule. The difference with the strict case is that there are now distinguished associator isomorphisms
\begin{equation}
    \cT(x,x',x''): (xx')x''\rightarrow x(x'x''),\label{3cocyassociso}
\end{equation}
which is given by the data of the homotopy map $\cT$. Unfortunately, the construction of a 2-group algebra $kG$ described in  \eqref{2grpalg}, \eqref{2grpalg2} does {\it not} preserve the classifying 3-cocycles. This is because that the $t$-map on $kG$ is the augmentation, and hence $kG$ is classified by $HH^3(kG_0/k,\varepsilon)$ where $\operatorname{im}t \cong k$ while $\varepsilon=\operatorname{ker}t$ is the augmentation ideal.

It is, however, possible to construct a version of the group 2-algebra $kG$ that does give rise to a correspondence
\begin{equation}
    H^3(G_0,G_{-1})\rightarrow HH^3(kG_0,kG_{-1}),\qquad\tau\mapsto \cT,\nonumber
\end{equation}
as we have noted at the end of Section \ref{sec:2gpalgb}. Moreover, for certain skeletal 2-groups, one may even leverage the natural (ring!) isomorphism $HH^\ast(kN,kN)\cong H^\ast(N,kN)$ \cite{Siegel:1999} to produce a bijective correspondence between these classifying 3-cocycles. This fact is used in the accompanying work \cite{Chen2z:2023}.

\subsection{Weak 2-coalgebras}
We begin by defining the notion of a weak 2-coalgebra. Recall that the weakening in {\bf Definition \ref{weak2alg}} concerns only the associativity of the 2-algebra structure. Correspondingly, the weakening of a 2-coalgebra should only concern the coassociativity.

For brevity of notation later, we first rewrite the equations  \eqref{cohgrpd00}, \eqref{cohgrpd10} in a more concise way. Consider coassociativity  \eqref{cohgrpd00}; we naturally extend $\Delta_{-1}$ to act on tensor products (with alternating sign) such that 
\begin{equation}
    \Delta_{-1}\circ\Delta_{-1} \equiv (\id\otimes \Delta_{-1})\circ \Delta_{-1} - (\Delta_{-1}\otimes \id)\circ\Delta_{-1}.\nonumber
\end{equation}
Secondly, we recombine $\Delta_0 = \Delta_0^l + \Delta_0^r$ and extend it as well to tensor products, such that
\begin{eqnarray}
    (\Delta_{-1} + \Delta_0)\circ\Delta_0 &\equiv& \left[(\Delta_{-1}\otimes \id)\circ \Delta_0^l - (\id\otimes\Delta_0^l)\circ\Delta_0^l\right] \nonumber\\
    &\qquad&+~ \left[(\id\otimes\Delta_{-1})\circ\Delta_0^r -(\Delta_0^r\otimes \id)\circ\Delta_0^r\right]\nonumber
\end{eqnarray}
encodes two expressions in  \eqref{cohgrpd10}. We extend the $t$-map to the triple tensor product, $$D_t = \id\otimes \id\otimes t - \id\otimes t\otimes \id + t\otimes \id\otimes \id,$$ such that the equation
\begin{equation}
    D_t\circ\Delta_0\circ\Delta_0 = \Delta_0\circ D_t\circ\Delta_0 \nonumber
\end{equation}
encodes all three equations in  \eqref{cobimodu}. For convenience, we define also the map
\begin{equation}
    D_t[2] \equiv t\otimes t\otimes \id - t\otimes  \id\otimes t + \id\otimes t\otimes t,\nonumber
\end{equation}
which is an extension of two applications of $t$ to the $3$-fold tensor product.

\begin{definition}
Let $\Delta_1:\cG_0\rightarrow \cG_{-1}^{3\otimes}$ denote an {\it invertible} trilinear map. Together with the maps $(\Delta_{-1},\Delta_0)$ defined as in  \eqref{grpdcoprod}, we say that the tuple $(\cG,\Delta=(\Delta_{-1},\Delta_0,\Delta_1))$ is a {\bf weak 2-coalgebra} iff coequivariance  \eqref{cohgrpd+}, coPeiffer identity  \eqref{cohgrpd-}, \textbf{weak coassociativity} 
\begin{eqnarray}
    \Delta_{-1}\circ\Delta_{-1} &=& \Delta_1\circ t,\nonumber\\
    (\Delta_{-1} + \Delta_0)\circ\Delta_0 &=& D_t\circ\Delta_1,\label{weakcohgrpd1}
\end{eqnarray}
and \textbf{2-coassociativity}
\begin{equation}
    \Delta_1\circ\Delta_0 = \Delta_{-1}\circ\Delta_1 \label{weakcohgrpd2}
\end{equation}
are satisfied. In which case we call $\Delta_1$ the {\bf coassociator} of $\cG$. 

We call $(\cG,\Delta)$ \textbf{counital} if it is equipped with a counit $\epsilon:k\rightarrow\cG$ satisfying the usual conditions, and $\epsilon\circ\Delta_1=0$.
\end{definition}

Notice that, provided the coequivariance and the coPeiffer identity are satisfied, applying one more $t$-map to  \eqref{weakcohgrpd1} yields
\begin{equation}
    \Delta_0'\circ\Delta_0- \Delta_0\circ\Delta_0' = D_t[2]\circ \Delta_1,\label{weakcohgrpd3}
\end{equation}
which is a monoidal weakening of the condition  \eqref{cobimodu}. Similarly, applying the $t$-map yet once more gives a map $\Phi\equiv (t\otimes t\otimes t)\Delta_1:\cG_0\rightarrow \cG_0^{3\otimes}$ that lands only in $\cG_0$. We write this element multiplicatively such that
\begin{equation}
    (\Delta_0'\otimes \id)\circ\Delta_0' = \Phi \circ (\id\otimes \Delta_0')\circ\Delta_0'.\label{weakcohgrpd4}
\end{equation}
Recall that, in the skeletal case where $t=0$, the coproducts $\Delta_{-1},\Delta_0,\Delta_0'$ are independent and hence  \eqref{weakcohgrpd4} should also be imposed independently from  \eqref{weakcohgrpd1}.

\subsection{Weak 2-bialgebras}
Suppose now $(\cG,\cT)$ is a weak 2-algebra equipped with the tuple $\Delta=(\Delta_{-1},\Delta_0,\Delta_1)$ of linear maps. Recall the Sweedler notation  \eqref{deg0sweed} for $\Delta_0':\cG_0\rightarrow\cG_0^{2\otimes}$. We use it to state the condition that the coassociator $\Delta_1$ preserves the algebra structure on $\cG$,
\begin{eqnarray}
    (\Delta_{-1}\circ\cT)(x,x',x'') &=& \cT(\bar x_{(1)},\bar x_{(1)}',\bar x_{(1)}'') \otimes \cT(\bar x_{(2)},\bar x_{(2)}',\bar x_{(2)}''),\nonumber\\
    \Delta_1(xx') &=& x_{(1)}x_{(1)}'\otimes x_{(2)}x_{(2)}'\otimes x_{(3)}x_{(3)}',\label{coassocalg}
\end{eqnarray}
for $x,x',x''\in\cG_0$. Note that $\bar x_{(1)},\bar x_{(2)}\in\cG_0$ are not to be confused with the elements $x_{(1)}^{l,r}$ in  \eqref{sweed}.
 
\begin{definition}\label{wk2bialg}
The tuple $(\cG,\cT,\Delta)$ is a (unital) {\bf weak 2-bialgebra} iff $(\cG,\cT)$ is a weak 2-algebra and $(\cG,\Delta)$ is a (counital) weak 2-coalgebra. Equivalently, $(\cG,\cT,\Delta)$ is a weak 2-bialgebra iff the tuple $\Delta=(\Delta_1,\Delta_0,\Delta_{-1})$ satisfies  \eqref{cohgrpd-}, \eqref{cohgrpd+}, \eqref{weakcohgrpd1}-\eqref{weakcohgrpd3}, \eqref{2algcoprod} and \eqref{coassocalg}.

A weak 2-bialgebra $(\cG,\cT,\Delta)$ is called {\bf quasi-2-bialgebra} if $\cT=0$.
\end{definition}

%\subsubsection{Dually paired weak 2-bialgebras} 
Similar to what we have done for the strict case, we suppose $\cG$ is dually paired with its dual 2-algebra through  \eqref{pairing}. The coassociator $\Delta_1$ on $\cG$ induces a linear map $\cT^*:\cG_{-1}^*\rightarrow \cG_0^*$ by
\begin{equation}
    \langle f\otimes f'\otimes f'',\Delta_1(x)\rangle = \langle \cT^*(f,f',f''),x\rangle. \nonumber
\end{equation}
Similarly, the Hochschild 3-cocycle $\cT$ on $\cG$ induces a linear map $\Delta_1^*:\cG_{-1}^*\rightarrow (\cG_0^*)^{3\otimes}$. We form the tuple $\Delta^*=(\Delta_1^*,\Delta_0^*,\Delta_{-1}^*)$.

\begin{proposition}\label{weak2bialg}
Let $\cG,\cG^*$ be dually paired, then $(\cG,\cT,\Delta)$ is a (unital) weak 2-bialgebra iff $(\cG^*,\cT^*,\Delta^*)$ is a (unital) weak 2-bialgebra.
\end{proposition}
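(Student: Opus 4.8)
The plan is to mirror the proof of \textbf{Proposition \ref{2bialg}} and extend it to accommodate the two pieces of weakening data, the associator $\cT$ and the coassociator $\Delta_1$. The governing observation is the duality dictionary set up immediately before the statement: because the pairing \eqref{pairing} flips the grading and sends $t\mapsto t^*$, the coassociator $\Delta_1$ of $\cG$ pairs precisely to the Hochschild $3$-cocycle $\cT^*$ of $\cG^*$, while the $3$-cocycle $\cT$ of $\cG$ pairs to the coassociator $\Delta_1^*$ of $\cG^*$. Thus, under $\cG\leftrightarrow\cG^*$, the roles of \emph{weak associativity} and \emph{weak coassociativity} are exchanged, and the strategy is to check that each defining condition of a weak $2$-bialgebra (\textbf{Definition \ref{wk2bialg}}) dualizes to one of the defining conditions for the other structure.

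First I would dispose of the conditions that do not involve the weakening at all. The coequivariance \eqref{cohgrpd+}, the coPeiffer identity \eqref{cohgrpd-}, the (co)bimodule structure, the unit/counit duality, and crucially the $2$-bialgebra axioms \eqref{2algcoprod}, all dualize exactly as in the proof of \textbf{Proposition \ref{2bialg}}: those arguments are pure Sweedler-notation computations with the pairing and never invoke (co)associativity, so they carry over verbatim to the non-associative setting here. This reduces the problem to the genuinely new content, namely the five conditions \eqref{weakcohgrpd1}, \eqref{weakcohgrpd2}, \eqref{weakcohgrpd3}, \eqref{coassocalg}, together with the normalization/counit constraints.

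Next I would dualize the weakening conditions term by term. Pairing the first equation of weak coassociativity \eqref{weakcohgrpd1}, $\Delta_{-1}\circ\Delta_{-1}=\Delta_1\circ t$, against elements of $\cG_{-1}$ yields (up to the sign from the alternating extension of $\Delta_{-1}$) the weak $1$-associativity of the degree-$0$ product of $\cG^*$ with defect $t^*\cT^*$, exactly as required by \textbf{Definition \ref{weak2alg}} for $\cG^*$; the mixed equation of \eqref{weakcohgrpd1} together with \eqref{weakcohgrpd3}, \eqref{weakcohgrpd4} dualizes in the same manner to the weak bimodularity and the all-$t$-applied associativity of $\cG^*$, where the operators $D_t$, $D_t[2]$, $\Phi$ correspond to successive insertions of $t^*$. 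Running this in the reverse direction, the weak $1$-associativity and weak bimodularity of $\cG$ produce \eqref{weakcohgrpd1}, \eqref{weakcohgrpd3}, \eqref{weakcohgrpd4} for $\cG^*$. The $2$-coassociativity \eqref{weakcohgrpd2}, $\Delta_1\circ\Delta_0=\Delta_{-1}\circ\Delta_1$, and the Hochschild $3$-cocycle condition swap roles: the former pairs to the cocycle identity for $\cT^*$ and the latter pairs to \eqref{weakcohgrpd2} for $\cG^*$. Finally, the two compatibility conditions \eqref{coassocalg} — that $\cT$ is a coalgebra map and $\Delta_1$ an algebra map — are interchanged under dualization, while the normalization of $\cT$ dualizes to the counit condition $\epsilon\circ\Delta_1=0$ (and conversely).

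The main obstacle I anticipate is not conceptual but the careful bookkeeping of the $t$-insertions and signs. Specifically, verifying that the Hochschild $3$-cocycle condition matches $2$-coassociativity \eqref{weakcohgrpd2} under the pairing requires expanding both sides fully in the Sweedler notations \eqref{sweed}, \eqref{deg0sweed} and tracking how the grading flip turns each product into the dual coproduct and each $t$-insertion in $D_t$ (respectively $D_t[2]$, $\Phi$) into the corresponding $t^*$-insertion in the (co)bimodule structure. The alternating signs built into the extensions $\Delta_{-1}\circ\Delta_{-1}$ and $(\Delta_{-1}+\Delta_0)\circ\Delta_0$ must be reconciled with the signs in the associativity defects of \textbf{Definition \ref{weak2alg}}; once the conventions are fixed consistently the identifications hold, precisely because $\cT$ and $\Delta_1$ were \emph{defined} as mutual pairing-duals. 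As in \textbf{Proposition \ref{2bialg}}, the converse direction is identical by the symmetry of the non-degenerate pairing \eqref{pairing}, which completes the equivalence.
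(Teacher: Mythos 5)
Your proposal is correct and follows essentially the same route as the paper's proof: the strict/non-weak conditions are delegated to \textbf{Proposition \ref{2bialg}}, the weak (co)associativity conditions \eqref{weakcohgrpd1}, \eqref{weakcohgrpd3} are matched with weak 1-associativity/bimodularity of the dual, and the two genuinely non-trivial identifications — the Hochschild 3-cocycle condition for $\cT^*$ pairing to 2-coassociativity \eqref{weakcohgrpd2} for $\Delta_1$, and the two compatibility conditions in \eqref{coassocalg} being interchanged under dualization — are exactly the two checks the paper carries out by explicit Sweedler computation. The only difference is that you defer the Sweedler-notation bookkeeping that the paper writes out in full, but the structure and the key correspondences are identical.
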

\begin{proof}
Let $(\cG^*,\cT^*,\Delta^*)$ be the dual weak 2-bialgebra. By a straightforward computation, it is obvious that the conditions  \eqref{cohgrpd+}, \eqref{cohgrpd-}, \eqref{weakcohgrpd1}, \eqref{weakcohgrpd3} on $(\cG^*,\Delta^*)$ hold iff the Peiffer conditions, weak 1-associativity and weak bimodularity hold for the 2-algebra $\cG$. The fact that $(\Delta_{-1}^*,\Delta_0^*)$ are 2-algebra maps, as well as the units/counits, are treated the same way as in the proof of {\bf Proposition \ref{2bialg}}.

The two non-trivial identities to check are  \eqref{weakcohgrpd2}, \eqref{coassocalg}. We will use the Sweedler notation for $\Delta_1$, $\Delta_1 x= x_{(1)}\otimes x_{(2)}\otimes x_{(3)}$. Note $x\in\cG_0$, but $x_{(i)}\in \cG_{-1}$ for $i=1,2,3$.

We begin first with the latter, and check that $\Delta_1$ is a 2-algebra map. We have
\begin{equation}
    \langle \Delta_1^*(ff'),x\otimes x'\otimes x''\rangle = \langle ff',\cT(x,x',x'')\rangle = \langle f\otimes f',\Delta_{-1}\cT(x,x',x'')\rangle,\nonumber
\end{equation}
while (recall $\Delta_0'$ dualizes to the multiplication on $\cG_0^*$, and $ f_{(i)}\in \cG^*_{-1}$)
\begin{eqnarray}
    \langle  f_{(1)} f_{(1)}'\otimes f_{(2)} f_{(2)}'\otimes  f_{(3)} f_{(3)}', x\otimes x'\otimes x''\rangle &=& \langle \bigotimes_{i=1}^3 f_{(i)}\otimes  f_{(i)}',\Delta_0'(x)\otimes\Delta_0'(x')\otimes\Delta_0'(x'')\rangle \nonumber\\
    &=&\langle ( f_{(1)}\otimes  f_{(1)}')\otimes ( f_{(2)}\otimes f_{(2)}')\otimes ( f_{(3)}\otimes  f_{(3)}'), \nonumber\\
    &\qquad&\quad (\bar x_{(1)}\otimes \bar x_{(2)}) \otimes (\bar x'_{(1)}\otimes \bar x'_{(2)})\otimes (\bar x''_{(1)}\otimes \bar x''_{(2)})\rangle \nonumber\\
    &=& \langle \Delta_1^*(f) \otimes \Delta_1^*(f'), \nonumber\\
    &\qquad& \quad(\bar x_{(1)}\otimes \bar x_{(1)}'\otimes \bar x_{(1)}'')\otimes (\bar x_{(2)}\otimes \bar x_{(2)}'\otimes \bar x_{(2)}'')\rangle \nonumber\\
    &=& \langle f\otimes f',\cT(\bar x_{(1)},\bar x_{(1)}',\bar x_{(1)}'')\otimes \cT(\bar x_{(2)},\bar x_{(2)}',\bar x_{(2)}'')\rangle,\nonumber
\end{eqnarray}
meaning that $\Delta_1$ is a 2-algebra map on $(\cG,\cT)$ iff $\Delta_1^*$ is also one on $(\cG^*,\cT^*)$.

We now check  \eqref{weakcohgrpd3}. Fix arbitrary elements $f_1,\dots,f_4\in\cG_{-1}^*$, we compute
\begin{eqnarray}
    \langle f_1\cdot\cT^*(f_2,f_3,f_4),x\rangle &=& \langle f_1\otimes \cT^*(f_2,f_3,f_4),\Delta_0^l(x)\rangle = \langle f_1\otimes\dots\otimes f_4,(1\otimes \Delta_1)\circ\Delta_0^l(x)\rangle,\nonumber\\
    \langle \cT^*(f_1,f_2,f_3)\cdot f_4,x\rangle &=& \langle \cT^*(f_1,f_2,f_3)\otimes f_4,\Delta_0^r(x)\rangle = \langle f_1\otimes\dots\otimes f_4,(\Delta_1\otimes 1)\circ\Delta_0^r(x)\rangle;\nonumber
\end{eqnarray}
on the other hand, we have 
\begin{eqnarray}
    \langle \cT^*(f_1f_2,f_3,f_4),x\rangle &=& \langle f_1f_2\otimes f_3\otimes f_4,\Delta_1(x)\rangle = \langle f_1\otimes\dots\otimes f_4,(\Delta_{-1}\otimes 1\otimes 1)\circ\Delta_1(x)\rangle,\nonumber\\
    \langle \cT^*(f_1,f_2,f_3f_4),x\rangle &=& \langle f_1\otimes f_2\otimes f_3 f_4,\Delta_1(x)\rangle = \langle f_1\otimes\dots\otimes f_4,(1\otimes 1\otimes \Delta_{-1})\circ\Delta_1(x)\rangle,\nonumber\\
    \langle \cT^*(f_1,f_2f_3,f_4),x\rangle &=& \langle f_1\otimes f_2f_3\otimes f_4,\Delta_1(x)\rangle = \langle f_1\otimes\dots\otimes f_4,(1\otimes \Delta_{-1}\otimes 1)\circ\Delta_1(x)\rangle.\nonumber
\end{eqnarray}
By summing these up, the Hochschild 3-cocycle condition for $\cT^*$ is equivalent to
\begin{equation}
    (1\otimes \Delta_1)\circ\Delta_0^l + (\Delta_1\otimes 1)\circ\Delta_0^r=\Delta_1\circ\Delta_0=\Delta_{-1}\circ\Delta_1.\nonumber
\end{equation}
which is exactly  \eqref{weakcohgrpd3}.
\end{proof}
\noindent Given $(\cG,\cG^*)$ are dually paired 2-bialgebras, we see that a quasi-2-bialgebra $(\cG,\cT=0,\Delta)$ encode the same data as a weak but coassociative 2-bialgebra $(\cG^*,\cT^*,\Delta^*)$, in which $\Delta_1^*=0$.

\section{Weak (skeletal) 2-quantum doubles}\label{weakskeletalqd}

Let $\cG,\cG^*$ be dually paired (weak) 2-bialgebras. To form its weak 2-quantum double, we require them to act on each other \textit{weakly}. This means, in particular, that the coadjoint actions $\bar\rhd,\bar\lhd$ now come with the additional components 
\begin{equation}
    \rhd_1: \cG_0^{2\otimes}\rightarrow \operatorname{Hom}(\cG_{-1}^*,\cG_0^*),\qquad \lhd_1: (\cG_{-1}^*)^{2\otimes}\rightarrow \operatorname{Hom}(\cG_0,\cG_{-1}).\nonumber
\end{equation}
This will be justified further in Section \ref{wk2rep} where we show that the coadjoint action can be interpreted weak representation. More specifically, just like the product and actions in \eqref{coadj0} contribute to defining dually some (crossed) relations, the cocycle $\cT$ should also contribute dually to the adjoint action. This is what $\rhd_1$ and $\lhd_1$ stand for, as we will see in   \eqref{assoc}. 

To construct non-skeletal weak 2-quantum doubles, one must explicitly keep track of how $\cT,\cT^*,\rhd_1,\lhd_1$ appear in the crossed-relations  \eqref{comprel1}, \eqref{comprel2}, \eqref{crossrel}. This is a Herculean task that we leave to the ambitious reader. We will restrict from now on to the skeletal case when defining the quantum double.

\subsection{Matched pair of skeletal weak  2-bialgebras}
Though the situation is drastically simplified in the skeletal case $t=0$, it is now important for us to keep track of the associators. We shall do this by using the notation of  \eqref{3cocyassociso}.

The non-trivial crossed relations  \eqref{skelcomprel}, in particular, are attached with the components $\rhd_1,\lhd_1$ of the coadjoint actions,
\begin{eqnarray}
    (x)\lhd_1^{f,f'}&:& x\rhd_{-1}(ff') \xrightarrow{\sim} (x_{(1)}\bar\rhd f_{(1)})\cdot^* ((x_{(2)}\bar\lhd f_{(2)})\bar\rhd f'),\nonumber\\
    \rhd_1^{x,x'}(f)&:&(xx') \lhd_{-1} f \xrightarrow{\sim} (x\bar\lhd (x_{(1)}'\bar\rhd f_{(1)}))\cdot (x_{(2)}\bar\lhd f_{(2)}),\nonumber
\end{eqnarray}
where we have made use of the shorthand notation defined in {\it Remark \ref{skelqd}}. These come together to allow us to define a Hochschild 3-cochain on the 2-quantum double $D(\cG)$,
\begin{equation}
    \cT_D:D(\cG)_0^{3\otimes}\rightarrow D(\cG)_{-1},\qquad \cT_D(w,w',w'') = \begin{cases}\cT(x,x',x'') \\ \rhd_1^{x,x'}(f'')  \\ (x)\lhd_1^{f',f''}  \\ \cT^*(f,f',f'') \end{cases},\label{assoc} 
\end{equation}
where $w=(x,f) \in D(\cG)_0$ is a degree-0 element, with $x\in\cG_0$ and $f\in\cG_{-1}^*$.

\begin{definition}
The pair $(\cG,\cG^*)$ of mutually paired weak skeletal 2-bialgebras forms a {\bf (skeletal) matched pair} iff, in addition to the compatibility conditions  \eqref{comprel1}-\eqref{grpdunit}, the 3-cochain $\cT_D$ defined in  \eqref{assoc} is a Hochschild 3-cocycle on $D(\cG) \cong \cG\otimes \cG^*$. 
\end{definition}
For arguments contained solely in $\cG_0$ or $\cG_{-1}^*$, this condition merely states the 3-cocycle conditions for $\cT,\cT^*$, respectively. The other ones mix non-trivially the different components of the 3-cocycle $\cT_D$,
\begin{align}
    x_1\rhd_0 (\rhd_1^{x_2,x_3}(f)) - \cT(x_1,x_2,x_3)\lhd_0 f &= \rhd_1^{x_1x_2,x_3}(f) - \rhd_1^{x_1,x_2x_3}(f) + \cT(x_1,x_2,x_3\lhd_{-1}f),\nonumber\\
    x_1 \cdot (x_2)\lhd_1^{f_1,f_2} - \rhd_1^{x_1,x_2}(f_1)\cdot^* f_2 &= (x_1x_2)\lhd_1^{f_1,f_2} - (x_1)\lhd_1^{x_2\rhd_{-1}f_1,f_2} + \rhd_1^{x_1,x_2}(f_1f_2),\nonumber\\
    x \rhd_0 \cT^*(f_1,f_2,f_3) - ((x)\lhd_1^{f_1,f_2})\lhd_0 f_3 &= \cT^*(x\rhd_{-1}f_1,f_2,f_3) - (x)\lhd_1^{f_1f_2,f_3} + (x)\lhd_1^{f_1,f_2f_3}.\label{2dd3cocyc}  
\end{align}
Then, we construct $D(\cG)$ as a 2-bialgebra as in Section \ref{str2qd}. 

Since we are in the skeletal case, it is easy to see from  \eqref{weak2hom} that the 2-quantum double is \textit{weakly} self-dual $D(\cG)\sim D(\cG)^*$, where we recall $\sim$ denotes equivalence of 2-algebras under the classification result {\bf Theorem \ref{2algclass}}. This means that the associated Hochschild 3-cocycles $\cT_D,\cT^*_D$ are cohomologous, where
\begin{equation}
    \cT^*_D:D(\cG)_0^{3\otimes}\rightarrow D(\cG)_{-1},\qquad {\cT}^*_D(w,w',w'') = \begin{cases}\mathring{\cT}(f,f',f'') \\ \mathring{\rhd}_1^{f,f'}(x'')  \\ (f)\mathring{\lhd}_1^{x',x''}  \\ \mathring{\cT}^*(x,x',x'') \end{cases},\nonumber
\end{equation}
denotes the dual of the 3-cocycle $\cT_D$. The "dual" version of  \eqref{2dd3cocyc} reads
\begin{align}
    f_1\lhd_0 (\mathring{\rhd}_1^{f_2,f_3}(x)) - \mathring{\cT}(f_1,f_2,f_3)\rhd_0 x &= \mathring{\rhd}_1^{f_1f_2,f_3}(x) - \mathring{\rhd}_1^{f_1,f_2f_3}(x) + \mathring{\cT}(f_1,f_2,f_3\lhd_{-1}x),\nonumber\\
    f_1 \cdot^* (f_2)\mathring{\lhd}_1^{x_1,x_2} - \mathring{\rhd}_1^{f_1,f_2}(x_1)\cdot x_2 &= (f_1f_2)\mathring{\lhd}_1^{x_1,x_2} - (f_1)\mathring{\lhd}_1^{f_2\lhd_{-1}x_1,x_2} + \mathring{\rhd}_1^{f_1,f_2}(x_1x_2),\nonumber\\
    f \lhd_0 \mathring{\cT}^*(x_1,x_2,x_3) - ((f)\mathring{\lhd}_1^{x_1,x_2})\rhd_0 x_3 &= \mathring{\cT}^*(f\rhd_{-1}x_1,x_2,x_3) - (f)\mathring{\lhd}_1^{x_1x_2,x_3} + (f)\mathring{\lhd}_1^{x_1,x_2x_3}.\label{2dd3cocycdual}
\end{align}
It is important to note that the components $\rhd_1,\lhd_1$ do {\it not} form Hochschild 3-cocycles by themselves, and similarly for the components $\mathring{\rhd}_1,\mathring{\lhd}_1$. 

\subsection{Factorizability of weak 2-bialgebras}
We now prove the analogue of {\bf Theorem \ref{2qd}}.
\begin{theorem}\label{weak2dd}
    Suppose $(\cK,\hat\cdot,\cT_K)$ is a weak 2-bialgebra that weakly factors into two skeletal weak sub-2-bialgebras $\cG,\cH$, namely the inclusions in the span  \eqref{span} are weak homomorphisms as defined in  \eqref{weak2hom}, then $\cK\sim D(\cG)$ are equivalent as 2-bialgebras. 
\end{theorem}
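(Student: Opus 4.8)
The plan is to leverage the strict factorizability result \textbf{Theorem \ref{2qd}} for the underlying associative data, and to account separately for the single genuinely weak ingredient — the Hochschild $3$-cocycle $\cT_K$ — by tracking it through the factorization. The key simplification is that we are in the skeletal case $t_\cG=t_\cH=T=0$: by weak $1$-associativity and weak bimodularity (\textbf{Definition \ref{weak2alg}}) every algebra and bimodule product in $\cG,\cH,\cK$ is then \emph{strictly} associative, so the only non-strict structure is the degree-$(-1)$-valued associator. First I would forget the cocycles $\cT,\cT^*,\cT_K$ and apply \textbf{Theorem \ref{2qd}} verbatim to the resulting strict skeletal $2$-bialgebras: the span \eqref{span} of strict sub-$2$-bialgebras produces the matched-pair actions $\bar\rhd,\bar\lhd$, the braided transposition $\Psi$ of \eqref{braid}, and the strict identification $\cK\cong\cG\bar{\bowtie}\cH$ together with the skeletal crossed relations \eqref{skelcomprel}; since $\cG,\cH$ are dually paired the pairing identifies $\cH\cong\cG^*$, so $\cG\bar{\bowtie}\cH=D(\cG)$.

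Next I would reconstruct the associator. Since the inclusions $\iota,\jmath$ are weak homomorphisms, they carry components $F_1^{\iota}:\cG_0^{2\otimes}\to\cK_{-1}$ and $F_1^{\jmath}:\cH_0^{2\otimes}\to\cK_{-1}$ obeying \eqref{weak2hom}, which measure precisely the failure of $\iota,\jmath$ to transport associativity strictly. Writing a degree-$0$ element as $w=\iota_0(x)\jmath_0(f)$ under the factorization $\cK_0\cong\cG_0\otimes\cH_0$, I would expand $\cT_K(w,w',w'')$ according to the placement of $\cG$- versus $\cH$-valued arguments. On purely $\cG$- or purely $\cH$-valued slots this recovers $\cT$ and $\cT^*$; on the mixed sectors, the weak-homomorphism relations \eqref{weak2hom} convert the $F_1$-terms and the coadjoint actions into exactly the off-diagonal associator components $\rhd_1^{x,x'}(f)$ and $(x)\lhd_1^{f,f'}$. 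This is the content of the assignment \eqref{assoc}: $\cT_K$ decomposes into the four-piece $3$-cochain $\cT_D$ on $D(\cG)$.

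It then remains to check that $\cT_D$ is a genuine Hochschild $3$-cocycle, i.e.\ that $(\cG,\cH)$ is a skeletal matched pair in the weak sense. I would obtain this by restricting the $3$-cocycle identity for $\cT_K$ (which holds because $\cK$ is a weak $2$-bialgebra) to arguments of each mixed type across the three slots of $\cK_0^{3\otimes}$; combining it with the weak-homomorphism compatibilities \eqref{weak2hom} and the strict crossed relations already in hand, the expansion collapses term-by-term onto the three mixed identities \eqref{2dd3cocyc}. Conceptually, because $\Psi,\bar\rhd,\bar\lhd$ are inherited from the strict theorem, the $2$-braiding relations \eqref{2yb1} now hold only up to the associator, and this controlled failure is exactly what \eqref{2dd3cocyc} records. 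Finally, the weak cochain map assembled from $(\iota,\jmath)$ with its $F_1$-data is a weakly invertible $2$-bialgebra homomorphism $\cK\to D(\cG)$ in the sense of \eqref{weak2hom}, trivializing $[\cT_K]-[\cT_D]$ in $HH^3(\operatorname{coker}T,\operatorname{ker}T)$; by the classification \textbf{Theorem \ref{2algclass}} this yields the desired equivalence $\cK\sim D(\cG)$.

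The main obstacle I anticipate is the mixed-argument bookkeeping in the third step: the multiplication $\hat\cdot$ entangles the two factors through $\bar\rhd,\bar\lhd$, so verifying that the single cocycle identity for $\cT_K$ reorganizes precisely into \eqref{2dd3cocyc} — with the correct placement of $\rhd_1,\lhd_1,\cT,\cT^*$ and matching signs — requires carefully using \eqref{weak2hom} to trade every $F_1$-term for an associator component. Getting this reorganization to close is the technical heart of the argument.
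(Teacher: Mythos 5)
Your proposal follows essentially the same route as the paper's proof: skeletality makes all the underlying products strictly associative, so \textbf{Theorem \ref{2qd}} reconstructs the strict 2-bialgebra structure $\cK\cong D(\cG)$ together with $\Psi,\bar\rhd,\bar\lhd$; the mixed-slot components of $\cT_K$ are then identified with $\rhd_1,\lhd_1$ as in \eqref{assoc}, the Hochschild 3-cocycle identity for $\cT_K$ reorganizes into the matched-pair conditions \eqref{2dd3cocyc}, and the weak-homomorphism data of $\iota,\jmath$ supplies the equivalence $\cK\sim D(\cG)$ via the classification \textbf{Theorem \ref{2algclass}}. The paper additionally dualizes the argument to handle the coassociators $(\Delta_\cG)_1,(\Delta_\cH)_1$, which you do not mention, but that step is routine.

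One step in your write-up is stated imprecisely, and it happens to be exactly where the paper's real work lies. You claim that on purely $\cG$- or purely $\cH$-valued slots, $\cT_K$ ``recovers $\cT$ and $\cT^*$''. It does not, on the nose: the restriction $\cT_K|_{\cG}:\cG_0^{3\otimes}\to\cK_{-1}$ lands in all of $\cK_{-1}$, not in $\cG_{-1}$, so besides the $\cG_{-1}$-valued component (which is $\iota_{-1}\circ\cT_\cG$ up to the correction terms in \eqref{weak2hom}) there is an ``undesirable'' $\cH_{-1}$-valued piece $\tilde{\cT}_G$ that is generically nonzero. The paper's key computation is that this piece is pure coboundary: projecting the last equation of \eqref{weak2hom} for $\iota$ onto $\cH$ kills the term $\iota_{-1}(\cT_\cG(x,x',x''))|_\cH$ and leaves $\tilde{\cT}_G = d_{HH}[\iota_1|_\cH]$, and similarly $\tilde{\cT}_H = d_{HH}[\jmath_1|_\cG]$. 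This is precisely why the conclusion is a weak equivalence $\cK\sim D(\cG)$ rather than an isomorphism, and why the hypothesis that $\iota,\jmath$ carry the components $\iota_1,\jmath_1$ is indispensable (the paper's closing remark notes that without them $\cK\not\sim D(\cG)$ in general). Your final paragraph --- trivializing $[\cT_K]-[\cT_D]$ by the $F_1$-data and invoking \textbf{Theorem \ref{2algclass}} --- is the correct statement and implicitly contains exactly this computation, so the proposal as a whole closes; but as literally written, your middle paragraph asserts an on-the-nose decomposition of $\cT_K$ into the four pieces of $\cT_D$, which is false and which the paper explicitly warns against.
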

\noindent Recall two weak 2-bialgebras are equivalent when there exists an invertible weak 2-homomorphism  \eqref{weak2hom} between them.
\begin{proof}

    The fact that $\cK$ factors into skeletal 2-subalgebras means that it must also be skeletal itself. This allows us to leverage the proof of {\bf Theorem \ref{2qd}} to reconstruct the underlying 2-bialgebra structure of $\cK\cong D(\cG)$ as a 2-quantum double. 
    
    The subtlety here is that we must now keep track of the 3-cocycle $\cT_K: \cK_0^{3\otimes}\rightarrow \cK_{-1}$ in $\cK$ when we, in particular, invoke associativity in the form
    \begin{eqnarray}
        \cT_K(\iota_0(x),\iota_0(x'),\jmath_0(f))&\equiv& \rhd_1^{x,x'}(f) : (\iota_0(x)\hat\cdot\iota_0(x'))\hat\cdot\jmath_0(f)\xrightarrow{\sim} \iota_0(x)\hat\cdot(\iota_0(x')\hat\cdot\jmath_0(f)),\nonumber\\
        \cT_K(\iota_0(x),\jmath_0(f),\jmath_0(f'))&\equiv& (x)\lhd_1^{f,f'} :(\iota_0(x)\hat\cdot\jmath_0(f))\hat\cdot \jmath_0(f')\xrightarrow{\sim} \iota_0(x)\hat\cdot(\jmath_0(f)\hat\cdot \jmath_0(f')).\nonumber
    \end{eqnarray}
    Now in the skeletal case, the braiding map $\Psi = (\Psi_0,\Psi_{-1};\bar\Psi): \cG\otimes \cH\rightarrow \cH\otimes \cG$ is still defined as in  \eqref{braid}. However, the components $\rhd_1,\lhd_1$ now give rise to associators
    \begin{eqnarray}
        \rhd_1&:&\Psi \circ(\hat\cdot \otimes \operatorname{id}) \xRightarrow{\sim} (\operatorname{id}\otimes \hat\cdot)\circ \Psi_{12}\circ \Psi_{23},\nonumber\\
        \lhd_1&:&\Psi\circ(\operatorname{id}\otimes \hat\cdot) \xRightarrow{\sim} (\hat\cdot\otimes \operatorname{id})\circ \Psi_{23}\circ \Psi_{12}\label{2yb2}
    \end{eqnarray}
    that implement the braiding relations  \eqref{2yb1}. These braiding associators satisfy a set of algebraic conditions following from the 3-cocycle condition  \eqref{2dd3cocyc} for $\cT_K$.
    
    With the components $\rhd_1,\lhd_1$ as defined above, we now wish to reconstruct the Hochschild 3-cocycles $\cT_G,\cT_H$ of $\cG,\cH$ from $\cT_K$. Note this cannot be achieved by just restricting $\cT_K$ via the span  \eqref{span}, as this does not have the desired codomains. For instance, the restriction $\cT_K|_{\operatorname{im}\iota=\cG}: G_0^{3\otimes}\rightarrow \cK_{-1} \cong \cG_{-1}\otimes \cH_{-1}$ in general lands in the tensor product, for which only the $\cG_{-1}$-valued component gives the desired 3-cocycle $\cT_G$ on $\cG$. Nevertheless, with $\cT_G,\cT_H$ defined in this way, having the span  \eqref{span} means that the 3-cocycle condition for $\cT_K$ implies $(\cG,\cT_G),(\cH,\cT_H)$ form a matched pair of weak 2-bialgebras, as in  \eqref{2dd3cocyc}. 
    
    The "undesirable" piece $\tilde{\cT}_G$, namely the component of $\cT_K|_{\cG}$ valued in $\cH_{-1}$, is a Hochschild coboundary. This follows from the definition of the inclusion $\iota=(\iota_{-1},\iota_0,\iota_1):\cG\hookrightarrow \cK$ as a weak homomorphism. Indeed, by projecting the last of  \eqref{weak2hom} for $\iota_1$ to $\cH$, the first term $\iota_{-1}(\cT_\cG(x,x',x''))|_\cH=0$ vanishes whence
    \begin{eqnarray}
     \tilde{\cT}_G&\equiv &\cT_K(\iota_0(x),\iota_0(x'),\iota_0(x''))|_\cH \nonumber\\
     &=&
        \iota_0(x)\, \hat\cdot\, \iota_1(x',x'')|_\cH -~\iota_1(xx',x'')|_\cH+\iota_1(x,x'x'')|_\cH - \iota_1(x,x')|_\cH\hat\cdot \iota_0(x'')\nonumber\\
        &=& d_{HH}[\iota_1|_\cH](x,x',x''),\nonumber
    \end{eqnarray}
    where $d_{HH}$ is the Hochschild differential \cite{Wagemann+2021}.
    Similar arguments show that $\tilde{\cT}_H=d_{HH}[\jmath_1|_\cG]$ is a Hochschild coboundary as well. This establishes the weak equivalence $\cK\sim D(\cG)$.

    The same argument as above, but dualized, is applied to reconstruct $(\Delta_G)_1$ and $(\Delta_H)_1$ from the coassociator $(\Delta_K)_1$. The coassociator conditions  \eqref{weakcohgrpd1}-\eqref{weakcohgrpd3}, as well as  \eqref{coassocalg}, for them follow from those for $(\Delta_K)_1$. 
% \end{enumerate}
\end{proof}

Note the coadjoint actions $\rhd,\lhd$ only define genuine algebra representations when $\cT,\cT^*=0$ (as in {\bf Theorem \ref{2qd}}), or when $t,t^*=0$. Without skeletality, the braiding transposition $\Psi$ is no longer of the form given in  \eqref{braid}. Terms like $\rhd_1^{t\cdot,\cdot},\lhd_1^{t^*\cdot,\cdot}$ must now appear. This, of course, would modify  \eqref{2yb1} in a complicated and intricate manner. 

\begin{remark}
If the components $\iota_1,\jmath_1$ are not required as part of the data for the inclusions $\iota,\jmath$ in the span  \eqref{span}, then $\cK\not\sim D(\cG)$ in general. In particular, without the component $\iota_1$ trivializing $\tilde{\cT}_G$ by  \eqref{weak2hom}, its (possibly non-trivial) Hochschild class $[\tilde{\cT}_G]\in HH^3(\cK_0,\cK_{-1})$ is in fact an {\it extra} piece of data in $\cK$ that is not in $D(\cG)$, despite them sharing the same 2-bialgebra structure. Such a factorizable weak 2-bialgebra is still weakly self-dual $\cK\sim\cK^*$.
\end{remark}

%\bigskip

In the following,  we shall shift gears a bit and study the 2-representation theory of quasitriangular 2-bialgebras, and finally work our way towards proving the main theorem.

\section{The monoidal 2-category of 2-representations}\label{2representations}
With the above algebraic machinery in place, we are now ready to discuss the 2-representations of a strict or weak 2-bialgebra $\cG$. In the following, we shall follow the Baez-Crans definition of a 2-vector space and the monoidal 2-category $\mathsf{2Vect}^{BC}$ they form \cite{Baez:2003fs,Baez:2010ya}.

Recall that a Baez-Crans 2-vector space is equivalent to a 2-term cochain complex of vector spaces from \textbf{Proposition \ref{2ch}}. Also equivalently, a 2-vector space is a {\it nuclear} 2-algebra \cite{Wagemann+2021}, or an Abelian Lie 2-algebra \cite{Bai_2013,Chen:2012gz}. 

2-vector spaces of this type form a 2-category $\mathsf{2Vect}^{BC}$ in which the 1-morphisms are cochain maps and 2-morphisms are cochain homotopies. Concretely, let $V=V_{-1}\xrightarrow{\partial}V_0,W=W_{-1}\xrightarrow{\partial'} W_0$ denote two 2-vector spaces. A cochain map $f:V\rightarrow W$ is a collection linear maps $f_{0,-1}:V_{0,-1}\rightarrow W_{0,-1}$ such that
\begin{equation}
    \partial'  f_{-1} = f_0 \partial.\nonumber
\end{equation}
Given two such cochain maps $f,g$, a cochain homotopy $q:f\Rightarrow g$ is a linear map $q:V_0\rightarrow W_{-1}$ such that 
\begin{equation}
    \partial q = f_0 - g_0,\qquad q\partial = f_{-1} - g_{-1}.\nonumber 
\end{equation}
We shall refine these notions to fit the definition of a 2-representation of $\cG$ in the following.

\subsection{Weak 2-representations}\label{wk2rep}
Recall that a representation of an ordinary algebra $A$ on the vector space $V$ is an algebra homomorphism $A\rightarrow \operatorname{End}(V)$. Morally, a 2-representation should therefore be a 2-algebra homomorphism between a 2-algebra $\cG$ and a "categorified" notion of the endomorphism algebra $\operatorname{End}(V)$. Correspondingly, a weak 2-representation should be a weak 2-homomorphism as in  \eqref{weak2hom} into a "weak endomorphism 2-algebra".

\subsubsection{Endomorphism 2-algebra on a 2-vector space}
In the strict case, the endomorphisms of a 2-vector space are naturally given in the setting of $\mathsf{2Vect}^{BC}$ --- namely $\operatorname{End}(V) = \operatorname{End}_{\mathsf{2Vect}^{BC}}(V)$, which forms an associative 2-algebra $\operatorname{End}(V)=\operatorname{End}(V)_{-1}\xrightarrow{\delta}\operatorname{End}(V)_0$ of linear transformations on a 2-term cochain complex $V$ \cite{Angulo:2018},
\begin{eqnarray}
     \operatorname{End}(V)_0 &=& \{(M,N)\in\operatorname{End}(V_{-1})\times \operatorname{End}(V_0) \mid \partial M = N\partial\},\nonumber\\
     \operatorname{End}(V)_{-1} &=&\{A\in\operatorname{Hom}(V_0,V_{-1})\mid (A\partial,\partial A) \in \operatorname{End}(V_{-1})\times\operatorname{End}(V_0)\},\nonumber
\end{eqnarray}
equipped with the 2-algebra structure (take $A\in\operatorname{End}(V)_{-1}, \,\, (M,N)\in \operatorname{End}(V)_{0}$)
\begin{equation}
     \delta: A\mapsto (A\partial,\partial A),\qquad (M,N)\cdot A = MA,\qquad A\cdot(M,N)=AN.\nonumber 
\end{equation}
The associativity of matrix multiplication implies that $\operatorname{End}(V)_{-1}$ is clearly a $\operatorname{End}(V)_0$-bimodule, Moreover, we have the Peiffer conditions (note $A,A'\in\operatorname{End}(V)_{-1}$)
\begin{eqnarray}
     \delta((M,N)\cdot A) &=& (MA\partial,\partial MA) = (MA\partial,N\partial A)= (M,N)\delta(A),\nonumber\\ 
     \delta(A\cdot(M,N)) &=& (AN\partial,\partial AN) = (A\partial M,\partial AN) = \delta(A)(M,N),\nonumber\\
     A\ast A'&\equiv& \delta(A)\cdot A' = A\partial A'= A\cdot\delta(A'),\nonumber
\end{eqnarray}
and hence $\operatorname{End}(V)$ is an associative 2-algebra. Note that none of the matrices here are required to be invertible.

\medskip 

% \paragraph{Weakening  the $\cG$-module structure.} 
As weak 2-algebras are no longer associative, the above presentation of $\operatorname{End}(V)$ in terms of matrices is no longer sufficient: we require a weaker version of $\operatorname{End}(V)$. Such a notion of the weak endomorphism 2-algebra $\End(V)$ would still have the same graded structure $\delta:\End(V)_{-1}\rightarrow\End(V)_0$ as in the strict case above, but its algebra structure should have its associativity controlled by a Hochschild 3-cocycle $\T$, in accordance with {\bf Definition \ref{weak2alg}}.

\medskip 

To begin, we extend the idea of \cite{Schafer:1955} to weak 2-algebras. In essence, we leverage the observation in the strict case that an algebra 2-homomorphism $\cG\rightarrow \End(V)$ is equivalent to a $\cG$-bimodule structure on $V$. We are going to provide a weak generalization of such a $\cG$-bimodule structure in \textbf{Definition} \ref{weakbimodule}.

Let $\text{2Alg}$ denote the category of weak 2-algebras $(\cG,\cT)$, which contains the full subcategory $\text{2Alg}_\text{ass}$ of {\it strict} 2-algebras. A 2-vector space $V\in\mathsf{2Vect}\subset \text{2Alg}_\text{ass}\subset\text{2Alg}$ fits as a strict 2-algebra with trivial multiplication. We consider $\cG$ as a weak 2-algebra (as defined in {\bf Definition \ref{weak2alg}}).  We then equip the direct sum $\cG\oplus V$ with a semidirect product structure,
\begin{eqnarray}
    (z + u) \cdot (z'+u') &=& yy' + x\cdot_l y' + y\cdot_r x' + xx'\nonumber\\
    &\qquad&+~ x\rhd w' + x\rhd v' + y\Yright w'+y\Yright v'\nonumber\\
    &\qquad& +~ w\lhd x' + v\lhd x' + w\Yleft y' + v\Yleft y',\nonumber
\end{eqnarray}
where we have used the shorthand notation $z= (y,x)\in\cG_{-1}\times \cG_0=\cG,\, u=(w,v)\in V_{-1}\times V_0= V$ and where 
\begin{eqnarray}
    \cdot_{l}: \cG_0\otimes \cG_{-1}\rightarrow\cG_{-1}, \qquad   \cdot_{r}: \cG_{-1}\otimes \cG_{0}\rightarrow\cG_{-1}, \nonumber\\
    \rhd: \cG_0\otimes V\rightarrow V, \qquad \lhd:V \otimes  \cG_0 \rightarrow V, \nonumber\\ 
   \Yright: \cG_{-1}\otimes V\rightarrow V, \qquad  \Yleft:V \otimes  \cG_{-1} \rightarrow V \nonumber
\end{eqnarray}
are all bilinear maps. 
\smallskip 

\begin{definition}\label{weakbimodule}
We say that $V$ is a \textbf{$\cG$-bimodule} if $(\cG\oplus V,\cdot) \in \text{2Alg}$ is %once again 
a weak 2-algebra. In other words,
\begin{itemize}
    \item[(i)] $(\cG\oplus V)_{-1} \equiv \cG_{-1}\oplus V_{-1}$ is a weak $(\cG\oplus V)_0:=\cG_0\oplus V_0$-bimodule, 
\item[(ii)] the map $t\oplus \partial:\cG_{-1}\oplus V_{-1}\rightarrow \cG_0\oplus V_0$ is equivariant with respect to $\cdot$ and satisfies the Peiffer identity\footnote{The Peiffer identity states $y\Yright w = (ty)\rhd w = y\Yright(\partial w)$, and similarly $w\Yleft y = (\partial w)\Yleft y= w\lhd(ty)$. If we write $y\Yright v = \Upsilon_y v$, then we reproduce precisely the 2-representation properties  \eqref{genpeif}.}, 
\item[(iii)] there exists a well-defined trilinear invertible map $(\cG_0\oplus V_0)^{3\otimes}\rightarrow \cG_{-1}\oplus V_{-1}$ that satisfies the Hochschild 3-cocycle condition.
\end{itemize}
\end{definition}

By the macrocosm principle \cite{Baez:1995xq}, this puts us in the context of a {\it homotopy refinement} $\mathsf{2Vect}^{hBC}$ of the Baez-Crans 2-vector spaces. In this linear 2-category, each endomorphism category $=\End(V)$ can be thought of as the weak 2-algebra $\End(V)$, and a $\cG$-bimodule structure on $V\in\mathsf{2Vect}^{hBC}$ is equivalent to a 2-homomorphism $\cG\rightarrow \operatorname{End}_{\mathsf{2Vect}^{hBC}}(V) =\End(V)$. We call $\End(V)$ the {\bf weak endomorphism 2-algebra} on $V$, and denote by $\T:\End(V)_0^{3\otimes}\rightarrow \End(V)_{-1}$ its Hochschild 3-cocycle obtained from the third point of {\bf Definition} \ref{weakbimodule}. This motivates our following theory of weak 2-representations.

\begin{remark}\label{weak2end}
We emphasize here that the 2-category $\mathsf{2Vect}^{BC}$ of Baez-Crans 2-vector spaces is completely strict \cite{Baez:2003fs}, and hence its algebra objects (ie. associative 2-algebras/algebra crossed-modules) and its endomorphism categories $\operatorname{End}(V) = \operatorname{End}_{\mathsf{2Vect}^{BC}}(V)$ do not carry homotopy data. Weak 2-algebras/2-term $A_\infty$-algebras are therefore not part of the theory of the usual Baez-Crans 2-vector spaces. Instead, they constitute algebra objects in the {homotopy refinement} $\mathsf{2Vect}^{hBC}$ of $\mathsf{2Vect}^{BC}$. The difference between the setting $\mathsf{2Vect}^{hBC}$ and the Kapranov-Voevodsky setting $\mathsf{2Vect}^{KV}$ is currently under investigation by one of the authors. In Appendix \ref{weak2repthy}, we shall establish properties of weak 2-representations
and relate it to those in the literature \cite{Delcamp:2021szr,Delcamp:2023kew,Douglas:2018,Baez:2012}.
\end{remark}

\subsubsection{Weak 2-representations, weak 2-intertwiners and modifications}

\begin{definition}\label{weak2rep}
A {\bf weak 2-representation} $(\varrho,\rho):\cG\rightarrow \End(V)$ of $\cG$ on $V$ is a homomorphism between weak 2-algebras as in  \eqref{weak2hom}. In other words, $\rho=(\rho_0,\rho_1)$ is a chain map
\begin{equation}
    \begin{tikzcd}
\cG_{-1} \arrow[d, "\rho_1"] \arrow[r, "t"]     & \cG_0 \arrow[d, "\rho_0"]  \\
\End(V)_{-1} \arrow[r, "\delta"] & \End(V)_{0}
\end{tikzcd}\label{commsq}
\end{equation}
which preserves the 2-algebra structures {\it up to homotopy},
\begin{eqnarray}
    \delta\varrho(x,x') &=& \rho_0(xx') - \rho_0(x)\rho_0(x'),\nonumber\\
    \varrho(x,ty) &=& \rho_1(x\cdot y) - \rho_0(x)\cdot \rho_1(y),\nonumber\\
    \varrho(ty,x) &=& \rho_1(y\cdot x) - \rho_1(y)\cdot \rho_0(x),\label{weak2repalg}
\end{eqnarray}
and for which the Hochschild 3-cocycles $\cT,\T$ of respectively $\cG$ and $ \End(V)$  satisfy the following compatibility conditions
\begin{eqnarray}
    \rho_1(\cT(x,x',x'')) &=& \rho_0(x)\cdot \varrho(x',x'')- \varrho(xx',x'') \nonumber\\
     &\qquad&+~ \varrho(x,x'x'') - \varrho(x,x')\cdot \rho_0(x'')\nonumber\\
     &\qquad& +~\T(\rho_0(x),\rho_0(x'),\rho_0(x'')),\label{weak2rephom}
\end{eqnarray}
where $x,x',x''\in\cG_0$ and $y\in\cG_{-1}$. We require $\varrho$ to be invertible.

We call $\rho$ a {\bf strict 2-representation}  if $\varrho=0$ identically.
\end{definition}
\noindent As $\cT,\T$ are normalized, $\varrho$ by definition vanishes if any of its arguments are $0$ or the unit $\eta_0\in\cG_0$.

\begin{remark}\label{weak2repstrict}
Due to the classification {\bf Theorem \ref{2algclass}} of 2-algebras \cite{Wagemann+2021}, a non-trivial 2-algebra $\cG$ with $\cT\neq 0$ cannot admit a strict 2-representation. Conversely, however, 2-representations of a strict 2-algebra can still be weak, as  \eqref{weak2rephom} only states that the {\it cohomology class} of $\T$ is trivial, not that it is trivial as a 3-cocycle. However, if we further restrict to the case where $V$ is a strict $\cG$-bimodule (ie. the trilinear map in {\bf Definition \ref{weakbimodule}} vanishes), then $\T=0$ and $\End(V)$ is isomorphic to $\operatorname{End}(V)$.
\end{remark}

\paragraph{Example: weak coadjoint representation.}
A very natural example of a 2-representation is achieved by dualizing, using  \eqref{pairing}, the 2-representation  $\cG\rightarrow\operatorname{End}(\cG)$ given by the weak 2-algebra structure of $\cG$ on itself.

This gives rise to the {\it coadjoint representation} (cf. \cite{Bai_2013,chen:2022}) $\bar\rhd=(\rhd_1,(\rhd_0,\rhd_{-1}),\Upsilon):\cG\rightarrow\End(\cG^*)$ of $\cG$ on its dual $\cG^*$, given explicitly by
\begin{eqnarray}
    \rhd_0:\cG_0\rightarrow \End(\cG_0^*),&\qquad& \langle g,xx'\rangle = -\langle x\rhd_0 g,x'\rangle,\nonumber\\
    \rhd_{-1}:\cG_0\rightarrow \End(\cG_{-1}^*),&\qquad& \langle f,x\cdot y\rangle = -\langle x\rhd_{-1} f,y\rangle,\nonumber\\
    \Upsilon:\cG_{-1}\rightarrow \operatorname{Hom}(\cG_{-1}^*,\cG_0^*),&\qquad& \langle f,y\cdot x\rangle = -\langle \Upsilon_yf, x\rangle\label{coadj}
\end{eqnarray}
and 
\begin{equation}
    \rhd_1: \cG_0^{2\otimes}\rightarrow \End(\cG^*)_{-1}=\operatorname{Hom}(\cG_{-1}^*,\cG_0^*),\qquad \langle f,\cT(x,x',x'')\rangle = +\langle \rhd_1^{x,x'}(f),x''\rangle.\label{weakcoadj}
\end{equation}
Notice a plus sign occurs here, in contrast with the rest of the components defined in  \eqref{coadj}. This is because we have dualized two elements in $\cG$, instead of one.

Analogously, we have the coadjoint back-action $\bar\lhd=((\lhd_0,\lhd_{-1}),\tilde\Upsilon)$ of $\cG^*$ on $\cG$, which we write from the right\footnote{This means that we have, for instance, $\langle g\cdot^* f,x\rangle = -\langle g,x\lhd_{-1}f\rangle$ and $\langle f\cdot^* g,x\rangle = -\langle f,x\tilde\Upsilon_{g}\rangle$.}. The "bar" notation is used to distinguish $\bar\rhd$ from the crossed-module action $\rhd$ in the case where $\cG=kG$ is the 2-group algebra defined in  \eqref{2grpalg}.

Due to  \eqref{weak2repalg}, the components of a weak 2-representation are not genuine algebra representations in general, but only up to homotopy. We have in general that 
\begin{equation}
    (xx')\rhd_0 g = x\rhd_0(x'\rhd_0 g) + \rhd_1^{x,x'}(t^*g),\qquad (xx')\rhd_{-1}f = x\rhd_{-1}(x'\rhd_{-1}f) + t^*\rhd_1^{x,x'}(f), \nonumber
\end{equation}
where $t^*$ is the dual $t$-map on $\cG^*$, and 
\begin{equation}
    \Upsilon_{x\cdot y}f = x\rhd_0 (\Upsilon_yf) + \rhd_1^{x,ty}(f),\qquad \Upsilon_{y\cdot x}f = \Upsilon_y(x\rhd_{-1} f) + \rhd_1^{ty,x}(f).\nonumber
\end{equation}
Of course, these components reduce to genuine strict algebra representations if $\rhd_1=0$ or $t=0$, which simplifies the situation considerably.

\begin{remark}\label{2repgrpdalg}
Recall the 2-algebra $\cG=kG$ associated to the 2-group $G$ in Example \ref{sec:2gpalgb} In the skeletal case, we can induce a 2-representation $\rho$ of the 2-algebra from that of the 2-group, by extending it linearly. All 2-representations of $kG$ shall arise this way, in this case.
\end{remark}

\paragraph{1- and 2-morphisms on the weak 2-representation 2-category.}
With {\bf Definition \ref{weak2rep}} in hand, we are now ready to define the morphisms on the weak 2-representations. Let $\rho=(\varrho,\rho_0,\rho_1)$ and $\rho' = (\varrho',\rho_0',\rho_1')$ denote two weak 2-representations on $V,W\in \operatorname{2Rep}^\cT(\cG)$, respectively. 

\begin{definition}\label{weak2int}
A {\bf weak 2-intertwiner} $i = (I,i_1,i_0):V\rightarrow W$ consist of a 2-vector space homomorphism $(i_1,i_0): V\rightarrow W$ together with a collection of invertible cochain homotopies $I_{x,i}: V_0\rightarrow W_{-1}$ satisfying 
\begin{equation}
    \partial I_{x,i} = i_0\circ \rho_0^0(x) - \rho'^0_0(x)\circ i_0 ,\qquad I_{x,i}\partial = i_1\circ \rho_0^1(x) - \rho'^1_0(x)\circ i_1\nonumber
\end{equation}
for each $x\in \cG_0$, as well as 
\begin{equation}
    I_{ty,i} = i_1\circ\rho_1(y) - \rho_1'(y)\circ i_0\nonumber
\end{equation}
for each $y\in \cG_{-1}$. Moreover, $I_{\bullet,i}$ trivializes $\varrho-\varrho'$ as a Hochschild 2-cocycle, in the sense that for each $x,x'\in \cG_0$,
\begin{equation}
     \operatorname{id}_i\otimes\varrho(x,x') - \varrho'(x,x')\otimes \operatorname{id}_i = \operatorname{id}_{\rho_0(x)}\otimes I_{x',i} - I_{xx',i}+  I_{x,i} \otimes\operatorname{id}_{\rho_0(x')},\label{2inthomotopy}
\end{equation}
where $\operatorname{id}_i:i\Rightarrow i$ denotes the identity cochain homotopy on the intertwiner $i$.
\end{definition}
\noindent In other words, a weak 2-intertwiner $i: V\rightarrow W$ is such that the following diagrams
\begin{equation}
\begin{tikzcd}
V_{-1} \arrow[rd, "i_{1}"] \arrow[rr, "\partial"] \arrow[dd, "\rho^1_0"] &                                                                        & V_0 \arrow[rd, "i_0"] \arrow[dd, "\rho^0_0", bend left] &                             \\
                                                                   & W_{-1} \arrow[rr, "\partial'"', bend left] \arrow[dd, "\rho'^1_0"', bend left] &                                                          & W_0 \arrow[dd, "\rho'^0_0"] \\
V_{-1} \arrow[rr, "\partial", bend left] \arrow[rd, "i_{1}"]             &                                                                        & V_0 \arrow[rd, "i_0"]                                    &                             \\
                                                                   & W_{-1} \arrow[rr, "\partial'"]                                                &                                                          & W_0                        
\end{tikzcd},\qquad\qquad \begin{tikzcd}
V_0 \arrow[r, "\rho_1"] \arrow[d, "i_0"] & V_{-1} \arrow[d, "i_{1}"] \\
W_0 \arrow[r, "\rho_1'"]                 & W_{-1}                    
\end{tikzcd}\label{2int}
\end{equation}
commute {\it up to a natural invertible 2-morphism} given by $I_{\bullet,i}$. By definition, we have $I_{0,i} = I_{\eta_0,i} = 0$ where $\eta_0$ is the unit of $\cG_0$.

Now let $i,i':\rho\rightarrow\rho'$ denote two weak 2-intertwiners, we have the following.
\begin{definition}\label{weakmodif}
A {\bf modification} $\mu:i\Rightarrow i'$ between two weak 2-intertwiners is a $\cG$-equivariant cochain homotopy
\begin{equation}
    \begin{tikzcd}
V_{-1} \arrow[r, "\partial"] \arrow[d, "i_1-i_1'"'] & V_0 \arrow[ld, "\mu"'] \arrow[d, "i_0-i_0'"] \\
W_{-1} \arrow[r, "\partial'"]                      & W_0                                        
\end{tikzcd},\label{modif}
\end{equation}
where $\mu$ intertwines between $\rho_1(y),\rho_1'(y)$ for each $y\in \cG_{-1}$, as cochain homotopies. Moreover, $\mu$ trivializes $I_{\cdot,i}-I_{\cdot,i'}$ as a Hochschild 1-cocycle, in the sense that
\begin{equation}
    I_{x,i} - I_{x,i'} = \operatorname{id}_{\rho_0(x)}\otimes \mu-\mu   \label{modifhomotopy}
\end{equation}
for all $x\in \cG_0$, as a relation between cochain homotopies.
\end{definition}

We shall denote by $\operatorname{2Rep}^\cT(\cG)$ the 2-category of {weak 2-representations} of the weak 2-bialgebra $(\cG,\cT)$, consisting of weak 2-representation $(V,\rho)$ objects, weak 2-intertwiners $i$ as 1-morphisms and modifications $\mu$ as 2-morphisms. We will prove in Appendix \ref{weak2repthy} that our definition in fact coincides with the higher-representation theory developed in the literature \cite{Baez:2012bn,Douglas:2018,Delcamp:2021szr,Delcamp:2023kew}, in the case where $\cG=kG$ corresponding to a skeletal 2-group $G$. We devote the remainder of this section to proving that $\operatorname{2Rep}^\cT(\cG)$ forms a monoidal 2-category.

\subsection{Monoidal structure on the 2-representations}\label{fusion2rep2cat}
Recall that vector space cochain complexes come equipped with natural notions of direct sum $\oplus$, as well as tensor product $\otimes$, which satisfy the distributive law
\begin{equation}
    V\otimes(W\oplus U) = (V\otimes W)\oplus (V\otimes U), \nonumber
\end{equation} 
where $V,W,U$ are vector space cochains. For chain complexes, the direct sum is given simply by
\begin{equation}
    V\oplus W = V_{-1} \oplus W_{-1}\xrightarrow{\partial\oplus \partial'} V_0\oplus W_0,\nonumber
\end{equation}
while the tensor product is given by the following complex
\begin{equation}
    V\otimes W = \LaTeXunderbrace{V_{-1}\otimes W_{-1}}_{\text{deg}=-2}\xrightarrow{D^+}\LaTeXunderbrace{V_{-1}\otimes W_0 \oplus V_0\otimes W_{-1}}_{\text{deg}=-1}\xrightarrow{D^-} \LaTeXunderbrace{V_0\otimes W_0}_{\text{deg}=0},\label{grtensor}
\end{equation}
where $D^\pm = \pm1\otimes\partial'+\partial\otimes 1$ is the tensor extension of the differentials $\partial:V_{-1}\rightarrow V$ and $\partial':W_{-1}\rightarrow W_0$. 

We endow the direct sum and tensor product structure on 2-representations of $\cG$ in the same way as above. Note the direct double $\cG^{2\oplus}$ and the tensor square $\cG^{2\otimes}$ of a strict 2-algebra $\cG$ also have the same structure.

\paragraph{Direct sums.} For the direct sum 2-representation, this is simply accomplished by extending {\bf Definition \ref{weak2rep}} to a direct sum of 2-algebra homomorphisms
\begin{equation}
    (\varrho,\rho)\oplus(\varrho',\rho') = (\varrho\oplus\varrho',\rho\oplus \rho'): \cG\oplus\cG \rightarrow \End(V)\oplus\End(W).\nonumber
\end{equation}
In particular, the direct sum $V\oplus W$ of 2-representations of $\cG$ is given by the components
\begin{equation}
    (\rho\oplus\rho')_0^0 = \rho_0^0\oplus\rho'{}_0^0,\qquad (\rho\oplus\rho')_0^1 = \rho_0^1\oplus\rho'{}_0^1,\qquad (\rho\oplus\rho')_1 = \rho_1\oplus\rho_1' \nonumber
\end{equation}
such that the square  \eqref{commsq} commutes,
\begin{equation}
    (\rho\oplus\rho')_0\circ (t\oplus t) = (\delta \oplus\delta')\circ (\rho\oplus\rho')_1,\nonumber
\end{equation}
where $\delta,\delta'$ are the differentials of the two 2-algebras $\End(V),\End(W)$, respectively. The zero 2-representation under direct sum is of course the trivial complex $0 \rightarrow 0$.

\subsubsection{Tensor product}\label{sec:monoidal}
As in the 1-bialgebra case, the tensor product of 2-representations is accomplished by precomposing with the coproduct. However, the graded components of the coproduct $\Delta = \Delta_{-1} + \Delta_0$ in  \eqref{grpdcoprod}, as well as $\Delta_0'$ in  \eqref{deg0sweed}, allows us to define the tensor product between 2-representations $V\otimes W$
\begin{equation}
    \rho_{V\otimes W}(x) = \big((\rho_V)_0\otimes (\rho_W)_0\big)\circ\Delta_0'(x), \quad x\in\cG_0,\label{tensor2rep1}
\end{equation}
as well as its weak component (cf. {\bf Definition \ref{weak2rep}})
\begin{equation}
    \varrho_{V\otimes W}(x,x') = \varrho_V(\bar x_{(1)},\bar x_{(1)}')\otimes\varrho_W(\bar x_{(2)},\bar x_{(2)}'),\qquad x,x'\in\cG_0.\nonumber
\end{equation}
We also have the tensor product between a 2-intertwiner $i:V\rightarrow U$ and a 2-representation
\begin{align}
    \rho_{i\otimes W}(x) &= \big((\rho_U)_1\circ i \otimes (\rho_W)_0\big)\circ\Delta_0^l(x) + (-1)^\text{deg} \big(i\circ (\rho_V)_0 \otimes (\rho_W)_1\big)\circ\Delta_0^r(x),\nonumber\\
    \rho_{W\otimes i}(x) &= \big((\rho_W)_0\otimes (\rho_U)_1\circ i\big)\circ\Delta_0^r(x) +(-1)^\text{deg}\big((\rho_W)_1\otimes i\circ (\rho_V)_0\big)\circ\Delta_0^l(x)\label{tensor2rep2}
\end{align}
for each $x\in\cG_0$, where the sign depends on the degree of the components in  \eqref{grtensor}. Lastly, the tensor product between 2-intertwiners $i:V\rightarrow U,j:W\rightarrow T$ is given by just
\begin{equation}
    \rho_{i\otimes j}(y) = \big((\rho_U)_1\circ i\otimes(\rho_T)_1\circ j + (-1)^\text{deg}i\circ (\rho_V)_1\otimes j\circ (\rho_W)_1\big)\circ\Delta_{-1}(y)\label{tensor2rep3}
\end{equation}
for each $y\in\cG_{-1}$. This defines the invertible natural 2-morphism $I_{i\otimes j,\bullet}$ (cf. {\bf Definition \ref{weak2int}}).

The fact that  \eqref{tensor2rep1}, \eqref{tensor2rep2}, \eqref{tensor2rep3} define genuine 2-representations (up to the homotopy $\varrho$; cf. {\bf Definition \ref{weak2rep}} and  \eqref{weak2repalg}), for instance
\begin{equation}
    \delta\varrho_{V\otimes W}(x,x') = \rho_{V\otimes W}(xx') - \rho_{V\otimes W}(x)\rho_{V\otimes W}(x'),\nonumber
\end{equation}
requires the 2-bialgebra axioms  \eqref{2algcoprod}. 

\paragraph{Tensor unit.} Now if $\cG$ is a unital 2-bialgebra, then there is a tensor unit, denoted by $I\in\operatorname{2Rep}^\cT(\cG)$ given by the ground field complex $k\xrightarrow{1}k$, and a unit 2-intertwiner given by the identity $\id_I:1\rightarrow 1$, such that $\cG$ acts on them through multiplication of the counit $\epsilon$, 
\begin{equation*}
    \rho_I(x) = \epsilon_0(x),\qquad \rho_{\id_I}(y) = \epsilon_{-1}(y).
\end{equation*}
From \eqref{weak2repalg}, the corresponding component $\varrho=\id$ for the tensor unit $I$ is clearly the identity 2-morphism. In according with \eqref{tensor2rep1}, \eqref{tensor2rep2}, \eqref{tensor2rep3}, the condition \eqref{counit} then implies that the left- and right-unitor morphisms in $\operatorname{2Rep}^\cT(\cG)$ are all 1- and 2-isomorphisms. For instance, \eqref{deg0counit} implies
\begin{equation*}
    \rho_{V\otimes 1} = \rho_V = \rho_{1\otimes V},
\end{equation*}
whence $V\otimes 1,1\otimes V$ and $V$ coincides as 2-representations.

Due to this, all coherence diagrams in $\operatorname{2Rep}^\cT(\cG)$ concerning the unitors, such as the homotopy triangle and the zig-zag axioms \cite{Kong:2020,GURSKI20114225}, are trivially satisfied, and hence we will not directly prove them. The conditions \eqref{counit}, \eqref{deg0counit} can of course be easily relaxed to give non-trivial unitors, but we shall not consider this here.

\subsubsection{Naturality and the $\mathsf{Gray}$-property of the tensor product}
Recall the space $\End(V)_{-1}$ is modelled by cochain homotopies, which can be interpreted as "endomorphisms" on $\End(V)_0$. Using this perspective, we will prove the following key results.
\begin{lemma}\label{monoidal2rep2cat}
Let $i:V\rightarrow U$ denote a 2-intertwiner. We have the following diagrams
\begin{equation}
\begin{tikzcd}
V\otimes W \arrow[rr, "\rho_{V\otimes W}"] \arrow[dd, "i"'] &                                   & V\otimes W \arrow[dd, "i"] \\
                                                            & \xRightarrow{\rho_{i\otimes W}} &                            \\
U\otimes W \arrow[rr, "\rho_{U\otimes W}"]                  &                                   & U\otimes W
\end{tikzcd},\qquad \begin{tikzcd}
W\otimes V \arrow[rr, "\rho_{W\otimes V}"] \arrow[dd, "i"'] &                                   & W\otimes V \arrow[dd, "i"] \\
                                                            & \xRightarrow{\rho_{W\otimes i}} &                            \\
W\otimes U \arrow[rr, "\rho_{W\otimes U}"]                  &                                   & W\otimes U                
\end{tikzcd} \nonumber
\end{equation}
in $\operatorname{2Rep}^\cT(\cG)$. 
\end{lemma}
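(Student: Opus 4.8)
The plan is to show that the data $\rho_{i\otimes W}$ (resp. $\rho_{W\otimes i}$) defined in \eqref{tensor2rep2} furnishes precisely the collection of invertible cochain homotopies $I_{\bullet,\,i\otimes W}$ (resp. $I_{\bullet,\,W\otimes i}$) that upgrades the tensored intertwiner $i\otimes W:(V\otimes W,\rho_{V\otimes W})\to(U\otimes W,\rho_{U\otimes W})$ (resp. $W\otimes i:(W\otimes V,\rho_{W\otimes V})\to(W\otimes U,\rho_{W\otimes U})$) to a weak 2-intertwiner in the sense of \textbf{Definition \ref{weak2int}}. Thus the lemma reduces to verifying the four defining conditions of \textbf{Definition \ref{weak2int}} for these candidate 2-morphisms, with source and target representations given by \eqref{tensor2rep1}. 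I will treat the first diagram in detail; the second is obtained by the evident left--right reflection, exchanging $\Delta_0^l\leftrightarrow\Delta_0^r$ and the swaps $\sigma\leftrightarrow\sigma'$.

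First I would record that the underlying graded pair $(i_1\otimes\operatorname{id},\, i_0\otimes\operatorname{id})$ is a chain map for the three-term complex \eqref{grtensor}, which is immediate since $i=(i_1,i_0)$ is one and $\operatorname{id}_W$ commutes with $\partial'$; the differential $D^\pm=\pm1\otimes\partial'+\partial\otimes1$ then supplies exactly the two copies of the $t$-maps to be matched below. Next I would verify the two boundary identities of \textbf{Definition \ref{weak2int}} relating $\rho_{i\otimes W}$ to the chain map of $i\otimes W$ and the actions $\rho_{V\otimes W},\rho_{U\otimes W}$. Expanding $\rho_{V\otimes W},\rho_{U\otimes W}$ through $\Delta_0'$ and $\rho_{i\otimes W}$ through $\Delta_0^l,\Delta_0^r$, these collapse onto the boundary conditions already satisfied by $I_{\bullet,i}$ for $i:V\to U$, once the coPeiffer identity \eqref{cohgrpd-}, $\Delta_0'=(t\otimes\operatorname{id})\circ\Delta_0^l=(\operatorname{id}\otimes t)\circ\Delta_0^r$, is used to convert $D^\pm$ acting on the $\Delta_0^{l,r}$-components into $\Delta_0'$. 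The condition involving $T=t\otimes t$ on the degree-$(-1)$ piece follows analogously from the coequivariance \eqref{cohgrpd+}, which relates $\Delta_0\circ t$ to $D_t^+\circ\Delta_{-1}$, together with the identity $I_{ty,i}=i_1\circ\rho_1(y)-\rho_1'(y)\circ i_0$ for $i$.

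The crux is the Hochschild $2$-cocycle trivialization \eqref{2inthomotopy} for $i\otimes W$. Here I would expand $\varrho_{V\otimes W}(x,x')=\varrho_V(\bar x_{(1)},\bar x'_{(1)})\otimes\varrho_W(\bar x_{(2)},\bar x'_{(2)})$ and the analogous expression for $U$, then feed in the fact that $I_{\bullet,i}$ already trivializes $\varrho_V-\varrho_U$ in the first tensor leg. The surviving terms must reorganize into the right-hand side of \eqref{2inthomotopy} for the tensored intertwiner, and this reorganization is exactly what the $2$-bialgebra axioms \eqref{2algcoprod} (compatibility of the graded coproduct with $\cdot_l,\cdot_r$ and $\mu_0$) guarantee, since they let one pull $\Delta_0'$ past the products occurring in $\varrho(xx',x'')$ and commute the two legs via $\sigma$. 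I expect this step to be the main obstacle: the bookkeeping of the $(-1)^{\mathrm{deg}}$ signs from the graded tensor product \eqref{grtensor} entering \eqref{tensor2rep2}, \eqref{tensor2rep3}, combined with the need to invoke coassociativity/cobimodularity to align the Sweedler legs, is delicate, even though the underlying algebraic identity is forced by \eqref{2algcoprod}.

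Invertibility of $\rho_{i\otimes W}$ follows from invertibility of $I_{\bullet,i}$ and of the coproduct components, so that $\rho_{i\otimes W}$ is a genuine (invertible) $2$-morphism filling the naturality square. Finally, the second diagram is established \emph{verbatim} with the roles of $\Delta_0^l$ and $\Delta_0^r$ (and of $\sigma,\sigma'$) interchanged throughout, yielding the homotopy data $\rho_{W\otimes i}$ of $W\otimes i$ and completing the proof.
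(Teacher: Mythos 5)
Your overall skeleton is aligned with the paper's proof: reduce the lemma to the statement that $\rho_{i\otimes W}$ is a cochain homotopy filling the square over the three-term complex \eqref{grtensor}, verify it degree by degree with careful $(-1)^{\mathrm{deg}}$ bookkeeping, use the coPeiffer relation to match Sweedler legs, and obtain the second diagram by left--right reflection. However, the mechanism you cite for the central verification is wrong. You claim the boundary identities ``collapse onto the boundary conditions already satisfied by $I_{\bullet,i}$ for $i:V\to U$.'' The homotopy $\rho_{i\otimes W}$ defined in \eqref{tensor2rep2} contains no trace of the weak-intertwiner data $I_{\bullet,i}$: it is built only from the chain-map components of $i$ and the degree-$(-1)$ components $(\rho_U)_1,(\rho_W)_1$ of the \emph{representations}. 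Its boundary therefore cannot be evaluated from $I_{\bullet,i}$'s conditions, and a proof attempting to do so would stall. What actually computes $D^\pm\rho_{i\otimes W}$ is the commutative square \eqref{commsq} of the 2-representations themselves, namely $\partial\,\rho_1(y)=\rho_0^0(ty)$ and $\rho_1(y)\,\partial=\rho_0^1(ty)$ applied to $\rho_U$, $\rho_V$ and $\rho_W$; this converts the differentials supplied by $D^\pm$ into $t$-maps on the Sweedler legs, after which the relations \eqref{condcop} (coPeiffer \eqref{cohgrpd-} together with coequivariance) give $tx_{(1)}^l\otimes x_{(2)}^l=\bar x_{(1)}\otimes \bar x_{(2)}=x_{(1)}^r\otimes tx_{(2)}^r$, producing exactly $\rho_{U\otimes W}\circ i-i\circ\rho_{V\otimes W}$ in each degree. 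The paper's entire proof consists of three such computations --- the two end triangles and the middle section of the three-term complex (so three homotopy conditions, not the ``two boundary identities'' you list) --- and nothing else; you gesture at this conversion in your first step (``$D^\pm$ supplies the $t$-maps''), but then mis-attribute it in the step that matters.

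A secondary mismatch of emphasis: you single out the Hochschild trivialization \eqref{2inthomotopy} for $i\otimes W$ as ``the crux.'' That condition is not part of this lemma or its proof; the compatibility of the tensor product with the weak components $\varrho$ via the 2-bialgebra axioms \eqref{2algcoprod} is asserted separately in Section \ref{sec:monoidal}. So your step 3 is extra work toward a stronger statement (that $i\otimes W$ is a full weak 2-intertwiner in the sense of {\bf Definition \ref{weak2int}}), while the step you treat as routine bookkeeping is where the actual proof lives --- and, as written, it rests on the wrong identity.
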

\begin{proof}
Let us focus first on the left diagram. The goal is to show that $\rho_{i\otimes W}$ defines a cochain homotopy which fits into the following diagram
\begin{equation}
    \begin{tikzcd}
V_{-1}\otimes W_{-1} \arrow[rr, "D^+"] \arrow[d] &  & V_{-1}\otimes W_0\oplus V_0\otimes W_{-1} \arrow[d] \arrow[rr, "D^-"] \arrow[lld, "\rho_{i\otimes W}"'] &  & V_0\otimes W_0 \arrow[d] \arrow[lld, "\rho_{i\otimes W}"'] \\
U_{-1}\otimes W_{-1} \arrow[rr, "D^+"']          &  & U_{-1}\otimes W_0\oplus U_0\otimes W_{-1} \arrow[rr, "D^-"']                                         &  & U_0\otimes W_0                                         
\end{tikzcd},\nonumber
\end{equation}
where the horizontal maps are the differentials given in  \eqref{grtensor}, and the vertical maps are various components of $\rho_{V\otimes W}\circ i - i\circ\rho_{V\otimes W}$. 

The key is the commutation relation  \eqref{commsq}, which allows us to write
\begin{equation}
    \delta(\rho_1(y)) = (\rho_1(y)\partial,\partial \rho_1(y)) = (\rho_0^1(ty),\rho_0^0(ty)) \nonumber
\end{equation}
for each $y\in\cG_{-1}$, as well as the definition  \eqref{deg0sweed} of $\Delta_0'$. Directly computing, we have for the rightmost triangle
\begin{eqnarray}
    D^-\rho_{i\otimes W} &=& \partial_U(\rho_U)_1(x_{(1)}^l) \circ i \otimes (\rho_W)_0^0(x_{(2)}^l) - (-1)^\text{deg} i\circ (\rho_V)_0(x_{(1)}^r) \otimes \partial_W(\rho_W)_0^0(x_{(2)}^r)\nonumber\\
    &=& (\rho_U)_0^0(tx_{(1)}^l) \circ i\otimes (\rho_W)_0^0(x_{(2)}^l) - i\circ (\rho_V)_0^0(x_{(1)}^r)\otimes (\rho_W)_0^0(tx_{(2)}^r) \nonumber\\
    &=& \rho_{U\otimes W} \circ i - i\circ\rho_{V\otimes W}\nonumber
\end{eqnarray}
as maps on $V_0\otimes W_0$ (with deg = 0), and similarly we have for the leftmost triangle
\begin{eqnarray}
    \rho_{i\otimes W}D^+ &=& (\rho_U)_1(x_{(1)}^l)\partial_U \circ i \otimes (\rho_W)_0^1(x_{(2)}^l) + (-1)^\text{deg} i\circ (\rho_V)_0(x_{(1)}^r) \otimes (\rho_W)_0^0(x_{(2)}^r)\partial_W\nonumber\\
    &=& (\rho_U)_0^1(tx_{(1)}^l) \circ i\otimes (\rho_W)_0^1(x_{(2)}^l) - i\circ (\rho_V)_0^1(x_{(1)}^r)\otimes (\rho_W)_0^1(tx_{(2)}^r) \nonumber\\
    &=& \rho_{U\otimes W} \circ i - i\circ\rho_{V\otimes W}\nonumber
\end{eqnarray}
as maps on $V_{-1}\otimes W_{-1}$ (with deg = -1). 

Now consider the middle section. We need to compute
\begin{gather}
    D^+\rho_{i\otimes W} = (\rho_U)_0^1(tx_{(1)}^l) \circ i\otimes (\rho_W)_0^0(x_{(2)}^l) - i\circ (\rho_V)_0^0(x_{(1)}^r)\otimes (\rho_W)_0^1(tx_{(2)}^r),\nonumber\\
    \rho_{i\otimes W}D^- = (\rho_U)_0^0(tx_{(1)}^l) \circ i\otimes (\rho_W)_0^1(x_{(2)}^l) - i\circ (\rho_V)_0^1(x_{(1)}^r)\otimes (\rho_W)_0^0(tx_{(2)}^r), \nonumber
\end{gather}
and sum them to find
\begin{eqnarray}
    D^+\rho_{i\otimes W}+ \rho_{i\otimes W}D^- &=& \left[(\rho_U)_0^1(tx_{(1)}^l)\otimes (\rho_W)_0^0(x_{(2)}^l) + (\rho_U)_0^0(tx_{(1)}^l)\otimes (\rho_W)_0^1(x_{(2)}^l)\right]\circ i \nonumber\\
    &\qquad& -~ i\circ \left[(\rho_V)_0^0(x_{(1)}^r)\otimes (\rho_W)_0^1(tx_{(2)}^r) + (\rho_V)_0^1(x_{(1)}^r)\otimes (\rho_W)_0^0(tx_{(2)}^r)\right] \nonumber\\
    &=& \rho_{U\otimes W} \circ i - i\circ\rho_{V\otimes W} \nonumber
\end{eqnarray}
as maps on $V_{-1}\otimes W_0\oplus V_0\otimes W_{-1}$. The other diagram is treated identically.
\end{proof}

We now show that  \eqref{tensor2rep3} is in fact {\it not} independently defined.
\begin{lemma}\label{2repdecomp}
If $j: W\rightarrow T$ is another 2-intertwiner, then $i\otimes j$  decomposes as $i\otimes j= i\otimes T \circ V\otimes j \cong U\otimes j \circ i\otimes W$. The homotopy $I_{i\otimes j,\bullet} = I_{i\otimes\id_W} \ast I_{\id_V\otimes j}$ also decomposes accordingly.
\end{lemma}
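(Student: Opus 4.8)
The statement is that the tensor product behaves like a cubical ($\operatorname{Gray}$-type) functor: the 2-intertwiner $i\otimes j$ carries no datum independent of those already attached to the whiskered 1-morphisms $V\otimes j$, $i\otimes T$, $i\otimes W$ and $U\otimes j$. The plan is to verify this directly from the explicit formulas \eqref{tensor2rep2} and \eqref{tensor2rep3}, by forming the appropriate vertical $\ast$-composite of whiskered homotopies and matching it term-by-term to \eqref{tensor2rep3}.

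First I would write out the homotopy data of the two factors. The second line of \eqref{tensor2rep2}, with the object on the left and $j$ on the right, yields $I_{\id_V\otimes j,\bullet}=\rho_{V\otimes j}$, while the first line, with $i$ on the left and the object $T$ on the right, yields $I_{i\otimes T,\bullet}=\rho_{i\otimes T}$. I would then assemble their vertical composite using the $\ast$-product $A\ast A'=\delta(A)\cdot A'=A\partial A'$ of cochain homotopies, where the relevant differentials are the tensor differentials $D^{\pm}$ of \eqref{grtensor}. The essential input is the commutation square \eqref{commsq}: each time a differential meets a $\rho_1$-leg it produces $\partial\rho_1(y)=\rho_0^0(ty)$ or $\rho_1(y)\partial=\rho_0^1(ty)$, converting a degree-$0$ leg into a $t$-twisted one. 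Feeding these identities into the $\Delta_0^{l},\Delta_0^{r}$ legs of $\rho_{V\otimes j}$ and $\rho_{i\otimes T}$, and then applying the coPeiffer identity \eqref{cohgrpd-}, $(t\otimes\id)\Delta_0^l=\Delta_0'=(\id\otimes t)\Delta_0^r$, together with the cobimodularity relations \eqref{cohgrpd10} and coequivariance \eqref{cohgrpd+}, collapses the two separate coproduct legs into a single $\Delta_{-1}$. After carrying the degree signs $(-1)^{\text{deg}}$ of \eqref{grtensor}, the composite reduces precisely to the two surviving terms of \eqref{tensor2rep3}, establishing $I_{i\otimes j,\bullet}=I_{i\otimes\id_W}\ast I_{\id_V\otimes j}$ and hence the on-the-nose equality $i\otimes j=(i\otimes T)\circ(V\otimes j)$.

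For the second factorization I would run the parallel computation starting instead from $i\otimes W$ and $U\otimes j$, now invoking the \emph{other} leg of \eqref{cohgrpd-} and the swap components $\sigma,\sigma'$ appearing in the 2-bialgebra axioms \eqref{2algcoprod}. This again reproduces $\rho_{i\otimes j}$, but only after interchanging the order of the two coproduct tensor factors. That reordering is exactly the canonical invertible interchanger (the tensorator of the underlying $\operatorname{Gray}$ monoid), which is why the second identification is a natural isomorphism ``$\cong$'' rather than a strict equality; its components are supplied by the non-cocommutativity data of $\Delta$.

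The main obstacle is the sign and index bookkeeping rather than anything structural: one must track the degree-dependent signs of the three-term complex \eqref{grtensor} through the $\ast$-product and confirm that the ``cross'' contributions (those mixing $(\rho_U)_1\circ i$ with $j\circ(\rho_W)_1$) recombine into---or cancel against---the two terms retained in \eqref{tensor2rep3}. Fortunately the hard part of this is already discharged in the proof of \textbf{Lemma \ref{monoidal2rep2cat}}, where the same differentials and the square \eqref{commsq} were used to show each whiskering $\rho_{i\otimes W}$ is a genuine cochain homotopy; I would reuse that verification for each factor and then only check that the two factors close up under vertical composition to give \eqref{tensor2rep3}.
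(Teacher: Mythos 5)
Your treatment of the first decomposition is essentially the paper's own argument: you form the vertical $\ast$-composite of the whiskered homotopies $\rho_{V\otimes j}$ and $\rho_{i\otimes T}$, use the commutation square \eqref{commsq} to trade differentials hitting $\rho_1$-legs for $t$-twisted $\rho_0$-legs, and use coequivariance \eqref{cohgrpd+} to collapse the coproduct legs onto $\Delta_{-1}$, recovering \eqref{tensor2rep3}. (Strictly speaking, the identity established is $\rho_{i\otimes j}=(\rho_{i\otimes T}\ast\rho_{V\otimes j})\circ t$, i.e.\ the composite evaluated at $ty$; your phrasing obscures the precomposition with $t$, but the computation is the same, and your reuse of the verification from \textbf{Lemma \ref{monoidal2rep2cat}} is exactly what the paper does.)

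The genuine gap is in your account of the second decomposition, i.e.\ of why $i\otimes T\circ V\otimes j\cong U\otimes j\circ i\otimes W$ is only a 2-isomorphism. You attribute the discrepancy to ``interchanging the order of the two coproduct tensor factors'' and assert that the isomorphism's components are ``supplied by the non-cocommutativity data of $\Delta$'', invoking ``the canonical invertible interchanger (the tensorator of the underlying $\operatorname{Gray}$ monoid)''. Both claims fail. First, the underlying homotopy data of the two composites agree \emph{on the nose}: the same computation shows $\rho_{U\otimes j}\ast\rho_{i\otimes W}$ also equals $\rho_{i\otimes j}$, with no reordering of tensor legs required. The obstruction to strict equality sits entirely in the \emph{weak} components $\varrho$ of the two composite 2-representations: one computes that $\varrho_{(i\otimes T)\circ (V\otimes j)}\circ\Delta_0(x)$ and $\varrho_{(U\otimes j)\circ (i\otimes W)}\circ\Delta_0(x)$ consist of the same two $\varrho\otimes\varrho$ terms but with the arguments and degree signs swapped, and the 2-isomorphism asserted in the lemma is precisely the difference $\varrho_{(i\otimes T)\circ(V\otimes j)}\ast\varrho_{(U\otimes j)\circ(i\otimes W)}^{-1}$. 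In particular, for strict 2-representations ($\varrho=0$) the two composites would be equal on the nose no matter how non-cocommutative $\Delta$ is, so cocommutativity of $\Delta$ is not the source of this 2-morphism; the source is the failure of $\rho_0$ to be a strict homomorphism, as recorded in \eqref{weak2repalg}. Second, appealing to a pre-existing tensorator of ``the underlying $\operatorname{Gray}$ monoid'' is circular: this lemma is exactly what establishes the $\operatorname{Gray}$ property of $\operatorname{2Rep}^\cT(\cG)$ (as the paper remarks immediately after the lemma), so the interchanger cannot be presupposed --- it must be exhibited, and it is exhibited out of $\varrho$. Since your proposal never constructs this 2-morphism, the ``$\cong$'' half of the statement is not actually proved.
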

\begin{proof}
What we need to show is that $\rho_{i\otimes j} = (\rho_{i\otimes T}\ast\rho_{V\otimes j})\circ t = (\rho_{U\otimes j}\ast\rho_{i\otimes W})\circ t$ as 2-morphisms. Recall cochain homotopies $q:f\Rightarrow g,\, p:g\Rightarrow h$ in $\mathsf{2Vect}^{hBC}$ compose by $p\ast q = p\partial_U q :f\Rightarrow h$, where $U$ is the source 2-vector space of the cochain map $g$. Indeed, we have
\begin{gather}
    \partial_W(p\ast q) = (\partial_Wp)\circ (\partial_Uq) = (g_0 - h_0)\circ (f_0-g_0),\nonumber\\
    (p\ast q)\partial_V = (p\partial_U)\circ(q\partial_V) = (g_{-1}-h_{-1})\circ(f_{-1}-g_{-1})\nonumber
\end{gather}
as desired, where $W$ is the target of $h$ and $V$ is the source of $f$. Notice this is exactly how elements in $\End(V)_{-1}$ compose, $A\ast A' = A\delta A$.

The goal is to prove that $D^\pm\rho_{i\otimes j}(y)$ in fact decomposes as described above for each $y\in\cG_{-1}$. This follows from the coequivariance condition  \eqref{cohgrpd+}. By direct computation, precomposing  \eqref{tensor2rep2} yields (here we neglect the 2-vector space subscripts for brevity)
\begin{eqnarray}
    \rho_{i\otimes W}\circ t &=& (\rho_1i\otimes (\rho_0t) +(-1)^\text{deg} i(\rho_0t)\otimes \rho_1)\circ\Delta_{-1}\nonumber\\
    &=& (\rho_1i\otimes \partial\rho_1 + (-1)^\text{deg} i(\partial\rho_1)\otimes\rho_1)\circ\Delta_{-1},\nonumber\\
    \rho_{U\otimes j} \circ t &=& ((\rho_0t)\otimes\rho_1j + (-1)^\text{deg} \rho_1\otimes i(\rho_0t)) \circ\Delta_{-1} \nonumber\\
    &=& ((\rho_1\partial)\otimes \rho_1j + (-1)^\text{deg}\rho_1\otimes j( \rho_1\partial))\circ\Delta_{-1},\nonumber
\end{eqnarray}
where we have used  \eqref{commsq} to commute the $t$-map past the 2-representations to the differential $\partial$. Using the Sweeder notation  \eqref{sweed} for $\Delta_{-1}$, we 
compute their graded composition to be
\begin{eqnarray}
    (\rho_{U\otimes j})(ty)\ast(\rho_{i\otimes W})(ty)&=&(\rho_1(y_{(1)})\partial)\rho_1(y_{(1)})i\otimes \rho_1(y_{(2)})j(\partial\rho_1(y_{(2)})) \nonumber\\
    &\qquad&+~ (-1)^\text{deg}\rho_1(y_{(1)})(i\partial\rho_1(y_{(1)}))\otimes j(\rho_1(y_{(2)})\partial)\rho_1(y_{(2)}) \nonumber\\
    &=& (\rho_1(y_{(1)})\ast \rho_1(y_{(1)})) i \otimes j(\rho_1(y_{(2)})\ast \rho_1(y_{(2)})) \nonumber\\
    &\qquad&+ ~(-1)^\text{deg} (\rho_1(y_{(1)})\ast \rho_1(y_{(1)}))i\otimes j (\rho_1(y_{(2)})\ast\rho_1(y_{(2)}))\nonumber\\
    &=&(\rho_1 i \otimes \rho_1 j + (-1)^\text{deg}i\rho_1 \otimes j\rho_1)\circ\Delta_{-1}(y) = \rho_{i\otimes j}(y)\nonumber
\end{eqnarray}
as desired, where we have noted the property $i_{-1}(\rho_V)_1 = (\rho_U)_1i_0$ of the 2-intertwiners $i,j$ to permute them past the $\rho$'s. This proves that the 2-algebra homomorphisms $\rho_{i\otimes j} = \rho_{i\otimes T} \ast \rho_{V\otimes j}$ coincide. A similar argument shows that the 2-algebra homomorphisms $\rho_{i\otimes j} = \rho_{U\otimes j}\ast \rho_{i\otimes W}$ also coincide. 

This is not sufficient to imply that $i\otimes T \circ V\otimes j = U\otimes j \circ i\otimes W$, however. Indeed, the weak component $\varrho$ of the two decomposed 2-representations in general may differ. After some computations, one can show that we have
\begin{align}
    \varrho_{(i\otimes T)\circ (V\otimes j)}\circ\Delta_0(x) &= \varrho(tx_{(1)}^l,x_{(1)}^r)\otimes\varrho(x_{(2)}^l,tx_{(2)}^r)+(-1)^\text{deg}\varrho(x_{(1)}^r,tx_{(1)}^l)\otimes\varrho(tx_{(2)}^r,x_{(1)}^l),\nonumber\\
    \varrho_{(U\otimes j)\circ (i\otimes W)}\circ\Delta_0(x) &= \varrho(x_{(1)}^r,tx_{(1)}^l)\otimes\varrho(tx_{(2)}^r,x_{(1)}^l) + (-1)^\text{deg}\varrho(tx_{(1)}^l,x_{(1)}^r)\otimes\varrho(x_{(2)}^l,tx_{(2)}^r).\nonumber
\end{align}
The difference $\phi_{i,j}= \varrho_{(i\otimes T)\circ (V\otimes j)}\ast \varrho_{(U\otimes j)\circ (i\otimes W)}^{-1}$ between these 2-morphisms is precisely the 2-isomorphism $i\otimes T \circ V\otimes j \cong U\otimes j \circ i\otimes W$.
\end{proof}
\noindent This 2-isomorphism is precisely the \textit{interchanger}
\begin{equation}
    \phi_{i,j}: (i\otimes\id_T) \circ (\id_V\otimes j) \Rightarrow (\id_U\otimes j) \circ (i\otimes \id_W) \label{interchange}
\end{equation}
of two 1-morphisms $i:V\rightarrow U,j: W\rightarrow T$. A key property of monoidal 2-categories, or more generally {$\mathsf{Gray}$-enriched 3-categories}, is that most of its coherence data can be stritified away {\it aside} from this interchanger $\phi$ \cite{gurski2006algebraic,neuchl1997representation}. Conversely, given the coherent choice of the interchanger, the tensor product between 1-morphisms can be determined by the product between objects and 1-morphisms through naturality. We call this the "\textbf{$\mathsf{Gray}$-property}".

These lemmas are important, as its proof techniques will be used repeatedly in what follows.

\subsection{Monoidal associators}\label{fusionassoc}
In this section, we shall focus on the \textit{associator} morphisms attached to the 2-representations in $\operatorname{2Rep}^\cT(\cG)$, as they play a direct role in the main theorem. Other subtleties that show up are studied in detail in Appendix \ref{weak2repthy}.

Recall from Section \ref{sec:monoidal} that the tensor product on $\operatorname{2Rep}(\cG)$ is given by the coproduct $\Delta$. The associator morphisms $a$ are therefore given by the coasscociator $\Delta_1:\cG_0\rightarrow\cG_{-1}^{3\otimes}$ attached to the coproduct in $\cG$, and {\it not} the Hochschild 3-cocycle $\cT$. However, the data $\Delta_1,\cT$ are dual to each other by {\bf Proposition \ref{weak2bialg}}, hence if $\cG$ is self-dual (like the weak (skeletal) 2-quantum double as we  constructed in Section \ref{weakskeletalqd}), they in fact constitute the same data. As such we shall denote the weak 2-representation 2-category by $\operatorname{2Rep}^\cT(\cG)$. We shall neglect the tensor product notation $\otimes$ in the following.

\medskip

We begin by constructing the associator 2-morphism $a_{ijk}: (i\otimes j)\otimes k\Rightarrow i\otimes (j\otimes k)$ on the triple $i:V\rightarrow V',j:W\rightarrow W',k:U\rightarrow U'$ of 2-intertwiners. By definition in section \ref{weakcohgrpd1}, we see that the following quantity
\begin{equation}
    a_{ijk} = ((\rho_{V'})_1\circ i\otimes(\rho_{W'})_1\circ j\otimes(\rho_{U'})_1\circ  k +(-1)^\text{deg}i\circ (\rho_{V})_1\otimes j\circ (\rho_{W})_1\otimes k\circ (\rho_{U})_1)\circ (\Delta_1\circ t)\label{fusionassoc1}
\end{equation}
defines a cochain homotopy that fits into the following equation $\rho_{(ij)k} - \rho_{i(jk)} = a_{ijk}$, which induces a 2-morphism (also denoted by $a_{ijk}$) between the 2-intertwiners 
\begin{equation}
    a_{ijk}: (ij)k\Rightarrow i(jk).\nonumber
\end{equation}
Secondly,  \eqref{weakcohgrpd1} implies that the following quantities based on $D_t\Delta_1$,
\begin{eqnarray}
    a_{Vjk} &=& ((\rho_V)_0\otimes(\rho_{W'})_1\circ j\otimes(\rho_{U'})_1\circ  k +(-1)^\text{deg} (\rho_V)_0\otimes j\circ (\rho_{W})_1\otimes k\circ (\rho_{U})_1)\nonumber\\
    &\qquad&\circ (t\otimes 1\otimes 1)\Delta_1,\nonumber\\
    a_{iWk} &=& ((\rho_{V'})_1\circ i\otimes(\rho_{W})_0\otimes(\rho_{U'})_1\circ  k +(-1)^\text{deg} i\circ (\rho_{V})_1\otimes (\rho_{W})_0\otimes k\circ (\rho_{U})_1)\nonumber\\
    &\qquad& \circ (1\otimes t\otimes 1)\Delta_1,\nonumber\\
    a_{ijU} &=& ((\rho_{V'})_1\circ i\otimes(\rho_{W'})_1\circ j\otimes(\rho_{U})_0 +(-1)^\text{deg} i\circ (\rho_V)_1\otimes j\circ (\rho_{W})_1\otimes  (\rho_{U})_0)\nonumber\\
    &\qquad& \circ (1\otimes 1\otimes t)\Delta_1,\label{fusionassoc2}
\end{eqnarray}
give rise to the associators for the following tensor products,
\begin{equation}
    a_{Vjk}: (Vj)k\Rightarrow V(jk),\qquad a_{iWk}: (iW)k\Rightarrow i(WK),\qquad a_{ijU}: (ij)U\Rightarrow i(jU) \nonumber
\end{equation}
for the mixed tensor products defined by  \eqref{tensor2rep2}. Thirdly,  \eqref{weakcohgrpd2} implies that the following quantities based in $D_t[2]\Delta_1$, 
\begin{eqnarray}
    a_{VWk} &=& ((\rho_V)_0\otimes(\rho_{W})_0\otimes(\rho_{U'})_1\circ  k +(-1)^\text{deg} (\rho_V)_0\otimes (\rho_{W})_0\otimes k\circ (\rho_{U})_1)\nonumber\\
    &\qquad& \circ (t\otimes t\otimes 1)\Delta_1,\nonumber\\
    a_{iWU} &=& ((\rho_{V'})_1\circ i\otimes(\rho_{W})_0\otimes(\rho_{U})_0 +(-1)^\text{deg} i\circ (\rho_{V})_1\otimes (\rho_{W})_0\otimes (\rho_{U})_0)\nonumber\\
    &\qquad& \circ (1\otimes t\otimes t)\Delta_1,\nonumber\\
    a_{VjU} &=& ((\rho_{V})_0\otimes(\rho_{W'})_1\circ j\otimes(\rho_{U})_0 +(-1)^\text{deg} (\rho_V)_0\otimes j\circ (\rho_{W})_1\otimes  (\rho_{U})_0)\nonumber\\
    &\qquad& \circ (t\otimes 1\otimes t)\Delta_1,\label{fusionassoc3}
\end{eqnarray}
serve as the associators 
\begin{equation}
    a_{VWk}: (VW)k\Rightarrow V(Wk),\qquad a_{VjU}:(Vj)U\Rightarrow V(jU),\qquad a_{iWU}: (iW)\Rightarrow i(WU),\nonumber
\end{equation}
Notice that these quantities we have defined so far are all cochain homotopies/2-mophisms in $\operatorname{2Rep}^\cT(\cG)$, due to the appearance of $\rho_1$ in their tensor products. 

Lastly,  \eqref{weakcohgrpd3} allows us to define the associator {\it 1-morphism},
\begin{equation}
    a_{VWU} = ((\rho_V)_0\otimes (\rho_W)_0\otimes (\rho_U)_0)(\Phi), \label{fusionassoc4}
\end{equation}
with  $\Phi\equiv (t\otimes t\otimes t)\Delta_1:\cG_0\rightarrow \cG_0^{3\otimes}$, which induces an invertible 1-morphism
\begin{equation}
    a_{VWU}: (VW)U\rightarrow V(WU)\nonumber
\end{equation}
that intertwines between $\rho_{(V\otimes W)\otimes U}$ and $\rho_{V\otimes(W\otimes U)}$.

The adjoint associator 2-morphism $a^\dagger$ is implemented by minus the corresponding cochain homotopy. For  \eqref{fusionassoc4}, however, the adjoint morphism $a^\dagger_{VWU}$ is given by the inverse $\Phi^{-1}$.

\paragraph{The pentagon relation and naturality of the associator.}
We now prove the following.
\begin{lemma}
Suppose the 3-cocycle $\T=0$ is trivial for the moment. The pentagon relation for the associators $a$ arising from  \eqref{fusionassoc1}, \eqref{fusionassoc2}, \eqref{fusionassoc3}, \eqref{fusionassoc4} follows from the 2-coassociativity condition  \eqref{weakcohgrpd3} for $\Delta_1$.
\end{lemma}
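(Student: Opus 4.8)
The plan is to reduce the pentagon, which with $\T=0$ (trivial pentagonator) is a strict equality of composite associator 2-intertwiners, to a single coalgebraic identity for the coassociator $\Delta_1$ and its $t$-reductions. First I would fix four tensor factors and write out the two composites appearing on either side of the pentagon: the left-hand composite of the form $a_{\bullet,\bullet,\bullet\otimes\bullet}\circ a_{\bullet\otimes\bullet,\bullet,\bullet}$ and the right-hand composite $(\id\otimes a)\circ a\circ(a\otimes\id)$. Each associator in \eqref{fusionassoc1}--\eqref{fusionassoc4} is the image, under the appropriate tensor product of representations, of exactly one graded component of $\Delta_1$ (namely $\Delta_1\circ t$, $D_t\Delta_1$, $D_t[2]\Delta_1$, or $\Phi=(t\otimes t\otimes t)\Delta_1$), while the tensor product of 2-representations is assembled from $\Delta_0'$, $\Delta_0$ and $\Delta_{-1}$ through \eqref{tensor2rep1}--\eqref{tensor2rep3}. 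Hence both pentagon composites are images of iterated coproducts precomposed with $\Delta_1$, and the task becomes comparing two such iterated expressions.

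The key step is to evaluate the composition of the associator 2-intertwiners using the graded $\ast$-composition in $\End(V)_{-1}$ already exploited in Lemma \ref{2repdecomp}, and to commute the $t$-map past the representations onto the differentials via \eqref{commsq} and the homotopy-homomorphism relations \eqref{weak2repalg}. Doing so, the left composite is precisely the image of $\Delta_1\circ\Delta_0$ and the right composite the image of $\Delta_{-1}\circ\Delta_1$, so that the pentagon holds exactly because of the 2-coassociativity of $\Delta_1$, whose form relevant to the monoidal structure (built from $\Delta_0'$) is \eqref{weakcohgrpd3} and which descends from $\Delta_1\circ\Delta_0=\Delta_{-1}\circ\Delta_1$. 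Along the way I would use coequivariance \eqref{cohgrpd+} and the coPeiffer identity \eqref{cohgrpd-} to convert freely between $\Delta_{-1}$, $\Delta_0$ and $\Delta_0'$, so that the two sides land in the same graded component before being compared.

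Finally, since the pentagon must be checked for every assignment of the four tensor factors to objects versus 2-intertwiners, I would organize the verification by these sectors: each sector applies a definite number of $t$-maps and thereby isolates one of \eqref{weakcohgrpd1} (all factors morphisms), \eqref{weakcohgrpd3}, or \eqref{weakcohgrpd4} (all factors objects, i.e. the $\Phi$-level identity). Because every one of these is a $t$-image of the master 2-coassociativity relation \eqref{weakcohgrpd3}, they all hold simultaneously, and the pentagon follows in each sector at once. The hard part will be purely combinatorial rather than conceptual: carefully tracking the degree-dependent signs $(-1)^{\text{deg}}$ in \eqref{tensor2rep2}, \eqref{tensor2rep3} and \eqref{fusionassoc1}--\eqref{fusionassoc4}, and confirming that the graded $\ast$-composition of the homotopies reproduces $\Delta_1\circ\Delta_0$ on the nose; here I would lean on the naturality squares of Lemma \ref{monoidal2rep2cat} to guarantee that each whiskered composite in the pentagon is a well-defined 2-morphism.
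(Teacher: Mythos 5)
Your proposal follows essentially the same route as the paper's proof: identify each associator in \eqref{fusionassoc1}--\eqref{fusionassoc4} with the appropriate $t$-contraction of $\Delta_1$, use coequivariance \eqref{cohgrpd+} and the coPeiffer identity to move between $\Delta_{-1}$, $\Delta_0$ and $\Delta_0'$, and reduce the pentagon in each sector (graded by how many of the four slots carry objects rather than 2-intertwiners) to a $t$-image of the master relation $\Delta_1\circ\Delta_0=\Delta_{-1}\circ\Delta_1$, with the all-object sector handled by $\Delta_0'\circ\Phi=\Phi\circ\Delta_0'$ as in \eqref{2coassobj}. This is exactly the paper's argument, and your second paragraph states the correct master identity.

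One correction to your third paragraph: the extreme sectors do \emph{not} isolate \eqref{weakcohgrpd1} or \eqref{weakcohgrpd4}. Those two equations belong to the \emph{weak coassociativity} family (each contains $\Delta_1$ linearly, so neither can produce a pentagon, which involves five occurrences of $\Delta_1$); \eqref{weakcohgrpd3} and \eqref{weakcohgrpd4} are obtained by applying $t$-maps to \eqref{weakcohgrpd1}, not to the 2-coassociativity relation. The all-intertwiner pentagon reduces to $\left[\Delta_1\circ\Delta_0-\Delta_{-1}\circ\Delta_1\right]\circ t=0$, i.e.\ the 2-coassociativity \eqref{weakcohgrpd2} precomposed with $t$, and the all-object pentagon to its full $D_t[3]$-image \eqref{2coassobj}; the intermediate sectors use $D_t$ and $D_t[2]$ applied to the same relation. (The paper's own proof cites "\eqref{weakcohgrpd3}" for what its definition labels \eqref{weakcohgrpd2}, which likely caused the confusion, but your dependency chain as written would not close the argument for those two sectors.)
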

\begin{proof}
Consider first  \eqref{fusionassoc1}. We {\it precompose}  \eqref{weakcohgrpd3} with $t$ and reconstruct the associators corresponding to each term according to the definition,
\begin{gather}
    (\id\otimes (\Delta_1\circ t))\circ\Delta_{-1}\leadsto \id_i \otimes a_{jkl},\qquad ((\Delta_1\circ t)\otimes\id )\circ\Delta_{-1}\leadsto a_{ijk}\otimes\id_l,\nonumber\\
    (1\otimes \Delta_{-1}\otimes 1)\circ\Delta_1\circ t \leadsto a_{i(jk)l},\qquad
    -(\Delta_{-1}\otimes 1\otimes 1)\circ\Delta_1\circ t\leadsto a^\dagger_{(ij)kl},\nonumber\\
    -(1\otimes 1\otimes \Delta_{-1})\circ\Delta_1\circ t\leadsto a^\dagger_{ij(kl)},\nonumber
\end{gather}
where $\operatorname{id}_i:i\Rightarrow i$ denotes the identity modification on the 2-intertwiner $i$. Now note that, by coequivariance  \eqref{cohgrpd+} $D_t\circ \Delta_{-1} = \Delta_0\circ t$, we have
\begin{equation}
    (\id\otimes (\Delta_1\circ t))\circ\Delta_{-1} = (\id\otimes \Delta_1)\circ \Delta_0^l\circ t,\qquad  ( (\Delta_1\circ t)\otimes\id)\circ\Delta_{-1} = (\Delta_1\otimes \id)\circ \Delta_0^r\circ t,\nonumber
\end{equation}
whence the pentagon relation
\begin{equation}
    \begin{tikzcd}
                                                        &                                                             & ((ij) k) l \arrow[rrd, "a_{ijk}\operatorname{id}_l "]  &                  &                                                        \\
(ij) (kl) \arrow[rru, "a^\dagger_{(ij)kl}"] &                                                             &                                                                                            &                  & (i (jk)) l \arrow[ldd, "{a_{i(jk)l}}"] \\
                                                        &                                                             &                                                                                            &                  &                                                        \\
                                                        & i (j(kl)) \arrow[luu,"a^\dagger_{ij(kl)}"'] &                                                                                            & i ((jk) l) \arrow[ll,"\operatorname{id}_ia_{jkl}"'] &                                                       
\end{tikzcd}\label{2ddpentagon}
\end{equation}
is equivalently expressed as 
\begin{eqnarray}
    0&=&(1\otimes \Delta_{-1}\otimes 1)\circ\Delta_1 \circ t- (\Delta_{-1}\otimes 1\otimes 1)\circ\Delta_1\circ t - (1\otimes 1\otimes\Delta_{-1})\circ\Delta_1\circ t\nonumber\\
    &\qquad&~+(\Delta_1\otimes 1)\circ\Delta_0^r\circ t + (1\otimes \Delta_1)\circ\Delta_0^l\circ t\nonumber\\
    &=& \left[-\Delta_{-1}\circ\Delta_1 + \Delta_1\circ\Delta_0\right] \circ t,\nonumber
\end{eqnarray}
which is nothing but the 2-coassociativity  \eqref{weakcohgrpd3} precomposed with $t$. Now by the coPeiffer identity $\Delta_0' = D_t\Delta_0$  \eqref{deg0sweed}, the same argument shows that the pentagon relations for the rest of the associator 2-morphisms  \eqref{fusionassoc2}, \eqref{fusionassoc3} are equivalent to applying the $t$-map $D_t,D_t[2]$ to  \eqref{weakcohgrpd3}.

Similarly, under the complete $t$-map $D_t[3] = t\otimes t\otimes t$, the 2-coasscociativity condition  \eqref{weakcohgrpd3} becomes
\begin{equation}
    \Delta_0'\circ \Phi = \Phi\circ \Delta_0',\label{2coassobj}
\end{equation}
which by  \eqref{tensor2rep1} implies the pentagon relation for the associator 1-morphism  \eqref{fusionassoc4}.
\end{proof}
\noindent We examine the case where $\T\neq 0$ is non-trivial in Appendix \ref{weak2repthy}. In particular, we show in {\bf Proposition \ref{pentag}} that $\T$ gives rise to the {\it pentagonator} 2-morphism $\pi$ in $\operatorname{2Rep}^\cT(\cG)$ implementing the pentagon axioms akin to \eqref{2ddpentagon}.

Recall from {\bf Proposition \ref{weak2bialg}} that, for a self-dual weak 2-bialgebra,  \eqref{weakcohgrpd3} follows from the 3-cocycle condition for the Hochschild 3-cocycle $\cT$. Thus the entirety of the 2-bialgebra (or 2-Hopf algebra) structure plays a central role, precisely as one would expect in Tannakian duality \cite{Majid:1990bt,Pfeiffer2007}.

\begin{lemma}
    The associator 2-morphism  \eqref{fusionassoc3} fits into diagrams of the form
\begin{equation}
\begin{tikzcd}
(VW)U \arrow[rr, "a_{VWU}"] \arrow[dd, "k"'] &                                   & V(WU) \arrow[dd, "k"] \\
                                                            & \xRightarrow{a_{VWk}} &                            \\
(VW)U' \arrow[rr, "a_{VWU'}"]                  &                                   & V(WU')                
\end{tikzcd}\label{assoc2morph}
\end{equation}
together with the associator morphism  \eqref{fusionassoc4}. Moreover, the associator 2-morphisms  \eqref{fusionassoc1}, \eqref{fusionassoc2} are completely determined by  \eqref{fusionassoc3}, \eqref{fusionassoc4}. 
\end{lemma}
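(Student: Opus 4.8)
The plan is to treat the two assertions separately, each mirroring a technique already established in the preceding lemmas: the naturality square \eqref{assoc2morph} is proven in the style of \textbf{Lemma \ref{monoidal2rep2cat}}, while the determination of \eqref{fusionassoc1}, \eqref{fusionassoc2} by \eqref{fusionassoc3}, \eqref{fusionassoc4} follows the decomposition argument of \textbf{Lemma \ref{2repdecomp}}. Throughout, the organising principle is that the four inputs $\Delta_1\circ t$, $D_t\Delta_1$, $D_t[2]\Delta_1$, and $\Phi=(t\otimes t\otimes t)\Delta_1$ feeding \eqref{fusionassoc1}--\eqref{fusionassoc4} are the successive $t$-localizations of a single coassociator $\Delta_1$, so that migrating a $t$-map through one slot trades an intertwiner slot $\rho_1$ for an object slot $\rho_0 t$.

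For the naturality statement I would first note that $a_{VWU}$ is a genuine $1$-morphism built purely from $\Phi$ via \eqref{fusionassoc4}, while $a_{VWk}$ from \eqref{fusionassoc3} is built on $(t\otimes t\otimes 1)\Delta_1$ and carries exactly one factor of $\rho_1\circ k$ in its third slot. The goal is to show that, as a cochain homotopy on the graded pieces of the triple tensor product $V\otimes W\otimes U$, the map $a_{VWk}$ has boundary equal to the difference of the underlying cochain maps of the two composite $1$-morphisms $a_{VWU'}\circ (VW)k$ and $V(Wk)\circ a_{VWU}$ around the square. Concretely I would compute $D^- a_{VWk}$, $a_{VWk}D^+$, and the middle term $D^+ a_{VWk}+a_{VWk}D^-$ degree by degree, exactly as in the three displayed computations of \textbf{Lemma \ref{monoidal2rep2cat}}. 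The only input needed is the commutation relation \eqref{commsq}, in the form $\delta\rho_1(y)=(\rho_0^1(ty),\rho_0^0(ty))$, which converts each boundary $\rho_1\partial$ or $\partial\rho_1$ appearing in the $k$-slot into $\rho_0\circ t$; matching the result against $a_{VWU},a_{VWU'}$ then identifies the boundary with the claimed difference.

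For the $\operatorname{Gray}$ property the strategy is to express the associators with more intertwiner slots as graded $\ast$-composites of those with fewer, leaving only \eqref{fusionassoc3} and \eqref{fusionassoc4} as free data. Recalling that cochain homotopies compose by $p\ast q=p\,\partial\,q$ and that by \eqref{commsq} every occurrence of $\rho_1\partial$ collapses to $\rho_0 t$, precomposing \eqref{fusionassoc1} with the coproduct and inserting $\partial$ between adjacent $\rho_1$-slots converts one intertwiner slot into an object slot carrying a $t$-map, turning a term of type \eqref{fusionassoc1} into a $\ast$-composite of terms of type \eqref{fusionassoc2}; iterating converts \eqref{fusionassoc2} into composites of \eqref{fusionassoc3}. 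The bookkeeping is governed by coequivariance \eqref{cohgrpd+}, $D_t^+\circ\Delta_{-1}=\Delta_0\circ t$, and the coPeiffer identity \eqref{cohgrpd-}, used to migrate $t$-maps through the coproduct exactly as in the pentagon proof, so each decomposition step is a single-slot application of coequivariance.

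The main obstacle is not the existence of the decomposition but its coherence: as at the end of \textbf{Lemma \ref{2repdecomp}}, the two orders in which the $\ast$-composites can be formed need not agree on the nose, and the weak components $\varrho$ of the composed $2$-representations differ by a $2$-isomorphism rather than an equality. I therefore expect the decomposition of \eqref{fusionassoc1}, \eqref{fusionassoc2} in terms of \eqref{fusionassoc3}, \eqref{fusionassoc4} to hold only up to a canonical invertible $2$-morphism, built from a difference of the form $\varrho_{(\cdots)}\ast\varrho_{(\cdots)}^{-1}$ precisely as in \textbf{Lemma \ref{2repdecomp}}; verifying that these coherence $2$-isomorphisms are the correct ones, together with tracking the signs $(-1)^{\text{deg}}$ from \eqref{grtensor} through the alternating-sign extensions of $\Delta_{-1},\Delta_0,\Delta_1$, is the delicate part. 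This up-to-isomorphism determination is exactly the $\operatorname{Gray}$ property asserted in the statement.
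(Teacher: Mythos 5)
Your proposal is correct and follows essentially the same route as the paper: the naturality square is established by the degree-by-degree cochain-homotopy computation of \textbf{Lemma \ref{monoidal2rep2cat}} using \eqref{commsq}, and the determination of \eqref{fusionassoc1}, \eqref{fusionassoc2} from \eqref{fusionassoc3}, \eqref{fusionassoc4} is obtained by adapting \textbf{Lemma \ref{2repdecomp}}, precomposing with $t$ (resp.\ applying the tensor-extended $\delta$) to trade intertwiner slots for object slots. Your caveat that the decompositions only agree up to the canonical invertible $2$-morphisms coming from the weak components $\varrho$ is exactly the $\operatorname{Gray}$-property caveat already built into the paper's argument, so nothing is missing.
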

\begin{proof}
    The first statement follows directly from the definitions, and by using the same argument as in the proofs of {\bf Lemma \ref{monoidal2rep2cat}}, and also later in {\bf Lemma \ref{braiding2rep2cat2}}. Similarly, by adapting the proof of {\bf Lemma \ref{2repdecomp}}, we see that  \eqref{fusionassoc1}, \eqref{fusionassoc2} admit the following decompositions
\begin{equation}
    a_{ijk} = (a_{V'W'k}\cdot a_{ijU})\circ t = \dots \text{etc.},\qquad D_\delta a_{ijU} = a_{V'jU}\cdot a_{iWU} = \dots \text{etc.},\nonumber
\end{equation}
where $D_\delta$ is the tensor triple of the $t$-map $\delta$ on $\End(V)$, and "etc." means permutations of the subscripts. This proves the second statement.
\end{proof}
\noindent This naturaliy property shall become very important later in Section \ref{braided2cat}.

\begin{remark}\label{quasihopf}
Suppose the endomorphism $\Phi$ in  \eqref{fusionassoc4} is {\it inner}, in the sense that it is given by conjugation with an element --- also denoted $\Phi$ --- of $\cG_0^{3\otimes}$, then the coassociativity condition becomes
\begin{equation}
    (\id\otimes\Delta_0')\circ\Delta'_0 = \Phi((\Delta_0'\otimes\id)\circ\Delta_0')\Phi^{-1},\nonumber
\end{equation}
and the 2-coassociativity condition  \eqref{2coassobj} becomes
\begin{equation}
    ((\id\otimes\id\otimes\Delta_0')\Phi)((\Delta_0'\otimes\id\otimes\id)\Phi) = (\Phi\otimes \eta_0)((\id\otimes\Delta_0'\otimes\id)\Phi)(\eta_0\otimes\Phi),\nonumber
\end{equation}
where $\eta_0$ is the unit of $\cG_0$. In other words, $(\cG_0,\Delta'_0,\Phi)$ in fact forms a {\it quasi-bialgebra} \cite{book-quasihopf}.
\end{remark}

We have established $\operatorname{2Rep}^\cT(\cG)$ as a monoidal (fusion) 2-category, or equivalently a monoidal bicategory. We now turn to the braiding structure in the following.

\section{The braided monoidal 2-category of 2-representations}\label{braiding2rep}
We now turn to the braiding structure on the weak 2-representations afforded by the 2-R-matrix $\cR$. We shall first examine some of the basic properties of the braiding map in Section \ref{sec:strictbraiding}. We will then study how such braiding maps interact with the weakened monoidal structures of the 2-representations 
% the weakened case 
in Section \ref{braided2cat}. 

Let $(\cG,\cdot,\Delta,\cR)$ denote a strict quasitriangular 2-bialgebra as defined in Section \ref{quasitrihopf}. Recall that a 2-R-matrix $\cR=\cR^l+\cR^r$ on the 2-bialgebra $\cG$ consist of the following components 
\begin{equation}
    \cR^l = \cR^l_{(1)}\otimes \cR^l_{(2)}\in  \cG_{-1}\otimes\cG_0,\qquad
    \cR^r=\cR_{(1)}^r\otimes \cR_{(2)}^r \in \cG_0\otimes\cG_{-1}\nonumber
\end{equation}
for which  \eqref{def2R1}, \eqref{def2R2}, \eqref{2ybequiv} are satisfied. The  equivariance condition,  \eqref{2ybequiv}, unambiguously defines an element
\begin{equation}
    R = \cR_{(1)}^r \otimes t\cR^r_{(2)} (\equiv R^r) = t\cR_{(1)}^l\otimes \cR^l_{(2)} (\equiv R^l) \in \cG_0\otimes\cG_0,\label{deg0rmat}
\end{equation}
where $t:\cG_{-1}\rightarrow \cG_0$ is the $t$-map on $\cG$. Notice by applying the $t$-map (at every leg in $\cG_{-1}$) to  \eqref{2yangbax2}, we obtain two identical expressions that are equivalent to the usual 1-Yang-Baxter equations
\begin{equation}
    R_{12}R_{13}R_{23} = R_{23}R_{13}R_{12} \nonumber
\end{equation}
for the degree-0 $R$-matrix  \eqref{deg0rmat}.

\subsection{The braiding maps and their naturality} \label{sec:strictbraiding}
We shall use these components to define the braiding $b$ on $\operatorname{2Rep}^\cT(\cG)$. Take two 2-representations $V,W$ of $\cG$; we define the braiding map between $V,W$ by
\begin{equation}
b_{VW}: V\otimes W \rightarrow W\otimes V, \quad 
    b_{VW}%(V\otimes W) 
    =\text{flip}\circ \rho_0(R)
    \label{2repbraid1}
\end{equation}
where $\rho_0 = (\rho_V)_0 \otimes (\rho_W)_0$ on $V\otimes W$, and $R\in\cG_0\otimes\cG_0$ is given in \eqref{deg0rmat}. By  \eqref{tensor2rep1}, the braiding between the tensor product 2-representations are then given by
\begin{equation}
    b_{V(W\otimes U)}= \text{flip}\circ \rho_0((1\otimes\Delta_0')R),\qquad b_{(V\otimes W)U}= \text{flip}\circ \rho_0((\Delta_0'\otimes 1)R).\nonumber
\end{equation}

If $W=V$ are the same 2-representations of $\cG$, then we have the {\it self-braiding} map $b_V=b_{VV}$. On the other hand, we define the {\it mixed braiding} map between a 1-morphism $i:V\rightarrow U$ and an object $W$ by
\begin{eqnarray}
    b_{iW} &=&\text{flip}  \circ \left[i\circ\rho_{10}(\cR^l) +(-1)^{\operatorname{deg}} \rho_{01}(\cR^r)\circ i\right], \nonumber\\
    b_{Wi} &=&\text{flip}  \circ \left[i\circ\rho_{01}(\cR^r) +(-1)^{\operatorname{deg}} \rho_{10}(\cR^l)\circ i\right],\label{2repbraid2}
\end{eqnarray}
where we have used the shorthand $\rho_{10} = (\rho_V)_1\otimes (\rho_W)_0$ and $\rho_{01} = (\rho_U)_0\otimes (\rho_W)_1$. The sign $(-1)^{\operatorname{deg}}$ depends on the degree of the complex $V\otimes W$; more explicitly, $b_{iW}$ gives two maps 
\begin{gather}
    b_{iW}^1: V_0\otimes W_0\rightarrow (W_{-1}\otimes U_0)\oplus (W_0\otimes U_{-1}),\nonumber\\
    b_{iW}^2: (V_{-1}\otimes W_0)\oplus (V_0\otimes W_{-1}) \rightarrow W_{-1}\otimes U_{-1}\nonumber
\end{gather}
on the tensor product $V\otimes W$, the latter of which carries a non-trivial sign $(-1)^\text{deg}=-1$; similarly for $b_{Wi}$. 

\begin{remark}
We shall {\bf define} the braiding maps $b_{ij}$ between two 1-morphisms $i,j$ by the decomposition formula
\begin{equation}
    \phi_{i,j} = b_{ij} = b_{jU}\cdot b_{Wi} = b_{Ti}\cdot b_{jV} ,\qquad \begin{cases} i: V\rightarrow U \\ j: W\rightarrow T\end{cases},\label{1morphbraid}
\end{equation}
and impose the condition that it be equal to the interchanger $\phi_{i,j}$ \eqref{interchange}. This bypasses the need for a $R$-matrix defined in degree-(-1), which is not encoded by the 2-$R$-matrix $\cR$ anyway.
\end{remark}

Let $i:V\rightarrow V',j:U\rightarrow U'$ denote any 2-intertwiner. The above definition  \eqref{2repbraid2}, together with  \eqref{tensor2rep2} then allows us to form
\begin{eqnarray}
    b_{(i\otimes W)j}&=& \text{flip}_{(V'\otimes U')\otimes W} \circ \left[(i\otimes j)\rho_{101}((\Delta_0^l\otimes1)\cR^r)+(-1)^\text{deg}\rho_{011}((\Delta_0^r\otimes 1)\cR^r)\circ (i\otimes j)\right],\nonumber\\
    b_{i(W\otimes j)}&=& \text{flip}_{W\otimes (V'\otimes U')} \circ \left[(i\otimes j)\rho_{101}((\id\otimes\Delta_0^r)\cR^l)+(-1)^\text{deg}\rho_{110}((1\otimes\Delta_0^l)\cR^l)\circ (i\otimes j)\right].
    \nonumber
\end{eqnarray}
By applying strict 2-representations to  \eqref{def2R1}, we obtain the following strict {\it higher hexagon relations},
\begin{equation}
    b_{(i\otimes W)j} = \id_i\otimes b_{Wj}\ast b_{Wi}\otimes\id_j,\qquad b_{i(W\otimes j)} =\id_i\otimes b_{jW}\ast  b_{iW}\otimes\id_j,\label{str2hexag}
\end{equation}
in which the associator isomorphisms $a$ have been suppressed. We will reinstate them later in Section \ref{braided2cat}.

\medskip

With the definitions  \eqref{2repbraid1}, \eqref{2repbraid2} in hand, we now need to prove some very important lemmas. 

\begin{lemma}\label{braidingmorphs}
The maps $b_{VW}$ and $b_{iW},b_{Wi}$ are respectively 2-intertwiners and modifications in $\operatorname{2Rep}(\cG)$ for all 2-representation $V,W$ and 2-intertwiner $i$ iff  \eqref{def2R2} is satisfied. 

\end{lemma}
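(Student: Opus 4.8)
The plan is to verify directly the defining relations of a 2-intertwiner (\textbf{Definition \ref{weak2int}}) for $b_{VW}$ and of a modification (\textbf{Definition \ref{weakmodif}}) for $b_{iW},b_{Wi}$, and then to read off that these relations hold for \emph{all} $V,W,i$ precisely when \eqref{def2R2} holds. The central computational device is the flip-conjugation identity $\text{flip}\circ(A\otimes B) = (B\otimes A)\circ\text{flip}$. Writing $b_{VW}=\text{flip}\circ\rho_0(R)$ with $\rho_0=(\rho_V)_0\otimes(\rho_W)_0$ and recalling $\rho_{V\otimes W}(x)=\rho_0(\Delta_0'(x))$ from \eqref{tensor2rep1}, conjugating $\rho_{W\otimes V}(x)$ past the flip turns the intertwining requirement $b_{VW}\circ\rho_{V\otimes W}(x)=\rho_{W\otimes V}(x)\circ b_{VW}$ into the single relation $\rho_0\big(R\,\Delta_0'(x)\big)=\rho_0\big(\sigma(\Delta_0'(x))\,R\big)$.

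First I would treat $b_{VW}$. Since $b_{VW}$ is assembled degree-wise out of the single degree-$0$ element $R\in\cG_0\otimes\cG_0$ of \eqref{deg0rmat}, its compatibility with the differentials $D^\pm$ of the tensor complexes \eqref{grtensor} is automatic from the flip-conjugation identity together with the equivariance \eqref{2ybequiv} and the chain-map property \eqref{commsq}; what remains is the intertwining relation above. By the displayed reduction this holds for every pair $V,W$ if and only if $R\,\Delta_0'(x)=\sigma(\Delta_0'(x))\,R$ in $\cG_0^{2\otimes}$ for all $x$, and this degree-$0$ identity is exactly the image of \eqref{def2R2} under $t$ (using the coPeiffer identity \eqref{cohgrpd-} to turn $\Delta_0^{l},\Delta_0^r$ into $\Delta_0'$ and \eqref{deg0rmat} to turn $\cR^{l},\cR^r$ into $R$). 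The homotopy $I_{x,b_{VW}}$ required by \eqref{2inthomotopy} is produced by the mixed components of $R$ and vanishes at $x=\eta_0$, as needed.

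Next I would treat the modification conditions for $b_{iW},b_{Wi}$. Here the two graded pieces $b_{iW}^1,b_{iW}^2$ of \eqref{2repbraid2} are built from $\cR^l$ and $\cR^r$ \emph{separately}, so the equations of \textbf{Definition \ref{weakmodif}} do not collapse to the $t$-image but retain the full data of \eqref{def2R2}. Running the same flip-conjugation computation degree-by-degree on \eqref{2repbraid2}, now precomposed with the mixed coproducts $\Delta_0^{l},\Delta_0^r$ as in \eqref{tensor2rep2}, the equivariant-cochain-homotopy condition \eqref{modifhomotopy} for $b_{iW}$ reduces precisely to $\cR^l\Delta_0^l(x)=(\sigma\circ\Delta_0^r(x))\cR^l$, and the one for $b_{Wi}$ to $\cR^r\Delta_0^r(x)=(\sigma\circ\Delta_0^l(x))\cR^r$; these are exactly the two lines of \eqref{def2R2}. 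For the converse I would specialise to a faithful family of 2-representations — e.g. the regular/coadjoint representation of \eqref{coadj} — on which $\rho_0,\rho_1$ are injective, so the reduced identities force \eqref{def2R2} as an element identity in $\cG\otimes\cG$; combining the degree-$0$ constraint from $b_{VW}$ with the $\cR^l,\cR^r$ constraints from $b_{iW},b_{Wi}$ then recovers \eqref{def2R2} in full.

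I expect the main obstacle to be the degree bookkeeping in the middle term of the complex \eqref{grtensor}: the mixed braiding maps $b_{iW},b_{Wi}$ carry the signs $(-1)^{\deg}$ and interact with \emph{both} differentials $D^+$ and $D^-$ at once, so showing that $b_{iW}^1,b_{iW}^2$ glue into a single modification (rather than two unrelated cochain homotopies) requires the decomposition mechanism of \textbf{Lemma \ref{2repdecomp}} and a careful check that the flip interchanges $D^+\leftrightarrow D^-$ with the correct sign. Verifying that this gluing is governed by exactly \eqref{def2R2}, and not by some strictly stronger relation, is the delicate point; everything else is a mechanical, if lengthy, application of the flip-conjugation identity together with the 2-bialgebra axioms.
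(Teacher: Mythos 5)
Your proposal is correct and follows essentially the same route as the paper's proof: both use flip-conjugation to reduce the 2-intertwiner condition for $b_{VW}$ to the degree-0 identity $R\,\Delta_0'(x)=\sigma(\Delta_0'(x))\,R$, obtained by applying $t\otimes 1$ and $1\otimes t$ to \eqref{def2R2}, and to reduce the modification conditions for $b_{iW},b_{Wi}$ (commutation with the homotopies $(\rho_{V\otimes W})_0^1(x)$) to the two lines of \eqref{def2R2} themselves. Your explicit faithfulness argument via the coadjoint representation makes precise a converse the paper treats as definitional, and the degree-bookkeeping/gluing issue you flag as the main obstacle is in fact deferred in the paper to \textbf{Lemma \ref{braiding2rep2cat2}} rather than being part of this lemma's proof.
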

\begin{proof}
Note for each 2-representation $\rho$, the flip map $\text{flip}:V\otimes W\rightarrow W\otimes V$ is a 2-intertwiner between $\rho$ and $\rho'=\rho\circ\sigma$. Moreover, we interpret the cochain homotopy defined by $(\rho_{V\otimes W})_0^1(x)$ for each $x\in \cG_0$ as a modification between the action $(\rho_{V\otimes W})_0^0(x)$ and itself, treated as a 2-intertwiner; similarly for $\rho'$. Therefore, in order for the mixed braiding map $b_{iW}$ to be a modification in $\operatorname{2Rep}(\cG)$, it must commute with the cochain homotopy $(\rho_{V\otimes W})_0^1(x)$ --- namely
    \begin{equation}
        b_{iW}\ast (\rho_{V\otimes W})_0^1(x) = (\rho'_{W\otimes V})_0^1(x)\ast b_{iW},\nonumber
    \end{equation}
    where $\ast$ denotes the composition of cochain homotopies. With $\rho_{W\otimes V}' = (\rho_W\otimes\rho_V) \circ \sigma\circ \Delta$, this is satisfied by definition  \eqref{2repbraid1} of $b_{iW}$ iff
    \begin{equation}
        \cR^r\Delta_0^r(x)=\sigma(\Delta_0^l(x))\cR^r,\qquad  \cR^l\Delta_0^l=\sigma(\Delta_0^r(x))\cR^l,\label{2rmatint2}
    \end{equation}
    which is precisely  \eqref{def2R2}.
    
    Similarly, in order for the braiding map $b_{VW}$ to be a 2-intertwiner, it must commute with the action $(\rho_{V\otimes W})_0^0(x)$ for each $x\in \cG_0$:
    \begin{equation}
        b_{VW} \circ (\rho_{V\otimes W})_0^0(x) = (\rho'_{W\otimes V})_0^0(x)\circ b_{VW},\nonumber
    \end{equation}
    where $\circ$ denotes the composition of 2-intertwiners.

    First if the 2-representation $\rho$ were strict, then this translates to the algebraic condition
    \begin{equation}
        \sigma\Delta_0'(x)R = R\Delta_0'(x), \nonumber
    \end{equation}
    which in fact follows also from  \eqref{def2R2}. To see this, we recall the definitions  \eqref{deg0rmat} of $R$ and  \eqref{deg0sweed} of the coproduct $\Delta_0'$, and simply apply $t\otimes 1$ and $1\otimes t$ respectively to  \eqref{def2R2}. The fact that $t$ is an algebra homomorphism and that $(t\otimes 1)\circ\sigma = \sigma\circ(1\otimes t)$ proves the statement.

    Second, if the 2-representation $\rho$ were weak, then in general the component $\varrho$ gives rise to a possibly non-trivial invertible natural 2-morphism 
    \begin{equation}
        \varrho(\sigma\Delta'_0(x),R) - \varrho(R,\Delta_0'(x)).\nonumber
    \end{equation}
    We will not need this 2-morphism in the following so we shall suppose $I_{b_{VW},\bullet}=\id$.

\end{proof}
\noindent Notice this lemma implies that $(\cG_0,\Delta_0',R)$ forms an ordinary quasitriangular 1-bialgebra in degree-0. We can then leverage the well-known result in the literature \cite{Majid:1996kd,Joyal:1993} that the Yang-Baxter equation for $R$ implies the hexagon relation for the braiding structure $b_{VW}$ at the level of the objects.

\medskip 

Next, we need to prove the naturality of $b$ with respect to the 2-intertwiners $i:V\rightarrow U$.
We shall do this via the same technique as {\bf Lemma \ref{monoidal2rep2cat}}.
\begin{lemma}\label{braiding2rep2cat2} Consider the intertwiners $i:V\rightarrow U$ and $j:U\rightarrow T$. 
The mixed braiding maps $b_{iW},b_{Wi}$ fit into the following diagrams
\begin{equation}
    \begin{tikzcd}
V\otimes W \arrow[rrr, "b_{VW}"] \arrow[dd, "i"'] &                                    &    & W\otimes V \arrow[dd, "i"] \\
                                                  & {} \arrow[r, "b_{iW}", Rightarrow] & {} &                            \\
U\otimes W \arrow[rrr, "b_{UW}"]                  &                                    &    & W\otimes U                
\end{tikzcd},\qquad     
\begin{tikzcd}
W\otimes V \arrow[rrr, "b_{WV}"] \arrow[dd, "i"'] &                                    &    & V\otimes W \arrow[dd, "i"] \\
                                                  & {} \arrow[r, "b_{Wi}", Rightarrow] & {} &                            \\
W\otimes U \arrow[rrr, "b_{WU}"]                  &                                    &    & U\otimes W                
\end{tikzcd}\nonumber
\end{equation}
in $\operatorname{2Rep}^\cT(\cG)$. Moreover, given a 2-intertwiner $j:U\rightarrow T$ composable with $i$, the corresponding braiding 2-morphisms compose as $b_{jW}\ast b_{iW} = b_{j\circ i,W}$.
\begin{equation}
    \begin{tikzcd}
V\otimes W \arrow[rrr, "b_{VW}"] \arrow[dd, "i"'] &                                    &    & W\otimes V \arrow[dd, "i"] \\
                                                  & {} \arrow[r, "b_{iW}", Rightarrow] & {} &                            \\
U\otimes W \arrow[rrr, "b_{UW}"]    \arrow[dd, "j"']               &                                    &    & W\otimes U  \arrow[dd, "j"']             
\\& {} \arrow[r, "b_{jW}", Rightarrow] & {} & \\ 
T\otimes W \arrow[rrr, "b_{WT}"]& & & W\otimes T 
\end{tikzcd}
=     
\begin{tikzcd}
V\otimes W \arrow[rrr, "b_{VW}"] \arrow[dd, "j\circ i"'] &                                    &    & W\otimes V \arrow[dd, "j\circ i"] \\
                                                  & {} \arrow[r, "b_{(j\circ  i)W}", Rightarrow] & {} &                            \\
T\otimes W \arrow[rrr, "b_{TW}"]                  &                                    &    & W\otimes T       
\end{tikzcd}\nonumber
\end{equation}
\end{lemma}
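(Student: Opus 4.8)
The plan is to prove the two claims of \textbf{Lemma \ref{braiding2rep2cat2}} --- naturality of the braiding with respect to a $2$-intertwiner $i\colon V\to U$, and functoriality under vertical composition $b_{jW}\ast b_{iW}=b_{(j\circ i)W}$ --- by the same cochain-level technique used in \textbf{Lemma \ref{monoidal2rep2cat}} and \textbf{Lemma \ref{2repdecomp}}. First I would unpack what the diagrams assert: the braiding $1$-morphism $b_{VW}=\text{flip}\circ\rho_0(R)$ from \eqref{2repbraid1} and the mixed braiding $b_{iW}$ from \eqref{2repbraid2} must be related so that $b_{iW}$ is precisely the cochain homotopy witnessing commutativity of the square with $b_{VW},b_{UW}$ on the horizontal edges and $i$ on the vertical edges. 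Concretely, I must exhibit $b_{iW}$ as a map satisfying $D^{-}b_{iW}=b_{UW}\circ i-i\circ b_{VW}$ on degree-$0$ pieces, $b_{iW}D^{+}$ for the degree-$(-1)$ pieces, and $D^{+}b_{iW}+b_{iW}D^{-}$ in the middle degree, exactly mirroring the three computations in the proof of \textbf{Lemma \ref{monoidal2rep2cat}}.

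The key input is the commutation square \eqref{commsq}, which lets me replace $\delta\circ\rho_1=(\rho_0^1 t,\rho_0^0 t)$ and thereby collapse the differentials $D^{\pm}=\pm1\otimes\partial'+\partial\otimes1$ acting through $b_{iW}$ into the $t$-map applied to the legs of $\cR^l,\cR^r$. Here the equivariance condition \eqref{2ybequiv}, $(t\otimes\id)\cR^l=(\id\otimes t)\cR^r=R$, is essential: it guarantees that the two triangles (leftmost and rightmost) both produce the \emph{same} expression $b_{UW}\circ i-i\circ b_{VW}$ built from $R\in\cG_0^{2\otimes}$, so the two halves of $b_{iW}$ glue consistently. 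The flip maps are harmless $2$-intertwiners (as noted in the proof of \textbf{Lemma \ref{braidingmorphs}}), so I would factor them out and work with $\rho_0(R)$, $\rho_{10}(\cR^l)$, $\rho_{01}(\cR^r)$ directly. The sign $(-1)^{\text{deg}}$ appearing in \eqref{2repbraid2} is tracked exactly as in \eqref{tensor2rep3} and the middle-degree computation of \textbf{Lemma \ref{monoidal2rep2cat}}.

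For the second claim, functoriality $b_{jW}\ast b_{iW}=b_{(j\circ i)W}$ under the composition $\ast$ of cochain homotopies, I would recall from the proof of \textbf{Lemma \ref{2repdecomp}} that $p\ast q=p\,\partial\,q$, and that $2$-intertwiners satisfy $i_{-1}(\rho_V)_1=(\rho_U)_1 i_0$. Substituting the definitions of $b_{iW},b_{jW}$ from \eqref{2repbraid2}, the inner differential $\partial$ in the composite again gets converted via \eqref{commsq} into a $t$-map on a leg of $\cR$, and the intertwiner relation lets me slide $j$ past $\rho_1$; the two cross-terms with signs cancel against the matching terms, leaving exactly $b_{(j\circ i)W}$ with $j\circ i$ on the vertical edge. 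This is structurally identical to the graded-composition computation that ends the proof of \textbf{Lemma \ref{2repdecomp}}.

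The main obstacle I anticipate is bookkeeping rather than conceptual: correctly matching the degree-dependent signs $(-1)^{\text{deg}}$ across the three horizontal strips of the tensor complex \eqref{grtensor} while simultaneously commuting $i$ (resp.\ $j$) through the $\rho_1,\rho_0$ factors, and ensuring that the equivariance \eqref{2ybequiv} is invoked at the right leg so that the leftmost and rightmost triangles agree. A secondary subtlety is that, for genuinely \emph{weak} $2$-representations, the component $\varrho$ could contribute an extra invertible $2$-morphism (as flagged at the end of \textbf{Lemma \ref{braidingmorphs}}); I would handle this by assuming $I_{b_{VW},\bullet}=\id$ as done there, so that the diagrams commute strictly and the naturality/functoriality statements hold as written. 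No new algebraic identity beyond \eqref{commsq}, \eqref{def2R2}, and \eqref{2ybequiv} should be required.
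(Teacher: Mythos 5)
For the first (naturality) claim your plan is exactly the paper's proof: the same three-strip verification of the homotopy diagram \eqref{mixedbraidhomotopy} --- $D^-b_{iW}^1$ on $V_0\otimes W_0$, $b_{iW}^2D^+$ on $V_{-1}\otimes W_{-1}$, and $D^+b_{iW}^2+b_{iW}^1D^-$ in the middle degree --- with \eqref{commsq} used to convert $\partial\rho_1$ and $\rho_1\partial$ into $\rho_0\circ t$ on the legs of $\cR^{l,r}$, and the equivariance \eqref{2ybequiv} (equivalently, the two expressions $R^l=R^r=R$ of \eqref{deg0rmat}) identifying each strip with $b_{UW}\circ i - i\circ b_{VW}$. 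Your sign bookkeeping and the assumption $I_{b_{VW},\bullet}=\id$ also match the paper.

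For the second (composition) claim, however, your proposed route diverges from the paper's and, as described, has a gap. The paper does not perform any homotopy-composition computation here: it observes that the intertwining relations compose strictly, $(j\circ i)\circ\rho_V = j\circ\rho_U\circ i = \rho_T\circ(j\circ i)$, so each term of the defining formula \eqref{2repbraid2} for $b_{(j\circ i)W}$ is obtained by whiskering $b_{iW}$ with $j$ and $b_{jW}$ with $i$. Your plan instead imports the multiplicative composition $p\ast q = p\,\partial\,q$ from {\bf Lemma \ref{2repdecomp}}. That convention inserts a differential between the two braiding homotopies, so $b_{jW}\ast b_{iW}$ is \emph{quadratic} in the 2-$R$-matrix (terms of the form $\rho_1(\cR^l_{(1)})\,\rho_0(t\cR^l_{(1)})\otimes\rho_0(\cR^l_{(2)})\rho_0(\cR^l_{(2)})$ and mixed $\cR^l$--$\cR^r$ products), whereas the target $b_{(j\circ i)W}$ is linear in $\cR$. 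No amount of sliding $i,j$ past $\rho_1$ via the intertwiner relations collapses a quadratic expression to a linear one; you would need genuinely new input such as \eqref{def2R1} or a normalization of $\cR$, contradicting your closing claim that nothing beyond \eqref{commsq}, \eqref{def2R2}, \eqref{2ybequiv} is needed. The analogy with {\bf Lemma \ref{2repdecomp}} is misleading precisely because there \emph{both} sides are bilinear (of type $\rho_1\otimes\rho_1$ via $\Delta_{-1}$), so the multiplicative composition is the right tool; here it is not. Replace that computation by the paper's whiskering/composability argument and the second claim follows immediately.
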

\begin{proof}
For brevity, we shall suppress the subscripts $V,U,W$ on the 2-representations. Recall the two equivalent ways $R^r,R^l$ to express $R$ in  \eqref{deg0rmat}. We can then write $$b_{UW} \circ i = \text{flip}\circ \rho_0(R^r) \circ i,\qquad i\circ b_{VW} = i\circ \text{flip}\circ \rho_0(R^l).$$ 

Consider the left diagram. As 2-morphisms in $\operatorname{2Rep}(\cG)$ are given by cochain homotopies, we need to show that the definition  \eqref{2repbraid2} of the mixed braiding map $b_{iW} = b_{iW}^1 + b_{iW}^2$ fits into the following diagram 
\begin{equation}
    \begin{tikzcd}
V_{-1}\otimes W_{-1} \arrow[r, "D^+"] \arrow[d] & V_{-1}\otimes W_0 \oplus V_0\otimes W_{-1} \arrow[d] \arrow[r, "D^-"] \arrow[ld, "b_{iW}^2"'] & V_0\otimes W_0 \arrow[d] \arrow[ld, "b_{iW}^1"] \\
W_{-1}\otimes U_{-1} \arrow[r, "D^+"']          & W_{-1}\otimes U_0 \oplus W_0\otimes U_{-1} \arrow[r, "D^-"']                                 & W_0\otimes U_0                                 
\end{tikzcd},\label{mixedbraidhomotopy}
\end{equation}
where the vertical arrows are the various graded components of $b_{UW}\circ i - i\circ b_{VW}$, and the horizontal arrows are the differentials on the three-term tensor product complex  \eqref{grtensor}; for instance, the ones at the top row are given by $D^\pm= 1\otimes\partial_W\pm \partial_V\otimes 1$.

As in {\bf Lemma \ref{monoidal2rep2cat}}, the key towards this is the commutative square  \eqref{commsq}, which states that for each $y\in \cG_{-1}$ we have
\begin{equation}
    (\rho_1(y)\partial,\partial\rho_1(y)) =  \delta(\rho_1)(y) = (\rho_0)(Ty) = (\rho_0^1(Ty),\rho_0^0(Ty)).\nonumber
\end{equation}
Let us examine first the commutative triangle on the ends of  \eqref{mixedbraidhomotopy}. First, for the right-most triangle, we compute in terms of the components $b_{iW}^{1,2}$ that
\begin{eqnarray}
    D^-b_{iW}^1 &=& (1\otimes\partial_V-\partial_W\otimes 1) \circ \text{flip} \circ \rho(\cR)\nonumber\\
    &=& \text{flip}\circ \left[\rho_0^0(\cR^r_{(1)})\otimes \partial_W (\rho_1(\cR^r_{(2)}))\circ i-i\circ i\partial_V(\rho_1(\cR^l_{(1)}))\otimes \rho_0^0(\cR^l_{(2)})\right] \nonumber\\
    &=& \text{flip}\circ \left[\rho_0^0(\cR^r_{(1)}\otimes t\cR_{(2)}^r)\circ  i+i\circ \rho_0^0(-t\cR^l_{(1)} \otimes \cR^l_{(2)})\right] \nonumber\\
    &=& b_{UW} \circ i - i \circ b_{VW}\nonumber
\end{eqnarray}
as maps on $V_0\otimes W_0$. Similarly for the left-most triangle, we have
\begin{eqnarray}
    b_{iW}^2D^+ &=& \text{flip} \circ \rho(\cR)\circ ( 1\otimes\partial_W+\partial_V\otimes 1) \nonumber\\
    &=& \text{flip}\circ \left[\rho_0^1(\cR^r_{(1)})\otimes (\rho_1(\cR^r_{(2)}))\partial_W\circ i-i\circ (\rho_1(\cR^l_{(1)}))\partial_V\otimes \rho_0^1(\cR^l_{(2)})\right] \nonumber\\
    &=& \text{flip}\circ \left[\rho_0^1(\cR^r_{(1)}\otimes t\cR_{(2)}^r)\circ i-i\circ \rho_0^1(t\cR^l_{(1)} \otimes \cR^l_{(2)})\right] \nonumber\\
    &=& b_{UW} \circ i - i \circ b_{VW}\nonumber
\end{eqnarray}
as maps $V_{-1}\otimes W_{-1}$. Note the sign $(-1)^\text{deg}$ in  \eqref{2repbraid2} is non-trivial here as $\cR$ acts on the degree-(-1) part of the tensor product $V\otimes W$.

We now turn to the middle section of  \eqref{mixedbraidhomotopy}. We are required to compute the following,
\begin{eqnarray}
    D^+b_{iW}^2 &=& (1\otimes\partial_V+\partial_W\otimes 1 )\circ\text{flip}\circ \rho(\cR)\nonumber\\
    &=& \text{flip}\circ \left[\rho_0^1(\cR_{(1)}^r)\otimes \partial_W(\rho_1(\cR_{(2)}^r))\circ i-i\circ \partial_V(\rho_1(\cR^l_{(1)}))\otimes \rho_0^1(\cR_{(2)}^l) \right] \nonumber\\
    &=& \text{flip}\circ \left[ \rho_0^1(\cR_{(1)}^r)\otimes \rho_0^0(t\cR_{(2)}^r)\circ i-i\circ\rho_0^0(t\cR^l_{(1)}) \otimes \rho_0^1(\cR_{(2)}^l)\right],\nonumber\\
    b_{iW}^1D^- &=& \text{flip}\circ \rho(\cR) \circ (1\otimes \partial_W-\partial_V\otimes 1 ) \nonumber\\
    &=& \text{flip} \circ \left[\rho_0^0(\cR^r_{(1)}) \otimes \rho_1(\cR_{(2)}^r)\partial_W\circ i -i\circ \rho_1(\cR_{(1)}^l)\partial_V\otimes \rho_0^0(\cR^l_{(2)})\right]\nonumber\\
    &=& \text{flip} \circ\left[\rho_0^0(\cR^r_{(1)}) \otimes \rho_0^1(t\cR_{(2)}^r)\circ i -i\circ \rho_0^1(t\cR_{(1)}^)\otimes \rho_0^0(\cR^l_{(2)})\right].\nonumber
\end{eqnarray}
Summing these and rearranging terms gives, as maps on $V_{-1}\otimes W_0\oplus V_0\otimes W_{-1}$,
\begin{gather}
    \text{flip}\circ \left[\rho_0^0(\cR^r_{(1)}) \otimes \rho_0^1(t\cR_{(2)}^r) + \rho_0^1(\cR_{(1)}^r)\otimes \rho_0^0(t\cR_{(2)}^r)\right]\circ i \\ 
    \qquad -~\text{flip}\circ i\circ \left[ \rho_0^1(t\cR_{(1)}^l)\otimes \rho_0^0(\cR^l_{(2)})+\rho_0^0(t\cR^l_{(1)}) \otimes \rho_0^1(\cR_{(2)}^l)\right] \nonumber\\
    = b_{UW}\circ i - i\circ b_{VW}. \nonumber
\end{gather}
The diagram on the right is treated identically, and this establishes the first statement. The second statement directly follows from the fact that $(j\circ i)\circ \rho_V = j\circ \rho_U \circ i = \rho_X\circ (j\circ i)$ for composable 2-intertwiners $i,j$.
\end{proof}

In particular, since {\bf Lemma \ref{braidingmorphs}} proves that $b_{VW}$ is a 1-morphism, we can {\it iterate} the braiding maps and define $b_{b_{VW}U}$ as a 2-morphism. {\bf Lemma \ref{braiding2rep2cat2}} then implies that this is a 2-morphism
\begin{equation}
    \begin{tikzcd}
(V\otimes W)\otimes U \arrow[rr, "b_{(VW)U}"] \arrow[dd, "b_{VW}"] &                           & U\otimes (V\otimes W) \arrow[dd, "b_{VW}"] \\
                                                                   & \xRightarrow{b_{b_{VW}U}} &                                            \\
(W\otimes V)\otimes U \arrow[rr, "b_{(WV)U}"]                      &                           &  U\otimes (W\otimes V)                    
\end{tikzcd}\label{iteratedbraid}
\end{equation}
on three 2-representations $V,W,U$, and similarly for $b_{Vb_{WU}}$. This will be important later in Section \ref{proof}.

\medskip

Recall the "higher-hexagon relations" \eqref{str2hexag} following directly from the identities  \eqref{def2R1}. We shall prove this in the weakened context in Section \ref{braided2cat}.

\subsection{Braided 2-quasi-bialgebras; the modified hexagon relations}\label{braided2cat}
We now wish to keep track of the interplay between the fusion associators $a$ and the braiding maps $b$  --- or, algebraically, the coassociator and the 2-$R$-matrix --- on $\operatorname{2Rep}^\cT(\cG)$. We shall do this by revisiting the fundamental characterization of 2-R-matrices in Section \ref{quasitrihopf}. In other words, we are prompted to study the {\it weak} 2-quantum double $D(\cG,\cG)$ and its braided transposition $\Psi$.

Fix the weak 2-bialgebra $\cG$. Despite the skeletal construction in Section \ref{weakskeletalqd}, we are able to form $D(\cG,\cG)$ here \textit{without} assuming skeletality, since we know exactly how $\cG$ acts on itself by weak 2-representations --- in the canonical way according to {\bf Definition \ref{weak2alg}}. This fact also allows us to identify $\cT_D$ as merely several copies of the 3-cocycle $\cT$ on $\cG$, and in particular the components $\rhd_1=\lhd_1 = \cT$ are equal. 

\medskip

To proceed, we recall in the factorizable case that \textit{associativity} of $\cK\cong D(\cG,\cG)$ is invoked to deduce the braiding relation  \eqref{2yb1} for the braided transposition $\Psi$. This associativity is, of course, witnessed by the 3-cocycle $\cT$. Combined with the understanding from Section \ref{quasitrihopf} that the braided transposition gives rise to the canonical \textit{dual} 2-$R$-matrix, we then deduce the following.
\begin{definition}
    A \textbf{2-$R$-matrix} $\cR$ for a \textit{weak} 2-bialgebra $(\cG,\Delta,\Delta_1)$ is defined as in \textbf{Definition \ref{2Rdef}}, except that \eqref{def2R1} is modified by the {\it dual} of $\cT$ --- ie. the coassociator $\Delta_1$ --- namely, we have
\begin{gather}
    D_t\Delta_1(x)_{231}\cdot( 1\otimes\Delta_0)\cR \cdot D_t\Delta_1(x)_{123} = \cR_{13}\cdot D_t\Delta_1(x)_{213}\cdot \cR_{12},\nonumber\\
    D_t\Delta_1(x)_{312}^{-1}\cdot (\Delta_0\otimes 1)\cR\cdot D_t\Delta_1(x)_{213}^{-1} = \cR_{13}\cdot D_t\Delta_1(x)_{132}^{-1}\cdot \cR_{23},\label{quasi2rmatrix}
\end{gather}
for each $x\in \cG_0$. 
\end{definition}
\noindent This condition bears a striking resemblance to the defining relations of a {\it braided quasi-bialgebra} \cite{book-quasihopf}; indeed, applying the double-$t$-map $D_t[2]$ to  \eqref{quasi2rmatrix} yields, by definition  \eqref{deg0rmat}, \eqref{fusionassoc4},
\begin{equation}
    \Phi_{231}(x) (1\otimes\Delta_0')R\Phi_{123}(x) = R_{13} \Phi_{213}(x) R_{12},\qquad \Phi_{312}^{-1}(x) (\Delta_0'\otimes 1)R  \Phi_{213}^{-1}(x) = R_{13} \Phi_{132}^{-1}(x)R_{23},\label{quasirmatrix}
\end{equation}
which is precisely a braided quasi-bialgebra structure at degree-0 $(\cG_0,\Delta_0',R,\Phi)$; see {\it Remark \ref{quasihopf}}. This motivates the following definition.

\begin{definition}
A {\bf braided 2-quasi-bialgebra}\footnote{Note  that a \textit{quasi} 2-bialgebra, as opposed to a 2-quasi-bialgebra here, refers to a weak 2-bialgebra with trivial 3-cocycle $\cT=0$ but non-trivial coassociator $\Delta_1$. } $(\cG,\Delta=(\Delta_1,\Delta_0, \Delta_{-1}),\cT,\cR)$ is a weak 2-bialgebra equipped with a 2-$R$-matrix $\cR$ and a coassociator $\Delta_1:\cG_0\rightarrow \cG_{-1}^{3\otimes}$ such that  \eqref{quasi2rmatrix}, \eqref{quasirmatrix}, \eqref{def2R2} and \eqref{2ybequiv} hold.
\end{definition}

Similar to  \eqref{str2hexag}, by applying {\it strict} 2-representations $\rho = (\rho_1,\rho_0)$ to  \eqref{quasi2rmatrix}, we obtain:
\begin{lemma}\label{braiddecomp}
For each $X\in\operatorname{2Rep}^\cT(\cG)$, we have the decompositions (the {\bf hexagon relations})
\begin{eqnarray}
    \begin{cases}b_{(VW)X} = a_{XVW} \circ b_{VX} \circ a_{VXW}^\dagger \circ b_{WX}\circ a_{VWX} \\ b_{V(WX)} = a_{WXV}^\dagger \circ b_{VX} \circ a_{WVX} \circ b_{VW} \circ a_{VWX}^\dagger \end{cases}\iff \text{ \eqref{quasirmatrix}},\label{hexag}\\
    \begin{cases}b_{(Vj)X}= a_{XVj} \ast \id_{b_{VX}} \ast a_{VXj}^\dagger \ast b_{jX}\ast a_{VjX} \\ b_{V(jX)} = a_{jXV}^\dagger \ast \id_{b_{VX}} \ast a_{jVX} \ast b_{Vj}\ast a_{VjX}^\dagger \end{cases} \iff \text{apply $D_t^+$ to  \eqref{quasi2rmatrix}},\nonumber\\
    \begin{cases}b_{(iW)k} = a_{kiW} \ast b_{ik} \ast a_{ikW}^\dagger \ast b_{Wk}\ast a_{iWk} \\ b_{i(Wk)} = a_{Wki}^\dagger\ast b_{ik} \ast a_{Wik} \ast b_{iW} \ast a_{iWk}^\dagger\end{cases} \iff \text{ \eqref{quasi2rmatrix}},\nonumber
\end{eqnarray}
as 1-/2-morphisms, and similarly for all the other possible braiding maps on tensor products.
\end{lemma}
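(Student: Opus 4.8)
The plan is to prove Lemma \ref{braiddecomp} by the same dualization-and-apply-representations strategy that produced the strict hexagon relations \eqref{str2hexag}, but now tracking the coassociator contributions. Recall that the braiding maps \eqref{2repbraid1}, \eqref{2repbraid2} are built from $\rho_0(R)$ and $\rho(\cR)$, while the fusion associators \eqref{fusionassoc4}, \eqref{fusionassoc1}--\eqref{fusionassoc3} are built from various $t$-truncations of $\Delta_1$. Thus the three families of hexagon relations should correspond precisely to applying, respectively, the full degree-$0$ map $D_t[2]$ (giving $\Phi$), the single map $D_t^+$, and no $t$-map at all to the universal relation \eqref{quasi2rmatrix}. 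The first step is therefore to establish the object-level hexagon (the first line of \eqref{hexag}): I would apply a strict $2$-representation $\rho=(\rho_1,\rho_0)$ to \eqref{quasirmatrix}, identify $\rho_0(\Phi_{\sigma}(x))$ with the appropriate associator $1$-morphism \eqref{fusionassoc4} permuted by $\sigma$, identify $\rho_0(R)$ composed with $\text{flip}$ as the braiding $b_{VW}$, $b_{WX}$, etc., and read off that the relation $\Phi_{231}(1\otimes\Delta_0')R\Phi_{123}=R_{13}\Phi_{213}R_{12}$ becomes exactly the composite $a_{XVW}\circ b_{VX}\circ a_{VXW}^\dagger\circ b_{WX}\circ a_{VWX}$. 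Here the $\Phi^{-1}$ appearing in the second quasi-relation is matched with the adjoint associator $a^\dagger$, as stipulated in the definition of the adjoint morphism for \eqref{fusionassoc4}.

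The second and third steps promote this to the morphism level. For the mixed relations $b_{(Vj)X}$ and $b_{(iW)k}$ I would apply strict $2$-representations to the versions of \eqref{quasi2rmatrix} obtained by applying $D_t^+$ and the identity, exactly as flagged in the statement. The key device is the commutation square \eqref{commsq}, $\delta\rho_1 = \rho_0\circ T$ (equivalently $\rho_1(y)\partial=\rho_0^1(ty)$, $\partial\rho_1(y)=\rho_0^0(ty)$), which is what lets a factor of $t$ sitting on a leg of $\Delta_1$ or $\cR$ be absorbed into the differential of the target complex and hence converted into a genuine cochain-homotopy composite $\ast$. This is the same mechanism used in Lemma \ref{monoidal2rep2cat}, Lemma \ref{2repdecomp}, and Lemma \ref{braiding2rep2cat2}, so I would explicitly invoke those proof techniques rather than redo the bookkeeping. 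Once the $t$-maps are distributed correctly, the factors $D_t\Delta_1(x)_\sigma$ in \eqref{quasi2rmatrix} produce the mixed associators \eqref{fusionassoc2}, \eqref{fusionassoc3} (for the single-$t$ and double-$t$ cases), and the $\cR$-factors produce the mixed braidings $b_{jX},b_{Wk},b_{ik}$, yielding the stated decompositions with $\ast$-composition of $2$-morphisms.

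I would then remark that consistency of the three families is automatic: by Lemma \ref{2repdecomp} and the analogous decomposition of the associators recorded after \eqref{assoc2morph}, the $1$-morphism hexagon determines the mixed hexagons up to the coherence $2$-isomorphisms already in hand, so there is nothing further to check for the "other possible braiding maps." The main obstacle I anticipate is purely combinatorial: getting the permutation subscripts $123,213,231,312,132$ on $\Phi$ (equivalently on $\Delta_1$) to line up with the correct arrangement of source/target $2$-representations in the associators, and tracking the signs $(-1)^{\text{deg}}$ from \eqref{grtensor} as in \eqref{2repbraid2}. The cleanest way to manage this is to fix, once and for all, the dictionary $\rho_0(\Phi_\sigma)\leftrightarrow a_{\bullet}$ and $\text{flip}\circ\rho(\cR)\leftrightarrow b_\bullet$ at the object level in the first step, and then simply observe that applying $D_t^+$ or dropping $t$-maps on \eqref{quasi2rmatrix} replaces the corresponding legs' $1$-morphism data by cochain-homotopy data via \eqref{commsq}, leaving the permutation pattern unchanged. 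Since the hexagon for the underlying degree-$0$ $R$-matrix already holds by Lemma \ref{braidingmorphs} (as $(\cG_0,\Delta_0',R)$ is an honest quasitriangular $1$-bialgebra), the only genuinely new content is the correct propagation of the coassociator $\Delta_1$ into the higher cells, which the above $t$-map calculus delivers.
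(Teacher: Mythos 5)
Your proposal is correct and follows essentially the same route as the paper: the paper obtains the lemma precisely by applying strict 2-representations to \eqref{quasi2rmatrix} (and its $D_t^+$- and $D_t[2]$-truncations, the latter giving \eqref{quasirmatrix}), identifying $\rho_0(\Phi_\sigma)$ with the associators and $\text{flip}\circ\rho(\cR)$ with the braidings, with the $t$-map bookkeeping handled by \eqref{commsq} as in Lemmas \ref{monoidal2rep2cat}, \ref{2repdecomp}, and \ref{braiding2rep2cat2}. Your closing remark on consistency also matches the paper's observation that $b_{ij}$ and $a_{ijk}$ are determined by the mixed braidings/associators, so nothing further is needed.
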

\noindent The decomposition formula for $b_{ijk}$ follows from these, as well as the fact that $b_{ij},a_{ijk}$ are all determined by the mixed braiding/associators.

The 2-morphism $b_{(iW)X}$, for instance, can be expressed in terms of the following composition diagram
\begin{equation}
    \begin{tikzcd}
(VW)X \arrow[d, "i"'] \arrow[r, "a_{VWX}"] & V(WX) \arrow[d, "i"] \arrow[r, "b_{WX}"] & V(XW) \arrow[r, "a_{VXW}^\dagger"] \arrow[d, "i"] & (VX)W \arrow[r, "b_{VX}"] \arrow[d, "i"] & (XV)W \arrow[r, "a_{XVW}"] \arrow[d, "i"] & X(VW) \arrow[d, "i"] \\
(UW)X \arrow[r, "a_{UWX}"]                 & V(WX) \arrow[r, "b_{WX}"]                & U(XW) \arrow[r, "a_{UXW}^\dagger"]                & (UX)W \arrow[r, "b_{UX}"]                & (XU)W \arrow[r, "a_{UVW}"]                & X(UW)               
\end{tikzcd},\label{mixedhexag}
\end{equation}
which has also appeared in \cite{KongTianZhou:2020}. This establishes most of the structural properties of $\operatorname{2Rep}^\cT(\cG)$ as a braided 2-category, and the final ingredient to introduce is the {\it hexagonator}.

\subsection{The braiding hexagonator: weak 2-representations of a braided 2-quasi-bialgebra}\label{hexagona}
We obtained the decomposition {\bf Lemma \ref{braiddecomp}} by applying a strict 2-representation to  \eqref{quasi2rmatrix}. However, as we have noted previously in {\it Remark \ref{weak2repstrict}}, 2-representations of a weak 2-bialgebra $(\cG,\cT)$ {\it cannot} be strict, even when $\cG$ is skeletal. As such, we must take into account the additional component $\varrho: \cG_0^{2\otimes}\rightarrow\End(V)_{-1}$ when deriving the decompositions above (in particular  \eqref{hexag}).

For the rest of the paper, it suffices to consider the case $t=0$ or $t=\eta_0$, the constant map to the unit $\eta_0\in \cG_0$. Since $\varrho$ is normalized and the second and third equations in  \eqref{weak2repalg} involve {\it pre}-composing $\varrho$ with $t$, the only non-trivial relation is
\begin{equation}
    \rho_0(xx') - \rho_0(x)\rho_0(x') = \delta\varrho(x,x'),\qquad x,x'\in\cG_0,\nonumber
\end{equation}
where we recall that $\delta:\End(V)_{-1}\rightarrow\End(V)_0$ is the $t$-map on the weak endomorphism 2-algebra. Therefore, in order to obtain the decomposition of the form  \eqref{hexag} from  \eqref{quasirmatrix}, we must keep track of the terms involving $\varrho$ that appear. For instance, we have
\begin{equation}
    \rho_0^{3\otimes}(R_{13} \Phi_{213}) - \rho_0^{3\otimes}(R_{13})\rho_0^{3\otimes}(\Phi_{213}) = (\delta\varrho)^{3\otimes}(R_{13}, \Phi_{213}),\nonumber
\end{equation}
in which we notice that the second term on the left-hand side is the composition $b_{VU}\circ a_{WVU}$. 

More explicitly, translating  \eqref{quasirmatrix} to  \eqref{hexag} comes at a price given by a cochain homotopy 
\begin{eqnarray}
    \Omega_{V|WU}(x) &=& (\varrho_V\otimes\varrho_W\otimes\varrho_U)(\Phi_{231}(x) ,(1\otimes\Delta_0')R\Phi_{123}(x)) \nonumber\\
    &\qquad& -~ (\varrho_V\otimes\varrho_W\otimes\varrho_U)(R_{13} , \Phi_{213}(x)R_{12}) \nonumber\\
    &\qquad& +~ (\varrho_V\otimes\varrho_W\otimes\varrho_U)((1\otimes\Delta_0')R,\Phi_{123}(x)) \nonumber\\
    &\qquad& -~ (\varrho_V\otimes\varrho_W\otimes\varrho_U)(\Phi_{213}(x),R_{12})
    % \nonumber\\
    % &=& (\varrho_V\otimes\varrho_W\otimes\varrho_U)(\Phi_{231}(x)(1\otimes\Delta_0')R,\Phi_{123}(x)) - (\varrho_V\otimes\varrho_W\otimes\varrho_U)(R_{13}\Phi_{213}(x),R_{12}) \nonumber\\
    % &\qquad& +~ (\varrho_V\otimes\varrho_W\otimes\varrho_U)(\Phi_{231}(x),(1\otimes\Delta_0')R) - (\varrho_V\otimes\varrho_W\otimes\varrho_U)(R_{13},\Phi_{213}(x))
    \label{hexhomotop}
\end{eqnarray}
between the two sides of  \eqref{quasirmatrix} for each $x\in\cG_0$, and similarly its adjoint $\Omega_{V|WU}^\dagger$. We thus have the following diagrams
\begin{eqnarray}
\begin{tikzcd}
(V_{-1}\otimes W_{-1}) \otimes U_{-1} \arrow[rr, "\partial^{3\otimes}"] \arrow[dd] &  & (V_0\otimes W_0) \otimes U_0 \arrow[dd] \arrow[lldd, "\Omega_{V|WU}"'] \\
                                                                                                                                                                                                  &  &                                                                                                                                                             \\
W_{-1}\otimes (U_{-1}\otimes V_{-1}) \arrow[rr, "\partial^{3\otimes}"']                                                                                                    &  & W_0\otimes(U_0\otimes V_0)                                                                                                                                  
\end{tikzcd},\nonumber \\
\begin{tikzcd}
      V_{-1}\otimes (W_{-1} \otimes U_{-1}) \arrow[rr, "\partial^{3\otimes}"] \arrow[dd] &  & V_0\otimes (W_0 \otimes U_0) \arrow[dd
      ] \arrow[lldd, "\Omega^\dagger_{V|WU}"'] \\
                                                                                                                                                                                                  &  &                                                                                                                                                             \\
(U_{-1}\otimes V_{-1})\otimes W_{-1} \arrow[rr, "\partial^{3\otimes}"']                                                                                                    &  & (U_0\otimes V_0) \otimes W_0                                                                                                                                 
\end{tikzcd},\nonumber
\end{eqnarray}
where the vertical arrows denote the decomposition  \eqref{hexag}. These diagrams cast $\Omega,\Omega^\dagger$ as the {\bf hexagonator} 2-morphisms in $\operatorname{2Rep}^\cT(\cG)$:
\begin{eqnarray}
\begin{tikzcd}
                                                                                  & V(W U) \arrow[rr, "b_{V(WU)}"] &  & (W U) V \arrow[rd, "a_{WUV}"] &                                  \\
(V W) U \arrow[ru, "a_{VWU}"] \arrow[rd, "b_{VW}"'] & {} \arrow[rr, "\Omega_{V|WU}", Rightarrow]                             &  & {}                                                          & W (U V) \\
                                                                                  & (W V) U \arrow[rr, "a_{WVU}"']    &  & W (V U) \arrow[ru, "b_{VU}"']      &                                 
\end{tikzcd},\nonumber \\
\begin{tikzcd}
                                                                                  & (V W) U \arrow[rr, "b_{(VW)U}"] &  & U (V W) \arrow[rd, "a^\dagger_{UVW}"] &                                  \\
V (W U) \arrow[ru, "a^\dagger_{VWU}"] \arrow[rd, "b_{WU}"'] & {} \arrow[rr, "\Omega^\dagger_{V|WU}", Rightarrow]                             &  & {}                                                          & (U V)  W\\
                                                                                  & V (U W) \arrow[rr, "a^\dagger_{VUW}"']    &  & (V U)  W \arrow[ru, "b_{VU}"']      &                                 
\end{tikzcd}.\nonumber
\end{eqnarray}
In other words, the quantities $\Omega_{V|WU},\Omega_{V|WU}^\dagger$ by definition is an invertible modification implementing the two sides of the decomposition  \eqref{hexag}. 

Now by the diagram  \eqref{mixedhexag}, the 2-intertwiners $i:V\rightarrow U$ and their associated mixed braiding maps $b_{iW}$ preserve these hexagon relations. This leads to the naturality of the hexagonator $\Omega_{V|WU}$ with respect to 2-intertwiners such that we have (cf. diagram (2.2) in \cite{KongTianZhou:2020})
\begin{equation}
\Omega_{V|WX}=\begin{tikzcd}
                                                                                     & V (W X) \arrow[rrr] \arrow[rdd, "i"']              &                                                      &                                             & (W X) V \arrow[rddd] \arrow[ldd, "i"]            &                                       \\
                                                                                     &                                                                  & {} \arrow[r, "b_{i(W X)}", Rightarrow]        & {}                                          &                                                                &                                       \\
                                                                                     &                                                                  & U (W X) \arrow[r, "b_{U(W X)}"] & (W X) U \arrow[rd, "a_{WXU}"] &                                                                &                                       \\
(V W) X \arrow[ruuu, "a_{VWX}"] \arrow[rddd, "b_{VW}"'] \arrow[r, "i"] & (U W) X \arrow[ru, "a_{UWX}"] \arrow[rd, "b_{UW}"] & {} \arrow[r, " \Omega_{U|WX}",Rightarrow]            & {}                                          & W (X U)                                          & W (X V) \arrow[l, "i"'] \\
                                                                                     & {} \arrow[d, "b_{iW}", Rightarrow]                               & (W U) X \arrow[r, "a_{WUX}"]           & W (U X) \arrow[ru, "b_{UX}"'] & {} \arrow[d, "b_{iX}", Rightarrow]                             &                                       \\
                                                                                     & {}                                                               &                                                      &                                             & {}                                                             &                                       \\
                                                                                     & (W V) X \arrow[rrr, "a_{WVX}"] \arrow[ruu, "i"]    &                                                      &                                             & W (V X) \arrow[ruuu, "b_{VX}"'] \arrow[luu, "i"] &                                      
\end{tikzcd},\nonumber
\end{equation}
and similarly for the adjoint diagrams with $\Omega^\dagger$. The tensor product $VX$ of 2-representations is equipped with the tensor product $\Omega_{VX|WU}$ hexagonator, which are by construction natural and invertible.

\begin{remark}\label{pentagonator}
Notice we did not define any associators for the {2-morphisms} $\mu$ in $\operatorname{2Rep}^\cT(\cG)$. This is because 2-morphisms in a 2-category the tensor product $\mu\otimes\nu = \mu\ast \nu$ given by composition is {\it strictly} associative; indeed, such an associator $a_{\mu\nu\lambda}:(\mu\nu)\lambda \Rrightarrow \mu(\nu\lambda)$ would have to be a 3-morphism. 

By the same token, the hexagon relations involving the mixed braiding maps (ie. the decompositions in {\bf Lemma \ref{braiddecomp}} aside from  \eqref{hexag}), as well as the pentagon relations for the associator 2-morphisms  \eqref{fusionassoc1}, \eqref{fusionassoc2}, \eqref{fusionassoc3}, must hold strictly on-the-nose. However, the fact that $a_{VWU}$ is a 1-morphism implies we can have a 2-morphism $\pi$, called the {\bf pentagonator}, that implements its pentagon relation. We will show in Appendix \ref{weak2repthy} that $\pi$ is given by the Hochschild 3-cocycle $\T$ attached to the weak endomorphism 2-algebra $\End(V)$.
\end{remark}

\subsection{Main theorem and its proof}\label{proof}
We are finally ready to state and prove the main theorem. As earlier, we will often omit the tensor products to lighten the notations.  
\begin{theorem}\label{mainthm2}
The 2-representation 2-category $\operatorname{2Rep}^\cT(\cG)$ of a weak quasitriangular 2-bialgebra $\cG$ is a braided monoidal 2-category with trivial left-/right-equivalences $l:1V \xrightarrow{\sim} V$, $r: V1 \xrightarrow{\sim} V$.
\end{theorem}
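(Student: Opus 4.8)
The plan is to verify that all the data and coherence conditions assembled in Sections \ref{fusion2rep2cat}--\ref{hexagona} fit together into the axioms of a braided monoidal 2-category in the sense of \cite{KongTianZhou:2020,GURSKI20114225}. Concretely, the proof proceeds by accounting for each piece of structure one at a time and invoking the lemma that establishes it. First I would collect the monoidal data: the tensor product functor $\otimes$ from \eqref{tensor2rep1}--\eqref{tensor2rep3} together with the $\operatorname{Gray}$ property of {\bf Lemma \ref{2repdecomp}} and the naturality of {\bf Lemma \ref{monoidal2rep2cat}}; the associator 1-morphism $a_{VWU}$ from \eqref{fusionassoc4} with its associated 2-morphisms \eqref{fusionassoc1}--\eqref{fusionassoc3}; the pentagonator $\pi$ (supplied by {\bf Proposition \ref{pentag}} in the appendix, arising from the Hochschild 3-cocycle $\cT$); and the unitors, which by the argument in Section \ref{sec:monoidal} are all identities because of the counit conditions \eqref{counit}, \eqref{deg0counit}. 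This immediately discharges every coherence diagram involving unitors (the homotopy triangle, the zig-zag axioms), so I would state explicitly that these hold trivially and that the left-/right-equivalences $l,r$ are the identity.

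Next I would assemble the braiding data: the braiding 1-morphism $b_{VW}=\operatorname{flip}\circ\rho_0(R)$ from \eqref{2repbraid1}, which is a genuine 1-morphism by {\bf Lemma \ref{braidingmorphs}} (using \eqref{def2R2}); the mixed braiding modifications $b_{iW},b_{Wi}$ from \eqref{2repbraid2}, shown to be 2-morphisms and to satisfy naturality and functoriality by {\bf Lemma \ref{braiding2rep2cat2}}; and the hexagonators $\Omega_{V|WU},\Omega_{V|WU}^\dagger$ from \eqref{hexhomotop}, whose existence and invertibility follow from the braided 2-quasi-bialgebra relation \eqref{quasirmatrix} and the weak-representation component $\varrho$. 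The hexagon decompositions of {\bf Lemma \ref{braiddecomp}} realize exactly the $R_\bullet$ and $S_\bullet$ hexagon 1-arrow axioms of a braided monoidal bicategory, with $\Omega,\Omega^\dagger$ as the filling 2-morphisms. I would emphasize that, because the underlying 2-category is a $\operatorname{Gray}$-category (all tensor products of 1-morphisms decompose via the mixed structure, {\bf Lemma \ref{2repdecomp}}), the braiding on 1-morphisms $b_{ij}$ and the higher hexagons \eqref{str2hexag} are determined by the mixed data, so it suffices to check coherence on objects and mixed generators.

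I would then verify the remaining higher coherence axioms of \cite{KongTianZhou:2020}: the two hexagonator-vs-associator equations (diagrams (2.2) and (2.4) there), which follow from the naturality of $\Omega$ with respect to 2-intertwiners --- this is precisely the large commuting diagram drawn at the end of Section \ref{hexagona}, built from the mixed-hexagon diagram \eqref{mixedhexag} and the compatibility of $b_{iW}$ with the associators \eqref{assoc2morph}; the compatibility of the hexagonators with the pentagonator $\pi$, which reduces algebraically to the interplay between the coassociator relations \eqref{quasi2rmatrix} and the 2-coassociativity \eqref{weakcohgrpd3}, dualized via {\bf Proposition \ref{weak2bialg}} to the Hochschild 3-cocycle condition on $\cT$; and the compatibility of the iterated braiding \eqref{iteratedbraid} with the associators, which uses {\bf Lemma \ref{braiding2rep2cat2}} applied to the 1-morphism $b_{VW}$. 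Finally, the Yang-Baxter-type consistency at the level of objects is guaranteed by the degree-0 $R$-matrix satisfying $R_{12}R_{13}R_{23}=R_{23}R_{13}R_{12}$ (from applying $D_t[2]$ to \eqref{2yangbax2}) together with the classical result that $(\cG_0,\Delta_0',R,\Phi)$ is a braided quasi-bialgebra (Section \ref{braided2cat} and {\it Remark \ref{quasihopf}}).

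The main obstacle I anticipate is the bookkeeping of signs and degrees across the three-term tensor complex \eqref{grtensor} when checking that the higher coherence 2-morphisms compose correctly --- in particular ensuring that $\Omega$ and $\pi$ satisfy their mutual compatibility (the braided-monoidal analogue of the hexagon/pentagon interchange), since this is where the weak component $\varrho$, the coassociator $\Delta_1$, and the 3-cocycle $\cT$ all interact simultaneously. Rather than grinding through every such diagram, the strategy is to exploit the $\operatorname{Gray}$ property and the self-duality of $\cG$ (so that $\Delta_1$ and $\cT$ are literally the same data) to reduce each axiom to an already-established algebraic identity among \eqref{def2R1}, \eqref{def2R2}, \eqref{quasi2rmatrix}, \eqref{quasirmatrix}, \eqref{weakcohgrpd1}--\eqref{weakcohgrpd4}, and the Hochschild 3-cocycle condition; I would then cite the correspondence between these identities and the coherence diagrams of \cite{KongTianZhou:2020,GURSKI20114225} to conclude. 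Having exhibited every datum and checked every non-trivial coherence, $\operatorname{2Rep}^\cT(\cG)$ satisfies the definition of a braided monoidal 2-category with trivial unitors, completing the proof.
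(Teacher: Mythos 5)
Your assembly of the data tracks the paper's own proof closely: trivial unitors disposing of diagrams (2.5), (2.7)--(2.9) of \cite{KongTianZhou:2020}, the hexagon decompositions of \textbf{Lemma \ref{braiddecomp}} with $\Omega,\Omega^\dagger$ as fillings, naturality from \textbf{Lemmas \ref{braidingmorphs}} and \textbf{\ref{braiding2rep2cat2}}, the iterated-braiding diagram (2.10) obtained by pasting two hexagons via $b_{Vb_{UW}}$, and object-level Yang--Baxter consistency from the degree-0 $R$-matrix. Up to that point the proposal is essentially the paper's argument. (A minor omission: you never explicitly treat diagram (2.6), the braiding on the associator $b_{a_{VWU}X}$, though the ingredients you cite --- \textbf{Lemma \ref{braiding2rep2cat2}} applied to the 1-morphism $a_{VWU}$, \textbf{Lemma \ref{braiddecomp}}, and the naturality squares \eqref{assoc2morph} --- are exactly what the paper composes to produce it.)

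The genuine gap is your mechanism for the last axiom, the pentagonator--hexagonator compatibility (the second axiom of \cite{GURSKI20114225}, i.e.\ axiom (2.1) of \cite{KongTianZhou:2020}) --- precisely the point you flag as the main obstacle. You propose to reduce it to identities \emph{internal to} $\cG$: the relations \eqref{def2R1}, \eqref{def2R2}, \eqref{quasi2rmatrix}, \eqref{quasirmatrix}, \eqref{weakcohgrpd1}--\eqref{weakcohgrpd4}, and the Hochschild 3-cocycle condition on $\cT$ dualized through \textbf{Proposition \ref{weak2bialg}}. This cannot suffice. The pentagonator $\pi$ is built from the Hochschild 3-cocycle $\T$ of the weak endomorphism 2-algebra $\End(V)$ (\textbf{Proposition \ref{pentag}}), and the hexagonators $\Omega,\Omega^\dagger$ are built from the weak component $\varrho$ of the 2-representation, per \eqref{hexhomotop}; neither $\T$ nor $\varrho$ is determined by, or expressible through, identities internal to $\cG$. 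The only bridge between the three kinds of 2-morphisms entering this axiom --- $\T\circ\rho_0^{3\otimes}$ (the pentagonators), $\rho_1\circ\cT$ (the 2-morphisms $b_{Va_{WUX}}$, $a_{b_{VW}UX}$, etc.), and the Hochschild differential of $\varrho$ (the hexagonator composites) --- is the defining equation \eqref{weak2rephom} of a weak 2-representation, and the paper's proof of axiom (2.1) consists exactly of specializing \eqref{weak2rephom} to the elements of $\cG_0^{3\otimes}$ appearing in \eqref{quasirmatrix} and letting the resulting identity act on $V$. Equation \eqref{weak2rephom} appears nowhere in your list of inputs, so the final reduction step you describe would fail as stated; the repair is to make \eqref{weak2rephom} the central ingredient of that step rather than the algebra-internal dualities.
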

We will prove  this by using algebraic and diagrammatic manipulations that we have outlined throughout the paper, and reproduce all the coherence relations defining a braided monoidal 2-category in \cite{GURSKI20114225}. On the way, we shall also construct quantities that has also appeared in \cite{KongTianZhou:2020}. 

\medskip

Recall first that, from Section \ref{fusion2rep2cat}, we have trivial left- and right-unitors 
$l:1V\rightarrow V,r:V1\rightarrow V$, and hence all coherence relations involving them (ie. diagrams (2.5), (2.7)-(2.9) of \cite{KongTianZhou:2020}) are vacuously satisfied.

\paragraph{Braiding on the associator; the third Gurski axiom.} Let $V,W,U\in\operatorname{2Rep}^\cT(\cG)$ be four 2-representations. Consider the mixed braiding 2-morphism $b_{a_{VWU}X}$, which by {\bf Lemma \ref{braiding2rep2cat2}} fits into a diagram of the form
\begin{equation}
    \begin{tikzcd}
((VW)U)X \arrow[rr, "b_{((VW)U)X}"] \arrow[dd, "a_{VWU}"] &                            & X((VW)U) \arrow[dd, "a_{VWU}"] \\
                                                          & \xRightarrow{b_{a_{VWU}X}} &                                \\
(V(WU))X \arrow[rr, "b_{(V(WU))X}"]                       &                            & X(V(WU))                      
\end{tikzcd}.\nonumber
\end{equation}
{\bf Lemma \ref{braiddecomp}} states that we can in fact decompose the top and bottom 1-morphisms in this diagram, provided we keep in mind the hexagonator $\Omega,\Omega^\dagger$  \eqref{hexhomotop} that appears in doing so. We thus obtain a formula of the form
\begin{eqnarray}
    b_{((VW)U)X}&\xRightarrow{\Omega_{(VW)|UX}^\dagger}& a_{X(VW)U}\circ b_{(VW)X} \circ a_{(VW)XU}^\dagger \circ b_{UX}\circ a_{(VW)UX} \nonumber\\
    &\xRightarrow{\Omega_{V|WX}^\dagger}& a_{X(VW)U}\circ \left[a_{XVW}\circ b_{VX} \circ a_{VXW}^\dagger \circ b_{WX} \circ a_{VWX}\right] \nonumber\\
    &\qquad& \circ ~a_{(VW)XU}^\dagger \circ b_{UX}\circ a_{(VW)UX},\label{2.6top}
\end{eqnarray}
and similarly for the bottom 1-morphism $b_{(V(WU))X}$,
\begin{eqnarray}
    b_{(V(WU))X}&\xRightarrow{\Omega_{V|(WU)X}^\dagger}& a_{XV(WU)}\circ b_{VX} \circ a_{VX(WU)}^\dagger \circ b_{(WU)X}\circ a_{V(WU)X} \nonumber\\
    &\xRightarrow{\Omega_{W|UX}^\dagger}& a_{XV(WU)}\circ b_{VX}\circ a_{VX(WU)}^\dagger\nonumber\\
    &\qquad& \circ~ \left[a_{XWU}\circ b_{WX} \circ a_{WXU}^\dagger \circ b_{UX} \circ a_{WUX}\right] \circ a_{V(WU)X}.\label{2.6bot}
\end{eqnarray}
Now notice that there are three identical braiding maps that appear in both of these formulas, $b_{VX},b_{WX},b_{UX}$, but they act on objects that differ by an associator: we have $b_{UX}: (VW)(UX)\rightarrow (VW)(XU)$ from  \eqref{2.6top} and $b_{UX}: V(W(UX))\rightarrow V(W(XU))$ from  \eqref{2.6bot}, for instance. Such a square is precisely given by the diagram  \eqref{assoc2morph},
\begin{equation}
    \begin{tikzcd}
(VW)(UX) \arrow[rr, "a_{VW(UX)}"] \arrow[dd, "b_{UX}"'] &                                   & V(W(UX)) \arrow[dd, "b_{UX}"] \\
                                                            & \xRightarrow{a_{VWb_{UX}}} &                            \\
(VW)(XU) \arrow[rr, "a_{VW(XU)}"]                  &                                   & V(W(XU))                
\end{tikzcd},\nonumber
\end{equation}
and similarly for the other braiding maps that occur in both  \eqref{2.6bot}, \eqref{2.6top}.

Putting this all together, by successively decomposing the braiding maps, we achieve the following diagrammatic expression for $b_{a_{VWU}X}$ (here we only label the 2-morphisms for clarity):
\begin{equation}
    % https://q.uiver.app/?q=WzAsMjAsWzAsMCwiKChWVylVKVgiXSxbMCwxLCIoVlcpKFVYKSJdLFswLDMsIlYoVyhVWCkpIl0sWzAsNCwiVigoV1UpWCkiXSxbNSw0LCIoWFYpKFdVKSJdLFs1LDAsIlgoKFZXKVUpIl0sWzAsNSwiKFYoVVcpKVgiXSxbNSw1LCJYKFYoVVcpKSJdLFs1LDEsIihYKFZXKSlVIl0sWzUsMiwiKChYVilXKVUiXSxbMSwxLCIoVlcpKFhVKSJdLFsyLDEsIigoVlcpWClVIl0sWzIsMiwiKFYoV1gpKVUiXSxbMywyLCIoVihYVykpVSJdLFs0LDIsIigoVlgpVylVIl0sWzEsMywiVihXKFhVKSkiXSxbMiwzLCJWKChXWClVKSJdLFszLDMsIlYoKFhXKVUpIl0sWzMsNCwiVihYKFdVKSkiXSxbNCw0LCIoVlgpKFdVKSJdLFswLDVdLFswLDFdLFsxLDEwXSxbMTAsMTFdLFsxMSw4XSxbNSw4XSxbMTEsMTJdLFsxMiwxM10sWzEzLDE0XSxbMTQsOV0sWzgsOV0sWzUsOF0sWzEsMl0sWzksNF0sWzQsN10sWzIsM10sWzMsNl0sWzIsMTVdLFsxNSwxNl0sWzE2LDE3XSxbMTcsMThdLFsxOCwxOV0sWzE5LDRdLFs2LDddLFsxMiwxNl0sWzEzLDE3XSxbMTAsMTVdLFsxNCwxOV0sWzMsMThdLFszMiw0NiwiYV97VldiX3tVWH19IiwwLHsic2hvcnRlbiI6eyJzb3VyY2UiOjIwLCJ0YXJnZXQiOjIwfX1dLFsyNCwyOCwiXFxPbWVnYV97VnxXWH1eXFxkYWdnZXIiLDIseyJzaG9ydGVuIjp7InNvdXJjZSI6MjAsInRhcmdldCI6MjB9fV0sWzQ0LDQ1LCJhX3tWYl97V1h9VX0iLDEseyJzaG9ydGVuIjp7InNvdXJjZSI6MjAsInRhcmdldCI6MjB9fV0sWzQ3LDMzLCJhX3tiX3tWWH1XVX0iLDAseyJzaG9ydGVuIjp7InNvdXJjZSI6MjAsInRhcmdldCI6MjB9fV0sWzQ4LDM4LCJcXE9tZWdhX3tXfFVYfV5cXGRhZ2dlciIsMCx7InNob3J0ZW4iOnsic291cmNlIjoyMCwidGFyZ2V0IjoyMH19XSxbNDMsNDgsIlxcT21lZ2Ffe1Z8KFdVKVh9XlxcZGFnZ2VyIiwxLHsic2hvcnRlbiI6eyJzb3VyY2UiOjIwLCJ0YXJnZXQiOjIwfX1dLFsyMCwyNCwiXFxPbWVnYV97KFZXKXxVWH1eXFxkYWdnZXIiLDEseyJzaG9ydGVuIjp7InNvdXJjZSI6MjAsInRhcmdldCI6MjB9fV1d
    \hspace*{-1.5cm}
\begin{tikzcd}
	{((VW)U)X} &&&&& {X((VW)U)} \\
	{(VW)(UX)} & {(VW)(XU)} & {((VW)X)U} &&& {(X(VW))U} \\
	&& {(V(WX))U} & {(V(XW))U} & {((VX)W)U} & {((XV)W)U} \\
	{V(W(UX))} & {V(W(XU))} & {V((WX)U)} & {V((XW)U)} \\
	{V((WU)X)} &&& {V(X(WU))} & {(VX)(WU)} & {(XV)(WU)} \\
	{(V(UW))X} &&&&& {X(V(UW))}
	\arrow[""{name=0, anchor=center, inner sep=0}, from=1-1, to=1-6]
	\arrow[from=1-1, to=2-1]
	\arrow[from=2-1, to=2-2]
	\arrow[from=2-2, to=2-3]
	\arrow[""{name=1, anchor=center, inner sep=0}, from=2-3, to=2-6]
	\arrow[from=1-6, to=2-6]
	\arrow[from=2-3, to=3-3]
	\arrow[from=3-3, to=3-4]
	\arrow[""{name=2, anchor=center, inner sep=0}, from=3-4, to=3-5]
	\arrow[from=3-5, to=3-6]
	\arrow[from=2-6, to=3-6]
	\arrow[from=1-6, to=2-6]
	\arrow[""{name=3, anchor=center, inner sep=0}, from=2-1, to=4-1]
	\arrow[""{name=4, anchor=center, inner sep=0}, from=3-6, to=5-6]
	\arrow[from=5-6, to=6-6]
	\arrow[from=4-1, to=5-1]
	\arrow[from=5-1, to=6-1]
	\arrow[from=4-1, to=4-2]
	\arrow[""{name=5, anchor=center, inner sep=0}, from=4-2, to=4-3]
	\arrow[from=4-3, to=4-4]
	\arrow[from=4-4, to=5-4]
	\arrow[from=5-4, to=5-5]
	\arrow[from=5-5, to=5-6]
	\arrow[""{name=6, anchor=center, inner sep=0}, from=6-1, to=6-6]
	\arrow[""{name=7, anchor=center, inner sep=0}, from=3-3, to=4-3]
	\arrow[""{name=8, anchor=center, inner sep=0}, from=3-4, to=4-4]
	\arrow[""{name=9, anchor=center, inner sep=0}, from=2-2, to=4-2]
	\arrow[""{name=10, anchor=center, inner sep=0}, from=3-5, to=5-5]
	\arrow[""{name=11, anchor=center, inner sep=0}, from=5-1, to=5-4]
	\arrow["{a_{VWb_{UX}}}", shorten <=11pt, shorten >=11pt, Rightarrow, from=3, to=9]
	\arrow["{\Omega_{V|WX}^\dagger}"', shorten <=4pt, shorten >=4pt, Rightarrow, from=1, to=2]
	\arrow["{a_{Vb_{WX}U}}"{description}, shorten <=11pt, shorten >=11pt, Rightarrow, from=7, to=8]
	\arrow["{a_{b_{VX}WU}}", shorten <=11pt, shorten >=11pt, Rightarrow, from=10, to=4]
	\arrow["{\Omega_{W|UX}^\dagger}", shorten <=4pt, shorten >=4pt, Rightarrow, from=11, to=5]
	\arrow["{\Omega_{V|(WU)X}^\dagger}"{description}, shorten <=11pt, shorten >=11pt, Rightarrow, from=6, to=11]
	\arrow["{\Omega_{(VW)|UX}^\dagger}"{description}, shorten <=11pt, shorten >=11pt, Rightarrow, from=0, to=1]
\end{tikzcd}\nonumber
\end{equation}
This is precisely the third axiom in \cite{GURSKI20114225}; cf. diagram (2.6) in \cite{KongTianZhou:2020}.

\paragraph{The hexagonator $\Omega_{VX|WU}$; the third Gurski axiom.} We shall apply the same procedure as above to expand the defining diagram for $\Omega_{VX|WU}$,
\begin{equation*}
    \begin{tikzcd}
                                                                                  & (VX)(W U) \arrow[rr, "b_{(VX)(WU)}"] &  & (W U) (VX) \arrow[rd, "a_{WU(VX)}"] &                                  \\
((V X)W) U \arrow[ru, "a_{(VX)WU}"] \arrow[rd, "b_{(VX)W}"'] & {} \arrow[rr, "\Omega_{VX|WU}", Rightarrow]                             &  & {}                                                          & W (U (VX)) \\
                                                                                  & (W (VX)) U \arrow[rr, "a_{W(VX)U}"']    &  & W ((VX) U) \arrow[ru, "b_{(VX)U}"']      &                                 
\end{tikzcd}.
\end{equation*}
By rewriting each of the associator and braiding maps appearing here using \eqref{braiddecomp} and the pentagonator $\pi$ \eqref{pentag2rep} introduced in Appendix \ref{weak2repthy}, we obtain precisely diagram (2.4) in \cite{KongTianZhou:2020} as the $\Omega_{VX|WU}$. The third axiom of \cite{GURSKI20114225} then follows.

\paragraph{Iterating the braiding map; the fourth Gurski axiom.} Now consider the iterated braiding 2-morphism $b_{Vb_{UW}}$  \eqref{iteratedbraid}. By the same logic as above, we can use the decomposition  \eqref{hexag} once again on the top and bottom braiding morphisms that appear in the diagram,
\begin{eqnarray}
    b_{V(UW)} &\xRightarrow{\Omega_{V|UW}}& a_{UWV}^\dagger \circ b_{VW} \circ a_{UVW} \circ b_{VU}\circ a_{VUW}^\dagger,\nonumber\\
    b_{V(WU)} &\xRightarrow{\Omega_{V|WU}}& a_{WUV}^\dagger \circ b_{VU} \circ a_{WVU} \circ b_{VW}\circ a_{VWU}^\dagger.\nonumber
\end{eqnarray}

We can thus form the composition
\begin{equation}
    b_{\Omega_V|WU}\equiv \Omega_{V|WU}^{-1} \cdot b_{Vb_{UW}} \cdot \Omega_{V|UW}, \label{braidmorph}
\end{equation}
which fits into a diagram that "pastes" two hexagon diagrams together,
\begin{equation}
\hspace*{-1cm}
\begin{tikzcd}
W(VU) \arrow[dd, "b_{VU}"'] & (WV)U \arrow[l, "a_{WVU}"]  & (VW)U \arrow[r, "a_{VWU}"] \arrow[l, "b_{VW}"'] & V(WU) \arrow[ldd, "b_{V(WU)}"', bend right]     & V(UW) \arrow[ldd, "b_{V(UW)}", bend left] \arrow[l, "b_{UW}"'] &                            & (VU)W \arrow[dd, "b_{VU}"] \arrow[ll, "a_{VWU}"] \arrow[llllll, "b_{(VU)W}", bend right] \\
                            & \xRightarrow{\Omega_{V|WU}} &                                                 & \xLeftarrow{b_{Vb_{UW}}}                       &                                                                & \xLeftarrow{\Omega_{V|UW}} &                                                                                          \\
W(UV)                       &                             & (WU)V \arrow[ll, "a_{WUV}"']                    & (UW)V \arrow[r, "a_{UWV}"'] \arrow[l, "b_{UW}"] & U(WV)                                                          & U(VW) \arrow[l, "b_{VW}"]  & (UV)W \arrow[l, "a_{UVW}"'] \arrow[llllll, "b_{(UV)W}"', bend left]                      
\end{tikzcd}\nonumber
\end{equation}
Note that, by construction  \eqref{braidmorph}, the 2-morphisms $b_{\Omega_V|\bullet\bullet}$ are natural and {invertible}. Moreover, its definition is precisely (2.10) in \cite{KongTianZhou:2020}, and hence the fourth axiom of \cite{GURSKI20114225} follows.

\paragraph{Hochchild descent; the second Gurski axiom.} Let us now focus on  \eqref{weak2rephom}. Recall that it states, for $x_1,x_2,x_3\in\cG_0$, that
\begin{eqnarray}
    \rho_1(\cT(x_1,x_2,x_3)) -\T(\rho_0(x_1),\rho_0(x_2),\rho_0(x_3)) &=& \rho_0(x_1)\cdot \varrho(x_2,x_3) - \varrho(x_1x_2,x_3) \nonumber\\
    &\qquad& +~ \varrho(x_1,x_2x_3) - \varrho(x_1,x_2)\cdot\rho_0(x_3),\nonumber
\end{eqnarray}
where $\T$ is the Hochschild 3-cocycle on the weak endomorphism 2-algebra $\End(V)$ of a particularly chosen weak 2-vector space $V\in\mathsf{2Vect}^{hBC}$. We shall now specialize $x_1,\dots,x_3$ to the elements in $\cG_0$ of \eqref{quasirmatrix}, and let the equation act on $V$. 

By some computations, we see that the right-hand side translates to the composition of 2-morphisms
\begin{equation}
    \id_{\id_W}\Omega_{V|UX} \ast \Omega_{V|W(UX)} \ast \Omega_{V|(WU)X}^{-1} \ast (\Omega_{V|WU}\id_{\id_X})^{-1},\nonumber  
\end{equation}
while on the term $\rho\circ \cT$ on the left dualizes to terms of the form $(\rho_V\otimes\dots\otimes\rho_X)(\Delta_1\circ R - \cR\circ D_t^+ \Delta_1)$, which translates to
\begin{equation}
   a_{Wb_{VU}X}^\dagger \ast a_{b_{VW}UX} \ast ~a_{WUb_{VX}}^\dagger\ast  b_{Va_{WUX}}.\nonumber
\end{equation}
Now by leveraging the result {\bf Proposition \ref{pentag}} in Appendix \ref{weak2repthy}, the term $\T\circ\rho_0^{3\otimes}$ in fact defines the pentagonators $\pi$ on $\operatorname{2Rep}^\tau(\cG)$. The left-hand side then acquires also the contribution
\begin{equation}
    \pi_{WVUX} \ast \pi_{WUVX} \ast \pi_{VWUX}^\dagger \ast \pi_{WUXV}^\dagger, \nonumber
\end{equation}
where $\pi_{WUXV}(x) = \T((\rho_W)_0(x),(\rho_U)_0(x),(\rho_X)_0(x))(V)$; see  \eqref{pentag2rep}.

Altogether, this gives rise to the equation
\begin{eqnarray}
    \pi_{WVUX} \ast \pi_{WUVX} \ast \id_{\id_W}\Omega_{V|UX} \ast \Omega_{V|W(UX)} \ast a_{b_{VW}UX}^\dagger \ast b_{Va_{WUX}}^\dagger = \nonumber\\
    \pi_{VWUX} \ast \pi_{WUXV}\ast \Omega_{V|WU}\id_{\id_X} \ast \Omega_{V|(WU)X} \ast a_{Wb_{VU}X}^\dagger \ast a_{WUb_{VX}}^\dagger \nonumber
\end{eqnarray}
for $V,W,U,X\in\operatorname{2Rep}^\cT(\cG)$, which is precisely the second axiom in \cite{GURSKI20114225} (or equivalently axiom (2.1) in \cite{KongTianZhou:2020}). In \cite{KongTianZhou:2020}, this axiom was also captured in a cohomological manner in (3.2) there, but where the adjoint equivalences $a_{iVW},a_{iVW}^\dagger$ are omitted.

\medskip

In summary, we find that $\operatorname{2Rep}^\cT(\cG)$ has the following ingredients:
\begin{center}
        \begin{tabular}{|c|c|c|}
    \hline
         objects & 1-morphisms & 2-morphisms \\
         \hline 
         2-representations & 2-intertwiners & $\substack{\text{equivariant} \\ \text{cochain homotopies}}$ \\ 
         $(V,b_{V\bullet},\Omega_{V|\bullet\bullet})$ & $(i,b_{i\bullet})$ & $\mu$  \\
         \hline
         
    \end{tabular}
\end{center}
\noindent This establishes {\bf Theorem \ref{mainthm2}} and concludes our paper.

\section{Conclusion}
In this paper, we have given a construction of a categorified notion of a quantum double suited for 2-groups/2-algebras \cite{Wagemann+2021}. This was accomplished by naturally lifting Majid's quantum double construction \cite{Majid:1994nw,Majid:1996kd} to internal categories, which makes the structural theorems manifest. In particular, we have provided explicit algebraic computations that demonstrate concretely the notion of duality between 2-bialgebras, and how two 2-bialgebras can be "pasted together" through the notion of 2-representations \cite{Angulo:2018}. We have also given examples which demonstrate that the category of bialgebras (and their quantum doubles) embeds into the category of 2-bialgebras (and their 2-quantum doubles), generalizing an analogous statement proven in \cite{Bai_2013} for the classical case. 

By endowing Baez-Crans 2-vector spaces $\mathsf{2Vect}^{BC}$ with higher homotopical data, we described the {\it weak} 2-representation theory for 2-bialgebras $\cG$. We give a concrete description of the 2-category $\operatorname{2Rep}^\cT(\cG)$, which by definition comes with a forgetful 2-functor $\operatorname{2Rep}(\cG)\rightarrow \mathsf{2Vect}^{hBC}$ into the $k$-linear 2-category $\mathsf{2Vect}^{hBC}$ of such "deformed" Baez-Crans 2-vector spaces. Though we had provided an operational description of $\mathsf{2Vect}^{hBC}$ --- in particular the property that its endomorphism categories and algebra objects are modelled by 2-term $A_\infty$-algebras --- its precise construction shall appear soon in a later work. 

As in the case of 1-Hopf algebras \cite{Majid:1996kd}, the monoidal structure on $\operatorname{2Rep}^\cT(\cG)$ is controlled by the coproduct and the coassociator, whence the naturality and the $\mathsf{Gray}$-property of the tensor product follow from the coequivariance and coPeiffer identities ({\bf Lemmas \ref{monoidal2rep2cat}, \ref{2repdecomp}}). We have also introduced the 2-$R$-matrix $\cR$ of $\cG$, which was defined naturally from the properties of the 2-quantum double $D(\cG,\cG)$. We show that the resulting braiding on $\operatorname{2Rep}^\cT(\cG)$ is in fact natural and coherent ({\bf Lemmas \ref{braidingmorphs}, \ref{braiding2rep2cat2}}). 

Then, together with the structure of weak 2-representations, we identified the pentagonator and hexagonator 2-morphisms and exhibited all the necessary coherence diagrams to prove {\bf Theorem \ref{mainthm2}}. This gives a direct correspondence of the ingredients of a braided monoidal 2-category \cite{GURSKI20114225,KongTianZhou:2020} with those of an underlying weak quasitriangular 2-bialgebra $\cG$.

\medskip

We also note that {\bf Theorem \ref{mainthm2}} hints towards a (braided) {\it higher Tannakian reconstruction} (cf. \cite{Pfeiffer2007}). It should state that, morally, given a sufficiently "nice" braided 2-category (such as the Drinfel'd centre $Z_1(\mathcal{D})$ of a monoidal 2-category $\mathcal{D}$ \cite{KongTianZhou:2020} or a \textit{fusion} 2-category \cite{Douglas:2018}) and a fibre 2-functor $F:\cC\rightarrow\mathsf{2Vect}^{hBC}$, there is a braided equivalence $\mathcal{C} \simeq \operatorname{2Rep}^\cT(\cG)$ such that the diagram
\begin{equation}
    \begin{tikzcd}
\cC \arrow[rd, "\rotatebox{-20}{$\sim$}"'] \arrow[rr, "F"] &                                         & \mathsf{2Vect}^{hBC} \\
                                                         & \operatorname{2Rep}^\cT(\cG) \arrow[ru] &                     
\end{tikzcd}\nonumber
\end{equation}
commutes. The authors are aware that efforts towards such a result in the semisimple setting are currently being undertaken. Together with this work, it allows to distill all coherences in $\cC$ into a sequence of Hochschild cohomological descent equations between the weak 2-bialgebra $(\cG,\cT,\cR)$ and the weak 2-endormophism 2-algebra $\End(V)$.

Furthermore, as mentioned in the Introduction, such a braided equivalence would also allow us to reconstruct the topological field theory associated to a 4D gapped topological phase, described by a braided 2-category \cite{Johnson-Freyd:2020}. The work towards this goal, in the case of the 3D toric code as well as its spin-$\bbZ_2$ variant, is in \cite{Chen2z:2023}.
 
\medskip
 
We will show in Appendix \ref{weak2repthy} that our framework is able to produce the $k$-invariants of the 2-representation theory for a skeletal 2-group $G$ \cite{Delcamp:2023kew,Douglas:2018}, which demonstrates that this work does in fact remedy the issues mentioned at the beginning of Section \ref{weak2bialgebras} from \cite{heredia2016representations2groupsbaezcrans2vector}. 

\paragraph{Relations with existing quantum group categorifications.} Following the main text, we introduce the notion of (strict) Hopf 2-algebras in Appendix \ref{2-hopf}. There have been numerous proposals for the notion of "categorified Hopf algebras" in the literature, such as the trialgebra proposal of Pfeiffer \cite{Pfeiffer2007}, the quantum 2-group of Majid \cite{Majid:2012gy}, or the quantum groupoid of Lu \cite{Lu:1996}. It is well-known that group crossed-modules are equivalent to 2-groups and group groupoids \cite{Chen:2012gz}, and the $\text{cat}^1$-Hopf algebras of Wagemann \cite{Wagemann+2021} coincide with trialgebras in the cocommutative case \cite{Pfeiffer2007}. Hence our definition of \textit{strict} 2-bialgebras are very closely related to all of these alternative formulations. 

On the other hand, the content of {\bf Theorem \ref{mainthm}} has been explored in the context of Hopf categories of Gurski \cite{gurski2006algebraic} by Neuchl \cite{neuchl1997representation}. Hence, we posit that the relationship between our \textit{weak} 2-bialgebras and these Hopf categories should be understood through a monoidal "model change" from 2-vector spaces of the "homotopy/weak Baez-Crans" sense $\mathsf{2Vect}^{hBC}$ to the Kapranov-Voevodsky sense $\mathsf{2Vect}^{KV}$ \cite{Kapranov:1994}.\footnote{We emphasize that this model change would not be possible in the strict Baez-Crans sense, as $\mathsf{2Vect}^{BC}$ is known to be completely strict, while $\mathsf{2Vect}^{KV}$ cannot be strictified.} This problem is beign tackled currently by one of the authors.

\paragraph{Ribbon tensor 2-categories and modular invariants of 4-manifolds.} Recall that, in the 1-Hopf algebra case $H$, the representation category $\operatorname{Rep}(H)$ can be modelled with {\it ribbon diagrams} \cite{Majid:1996kd}, which are pictorial presentations for the computations one can do in $\operatorname{Rep}(H)$. This allowed one to deduce quantum invariants of 3-manifolds from the underlying pivotal braided tensor category: such as the Turaev-Viro state-sum invariants \cite{Turaev:1992hq,Barrett1993}, the multifusion invariants of Cui \cite{Cui_2017}, and the modular invariants of Reshetikhin-Turaev \cite{Reshetikhin:1991tc}.

The above Turaev-Viro state sum model has seen a recent generalization to 4-dimensions, known as the {\it Douglas-Reutter model} \cite{Douglas:2018}. This is a state sum model which takes as input a fusion spherical 2-category, and outputs a 4-manifold invariant. Similar constructions had also been studied by Mackaay \cite{Mackaay:ek,Mackaay:hc,Mackaay:uo}, but the {\it 2-ribbon calculus} underlying the Douglas-Reutter model served tantamount importance.

Nevertheless, these invariants are \textit{not} modular, and a 4-dimensional analogue of the Reshetikhin-Turaev invariant is still unknown. For this, one requires the notion of a \textbf{ribbon tensor 2-category}, in which both the braiding and the {\it twist} operations of the underlying tensor 2-category play a central role. Algebraically, we expect such higher ribbon structures to be captured by {\bf ribbon Hopf 2-algebras}. We shall pursue this line of thinking in a followup work, in an effort to construct a 4-dimensional analogue of the Reshetikhin-Turaev modular functor.

% should encode the braiding of not only 1-dimensional embedded tangles labeled by the objects, but also 2-dimensional "tangled surfaces" labeled by the 1-morphisms. However, since the hexagon relation holds only up to an invertible natural transformation (ie. the {\it hexagonator}, or equivalently the \textit{braided equivalence} in \cite{KongTianZhou:2020}), braided 2-categories $\cC$ do {not} form a module for the Artin braid group. An appropriate notion of \textbf{higher ribbon diagrams} would then be equivalent to pinning down $\cC$ as a module 2-category under the "2-braid group", a notion of which has been developed by Rouquier \cite{rouquier:hal-00002981,Rouquier2005CategorificationOS} in connection with knot categorification and Soergel bimodules.
% \paragraph{} In connection with the above point, modular tensor categories give rise to quantum invariants of 3-manifolds \cite{Turaev:1992hq,Barrett1993,Reshetikhin:1991tc}. There also exist similar constructions of 4-manifold invariants, such as the state-sums of or Douglas-Reutter \cite{Douglas:2018}. 

% In Appendix \ref{2-hopf}, we introduced a strict notion of a 2-Hopf algebra equipped with both a 2-R-matrix $\cR$ and an {\it antipode} $S$. However, to model a general ribbon tensor 2-category, we would require a "weak antipode" $S$ as well as the central ribbon/tortile element $\nu$ that arises from $\cR,S$. 

\newpage

\appendix

\section{Hopf 2-algebras}\label{2-hopf}

Recall in the usual 1-algebra case that a Hopf algebra $H$ is by definition a bialgebra equipped with an {\it antipode} $S:H\rightarrow H$ which is an anti-algebra and anti-coalgebra map, satisfying
\begin{equation}
    \cdot \circ (\operatorname{id}\otimes S)\circ\Delta=\cdot\circ(S\otimes\operatorname{id})\circ\Delta=\eta\circ\epsilon,\nonumber
\end{equation}
where $\cdot,\Delta$ are the product/coproduct on $H$ and $\eta,\epsilon$ are the unit/counit maps. Correspondingly, we shall define a 2-Hopf algebra $\cG$ as a 2-bialgebra equipped with an appropriate notion of an antipode $S:\cG\rightarrow \cG$.

%\paragraph{Opposite 2-algebras and the antipode.}
\paragraph{Opposite 2-algebras.}
Recall that any algebra $A$ comes with an "opposite" algebra $A^\text{opp}$, for which the algebra structure is written "backwards"; the multiplication is given by
\begin{equation}
    A\otimes A\rightarrow A,\qquad x\otimes x' \mapsto x'x.\nonumber
\end{equation}
Similarly, an {\bf opposite 2-algebra} $\cG^\text{opp}$ of a (strict) 2-algebra $\cG$ consist of opposites $\cG_0^\text{opp},\cG_{-1}^\text{opp}$ of the graded components of $\cG$, and a "swapped" $\cG_0$-bimodule structure on $\cG_{-1}$:
\begin{equation}
  \begin{array}{lccl}
    \cdot^{opp}_l:&   \cG_0\otimes\cG_{-1}&\mapsto  &\cG_{-1}\\
       &x\otimes y &\mapsto &y\cdot x = x\cdot^{opp}_l y 
  \end{array}  
  ,\qquad
   \begin{array}{lccl}
    \cdot^{opp}_r:&   \cG_{-1}\otimes\cG_0&\mapsto  &\cG_{-1}\\
       &y\otimes x &\mapsto &x\cdot y = y  \cdot^{opp}_r x.
  \end{array}  
\end{equation}
In other words, the left $\cG_0$-module structure of $\cG_{-1}$ is swapped with the right one. It is easy to see that the equivariance property and Peiffer identity for $t$ still take the form  \eqref{algpeif1}, \eqref{algpeif2}.

\paragraph{Antipode and Hopf 2-algebras.} In analogy with the 1-algebra case, an {\bf anti-2-algebra map} on $\cG$ is equivalent to a 2-algebra map $\cG\rightarrow \cG^\text{opp}$ into the opposite 2-algebra $\cG^\text{opp}$ as defined above. More explicitly, an anti-2-algebra map $\phi: \cG\rightarrow \cG$ has graded components $\phi_{0,-1}: \cG_{0,-1}\rightarrow \cG_{0,-1}$ as anti-algebra maps, such that $\phi_0t = t\phi_{-1}$ and 
\begin{eqnarray}
    \phi_{-1}(x\cdot y) = \phi_{-1}(y)\cdot \phi_0(x),\qquad \phi_{-1}(y\cdot x) = \phi_0(x)\cdot \phi_{-1}(y) \nonumber
\end{eqnarray}
for each $x\in\cG_0,y\in\cG_{-1}$.

This allows us to define an {\it antipode} $S_0=(S_0^1, S_0^0):\cG\rightarrow \cG$ as an anti-2-algebra map on $\cG$ such that 
\begin{eqnarray}
    \cdot\circ(\operatorname{id}\otimes  S_0^1)\circ \Delta_{-1} &=& \cdot\circ( S_0^1\otimes \operatorname{id})\circ\Delta_{-1}=\eta_{-1}\epsilon_{-1},\nonumber\\
    \cdot\circ(\operatorname{id}\otimes  S_0^0)\circ \Delta_0^l &=& \cdot\circ(S_0^1\otimes \operatorname{id})\circ\Delta_0^l=\eta_{-1}\epsilon_0,\nonumber\\
    \cdot\circ(\operatorname{id}\otimes  S_0^1)\circ \Delta_0^r &=& \cdot\circ(S_0^0\otimes \operatorname{id})\circ\Delta_0^r=\eta_{-1}\epsilon_0,\label{antpodes}
\end{eqnarray}
where $\cdot,\Delta$ are the 2-bialgebra product/coproduct and $\eta,\epsilon$ are the unit/counit on $\cG$.

\begin{definition}\label{2hopf}
A (strict) {\bf 2-Hopf algebra} is a Hopf algebra object in $\mathsf{2Vect}^{BC}$. Equivalentlly, it is a (strict) counital 2-bialgebra  $(\cG,\cdot,\Delta,\epsilon)$, as in {\bf Definition \ref{assoc2bialg}}, equipped with an antipode $S_0$ satisfying  \eqref{antpodes}.
\end{definition}

Note  \eqref{antpodes} directly implies that $(\cG_{-1},\Delta_{-1},S_0^1)$ forms a Hopf algebra. Furthermore, recalling the definition  \eqref{deg0sweed} $\Delta_0'=\frac{1}{2}D_t^+\Delta_0$ of the coproduct in degree-0, as well as the property that $t\eta_{-1}=\eta_0$, we have
\begin{equation}
    \cdot \circ(\operatorname{id}\otimes S_0^0) \circ\Delta_0' = \cdot\circ(S_0^0\otimes \operatorname{id})\circ\Delta_0'=\eta_0\circ\epsilon_0\nonumber
\end{equation}
from  \eqref{antpodes}, as the $t$-map by definition intertwines between the antipodes $S_0^0 t = t S_0^1$. As such, $(\cG_0,\Delta_0',S_0^0)$ itself forms a Hopf algebra for which the $t$-map is a Hopf algebra map, given the conditions  \eqref{cohgrpd00}, \eqref{cohgrpd10}, \eqref{cohgrpd+}, \eqref{cohgrpd-} hold. Moreover, the 2-coalgebra compatibility conditions  \eqref{2algcoprod} implies that $H_{-1}=(\cG_{-1},\Delta_{-1},S_0^1)$ forms a Hopf bimodule algebra over $H_0 = (\cG_0,\Delta_0',S_0^0)$. 

In other words, our definition of a {\it strict} 2-Hopf algebra gives rise to a crossed-module of Hopf algebras $t:(\cG_{-1},\Delta_{-1},S_0^1)\rightarrow(\cG_0,\Delta'_0,S_0^0)$, which is precisely the definition of a "$\text{cat}^1$-Hopf algebra" of \cite{Wagemann+2021}. We are able to go even further, as we are able to introduce the 2-R-matrix, as well as study the monoidal weakening of $\cG$. We shall say a bit more about the former in the following.

\paragraph{Quasitriangular strict Hopf 2-algebras.} Now let $(\cG,\Delta,\cR)$ denote a quasitriangular 2-bialgebra, equipped with the 2-$R$-matrix $\cR$. Recall that an antipode $S:\cG\rightarrow \cG$ is a anti-2-algebra map on $\cG$ such that  \eqref{antpodes} are satisfied. Together with the 2-$R$-matrix and  \eqref{def2R1}, it follows that we must have 
\begin{gather}
    (S_0^1\otimes \id)\cR^l \cdot \cR^r = \eta_{-1}^{2\otimes},\qquad (S_0^0\otimes \id)\cR^r\cdot \cR^l = \eta_{-1}^{2\otimes},\nonumber\\
    (\id\otimes S_0^1)\cR^r \cdot \cR^l = \eta_{-1}^{2\otimes},\qquad (\id\otimes S_0^0)\cR^l\cdot \cR^r =\eta_{-1}^{2\otimes}.\label{rmatrixantpode}
\end{gather}
Note here that, as the skew-paring $\langle\cdot,\cdot\rangle_\text{sk}$ is non-degenerate (the "quasi" in quasitriangular), the 2-$R$-matrix components $\cR^{l,r}$ are square and hence admit uniquely defined inverses as square matrices. 

\begin{definition}
    Such a tuple $(\cG,\Delta,\cR,S)$ satisfying  \eqref{rmatrixantpode} defines a strict \textbf{quasitriangular strict 2-Hopf algebra}.
\end{definition}
\noindent If $\cG=kG$ were the 2-group algebra constructed from  \eqref{2grpalg2}, then the algebra action $\cdot$ is simply matrix product, and we recover the usual notion that $(S\otimes \id)\cR, (\id\otimes S)\cR = \cR^{-1}$ provided $\cR$ is quasitriangular.

To not bloat up this paper anymore than we already have, we conclude this section by noting that the embedding in {\it Remark \ref{2algembed}} extends to (quasitriangular) Hopf 2-algebras,
\begin{equation}
    \text{HopfAlg}\hookrightarrow \text{2HopfAlg},\qquad H\mapsto \cH = H\xrightarrow{\id}H.\nonumber
\end{equation}
The antipode on $\cH$ is simply two copies of that $S$ on $H$, and  \eqref{antpodes}, \eqref{rmatrixantpode} is automatically satisfied as the coproduct components $\Delta_{-1}=\Delta_0=\Delta_0'$ and the R-matrices $\cR = R$ all coincide; see Section \ref{2groupbialg}.

\medskip

In general, we should consider a weakened form of this 2-Hopf structure, based on weak 2-bialgebras that we have introduced in the main text. In particular, there is a "cochain homotopy" component $S_1:\cG_0\rightarrow \cG_{-1}$ to the antipode which "deforms" the relations  \eqref{antpodes}, \eqref{rmatrixantpode}. We shall study this structure in more detail in a followup paper.

\section{Classical limits of (weak) 2-bialgebras and 2-$R$-matrices}\label{claslim}
In this section, we prove that the notion of 2-quantum doubles we have defined in Section \ref{str2qd} in the main text reproduces the known notion of 2-Manin triples of Lie 2-bialgebras \cite{Bai_2013,chen:2022} in the classical limit. We shall also examine the quasi-weak case and show that a weak/quasi-2-bialgebra as defined in {\bf Definition \ref{wk2bialg}} reproduces a weak/quasi-Lie 2-bialgebra as defined in \cite{Chen:2012gz,Chen:2013}.

\paragraph{Classical limit and the Lie-ification functor.} Given an (associative) algebra $A\in\text{Alg}_\text{ass}$, it is well-known \cite{Majid:1996kd,Wagemann+2021} that there is a \textit{Lie-ification functor} $\mathcal{L}:\text{Alg}_\text{ass}\rightarrow \text{LieAlg}$ that assigns $A$ to its "classical" Lie algebra $\g(A)$. The Lie bracket is given by the commutator $[X,X'] = XX' - X'X$, where $X \in \g(A)$ is the image of an element $x\in A$ under $\mathcal{L}$. The associativity of $A$ implies the Jacobi identity of $[\cdot,\cdot]$; note $A$ only needs to be left-symmetric (not necessarily associative) in order for $\g(A)$ to enjoy the Jacobi identity \cite{Bai_2013}.

There is a left-adjoint to the Lie-ification functor given by the universal envelope $U: \g\mapsto U(\g)$, which can be understood as a "quantization" map \cite{Majid:1994nw}. There is an analogous result for associative 2-algebras \cite{Wagemann+2021}.
\begin{lemma}\label{classicalstr}
The Lie-ification functor $\mathcal{L}:\mathrm{2Alg}_\mathrm{ass}\rightarrow\mathrm{Lie2Alg}$ lifts to associative 2-algebras (see {\bf Definition \ref{assoc2alg}}), where $\g(\cG) = \mathcal{L}(\cG_{-1})\xrightarrow{t}\mathcal{L}(\cG_0)$ is a Lie 2-algebra with
\begin{equation}
    X\rhd Y = X\cdot Y - Y\cdot X,\qquad X=\mathcal{L}(x),~Y=\mathcal{L}(y),\nonumber
\end{equation}
where $x\in\cG_0,y\in\cG_{-1}$. Moreover, the universal enveloping functor $U$ also lifts to Lie 2-algebras $U(\g)=U(\g_{-1})\xrightarrow{t}U(\g_0)$, such that $U$ is left-adjoint to $\mathcal{L}$.
\end{lemma}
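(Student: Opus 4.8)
The plan is to prove Lemma \ref{classicalstr} in two logically independent parts: first that the Lie-ification functor $\mathcal{L}$ lifts from $\mathrm{2Alg}_\mathrm{ass}$ to $\mathrm{Lie2Alg}$ by producing a genuine Lie 2-algebra (crossed-module) structure, and second that the universal envelope functor $U$ lifts and is left-adjoint to it. For the first part, I would take the associative 2-algebra $\cG=(\cG_{-1}\xrightarrow{t}\cG_0,\cdot)$ of \textbf{Definition \ref{assoc2alg}} and apply the ordinary Lie-ification $\mathcal{L}$ degreewise, setting $\g_0=\mathcal{L}(\cG_0)$ and $\g_{-1}=\mathcal{L}(\cG_{-1})$ with commutator brackets. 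Since $t$ is an algebra homomorphism, it automatically becomes a Lie algebra homomorphism $t:\g_{-1}\to\g_0$. The action is defined by $X\rhd Y = X\cdot Y - Y\cdot X$ using the $\cG_0$-bimodule structure on $\cG_{-1}$, and I would verify the three defining axioms of a Lie 2-algebra crossed-module (as in the definition recalled from \cite{Chen:2012gz,Baez:2008hz}): that $\rhd$ is a genuine $\g_0$-action on $\g_{-1}$, that $t$ is equivariant, and that the Peiffer identity $t(X)\rhd Y = [X,Y]_{\g_{-1}}$ holds.

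The key computations here all follow from the three bimodularity conditions \eqref{actioncond} together with the equivariance \eqref{algpeif1} and Peiffer identity \eqref{algpeif2} of the associative 2-algebra. Concretely, the $\g_0$-action property $[X,X']\rhd Y = X\rhd(X'\rhd Y) - X'\rhd(X\rhd Y)$ is a direct expansion using the left/right module commutativity in \eqref{actioncond}; the equivariance $t(X\rhd Y) = [t(X),t(Y)]$ follows from \eqref{algpeif1} since $t(X\cdot Y - Y\cdot X) = t(X)t(Y) - t(Y)t(X)$; and the Lie Peiffer identity is exactly the commutator version of \eqref{algpeif2}, namely $t(X)\cdot Y - Y\cdot t(X) = XY - YX$. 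I would present these as short one-line verifications, emphasizing that left-symmetry (rather than full associativity) suffices for the Jacobi identity, exactly as noted in the lemma statement and in \cite{Bai_2013}.

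For the second part, the universal envelope lift, I would define $U(\g)=(U(\g_{-1})\xrightarrow{t}U(\g_0))$ by applying the ordinary universal enveloping algebra functor in each degree, extending $t$ to a unital algebra homomorphism of the envelopes by the universal property, and inducing the $U(\g_0)$-bimodule structure on $U(\g_{-1})$ from the $\g_0$-action via the universal property of $U(\g_0)$ acting on the associative algebra $U(\g_{-1})$. One must check this induced structure satisfies the associative-2-algebra axioms of \textbf{Definition \ref{assoc2alg}}, which reduces to the corresponding Lie 2-algebra axioms by freeness/universality. The adjunction $\operatorname{Hom}_{\mathrm{2Alg}_\mathrm{ass}}(U(\g),\cG)\cong \operatorname{Hom}_{\mathrm{Lie2Alg}}(\g,\mathcal{L}(\cG))$ would then be assembled degreewise from the classical adjunction $U\dashv\mathcal{L}$ in each degree, checking naturality and that the bijection respects the $t$-maps and bimodule/action compatibilities—this last compatibility check is where the two graded degrees interact.

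The main obstacle I anticipate is \emph{not} the degreewise algebra, which is classical, but rather verifying that all the cross-degree compatibility data (equivariance of $t$ and the bimodule-vs-action structures) is preserved coherently under both functors, so that the degreewise adjunction bijections glue into a single natural isomorphism of hom-sets of 2-algebra/Lie-2-algebra morphisms. In particular, one must confirm that a graded pair of classical adjoints $(\phi_0,\phi_{-1})$ automatically satisfies the equivariance condition $\phi_0 t = t\phi_{-1}$ and the action-compatibility conditions required of morphisms in each category; this follows because these conditions are themselves generated by elements of $\g$ (which generate $U(\g)$), so a morphism agreeing on generators and respecting the Lie structure necessarily respects the induced 2-algebra structure. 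I expect most of the argument to be routine applications of universal properties, with the bookkeeping of the cross-degree conditions being the only genuinely delicate point, and I would refer to \cite{Wagemann+2021} for the parallel treatment in the $\mathrm{cat}^1$-algebra setting.
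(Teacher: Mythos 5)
First, a point of reference: the paper never proves this lemma at all --- it is stated as an import, with the sentence ``There is an analogous result for associative 2-algebras \cite{Wagemann+2021}'' serving as the justification, so any honest proof you give is already doing more than the text. Your first half (the Lie-ification direction) is essentially correct and is the routine part: the commutator bracket in each degree, the action $X\rhd Y = X\cdot Y - Y\cdot X$, equivariance from \eqref{algpeif1}, the Lie Peiffer identity from \eqref{algpeif2}, and the action axiom from the bimodularity conditions \eqref{actioncond} all check out as you describe (modulo notational slips: you write $t(X)$ and $[t(X),t(Y)]$ for $X\in\g_0$, where $t$ only applies to degree-$(-1)$ elements; equivariance should read $t(X\rhd Y)=[X,t(Y)]$ and Peiffer should read $t(Y)\rhd Y'=[Y,Y']$ for $Y,Y'\in\g_{-1}$).

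The genuine gap is in your second half. The claim that one can ``apply $U$ degreewise, induce the $U(\g_0)$-bimodule structure on $U(\g_{-1})$ by the universal property, and check the axioms by freeness'' fails, and the failure is exactly at the Peiffer identity of {\bf Definition \ref{assoc2alg}}. Peiffer forces $yy' = U(t)(y)\cdot y'$ for all $y,y'\in U(\g)_{-1}$; since $U(\g_{-1})$ is unital, taking $y\in\ker U(t)$ and $y'=1$ gives $y=0$, i.e.\ Peiffer with the enveloping product forces $U(t)$ to be \emph{injective}, which is false in general (take $\g_{-1}$ abelian with $t=0$: then $U(t)$ is the augmentation of $\operatorname{Sym}(\g_{-1})$ and its kernel is huge). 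Relatedly, the only action the universal property hands you is the derivation extension $x\cdot(yy')=(x\cdot y)y'+y(x\cdot y')$ of $\rhd$, and this contradicts the multiplier identities $x\cdot(yy')=(x\cdot y)y'$ that {\it Remark \ref{bimodulecondition}} shows are \emph{consequences} of the 2-algebra axioms whenever $t\neq 0$ --- the same obstruction \eqref{grpactiontrouble} that the paper confronts for group algebras in Section \ref{sec:2gpalgb}. So the degree-$(-1)$ part of $U(\g)$ cannot simply be the enveloping algebra $U(\g_{-1})$ with its own product; one must either redefine the degree-$(-1)$ product through the Peiffer identity (the route hinted at after {\bf Definition \ref{assoc2alg}}) or pass to a quotient, in parallel with Wagemann's quotienting construction for 2-group algebras (Theorem 3.8.3 of \cite{Wagemann+2021}). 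Your closing paragraph locates the delicacy in gluing the degreewise adjunction bijections, but the real issue is upstream: the object $U(\g)$ you propose to take the adjunction against does not exist as an associative 2-algebra in the form you constructed it.
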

\noindent In the following, we shall write $[\cdot,\cdot]:\g^{2\wedge}\rightarrow\g$ as the binary $L_2$-bracket on $\g$, which consists of the Lie bracket in $\g_0$ as well as the action $\rhd$ of $\g_0$ on $\g_{-1}$ \cite{Bai_2013,chen:2022}.

Note Lie-ification $\mathcal{L}$ is a functor. This means that, in particular, it sends a 2-algebra representation $\rho:\cG\rightarrow\operatorname{End}(V)$ on 2-vector space $V$ to a Lie 2-algebra representation $\mathcal{L}(\rho):\g(\cG)\rightarrow \mathfrak{gl}(V)$ as defined in \cite{Bai_2013,Angulo:2018}.

\subsection{Lie 2-bialgebras and the 2-classical double}
We now extend the above lemma to associative 2-quantum doubles. Let $(\cG,\cdot,\Delta)$ denote a strict 2-bialgebra as defined in {\bf Definition \ref{2bialg}}, and let $(\cG^*,\cdot^*,\Delta^*)$ denote its dually-paired 2-algebra. We put $\g = \mathcal{L}(\cG)$ and $\g^* = \mathcal{L}(\cG^*)$ as the corresponding Lie-ification of these 2-bialgebras.

The Lie-ification procedure can be understood loosely as an "expansion", or linearization, $x\approx 1 + X$ near the identity. Indeed, we have
\begin{equation}
    xx'-x'x \approx (1+X)(1+X')-(1+X')(1+X) \approx [X,X']\nonumber
\end{equation}
modulo terms of higher order. We make use of this notion on the coproduct   \eqref{grpdcoprod}, and also perform a skew-symmetrization, in order to define a Lie 2-algebra 2-cochain $\mathcal{L}(\Delta)= \delta= \delta_{-1}+\delta_0$ on $\g$,
\begin{eqnarray}
    \delta_{-1}(Y) &=& Y_{(1)}\wedge 1 + 1\wedge Y_{(2)},\nonumber\\
    \delta_0(X) &=& \left[X_{(1)}^l - X_{(2)}^r\right]\wedge 1 + 1\wedge \left[X_{(2)}^l - X_{(1)}^r\right]\nonumber\\
    &\equiv& X_{(1)}\wedge 1 + 1\wedge X_{(2)},\label{class2algcocy} 
\end{eqnarray}
where we have made use of the Sweedler notation   \eqref{sweed}, and the conventional notation $\wedge$ to denote skew-symmetric tenor products. Note the skew-symmetrization $\cG_{-1}\wedge\cG_0$ lands as a subspace in $\cG_{-1}\otimes\cG_0\oplus \cG_0\otimes\cG_{-1}$. 

In degree-0, we have of course also the coproduct $\Delta_0'$ defined in   \eqref{deg0sweed}. It gives rise to a Lie algebra cochain on $\mathcal{L}(\cG_0) = \g_0$ by
\begin{equation}
    \delta_0'(X) = \bar X_{(1)} \wedge 1 + 1\wedge \bar X_{(2)} = tX_{(1)} \wedge 1 + 1\wedge X_{(2)},\nonumber
\end{equation}
where $X_{(1)},X_{(2)}$ have been given in   \eqref{class2algcocy}.
\begin{proposition}\label{class2bialge}
The Lie-ification functor $\mathcal{L}$ sends a strict 2-bialgebra $(\cG,\Delta)$ to a Lie 2-bialgebra $(\g,\delta)$.
\end{proposition}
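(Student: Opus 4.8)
The plan is to show that the Lie-ified data $(\g,\delta)$ satisfies the axioms of a Lie 2-bialgebra as defined in \cite{Bai_2013,Chen:2012gz}, which amounts to two broad claims: first, that $\delta$ is a Lie 2-algebra \emph{1-cocycle} with respect to the adjoint action of $\g$ on $\g^{2\wedge}$; and second, that the dual bracket induced on $\g^*$ (the Lie-ification of the product $\cdot^*$ on $\cG^*$) satisfies the Jacobi/co-Jacobi identities. By \textbf{Proposition \ref{2bialg}}, $(\cG^*,\cdot^*,\Delta^*)$ is already an associative 2-bialgebra dually paired with $\cG$, so by functoriality of $\mathcal{L}$ (\textbf{Lemma \ref{classicalstr}}) both $\g$ and $\g^*$ are Lie 2-algebras, and the pairing \eqref{pairing} descends to a pairing of the Lie 2-algebras. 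This reduces the problem to checking that the cocycle $\delta$ dualizes to the $\g^*$-bracket and that it is a derivation.

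First I would verify that $\delta=\mathcal{L}(\Delta)$ indeed defines a graded cochain $\g\to\g^{2\wedge}$ valued in the correct degrees, using the degree assignments below \eqref{sweed}; the skew-symmetrization in \eqref{class2algcocy} is exactly what is needed to land in $\g_{-1}\wedge\g_0$. Next, the crucial step is the \emph{cocycle condition}: I would linearize the 2-bialgebra axioms \eqref{2algcoprod} near the identity via $x\approx 1+X$, discarding higher-order terms. Concretely, writing $\Delta_{-1}(x\cdot y)$ and expanding both sides to first order, the bilinear term $\bar x_{(1)}\cdot y_{(1)}\otimes\bar x_{(2)}\cdot y_{(2)}$ produces precisely the derivation property
\begin{equation}
    \delta_{-1}(X\rhd Y) = \operatorname{ad}_X\,\delta_{-1}(Y) - \operatorname{ad}_Y\,\delta_0(X),\nonumber
\end{equation}
and similarly the degree-0 axioms $\Delta_0^{l,r}(xx')$ linearize to the cocycle identity for $\delta_0$ with respect to the Lie bracket on $\g_0$. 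I would carry this out componentwise, matching each of the four equations in \eqref{2algcoprod} to the corresponding graded piece of the Lie 2-bialgebra cocycle condition, and checking compatibility with the coequivariance \eqref{cohgrpd+} and coPeiffer \eqref{cohgrpd-} identities, which linearize to the $t$-equivariance constraints relating $\delta_{-1},\delta_0,\delta_0'$.

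Finally I would confirm that the co-Jacobi identity for $\delta$ — equivalently, the Jacobi identity for the dual bracket on $\g^*$ — follows from coassociativity \eqref{cohgrpd00} together with cobimodularity \eqref{cohgrpd10}, again by first-order expansion: coassociativity becomes co-Jacobi after skew-symmetrization, exactly as in the ordinary 1-bialgebra case. The main obstacle I anticipate is \emph{bookkeeping the gradings and signs} through the skew-symmetrization: the associative coproduct has several components ($\Delta_{-1},\Delta_0^l,\Delta_0^r,\Delta_0'$) living in different degrees, and the antisymmetrization $\eqref{class2algcocy}$ collapses some of these (e.g.\ the combination $X_{(1)}^l-X_{(2)}^r$) while the Peiffer/coPeiffer relations \eqref{condcop} tie them together. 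Ensuring that the linearized constraints assemble into the single skew cochain $\delta=\delta_{-1}+\delta_0$ consistently — and that the $\g_0$-part $\delta_0'$ arising from $\Delta_0'=\tfrac12 D_t^+\Delta_0$ matches the induced bracket — is where the genuine care is required, rather than in any single algebraic manipulation. Since all higher-order terms are dropped in the classical limit, associativity is only needed up to the left-symmetric (pre-Lie) level, consistent with the remark in \textbf{Lemma \ref{classicalstr}}, so no subtlety from the weakening $\cT$ enters at this order.
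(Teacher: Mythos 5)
Your proposal follows essentially the same route as the paper: the paper's proof reduces the claim to showing that $\delta$ is a Lie 2-algebra 2-cocycle, linearizes coequivariance \eqref{cohgrpd+} and the coPeiffer identity \eqref{cohgrpd-} into the $t$-compatibility constraints on $\delta_{-1},\delta_0$, and then obtains the derivation/cocycle identities by direct first-order expansion of the 2-bialgebra axioms \eqref{2algcoprod}, using the Peiffer identity and the constraints \eqref{condcop} --- which is exactly your central computation. The only divergence is one of scope: you also verify the co-Jacobi identity for $\delta$ (correctly attributing it to the linearization of coassociativity \eqref{cohgrpd00} and cobimodularity \eqref{cohgrpd10}), whereas the paper's working definition of a Lie 2-bialgebra requires only the 2-cocycle condition and defers the cobracket/co-Jacobi discussion to the subsequent corollary on 2-Manin triples.
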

\begin{proof}
Recall $(\g,\delta)$ is a Lie 2-bialgebra iff $\delta$ is a Lie 2-algebra 2-cocycle \cite{Bai_2013}. Therefore it suffices to show that the 2-cochain defined in   \eqref{class2algcocy} is a 2-cocycle. This shall follow from the fact that $(\cG,\cdot,\Delta)$ is a 2-bialgebra --- namely the coproduct map $\Delta$   \eqref{grpdcoprod} satisfies   \eqref{cohgrpd+}, \eqref{cohgrpd-} and \eqref{2algcoprod}. 

First note that   \eqref{cohgrpd+} and \eqref{cohgrpd-} for the coproduct $\Delta$ translates directly to the conditions
\begin{equation}
    (t\otimes 1 + 1\otimes t)\delta_{-1} = \delta_0\circ t,\qquad (t\otimes 1 - 1\otimes t)\delta_0 =0\nonumber
\end{equation}
for the 2-cochain $\delta=\delta_{-1}+\delta_0$. Now by a direct computation using   \eqref{class2algcocy}, the condition   \eqref{2algcoprod} implies
\begin{eqnarray}
    \delta_0[X,X'] &=& \delta_0(XX') - \delta_0(X'X) \nonumber\\
    &=& X_{(1)}X_{(1)}'\wedge 1  + 1\wedge X_{(2)}X_{(2)}'\nonumber\\
    &\qquad& -\left(X_{(1)}'X_{(1)}\wedge 1 + 1\wedge X_{(2)}'X_{(2)}\right)\nonumber\\
    &=& [X_{(1)},X_{(1)}']\wedge 1 + 1\wedge [X_{(2)},X'_{(2)}] \nonumber\\
    &=& tX_{(1)}\rhd X_{(1)}'\wedge 1 + 1\wedge [X_{(2)},X'_{(2)}]\nonumber\\
    &=& (\bar X_{(1)}\rhd\otimes 1 + 1 \otimes \operatorname{ad}_{X_{(2)}})\delta_0(X') - (\bar X'_{(1)}\rhd\otimes 1 + 1 \otimes \operatorname{ad}_{X'_{(2)}})\delta_0(X),\nonumber
\end{eqnarray}
where we have used the the Peiffer identity and the fact that $\bar X_{(1)} = tX_{(1)}$ inherited from the constraints \eqref{condcop}, and 
\begin{eqnarray}
    \delta_{-1}(X\rhd Y) &=& \delta_{-1}(X\cdot Y) - \delta_{-1}(Y\cdot X)\nonumber\\
    &=& \bar X_{(1)}\cdot Y_{(1)} \wedge 1 + 1\wedge \bar X_{(2)}\cdot Y_{(2)} \nonumber\\
    &\qquad& - \left(Y_{(1)}\cdot\bar X_{(1)}\wedge 1 - 1\wedge Y_{(2)}\cdot \bar X_{(1)}\right)\nonumber\\
    &=& (\bar X_{(1)}\rhd Y_{(1)})\wedge 1 + 1\wedge (\bar X_{(2)}\rhd Y_{(2)}) \nonumber\\
    &=& [X_{(1)},Y_{(1)}]\wedge 1 + 1\wedge (X_{(2)}\rhd Y_{(2)}) \nonumber\\
    &=& (\operatorname{ad}_{X_{(1)}}\otimes 1 + 1\otimes X_{(2)}\rhd) \delta_{-1}(Y) - (\operatorname{ad}_{Y_{(1)}}\otimes 1 - 1\otimes \Upsilon_{Y_{(2)}})\delta_0(X),\nonumber
\end{eqnarray}
where $\bar X_{(2)} = X_{(2)}$. These are precisely the Lie 2-algebra 2-cocycle conditions for $\delta$ \cite{Bai_2013,chen:2022}.
\end{proof}

Now the characterization result in \cite{Bai_2013} states that $(\g,\g^*)$ form a matched pair of Lie 2-bialgebras iff $\delta$ is a Lie 2-algebra 2-cobracket on $\g$, namely $\delta$ satisfies the 2-coJacobi identities. For the 2-cocycle $\delta=\mathcal{L}(\Delta)$ defined in   \eqref{class2algcocy}, this is guaranteed precisely by coassociativity   \eqref{cohgrpd+}, \eqref{cohgrpd-}. We have therefore the immediate corollary:
\begin{corollary}
Suppose $(\cG,\cG^*)$ form a matched pair of strict 2-bialgebras. The Lie-ification functor $\mathcal{L}$ sends a 2-quantum double $D(\cG)=\cG\bar{\bowtie}\cG^*$ to a 2-Manin triple $\d = \g\bowtie\g^*[1]$.
\end{corollary}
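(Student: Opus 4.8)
The plan is to leverage the two preceding results, \textbf{Proposition \ref{class2bialge}} and its subsequent corollary, as the two halves of the argument, since the statement is essentially their conjunction applied to the specific 2-bialgebra $D(\cG)$. First I would observe that, by \textbf{Definition \ref{2dd}}, the 2-quantum double $D(\cG) = \cG\bar{\bowtie}\cG^*$ is itself a strict 2-bialgebra whose underlying 2-vector space is $\cG\oplus\cG^*$ and whose self-pairing \eqref{selfpair} is non-degenerate. Thus applying the Lie-ification functor $\mathcal{L}$ to $D(\cG)$ directly, by \textbf{Proposition \ref{class2bialge}}, yields a Lie 2-bialgebra $(\mathcal{L}(D(\cG)),\mathcal{L}(\Delta_D))$. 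The key identification to make is that $\mathcal{L}$ respects the direct-sum decomposition, so that $\mathcal{L}(D(\cG)) = \g\oplus\g^*$ as graded vector spaces, with $\g=\mathcal{L}(\cG)$ and $\g^* = \mathcal{L}(\cG^*)$; this is where the degree-swapping nature of the dualization (noted around \eqref{pairing}) must be tracked so that $\g^*$ acquires the shift $\g^*[1]$ appearing in the statement.

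Next I would show that the coadjoint actions $\bar\rhd,\bar\lhd$ defining the matched pair $(\cG,\cG^*)$ Lie-ify to the corresponding coadjoint actions of $\g$ on $\g^*$ and vice versa, using the functoriality of $\mathcal{L}$ emphasized in the remark following \textbf{Lemma \ref{classicalstr}} — namely that $\mathcal{L}$ sends 2-representations to Lie 2-algebra representations. Concretely, the matched-pair conditions \eqref{comprel1}, \eqref{comprel2}, \eqref{crossrel} become, under the linearization $x\approx 1+X$, the matched-pair conditions for Lie 2-bialgebras of \cite{Bai_2013}. Combined with the corollary already stated in the excerpt (that $\mathcal{L}$ sends the matched pair $(\cG,\cG^*)$ to a matched pair $(\g,\g^*)$, and hence $D(\cG)$ to the 2-Manin triple $\d = \g\bowtie\g^*[1]$), the bialgebra structure on $\mathcal{L}(D(\cG))$ assembles into the bicrossed-sum Lie 2-algebra structure, and the self-pairing \eqref{selfpair} descends to the non-degenerate invariant pairing that characterizes a 2-Manin triple. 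The final step is to confirm that the 2-cobracket $\delta_D = \mathcal{L}(\Delta_D)$ on $\g\bowtie\g^*[1]$ is precisely the canonical cobracket dual to the bracket — which follows from the manifest self-duality $D(\cG)\cong D(\cG)^*$ established in Section \ref{str2qdconstruct} via \textbf{Proposition \ref{2bialg}}, since Lie-ification commutes with the dualization that implements this self-duality.

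The main obstacle I anticipate is bookkeeping the grading shift and the skew-symmetrization consistently across the double. The construction \eqref{class2algcocy} of the classical cobracket replaces the two one-sided coproduct components $\Delta_0^l,\Delta_0^r$ by their skew-symmetric combination $X_{(1)}^l - X_{(2)}^r$, and one must verify that the off-diagonal coaction pieces $\xi,\zeta$ of $\Delta_D$ in \eqref{copD1}, \eqref{copD2} skew-symmetrize compatibly to reproduce exactly the mixed coadjoint terms of the 2-Manin triple cobracket, rather than spurious extra contributions. In particular, I would need to check that the diagonal parts $(\Delta_D)^d$ Lie-ify to $\delta_\g\oplus\delta_{\g^*}$ while the mixed parts Lie-ify to the cross-terms encoding the coadjoint actions, with all signs and degree assignments matching the conventions of \cite{Bai_2013,chen:2022}. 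This is purely a matter of careful index-tracking rather than a conceptual difficulty, since the heavy lifting — that linearization of the 2-bialgebra axioms yields the Lie 2-algebra 2-cocycle conditions — is already discharged by \textbf{Proposition \ref{class2bialge}}.
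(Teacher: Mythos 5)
Your proposal contains one genuine logical flaw: in the second paragraph you invoke ``the corollary already stated in the excerpt (that $\mathcal{L}$ sends the matched pair $(\cG,\cG^*)$ to a matched pair $(\g,\g^*)$, and hence $D(\cG)$ to the 2-Manin triple $\d=\g\bowtie\g^*[1]$)'' --- but that parenthetical \emph{is} the statement you are asked to prove, so as written the argument is circular. There is no such prior corollary in the paper. What actually sits between \textbf{Proposition \ref{class2bialge}} and the corollary is the external \emph{characterization result of} \cite{Bai_2013}: $(\g,\g^*)$ form a matched pair of Lie 2-bialgebras if and only if $\delta=\mathcal{L}(\Delta)$ is a genuine 2-cobracket (i.e.\ satisfies the 2-coJacobi identities), and for the cocycle \eqref{class2algcocy} this is guaranteed by the coassociativity conditions \eqref{cohgrpd+}, \eqref{cohgrpd-}. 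That result, together with the fact (also from \cite{Bai_2013}) that a matched pair of Lie 2-bialgebras assembles into the 2-Manin triple $\g\bowtie\g^*[1]$, is precisely what makes the corollary ``immediate'' in the paper; replacing your circular citation with it closes the gap.

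With that repair the remainder of your outline is sound, and it is in fact more explicit than the paper's proof, which consists of nothing beyond the observation above. Your route --- apply \textbf{Proposition \ref{class2bialge}} to $D(\cG)$ itself, track the decomposition $\mathcal{L}(D(\cG))=\g\oplus\g^*$ with the degree shift, linearize the coadjoint actions via functoriality of $\mathcal{L}$, check that the matched-pair conditions \eqref{comprel1}, \eqref{comprel2}, \eqref{crossrel} linearize to the Lie-theoretic ones, and verify that the self-pairing \eqref{selfpair} descends to the invariant pairing --- amounts to verifying the Manin-triple axioms directly rather than quoting the matched-pair $\Leftrightarrow$ cobracket equivalence. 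This buys concreteness (one sees which quantum axiom produces which classical one, including your correct worry about how the diagonal pieces of $\Delta_D$ versus the coaction pieces $\xi,\zeta$ of \eqref{copD1}, \eqref{copD2} skew-symmetrize into $\delta_\g\oplus\delta_{\g^*}$ plus coadjoint cross-terms), but at the cost of index bookkeeping that the paper entirely avoids by outsourcing the equivalence to \cite{Bai_2013}. None of that extra computation is wrong in principle; it is simply unperformed, so your proposal should be regarded as a correct strategy whose only failure point is the circular step identified above.
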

\noindent In other words, our construction of the 2-quantum double $D(\cG)$ admits the classical 2-Drinfel'd double as a classical limit, which directly categorifies an analogous statement between the general quantum double construction of Majid \cite{Majid:1994nw} and the classical Drinfel'd double \cite{Semenov1992}.

\subsection{The classical 2-$r$-matrix} 
Let us now turn to the classical limit of the 2-$R$-martrix as defined in Section \ref{quasitrihopf}. Prior to that, we first describe one of the key properties of the duality pairing on a 2-quantum double, namely its \textit{invariance}. This is expressed by, for instance,   \eqref{coadj} in the case of the coadjoint representation. For the sew-pairing $\langle\cdot,\cdot\rangle_\text{sk}$ forming the 2-quantum double $D(\cG,\cG) = \cG\bar\bowtie \cG^\text{opp}$, however, $\cG$ acts on $\cG^\text{opp}$ via its underlying (opposite) 2-algebra structure, which means that the skew-pairing satisfies the invariance property
\begin{equation}
    \langle xx',g\rangle_\text{sk} = -\langle x',g\cdot x\rangle_\text{sk},\qquad \langle x\cdot y,f\rangle_\text{sk} = -\langle y,fx\rangle_\text{sk},\qquad \langle ff',y\rangle_\text{sk} = -\langle f',f\cdot y\rangle_\text{sk}.\nonumber
\end{equation}
Given the adjoint action  $\bar\rhd = (\Upsilon,(\rhd_0,\rhd_{-1}))$ of $\cG$ on $\cG^\text{opp}$,
\begin{equation}
    x\rhd_0 g = g\cdot x,\qquad x\rhd_{-1} f = fx,\qquad \Upsilon_yf = f\cdot y,\nonumber
\end{equation}
this invariance property translates to the following conditions on the 2-$R$-matrix $\cR^{l,r}$,
\begin{equation}
    (x\cdot \otimes 1 + 1\otimes x\rhd_0) \cR^l = 0,\qquad (x\cdot \otimes 1)\cR^r + (1\otimes x\rhd_{-1})\cR^l=0,\qquad (f\cdot \otimes 1 + 1\otimes f\rhd_0)\cR^r =0.\nonumber
\end{equation}
Consider the first and last conditions with $x=f\in\cG_0$. They can be rewritten equivalently as the conditions 
\begin{equation}
    (x\cdot \otimes 1)\cR^l + (1\otimes x\rhd_0)\cR^r =0,\qquad (x\cdot \otimes 1)\cR^r + (1\otimes x\rhd_0)\cR^l = 0,\nonumber
\end{equation}
which together with the second condition may be compactly expressed as, using the graded sum,
\begin{equation}
    (x\bar\rhd \otimes 1 + 1\otimes x\bar\rhd)(\cR + \sigma(\cR)) = 0,\qquad \forall~x\in \cG_0,\label{rmatrixinvar}
\end{equation}
where $\sigma$ is a permutation of the $\cG_0,\cG_{-1}$ components.

Let us now finally recover  the  classical 2-$r$-matrix. This is once again accomplished by taking the Lie-ification functor on the quantum 2-R-matrix, $\r=\mathcal{L}(\cR)\in\g\otimes \g$, whence
\begin{equation}
    \g_{-1}\otimes\g_0\ni\r^r = \mathcal{L}(\cR^r),% = \r^r_{(1)} \otimes 1 + 1\otimes \r^r_{(2)},
    \qquad  \g_0\otimes\g_{-1}\ni\r^l=\mathcal{L}(\cR^l).% = \r^l_{(1)}\otimes 1 + 1\otimes \r^l_{(2)},\nonumber
\end{equation}
%where $\r^r_{(1)},\r^l_{(2)}\in \mathcal{L}(\cG_{-1}) = \g_{-1}$ and $\r^r_{(2)},\r^l_{(1)}\in\mathcal{L}(\cG_0) = \g_0$. 
The equivariance condition   \eqref{2ybequiv} clearly implies
\begin{equation}
    D_t^-\r = 0,\label{classicalequiv}
\end{equation}
while applying the Lie-ification functor $\mathcal{L}$ to   \eqref{rmatrixinvar} gives
\begin{equation}
    [X\otimes 1 + 1\otimes X, \r + \sigma(\r)] = 0,\qquad X= \mathcal{L}(x) \in \g_0.\nonumber
\end{equation}
Here, we have used the fact that the adjoint action $\rho$ of $\cG$ on itself gives rise to the adjoint representation ( using the graded Lie bracket) $\mathcal{L}(\bar\rhd)=[\cdot,\cdot]$ of $\g$ on itself \cite{Bai_2013}.

Finally, we consider the 2-Yang-Baxter equations \eqref{2yangbax2}. We sum each equation in \eqref{2yangbax2} in the total graded complex $\cG^{3\otimes}$, and rearragnge them to the form 
\begin{eqnarray}
0&=& \left( \cR^r_{23}(\cR^r_{13}\cdot_l\cR_{12}^l) - (\cR^l_{12}\cdot_r\cR^r_{13})\cR^r_{23}\right)\nonumber\\
&\qquad&+ ~    \left(\cR^l_{23}\cdot_l\cR^r_{13}) \cR^r_{12}-  \cR^r_{12} (\cR^r_{13}\cdot_r\cR^l_{23} )\right)  \nonumber\\
     &\qquad&+~ \left(\cR^l_{23}(\cR^l_{13}\cdot_r\cR_{12}^r) - (\cR^r_{12}\cdot_l\cR^l_{13})\cR^l_{23}\right)\nonumber\\
     &\qquad&+~ \left( (\cR^r_{23}\cdot_r\cR^l_{13}) \cR^l_{12}- \cR^l_{12} (\cR^l_{13}\cdot_l\cR^r_{23} )\right)    .\label{sum2yb}
\end{eqnarray}
Applying the Lie-ification functor $\mathcal{L}$ to this equation yields 
\begin{eqnarray}
0&=&\left( [\r^r_{13},\r_{12}^l] + 
[\r^r_{23},\r^r_{13}]+ 
[\r^r_{23},\r_{12}^l] \right)|_{rrl} + 
  \left({[\r^l_{23},\r^r_{13}]}+ 
  {[\r^l_{23},\r^r_{12}]}+{[\r^r_{13},\r^r_{12}]}\right)|_{lrr} \nonumber\\
&\qquad&+ ~\left( {[\r^l_{13},\r_{12}^r]} + { [\r^l_{23},\r^l_{13}]} + {[\r^l_{23},\r_{12}^r]} \right)|_{llr} + 
  \left( [\r^r_{23},\r^l_{13}]+ {[\r^r_{23},\r^l_{12}]} +{[\r^l_{13},\r^l_{12}]}\right)|_{rll} ,\nonumber
\end{eqnarray}
where the subscripts indicate where each term came from in \eqref{sum2yb}.

Consider the two places in which $\r_{23}^l\r^r_{12}$ occurs in the above. These terms take the form respectively in Sweedler notation
\begin{eqnarray}
    \r^l_{23}\r^r_{12}|_{lrr} &=& \r^r_{(1)} \eta_0 \otimes \r^l_{(1)}\r^r_{(2)}\otimes \r^l_{(2)}\cdot\eta_{-1},\nonumber\\
    \r^l_{23}\r^r_{12}|_{llr} &=& \r^r_{(1)} \cdot\eta_{-1} \otimes \r^l_{(1)}\r^r_{(2)}\otimes \r^l_{(2)}\eta_0,\nonumber
\end{eqnarray}
where $\eta_0,\eta_{-1}$ are the units in $\cG_0,\cG_{-1}$. By using the Peiffer identity and the equivariance condition \eqref{classicalequiv}
\begin{equation*}
    (t\r^l_{(1)})\otimes \r^l_{(2)} = (t\otimes 1)\r^l = (1\otimes t) \r^r = \r^r_{(1)}\otimes (t\r^r_{(2)}),
\end{equation*}
we can compute that
\begin{equation*}
    \begin{aligned}[c]
        \r^l_{23}\r^r_{12}|_{llr}&= \r^r_{(1)}\cdot\eta_{-1} \otimes \r^l_{(1)}\cdot (t\r^r_{(2)}) \otimes \r^l_{(2)}\eta_0 \\
        &= (t\r^l_{(1)})\cdot\eta_{-1} \otimes \r^l_{(1)} \cdot \r^l_{(2)}\otimes \r^l_{(2)}\eta_0 \\
        &= \r^l_{(1)}\eta_{-1} \otimes \r^l_{(1)}\cdot_r \r^l_{(2)}\otimes \r^l_{(2)} \eta_0 \\
        &= \r^l_{23}\cdot_r\r^l_{12}
    \end{aligned}\qquad\qquad
    \begin{aligned}[c]
        \r^l_{23}\r^r_{12}|_{lrr} &= \r^r_{(1)}\eta_0 \otimes (t\r^l_{(1)})\cdot \r^r_{(2)} \otimes \r^l_{(2)}\cdot\eta_{-1} \\ 
        &= \r^r_{(1)}\eta_0 \otimes \r^r_{(1)}\cdot \r^r_{(2)} \otimes (t\r^r_{(2)})\cdot\eta_{-1} \\
        &= \r^r_{(1)} \eta_0\otimes \r^r_{(1)}\cdot_l \r^r_{(2)} \otimes \r^r_{(2)}\eta_{-1} \\
        &= \r^r_{23}\cdot_l\r^r_{12}
    \end{aligned},
\end{equation*}
As such, we have
\begin{equation*}
    [\r^l_{23},\r^r_{12}] = [\r^l_{23},\r^l_{12}]= [\r^r_{23},\r^r_{12}],
\end{equation*}
and hence collecting all terms from the above gives
\begin{align*}
    [\r_{12},\r_{13}] ={ [\r^r_{12},\r^r_{13}]+[\r^r_{12},\r^l_{13}]+[\r^l_{12},\r^r_{13}]+[\r^l_{12},\r^l_{13}]}\\
    [\r_{13},\r_{23}]= {[\r^r_{13},\r^r_{23}]+[\r^r_{13},\r^l_{23}]+[\r^l_{13},\r^r_{23}]+[\r^l_{13},\r^l_{23}]}\\
    [\r_{12},\r_{23}]= {[\r^r_{12},\r^r_{23}]+[\r^r_{12},\r^l_{23}]+[\r^l_{12},\r^r_{23}]+[\r^l_{12},\r^l_{23}]}
\end{align*}
This is precisely the 2-graded classical Yang-Baxter equation of \cite{Bai_2013}
\begin{equation}
    \llbracket\r,\r\rrbracket = [\r_{12},\r_{13}] + [\r_{13},\r_{23}] + [\r_{12},\r_{23}] = 0\nonumber
\end{equation}
for the expansion $\r = \mathcal{L}(\cR) = \r^r + \r^l$.

\begin{theorem}
$\cR$ admits $\r$ as a classical limit: the Lie-ification functor sends the 2-$R$-matrix to a 2-graded classical $r$-matrix.
\end{theorem}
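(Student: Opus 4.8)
The plan is to verify the two defining conditions of a 2-graded classical $r$-matrix, in the sense of \cite{Bai_2013,Chen:2012gz}, for the image $\r = \mathcal{L}(\cR)$: the equivariance $D_t^-\r = 0$ and the 2-graded classical Yang-Baxter equation $\llbracket\r,\r\rrbracket = 0$. The first is immediate: since $\mathcal{L}$ is linear and intertwines the $t$-maps, applying it to the equivariance condition \eqref{2ybequiv} for the quantum 2-$R$-matrix yields \eqref{classicalequiv} on the nose. The substantive content is the second condition, which I would obtain by Lie-ifying the 2-Yang-Baxter equations \eqref{2yangbax2}.

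The key mechanism, supplied by {\bf Lemma \ref{classicalstr}}, is that under $\mathcal{L}$ every associative operation --- both the multiplications and the bimodule actions $\cdot_l,\cdot_r$ --- linearizes near the identity to the graded Lie bracket $[\cdot,\cdot]$, so that each of the four relations in \eqref{2yangbax2} (each equating two orderings of a triple product) collapses to a statement about a single graded commutator. Concretely, I would first sum the four equations in the total complex $\cG^{3\otimes}$ as in \eqref{sum2yb}, then apply $\mathcal{L}$ term by term, tracking the grading of each leg (the labels $rrl,lrr,llr,rll$) so that the $\cG_{-1}$- and $\cG_0$-valued factors are placed in the correct tensor slots.

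The main obstacle is the combinatorial bookkeeping required to fold the resulting twelve graded commutators into the three canonical terms $[\r_{12},\r_{13}]$, $[\r_{13},\r_{23}]$, $[\r_{12},\r_{23}]$. The crucial identifications are exemplified by a term such as $\r^l_{23}\r^r_{12}$, which appears in two distinct gradings: one must show it produces the same commutator regardless of which leg carries the $t$-map. This is precisely where the Peiffer identity $yy' = y\cdot t(y') = t(y)\cdot y'$ and the equivariance relation $(t\otimes 1)\r^l = (1\otimes t)\r^r$ do the essential work --- inserting the units $\eta_0,\eta_{-1}$ and sliding $t$ between the two legs yields $[\r^l_{23},\r^r_{12}] = [\r^l_{23},\r^l_{12}] = [\r^r_{23},\r^r_{12}]$ together with its analogues. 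Once every cross-grading term is normalized this way, collecting coefficients reproduces exactly the four-fold split of each canonical bracket, and the total is recognized as $\llbracket\r,\r\rrbracket$. Hence $\llbracket\r,\r\rrbracket = 0$, so $\r$ is a 2-graded classical $r$-matrix and $\mathcal{L}$ sends the universal 2-$R$-matrix to it, as claimed.
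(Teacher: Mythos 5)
Your proposal is correct and follows essentially the same route as the paper: Lie-ify the equivariance condition \eqref{2ybequiv} to get \eqref{classicalequiv}, sum the four quantum 2-Yang-Baxter relations into \eqref{sum2yb}, apply $\mathcal{L}$ while tracking the gradings $rrl,lrr,llr,rll$, and use the Peiffer identity together with $(t\otimes 1)\r^l = (1\otimes t)\r^r$ to identify the cross-grading commutators $[\r^l_{23},\r^r_{12}] = [\r^l_{23},\r^l_{12}] = [\r^r_{23},\r^r_{12}]$, after which the twelve terms assemble into $\llbracket\r,\r\rrbracket = 0$. This is precisely the paper's argument, including the key normalization step.
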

\noindent In other words, the "quantization" of the classical 2-$r$-matrix and the associated Lie 2-bialgebra $\g$ yields a  2-$R$-matrix with the associated quasitriangular 2-bialgebra $\cG$, as expected from \eqref{dimladder}.

\subsection{Weak Lie 2-bialgebras} 
We now prove the weak analogues of the classical limit for 2-bialgebras.
\begin{lemma}\label{weak2algtolie}
The Lie-ification functor $\mathcal{L}:\mathrm{Alg}\rightarrow \mathrm{Lie}$ extends to weak 2-algebras, assigning $(\cG,\cT)$ to a weak Lie 2-algebra $(\g(\cG),\mu_3)$ where the homotopy map $\mu_3$ is the total skew-symmetrization of $\cT$.
\end{lemma}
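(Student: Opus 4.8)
The plan is to mimic the strict case (Lemma \ref{classicalstr} and Proposition \ref{class2bialge}) but now tracking the homotopy data. First I would recall that by Lemma \ref{classicalstr} the Lie-ification functor $\mathcal{L}$ already sends the underlying (non-associative) graded algebra of $(\cG,\cT)$ to a graded skew-algebra, with bracket $[X,X']=XX'-X'X$ in degree $0$ and $X\rhd Y = X\cdot Y - Y\cdot X$ between the degrees. The only new ingredient is the ternary map. The natural candidate for $\mu_3:\g^{3\wedge}\to\g$ is
\begin{equation}
    \mu_3(X_1,X_2,X_3) = \sum_{\pi\in S_3}\operatorname{sgn}(\pi)\,\cT(x_{\pi(1)},x_{\pi(2)},x_{\pi(3)}),\qquad X_i=\mathcal{L}(x_i),\nonumber
\end{equation}
the total antisymmetrization of the Hochschild $3$-cochain $\cT$. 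I would first check that $\mu_3$ is well-defined and lands in $\g_{-1}$, which is immediate from the codomain $\cG_0^{3\otimes}\to\cG_{-1}$ of $\cT$ and the fact that $\mathcal{L}$ is $k$-linear.

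The core of the proof is to verify that $([\cdot,\cdot],\mu_3)$ satisfies the axioms of a weak (2-term) Lie 2-algebra in the sense of \cite{Bai_2013,Baez:2003fs}, i.e.\ an $L_\infty$ structure concentrated in degrees $0,-1$ with $l_1=t$, $l_2=[\cdot,\cdot]$ and $l_3=\mu_3$. There are two families of conditions. The first is the \emph{weakened Jacobi identity}: the Jacobiator of $[\cdot,\cdot]$ must equal $t\circ\mu_3$ (resp.\ $\mu_3\circ(t\otimes t\otimes t)$ in the appropriate degrees). I would obtain this by antisymmetrizing the weak $1$-associativity and weak bimodularity relations of Definition \ref{weak2alg}: for instance, skew-symmetrizing $(xx')x''-x(x'x'')=t\cT(x,x',x'')$ over the three arguments converts the associator defect into the Jacobiator of the commutator bracket on the left and into $t\,\mu_3$ on the right, using that the commutator of associative-up-to-$\cT$ products produces exactly the cyclic sum that defines the Jacobiator. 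The mixed-degree weak bimodularity relations similarly yield the Jacobiator conditions involving $\rhd$ and $\Upsilon$. The second family is the \emph{higher coherence} ($L_\infty$ pentagon/$l_3$-$l_4$ relation with $l_4=0$), which is precisely the statement that the total antisymmetrization of the Hochschild $3$-cocycle condition of Definition \ref{weak2alg} vanishes; here I would plug the five-term cocycle identity into the alternating sum over $S_4$ and observe the terms cancel pairwise, exactly the standard fact that a Hochschild $3$-cocycle antisymmetrizes to a Chevalley--Eilenberg (Lie algebra) $3$-cocycle.

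The main obstacle I anticipate is bookkeeping the signs and the Peiffer-induced degree constraints correctly, rather than any conceptual difficulty. Concretely, the subtlety is that the weak bimodularity of Definition \ref{weak2alg} comes with three separate relations whose arguments carry $t$-maps in different slots, and after antisymmetrization one must check they assemble into the single graded Jacobiator identity for $(\g,[\cdot,\cdot],\mu_3)$ without overcounting; the Peiffer identity \eqref{algpeif2} is what guarantees the would-be ambiguous terms (products $yy'$ versus $y\cdot ty'$) agree, so it must be invoked carefully. Finally I would note functoriality: since $\mathcal{L}$ is a functor on the strict part by Lemma \ref{classicalstr} and $\mu_3$ is defined by an explicit natural antisymmetrization, weak $2$-algebra homomorphisms \eqref{weak2hom} are sent to weak $L_\infty$-morphisms, with $F_1$ antisymmetrizing to the degree-$(-1)$ component of the $L_\infty$-morphism; this makes the assignment $(\cG,\cT)\mapsto(\g(\cG),\mu_3)$ a genuine functor, completing the lemma.
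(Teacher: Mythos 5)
Your proposal is correct and follows essentially the same route as the paper's own proof: define $\mu_3$ as the total skew-symmetrization of $\cT$ (the paper's $U_3=\mathcal{L}\circ\cT\circ\mathcal{L}$), obtain the weakened Jacobi identities by antisymmetrizing the weak 1-associativity and weak bimodularity relations (with the Peiffer identity handling the mixed-degree cases), observe that the Hochschild 3-cocycle condition antisymmetrizes to the Chevalley--Eilenberg 3-cocycle condition for $\mu_3$, and get functoriality by skew-symmetrizing the components of a weak 2-algebra homomorphism \eqref{weak2hom}. Indeed, your treatment is slightly more explicit than the paper's on the higher coherence and sign bookkeeping, which the paper dispatches with ``similar computations.''
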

\begin{proof}
We construct the Lie 2-algebra structure as in {\bf Lemma \ref{classicalstr}}. Let $U_3=\mathcal{L}\circ \cT\circ\mathcal{L}$ denote the induced trilinear map on $\mathcal{L}(\cG)$. We apply $\mathcal{L}$ to the Jacobiator $J(X,X',X'')=[X,[X',X'']] + [X',[X'',X'] + [X'',[X,X']] $,
\begin{eqnarray}
    J(X,X',X'')
    &=& X(X'X'') - X(X''X') - (X'X'')X + (X''X')X \nonumber\\
    &\qquad& + X'(X''X) - X'(XX'') - (X''X)X' + (XX'')X'\nonumber\\
    &\qquad& + X''(XX') - X''(X'X) - (XX')X''+(X'X)X''\nonumber\\
    &=& tU_3(X,X',X'') - tU_3(X,X'',X') + tU_3(X',X'',X) \nonumber\\
    &\qquad&-~ tU_3(X',X,X'') + tU_3(X'',X',X) - tU_3(X'',X,X')\nonumber\\ 
    &=& t(U_3(X,X',X'') - U_3(X,X'',X') + U_3(X',X'',X) \nonumber\\
    &\qquad&-~ U_3(X',X,X'') + U_3(X'',X,X'))- U_3(X'',X',X) ,\nonumber
\end{eqnarray}
where we have used the weak 1-associativity condition for $\cG$. Similarly, for $J(X,X',Y)=X\rhd (X'\rhd Y) - X'\rhd (X\rhd Y) - [X,X']\rhd Y$ we have
\begin{eqnarray}
    J(X,X',Y)&=& t(U_3(X,X',tY) - U_3(X,tY,X') + U_3(X',tY,X) \nonumber\\
    &\qquad&-~ U_3(X',X,tY) +U_3(tY,X,X') - U_3(tY,X',X) ,\nonumber
\end{eqnarray}
hence if we define the total skew-symmetrization
\begin{eqnarray}
    \mu_3(X,X',X'') &\equiv& U_3(X,X',X'') - U_3(X,X'',X') + U_3(X',X'',X) \nonumber\\
    &\qquad&~- U_3(X',X,X'') +U_3(X'',X,X') - U_3(X'',X',X), \nonumber
\end{eqnarray}
then weak 1-associativity implies the 2-Jacobi identity on $\mathcal{L}(\cG)$. 

Using the Peiffer conditions on this fact, we see that the weak bimodularity condition also implies the 2-Jacobi identity, with two $tY$'s inserted in $U_3$ instead. Similar computations show that the Hochschild 3-cocycle condition for $\cT$ implies the Lie 3-cocycle condition for $\mu_3$. 

Finally, let $F:(\cG,\cT)\rightarrow (\cG',\cT')$ denote a weak 2-algebra homomorphism as defined in   \eqref{weak2hom}. By applying the Lie-ification functor and appropriately skew-symmetrizing $\cT,\cT'$ and the 2-algebra structure, we recover precisely the definition of a weak 2-algebra map $\mathcal{L}(F):(\g,\mu)\rightarrow (\g',\mu')$ \cite{Baez:2005sn}. Thus $\mathcal{L}$ is functorial.
\end{proof}
Similar to the Lie 2-algebra 2-cocycle   \eqref{class2algcocy} defined from the coproduct $\Delta$, we form the classical limit of the coassociator $\Delta_1$ by totally skew-symmetrizing and linearizing it, such that we have the Lie cochain
\begin{equation}
    \delta_1(X) = X_{(1)}\wedge 1 \wedge 1 - 1\wedge X_{(2)}\wedge 1 + 1\wedge 1\wedge X_{(3)},\qquad X\in\g_0=\mathcal{L}(\cG_0).\label{class2coassoc}
\end{equation}
It is not hard to see by, for instance, dualizing the computations in the proof of {\bf Lemma \ref{weak2algtolie}}, that the conditions   \eqref{weakcohgrpd1}, \eqref{weakcohgrpd3} reduce to
\begin{eqnarray}
    \delta_{-1}\circ\delta_{-1} &=& \delta_1\circ t,\qquad\text{cf.   (42) in \cite{Chen:2012gz}}\nonumber\\
    (\delta_{-1}+\delta_0)\circ\delta_0 &=& D_t\circ\delta_1,\qquad  \text{cf.   (43) in \cite{Chen:2012gz}},\nonumber\\
    \delta_1\circ\delta_0 &=& \delta_{-1}\circ\delta_1,\qquad \text{cf.   (44) in \cite{Chen:2012gz}}.\nonumber
\end{eqnarray}

Let $(\cG,\cT,\Delta_1)$ be a weak 2-bialgebra as given in {\bf Definition \ref{wk2bialg}}. The conditions   \eqref{coassocalg} translate directly to
\begin{eqnarray}
    \delta_{-1}(\mu_3(X,X',X'')) &=& \mu_3(\bar X_{(1)},\bar X_{(1)}',\bar X''_{(1)})\wedge \mu_3(\bar X_{(2)},\bar X_{(2)}',\bar X_{(2)}''),\nonumber\\
    \delta_1([X,X']) &=& [X_{(1)},X_{(1)}']\wedge 1 \wedge 1 - 1\wedge [X_{(2)},X_{(2)}']\wedge 1 + 1\wedge 1\wedge [X_{(3)},X_{(3)}'],\nonumber
\end{eqnarray}
which are precisely the conditions for a {\it weak-Lie 2-bialgebra} $(\g,\mu_3,\delta)$ \cite{Chen:2013}, expressed explicitly. In other words, we have the weak version of {\bf Proposition \ref{class2bialge}}:
\begin{proposition}
The Lie-ification functor takes a weak 2-bialgebra $(\cG,\cT,\Delta)$ to a weak Lie 2-bialgebra $(\g,\mu,\delta)$, with the 2-cocycle data given as in   \eqref{class2algcocy}, \eqref{class2coassoc}.
\end{proposition}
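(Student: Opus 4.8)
The plan is to mirror exactly the strategy used in the proof of Proposition \ref{class2bialge}, but now carrying the extra coassociator data $\Delta_1$ through the Lie-ification functor $\mathcal{L}$. The target statement is that $\mathcal{L}$ sends a weak 2-bialgebra $(\cG,\cT,\Delta)$ to a weak Lie 2-bialgebra $(\g,\mu,\delta)$ with 2-cocycle data given by \eqref{class2algcocy} and \eqref{class2coassoc}. Since most of the ingredients have already been assembled in the immediately preceding discussion, the proof is largely a matter of collecting them and invoking the characterization of a weak Lie 2-bialgebra from \cite{Chen:2013}.

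First I would recall that, by \textbf{Lemma \ref{weak2algtolie}}, the functor $\mathcal{L}$ already produces a weak Lie 2-algebra $(\g,\mu_3)$ out of $(\cG,\cT)$, with $\mu_3$ the total skew-symmetrization of $\cT$; this handles the "algebra side." Dually, I would recall from \textbf{Proposition \ref{weak2bialg}} that the coassociator $\Delta_1$ is dual to the Hochschild 3-cocycle $\cT^*$, so that its linearized skew-symmetrization $\delta_1$ in \eqref{class2coassoc} is the "coalgebra-side" mirror of $\mu_3$. The core of the proof is then to verify that the cochain $\delta=\delta_{-1}+\delta_0$ together with $\delta_1$ forms a genuine weak-Lie 2-bialgebra cobracket. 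I would organize this into two parallel verifications: (i) that the coassociativity/weak-coassociativity conditions \eqref{weakcohgrpd1}, \eqref{weakcohgrpd3}, \eqref{weakcohgrpd2} linearize to Eqs. (42)--(44) of \cite{Chen:2012gz}, namely $\delta_{-1}\circ\delta_{-1}=\delta_1\circ t$, $(\delta_{-1}+\delta_0)\circ\delta_0=D_t\circ\delta_1$, and $\delta_1\circ\delta_0=\delta_{-1}\circ\delta_1$; and (ii) that the compatibility conditions \eqref{coassocalg} between $\Delta_1$ and the algebra structure linearize to the 2-cocycle identities for $\delta_1$ with respect to the bracket $[\cdot,\cdot]$ on $\g$. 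The computation in (ii) is precisely the dual (and skew-symmetrized) version of the Jacobiator computation carried out in the proof of \textbf{Lemma \ref{weak2algtolie}}, so I would explicitly note that dualizing those manipulations, together with the Peiffer constraints \eqref{condcop} giving $\bar X_{(1)}=tX_{(1)}$, yields the stated identities $\delta_{-1}(\mu_3(X,X',X''))=\mu_3(\bar X_{(1)},\bar X_{(1)}',\bar X_{(1)}'')\wedge\mu_3(\bar X_{(2)},\bar X_{(2)}',\bar X_{(2)}'')$ and $\delta_1([X,X'])=[X_{(1)},X_{(1)}']\wedge 1\wedge 1 - 1\wedge[X_{(2)},X_{(2)}']\wedge 1 + 1\wedge 1\wedge[X_{(3)},X_{(3)}']$.

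The main obstacle I anticipate is bookkeeping rather than conceptual: tracking the signs and grading shifts that arise when skew-symmetrizing the three-legged coassociator $\Delta_1:\cG_0\to\cG_{-1}^{3\otimes}$ and ensuring that the linearization $x\approx 1+X$ near the identity (together with the normalization of $\cT$, hence of $\mu_3$, which makes all terms with a unit insertion drop out) produces exactly the alternating-sign pattern in \eqref{class2coassoc}. A second delicate point is confirming that the weak-bialgebra coherence \eqref{weakcohgrpd3}, which in the finite case followed from the Hochschild 3-cocycle condition for $\cT$ via \textbf{Proposition \ref{weak2bialg}}, survives linearization to give $\delta_1\circ\delta_0=\delta_{-1}\circ\delta_1$; this is where the self-duality of the quantum structure feeds into the self-duality of the classical weak-Lie 2-bialgebra. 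I would close by remarking that functoriality of $\mathcal{L}$ on weak homomorphisms, already established at the end of \textbf{Lemma \ref{weak2algtolie}}, extends to the bialgebra morphisms since $\delta,\delta_1$ are built naturally from $\Delta,\Delta_1$, so that the assignment $(\cG,\cT,\Delta)\mapsto(\g,\mu,\delta)$ is indeed functorial and lands in weak Lie 2-bialgebras as defined in \cite{Chen:2013}. This completes the proof of the proposition.
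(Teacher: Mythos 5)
Your proposal is correct and follows essentially the same route as the paper: the paper likewise combines Lemma \ref{weak2algtolie} for the algebra side with the linearized, skew-symmetrized coassociator \eqref{class2coassoc}, observes that the weak coassociativity conditions reduce (by dualizing the computations of that lemma) to Eqs.~(42)--(44) of \cite{Chen:2012gz}, translates \eqref{coassocalg} into the two compatibility identities for $\delta_{-1}\circ\mu_3$ and $\delta_1\circ[\cdot,\cdot]$, and concludes via the characterization of weak Lie 2-bialgebras in \cite{Chen:2013}. Your extra remarks on functoriality and on the duality $\Delta_1\leftrightarrow\cT^*$ from Proposition \ref{weak2bialg} are consistent embellishments, not a different argument.
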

\noindent Note that this is a general result, which does not require the skeletality assumption on $\cG$. When $\cT=0$ and $\mu_3=0$, we recover the conditions for a quasi-Lie 2-bialgebra studied also in \cite{Chen:2012gz}.

\section{Module 2-categories and 2-representation theory}\label{weak2repthy}
As mentioned in {\it Remark \ref{weak2end}}, the {\it weak} 2-algebras live in the homotopy refinement $\mathsf{2Vect}^{hBC}$ of the 2-category $\mathsf{2Vect}^{BC}$ of Baez-Crans 2-vector spaces. We now show that the 2-representation theory developed here is nothing more than the 2-category of $\cG$-modules over the 2-category $\mathsf{2Vect}^{hBC}$.

Recall briefly some key aspects of modules in a 2-category $\textbf{D}$ of (certain types of) categories \cite{Delcamp:2021szr,Delcamp:2023kew}. 
\begin{definition}\label{2module2rep}
    Let $\cC\in\textbf{D}$ denote a monoidal category. The 2-category $\operatorname{Mod}_\textbf{D}(\cC)$ of {\bf $\cC$-modules in $\textbf{D}$} consist of objects $\cD\in \textbf{D}$ equipped with a $\cC$-action 1-morphism $\rhd:\cC\times\cD\rightarrow \cD$ and a set of pseudonatural transformations (the \textit{associators})
\begin{equation}
    \alpha_{XY|\cD}: (X \otimes Y)\rhd - \rightarrow X\rhd(Y\rhd -) \nonumber
\end{equation}
for each $X,Y\in \cC$, satisfying the module pentagon relations up to a possibly non-trivial {\it module pentagonator} 2-morphism $\pi_{XYZ}$. The 1-morphisms are $\cC$-module functors, and the 2-morphisms are $\cC$-module natural transformations.
\end{definition}
\noindent Crucially, the module pentagonators $\pi$ must satisfy {\it on the nose} a coherence condition, called the {\it associahedron condition}. The explicit expressions of these conditions can be found in \cite{Delcamp:2021szr,Delcamp:2023kew}.

\medskip

Consider a 2-bialgebra $\cG$ as an algebra object in $\textbf{D}= \mathsf{2Vect}^{hBC}$. Evaluating an action 2-functor $\rhd: \cG\times\mathsf{2Vect}^{hBC}\rightarrow\mathsf{2Vect}^{hBC}$ on the object $V$ gives precisely a weak 2-representation $\rho:\cG\rightarrow\operatorname{End}_{\mathsf{2Vect}^{hBC}}(V)= \End(V)$ of $\cG$ on $V\in\mathsf{2Vect}^{hBC}$, as we have defined in the main text. 
\begin{theorem}\label{pentag}
Weak 2-representations are $\cG$-modules in $\mathsf{2Vect}^{hBC}$: $$\operatorname{2Rep}^\cT(\cG) = \operatorname{Mod}_{\mathsf{2Vect}^{hBC}}(\cG).$$
\end{theorem}
\begin{proof}
As foretold, we reconstruct the {module associator} $\alpha$ and {pentagonator} $\pi$ of the $\cG$-modules $V\in \mathsf{2Vect}^{hBC}$ by taking 
\begin{equation}
\alpha_{x_1x_2|V} = \varrho(\rho_0(x_1),\rho_0(x_2))(V),\qquad \pi_{x_1x_2x_3|V} = \T(\rho_0(x_1),\rho_0(x_2),\rho_0(x_3))(V), \label{pentag2rep}
\end{equation}
where $\rho= (\varrho,\rho_0,\rho_1): \cG\rightarrow \End(V)$ is a weak 2-representation and $\T$ is the Hochschild 3-cocycle on $\End(V)$. We now proceed level by level.

\paragraph{Objects.} We identify the action 2-functor $\rhd$ as the weak 2-representation $\rho$ such that $x\rhd V = \rho_0(x)V$ for each $x\in \cG_0$. An arrow $x\rhd V\rightarrow x'\rhd V$ is therefore expressed as $\rho_1(y)V$, where $y\in \cG_{-1}$ is interpreted as a 2-morphism $x\xRightarrow{y}x'$ between $x,x'=x+ty$ \cite{Wagemann+2021,Baez:2003fs}, or simply by $\rho_1(y)$. What we need to prove is the pentagon relation between $\alpha,\pi$, as well as the associahedron condition for $\pi$. The pentagon relation can be written as
\begin{equation*}
% https://q.uiver.app/#q=WzAsNixbMCwwLCIoKHhfMXhfMil4XzMpXFxyaGQgViJdLFsxLDAsIih4XzEoeF8yeF8zKSlcXHJoZCBWIl0sWzIsMCwieF8xXFxyaGQoKHhfMnhfMylcXHJoZCBWKSJdLFswLDIsIih4XzF4XzIpXFxyaGQoeF8zXFxyaGQgVikiXSxbMiwyLCJ4XzFcXHJoZCAoeF8yXFxyaGQoeF8zXFxyaGQgVikpKSJdLFsxLDEsIlxceFJpZ2h0YXJyb3d7XFxwaV97eF8xeF8yeF8zfFZ9fSJdLFswLDEsIlxcY1QoeF8xLHhfMix4XzMpIl0sWzEsMiwiXFx2YXJyaG8oeF8xLHhfMnhfMykiXSxbMyw0LCJcXHZhcnJobyh4XzEseF8yKSJdLFswLDMsIlxcdmFycmhvKHhfMXhfMix4XzMpIiwyXSxbMiw0LCJcXHZhcnJobyh4XzIseF8zKSJdXQ==
\begin{tikzcd}
	{((x_1x_2)x_3)\rhd V} & & {(x_1(x_2x_3))\rhd V} & {x_1\rhd((x_2x_3)\rhd V)} \\
	& & {\xRightarrow{\pi_{x_1x_2x_3|V}}} \\
	{(x_1x_2)\rhd(x_3\rhd V)} & & & {x_1\rhd (x_2\rhd(x_3\rhd V)))}
	\arrow["{\rho_1(\cT(x_1,x_2,x_3))}", from=1-1, to=1-3]
	\arrow["{\varrho(x_1,x_2x_3)}", from=1-3, to=1-4]
	\arrow["{\varrho(x_1,x_2))\rho_0(x_3)}", from=3-1, to=3-4]
	\arrow["{\varrho(x_1x_2,x_3)}"', from=1-1, to=3-1]
	\arrow["{\rho_0(x_1)\varrho(x_2,x_3)}", from=1-4, to=3-4]
\end{tikzcd}
% \begin{tikzcd}
%                                                                                                    &                                                                      & (x_1(x_2x_3))\rhd V \arrow[rrd, "{\varrho(x_1,x_2x_3)}"] &                               &                                                                     \\
% ((x_1x_2)x_3)\rhd V \arrow[rru, "{\rho_1(\cT(x_1,x_2,x_3))}"] \arrow[rd, "{\varrho(x_1x_2,x_3)}"'] & {} \arrow[rr, "\pi_{x_1x_2x_3|V}", Rightarrow,shorten >=1.0cm,shorten <=1.0cm]                       &                                                          & {}                            & x_1\rhd((x_2x_3)\rhd V) \arrow[ld, "{\rho_0(x_1)\varrho(x_2,x_3)}"] \\
%                                                                                                    & (x_1x_2)\rhd (x_3\rhd V) \arrow[rr, "{\varrho(x_1,x_2)\rho_0(x_3)}"] &                                                          & x_1\rhd(x_2\rhd (x_3\rhd V))) &                                                                    
% \end{tikzcd}
\end{equation*}
Rewriting $\pi$ in terms of the 3-cocycle $\T$, we have
\begin{eqnarray}
    \T(\rho_0(x_1),\rho_0(x_2),\rho_0(x_3)) &=& -\varrho(x_1x_2,x_3) - \varrho(x_1,x_2)\rho_0(x_3) \nonumber\\
    &\qquad& +~ \rho_1(\cT(x_1,x_2,x_3)) + \varrho(x_1,x_2x_3) + \rho_0(x_1)\varrho(x_2,x_3),\nonumber
\end{eqnarray}
which is nothing but the last equation of  \eqref{weak2hom}. It is then easy to see that the associahedron condition follows from the Hochschild 3-cocycle condition for $\T$.

\paragraph{2-intertwiners.} 
Recall the notion of weak 2-intertwiners that we have given in {\bf Definition \ref{weak2int}}. By treating $V$ as a $\cG$-module 2-category and taking $\rhd,\rhd'$ as the action 2-functors corresponding to the 2-representations $\rho,\rho'$, we equivalently characterize the cochain homotopy $I$ as a collection of invertible natural transformations $I_{\bullet,i}: i(\bullet \rhd V) \Rightarrow \bullet \rhd' i(V)$, such that the following pentagon relation
% https://q.uiver.app/#q=WzAsNSxbMCwwLCJpKFxccmhvXzAoeHjigJkpVikiXSxbMCwyLCJpKFxccmhvXzAoeClcXHJob18wKHjigJkpVikiXSxbMSwyLCJcXHJob+KAmV8wKHgpXFxjaXJjIGkoXFxyaG9fMCh44oCZKVYpIl0sWzIsMiwiXFxyaG/igJlfMCh4KVxccmhv4oCZXzAoeOKAmSlcXGNpcmMgaShWKSJdLFsyLDAsIlxccmhv4oCZXzAoeHjigJkpXFxjaXJjIGkoVikiXSxbMCw0LCJJX3t4eOKAmSxpfSJdLFswLDEsImlcXGNpcmMgXFx2YXJyaG8oeCx44oCZKSIsMl0sWzEsMiwiSV97eCxpfVxcb3RpbWVzIFxcaWRfe1xccmhvXzAoeOKAmSl9IiwyXSxbMiwzLCJcXHJob+KAmV8wKHgpXFxvdGltZXMgSV97eOKAmSxpfSIsMl0sWzMsNCwiXFx2YXJyaG/igJkoeCx44oCZKVxcY2lyYyBpIiwyXV0=
\[\begin{tikzcd}
	{i(\rho_0(xx’)V)}& && &{\rho’_0(xx’)\circ i(V)} \\
	\\
	{i(\rho_0(x)\rho_0(x’)V)}& & {\rho’_0(x)\circ i(\rho_0(x’)V)} && {\rho’_0(x)\rho’_0(x’)\circ i(V)}
	\arrow["{I_{xx’,i}}", from=1-1, to=1-5]
	\arrow["{i\circ \varrho(x,x’)}"', from=1-1, to=3-1]
	\arrow["{I_{x,i} \otimes\id_{\rho_0(x’)}}", from=3-1, to=3-3]
	\arrow["{\rho’_0(x)\otimes I_{x’,i}}", from=3-3, to=3-5]
	\arrow["{\varrho’(x,x’)\circ i}"', from=3-5, to=1-5]
\end{tikzcd}\]
follows directly from  \eqref{2inthomotopy} This recovers precisely the notion of a $\cG$-module functor \cite{Delcamp:2023kew}. Notice no pentagonator appears here, as this is a relation on the 2-morphisms in $\operatorname{2Rep}^\cT(\cG)$ and hence a pentagonator for it would have to be a 3-morphism.

\paragraph{Modifications.}
Now let us consider the notion of modifications in $\operatorname{2Rep}^\cT(\cG)$ we have defined in {\bf Definition \ref{weakmodif}}. The condition  \eqref{modifhomotopy} is equivalent to the composition of 2-morphisms $(\operatorname{id}_{\rho_0(x)}\mu)\ast I_{x,i} = I_{x,i'}\ast \mu$, which is exactly a module natural transformation \cite{Delcamp:2023kew}.
\end{proof}

We show in this Section that $\End(V)$ can be interpreted in the context of module 2-categories. 

\medskip

We first recall briefly some key aspects of a module 2-category \cite{Delcamp:2021szr,Delcamp:2023kew}. To be more concrete, let $\cC$ denote a semisimple (monoidal) 2-category. A {\bf $\cC$-module 2-category} is a $k$-linear semisimple 2-category $\cD$ with a $\cC$ action 2-functor $\rhd:\cC\times\cD\rightarrow \cD$ and a set of adjoint natural equivalences (the \textit{associators})
\begin{equation}
    \alpha_{XY|A}: (X \otimes Y)\rhd A \rightarrow X\rhd(Y\rhd A) \nonumber
\end{equation}
for each $X,Y\in \cC$ and $A\in \cD$, satisfying the module pentagon relations up to a possibly non-trivial {\it module pentagonator} 2-morphism $\pi_{XYZ|A}$. These pentagonators must satisfy {\it on the nose} an additional coherence condition, called the {\it associahedron condition}. The explicit expressions of these conditions can be found in \cite{Delcamp:2021szr,Delcamp:2023kew}.

\subsection{Higher-representations of 2-groups}
For simplicity, consider the special case of a \textit{finite} skeletal 2-group $G = G_{-1}\xrightarrow{1}G_0$, where $1=\eta_0$ denotes the group unit in $G_0$, and take the corresponding 2-group algebra $kG$ via part c) of {\bf Example \ref{sec:2gpalgb}}. We also assume $G$ {\it splits}, namely it has a trivial Postnikov class $\tau=0$ (and hence the Hochschild 3-cocycle $\cT=0$ is trivial on the associated 2-group algebra $kG$).  

Recall the definition of a 2-representation of $\cG$ in 2-category $\textbf{D}$ in \textbf{Definition \ref{2module2rep}}. 
% equipped with a (strong) action functor $\rho:[G_{-1},G_0,\text{pt}] \rightarrow \textbf{D}$, which consists of the data
% \begin{itemize}
% \item an object $\cC\in \textbf{D}$
%     \item a 1-automorphism $\rho(x) \in \operatorname{Aut}(\cC)$ for each $x\in G_0$, and
%     \item an invertible 2-morphism $\rho(y):\rho(x)\Rightarrow\rho(x)$ for each $y\in G_{-1}$, where $x\in G_0$ is the source/target of $y$. 
% \end{itemize}
As shown in \textbf{Theorem \ref{pentag}}, if we take $\textbf{D} = \mathsf{2Vect}^{hBC}$ then we recover the weak 2-representations as defined in {\bf Definition \ref{weak2rep}} --- cochain homotopies play the role of natural transformations/2-morphisms in this setting. 

On the other hand, one typically considers the 2-category $\cC = \mathsf{2Vect}^{KV}$ of Kapranov-Voevodsky 2-vector spaces \cite{Kapranov:1994} in the usual literature \cite{Baez:2012,Douglas:2018,Delcamp:2021szr,Delcamp:2023kew,Bartsch:2022mpm,Bartsch:2023wvv}. We shall denote the former 2-representation 2-category by $\operatorname{2Rep}(kG)$, and the latter by $\operatorname{2Rep}_G$. 

\medskip

It is clear that the weak 2-representation theory developed in this paper hosts non-trivial $k$-invariants, and does hence indeed bypass the results of \cite{heredia2016representations2groupsbaezcrans2vector}. We will now show moreover that, specifically for finite 2-groups, the $k$-invariants in $\operatorname{2Rep}(kG)$ are in bijection with those found in $\operatorname{2Rep}_G$ in recent literature. 

\medskip

Since $t=1$ and by definition $\varrho(x,1) = \varrho(1,x)=0$, the left-bimodule structure in particular is respected $\rho_1(x\cdot y) = \rho_0(x)\rho_1(y)$ from  \eqref{weak2repalg}. As the left-bimodule action $\cdot$ coincides with the group action $\rhd$ by construction  \eqref{2grpalg2}, this implies the condition
\begin{equation}
    \rho(x\rhd y)_v = \rho(y)_{\rho(x)v},\qquad \forall x\in G_0,~ v\in V\nonumber
\end{equation}
for each $y\in G_{-1}$, which has also appeared in the 2-representation theory $\operatorname{2Rep}_G$ based on the Kaparanov-Voevodsky setting \cite{Delcamp:2023kew}. However, in there we also have the following data 
\begin{enumerate}
    \item The composition of elements $x\in G_0$ is preserved only up to an invertible natural transformation
    \begin{equation}
        p_{x_1,x_2}: \rho(x_1)\circ \rho(x_2)\xRightarrow{\sim} \rho(x_1x_2),\nonumber
    \end{equation}
    satisfying 
    \begin{equation}
        p_{x_1,x_2x_3}\ast (\operatorname{id}_{\rho(x_1)}\circ p_{x_2,x_3}) = p_{x_1x_2,x_3}\ast (p_{x_1,x_2}\circ\operatorname{id}_{\rho(x_3)}),\label{2repcoh1}
    \end{equation}
    where $\ast$ is the composition of 2-morphisms.
    \item A 1-morphism $i:\rho\rightarrow \rho'$ between two 2-representations assigns an object $i_\text{pt} \in \operatorname{Fun}(\rho(\text{pt}),\rho'(\text{pt}))$ and an invertible natural transformation $i_x: i_\text{pt}\circ \rho(x)\xRightarrow{\sim}\rho'(x)\circ i_\text{pt}$ to each $x\in G_0$ satisfying
    \begin{equation}
        (p_{x_1,x_2}\circ\operatorname{id}_{i_\text{pt}})\ast (\operatorname{id}_{\rho(x_1)}\circ i_{x_2}) \ast (i_{x_1}\circ\operatorname{id}_{\rho(x_2)}) = i_{x_1x_2} \ast (\operatorname{id}_{i_ \text{pt}} \circ p_{x_1,x_2}),\label{2repcoh2}
    \end{equation}
    as well as the naturality condition 
    \begin{equation}
        i_x \ast (\operatorname{id}_{i_ \text{pt}}\circ\rho(y)) = (\rho'(y)\circ \operatorname{id}_{i_ \text{pt}})\ast i_x \label{2repcoh3}
    \end{equation}
    for each $x\in G_0,y\in G_{-1}$.
    \item A 2-morphism $\mu:i\Rightarrow i'$ between two 1-morphisms assigns a natural transformation $\mu_\ast \in \operatorname{Fun}(i_ \text{pt},i'_\text{pt})$ satisfying
    \begin{equation}
        (\operatorname{id}_{\rho(x)}\circ \mu_\ast)\ast i_x = i'_x\ast \mu_\ast.\label{2repcoh4}
    \end{equation}
\end{enumerate}

Now let $\rho=(\varrho,\rho_0,\rho_1)\in \operatorname{2Rep}(kG)$ denote a weak 2-representation as we have defined in the main text. We will demonstrate that an identical set of coherence witnesses are encoded in {\bf Definition \ref{weak2rep}}.
\begin{itemize}
    \item We identify the invertible natural transformation $p_{x_1,x_2}$ with $\varrho(\rho_0(x_1),\rho_0(x_2))$ for each $x_1,x_2\in G_0$; as $\tau =0 $ is trivial, the Hoschild 3-cocycles $\cT,\T$ are both trivial, whence last equation of  \eqref{weak2hom} implies  \eqref{2repcoh1}.
    \item Take $\rho(\ast) =V$ and $\rho'(\ast)=W$, a 1-morphisms $i_ \text{pt}$ clearly denote a cochain map $i:V\rightarrow W$. We identify the invertible natural transformation $i_x$ with the cochain homotopy $I_{x,i}$ defined in {\bf Definition \ref{weak2int}}, whence  \eqref{2repcoh2} is equivalent to  \eqref{2inthomotopy}. Moreover, as $G$ is skeletal $t=1$, the 2-morphisms $\operatorname{id}_i\circ\rho(y),\rho(y)\circ\operatorname{id}_i$ are self-modifications $\mu: i\Rightarrow i$ on $i=i_ \text{pt}$, whence  \eqref{2repcoh3} follows from  \eqref{modifhomotopy}.
    \item We identify the 2-morphism $\mu_\ast$ with a modification $\mu$ as defined in {\bf Definition \ref{weakmodif}}.  \eqref{2repcoh4} then clearly follows also from  \eqref{modifhomotopy}.
\end{itemize}
The set of $k$-invariants in $\operatorname{2Rep}^\cT(kG) = \operatorname{Mod}_{\mathsf{2Vect}^{hBC}}(kG)$ --- coming from $A_\infty$-algebras --- therefore coincide with that coming from $\operatorname{2Rep}_G = \operatorname{Mod}_{\mathsf{2Vect}^{KV}}(G)$. This fact is crucial for the main result in \cite{Chen2z:2023}.

% % \end{proof}
% \medskip

% As a fusion 2-category, $\operatorname{2Rep}_G$ is endowed with the commutative tensor product \cite{Delcamp:2021szr,Delcamp:2023kew} 
% \begin{equation}
%     (\rho\otimes\rho')(\ast) = \rho(\ast)\otimes\rho'(\ast),\qquad (\rho\otimes\rho')(x) = \rho(x)\otimes\rho'(x),\quad x\in G_0\nonumber
% \end{equation}
% which corresponds to the case where the coproduct on $\cG=kG$ is grouplike cocommutative (see for instance Section \ref{2groupbialg}) in our framework. Indeed, $\operatorname{2Rep}_G$ is typically understood in the literature to be a symmetric monoidal 2-category, due to the resulting trivial 2-$R$-matrix on $G$. Our theory can therefore be understood as a non-symmetric generalization of higher-representation theory.

\newpage

\printbibliography
% \bibliographystyle{Biblio}
% \bibliography{biblio}
\end{document}